\tikzset{join/.code=\tikzset{after node path={%
\ifx\tikzchainprevious\pgfutil@empty\else(\tikzchainprevious)%
edge[every join]#1(\tikzchaincurrent)\fi}}}
\tikzset{>=stealth',every on chain/.append style={join},
         every join/.style={->}}
\renewcommand{\Pr}{\mbox{\rm Pr}}
\newcommand{\A}{{\mathcal A}}
\newcommand{\Le}{{\mathcal L}}
\newcommand{\Borel}{{\mathcal B}}
\newcommand{\D}{{\mathfrak D}}
\newcommand{\R}{{\mathbb R}}
\newcommand{\N}{{\mathbb N}}
\newcommand{\Z}{{\mathbb Z}}
\newcommand{\spec}{\mathop{{\bf s}\rm pec}}
\renewcommand{\rho}{{\varrho}}
\def\esssup{\mathop{\rm ess\,sup}}
\def\dist{\mathop{{\bf d}\rm ist}}
\def\Lip{\mathop{\rm Lip}}
\def\min{{\rm min}}  
\def\dint{{\rm d}}
\def\a{{\alpha }} 
\def\e{{\varepsilon}}
\def\eps{{\varepsilon}}  \def\F{\mathcal F} 
\def\d{{\delta}} 
\def\phi{{\varphi}}  
\newcommand{\Ps}{\mathcal P} 
\newcommand{\comp}{{\rm {\bf c}omp}} 
\newcommand{\vol}{\operatorname{{\bf v}ol}}
\renewcommand{\rho}{{\varrho}}
\newcommand{\ball}{{B^d}}
\newcommand{\fc}{\mathcal F_C}
\newcommand{\vtn}{A^{\mathrm{simple}}_n}
\newcommand{\abs}[1]{\left\vert #1 \right\vert} 
\newcommand{\norm}[2]{\left\Vert #1  \right\Vert _{#2}} 
\newcommand{\set}[1]{\left\{#1\right\}}
\newcommand{\expect}{\mathbf E}
\newcommand{\scalar}[2]{\left\langle #1,#2\right\rangle}
\theoremstyle{plain}
\newtheorem{theorem}{Theorem}[chapter]
\newtheorem{lemma}[theorem]{Lemma}
\newtheorem{prop}[theorem]{Proposition}
\newtheorem{coro}[theorem]{Corollary}
\newtheoremstyle{citing}{}{}{\itshape}{}{\bfseries}{.}%
  { }{\thmnote{#3}}
\theoremstyle{citing}
\newtheorem{cit}{}
\theoremstyle{definition} 
\newtheorem{defin}[theorem]{Definition}
\newtheorem{remark}[theorem]{Remark}
\begin{document}

\keywords{Markov chain Monte Carlo, spectral gap, uniform ergodicity, geometric ergodicity, burn-in, error bounds, Metropolis algorithm, 
ball walk, mean square error, 
	  log-concave density, hit-and-run algorithm.}
\mathclass{Primary 65C05; Secondary 60J10, 60J05, 60J22, 65C20, 37A25, 62F25, 65Y20.}
\thanks{
I want to thank Aicke Hinrichs, Thomas M\"uller-Gronbach, Erich Novak, Klaus Ritter 
and Mario Ullrich for several suggestions and helpful hints. 
The research was supported by DFG Priority Program 1324 and the DFG Research Training Group 1523.}
\abbrevauthors{Daniel Rudolf}
\abbrevtitle{Explicit error bounds for Markov chain Monte Carlo}

\title{Explicit error bounds for Markov chain Monte Carlo}

\author{Daniel Rudolf}
\address{Institute of Mathematics\\ University Jena\\
Ernst-Abbe-Platz 2\\ D-07743 Jena, Germany\\
E-mail: daniel.rudolf@uni-jena.de}

\maketitledis

\tableofcontents
\begin{abstract}
We prove explicit, i.e. non-asymptotic, error bounds for Markov
chain Monte Carlo methods.
The problem is to
compute the expectation of a function $f$ with respect to a measure $\pi$. 
Different convergence properties of Markov chains
imply different error bounds. For uniformly ergodic and reversible Markov chains 
we prove a lower and an upper error bound with respect to $\norm{f}{2}$.
If there exists an $L_2$-spectral gap, which is a weaker convergence property than uniform ergodicity, then 
we show an upper error bound with respect to $\norm{f}{p}$ for $p>2$. 
Usually a burn-in period is an efficient way to tune the algorithm.
We provide and justify a recipe how to choose the burn-in period. 
The error bounds are applied to the problem 
of the integration with respect to a possibly unnormalized density.
More precise, we consider the integration with respect to log-concave densities and the integration over convex bodies.
By the use of the Metropolis algorithm based on a ball walk and the hit-and-run algorithm it is shown that both problems 
are polynomial tractable.
\end{abstract}
\makeabstract



\chapter{Introduction and results}
In numerous applications one wants to compute the expectation of a function 
$f\colon D \to\R$ with respect to a probability measure $\pi$ defined on a measurable space $(D,\D)$. 
The goal is to approximate
\begin{equation}  \label{int}
S(f)=\int_D f(x) \, \pi(\dint x),
\end{equation}
where we assume that it is not possible to sample $\pi$ 
directly with reasonable cost.
In other words, we assume that there is no random number generator
which generates a sample with respect to $\pi$ reasonably fast.
This might happen if the available information on $\pi$ is 
incomplete or one has a complicated measurable space. 
However, many applications have in common that one knows enough about $\pi$
to design a Markov chain
which approximates the desired distribution.
Hence we assume that we cannot sample $\pi$ directly,
but we can run a Markov chain to get close to $\pi$.

Let us briefly illustrate such problems:
\begin{itemize}

\item Let $A\subset \R^d$ be an arbitrary \emph{convex 
body}\footnote{A convex body is a bounded convex set with non-empty interior.}. 
Suppose that we can sample the uniform distribution on 
\[
A\cap \ell \quad \text{for an arbitrary line} \quad \ell.
\] 
The goal is to simulate the uniform distribution on $A$, say $\mu_A$.
For a complicated $A$ it might be impossible 
to generate a uniformly distributed sample with reasonable cost.
But the hit-and-run algorithm (see Section~\ref{int_conv}) 
provides a Markov chain which has the
limit distribution $\mu_A$.
\item
Let $D\subset \R^d$ be a convex body. 
Suppose that $f\colon D\to \R$ is an integrable function with respect to $\pi_\rho$,
where $\rho$ is an unnormalized positive density and 
\[
\pi_\rho(A) = \frac{\int_A \rho(x) \,\dint x}{\int_D \rho(x) \,\dint x}, \quad A\subset D.
\] 
The goal is to approximate
\[
S(f,\rho)= \int_D f(x)\,\pi_\rho(\dint x) =\frac{\int_D f(x) \rho(x)\, \dint x}{\int_D \rho(x)\, \dint x}.
\]
By the Metropolis algorithm based on the ball walk (see Section~\ref{sec_S_f_rho}) one can construct
a Markov chain which has the limit distribution $\pi_\rho$.
It might be impossible to sample $\pi_\rho$ directly, 
in particular if $\rho$ is a complicated density function.
\end{itemize}
One can ask the following questions.
How does the error of numerical integration based on Markov chains behave? 
And, how long does the Markov chain need to get close to the limit distribution?\\

The thesis deals with the first question and,
 because of the close relation, touches briefly the second one. 
The Markov chain Monte Carlo method for approximating the expectation 
plays a crucial role in computer science, in statistical physics, in statistics,
and in financial mathematics, see e.g. \cite{MC_MC_pract,Mar,liu,mc_revolution,BroGelJonMen}.
Suppose that the function $f \colon D \to \R$ is given by an oracle 
which provides function values of $f$. The goal is to approximate $S(f)$.
The integral simplifies to a sum if the state space $D$ is finite, such that
\begin{equation}  \label{sum}
S(f)=\sum_{x\in D} f(x) \, \pi(x).
\end{equation}
We assume that the distribution $\pi$ can be simulated by a Markov chain $(X_n)_{n\in\N}$
with transition kernel $K$ and initial distribution $\nu$.
The distribution $\pi$ is the limit distribution, in particular it is stationary, i.e.
\[
  \pi(A) = \int_D K(x,A)\,\pi(\dint x),\quad A\in \D.
\]
Under weak assumptions on the Markov chain we obtain 
that after sufficiently many steps $m \geq n_0$, the distribution of $X_{m}$ is close to $\pi$.
The number $n_0$ determines the number of steps to get close to $\pi$, it is called the burn-in or the warm up period.
Afterwards, we approximate $S(f)$ by 
\begin{equation*}  
	S_{n,n_0}(f)=\frac{1}{n} \sum_{j=1}^n f(X_{j+n_0}).
\end{equation*} 
It is well known that an ergodic theorem\footnote{Suppose that $(D,\D)$ is countably generated. 
Let the Markov chain $(X_n)_{n\in\N}$ be $\phi$-irreducible ($\phi$ is a non-trivial $\sigma$-finite measure,  
for all $A\in\D$ and for all $x\in D$ there exists an $n\in\N$ such that $\phi(A)>0$ implies $K^n(x,A)>0$).
We assume that $\pi$ is a stationary distribution. Furthermore for all $A\in\D$ and for all $x\in D$ 
we have $\Pr(X_n\in A \text{ infinitely often} \mid X_1=x)=1$. Then  
$\lim_{n\to \infty} S_{n,n_0} (f) = S(f)$ almost surely. 
For a proof of the fact see \cite[Theorem~17.1.7, p.~427]{tweed}.
For a simple approach of a similar ergodic theorem we refer to \cite{AsGly}.
For a central limit theorem and fixed-width asymptotics of Markov chain Monte Carlo see \cite{Gey,Jon_et_al}.
} holds
which says that
\[
\lim_{n\to \infty} S_{n,n_0} (f) = \lim_{n\to \infty} \frac{1}{n} \sum_{j=1}^n f(X_{j+n_0})  = \int_D f(x)\, \pi(\dint x)  =S(f) \quad \text{almost surely.}
\]
This means that the algorithm is well defined 
but does not imply an error bound. 
It is a qualitative rather than a quantitative result.
We study the mean square error of $S_{n,n_0}$. 
For a function $f$, integrable with respect to $\pi$, 
it is given by
\begin{equation*}
	\label{error_ind}
  e_\nu(S_{n,n_0},f)=\left( \expect_{\nu,K} \abs{S_{n,n_0}(f)-S(f)}^2 \right)^{1/2},
\end{equation*}
where $\expect_{\nu,K}$ denotes the expectation of a Markov chain 
with transition kernel $K$ and initial distribution $\nu$.

The main topic of the thesis is the presentation of old and new explicit 
error bounds for computing the expectation
by Markov chain Monte Carlo.
These bounds are in terms of the $\norm{\cdot}{p}$-norm of the integrand $f$,
\[
  \norm{f}{p}  
  = \begin{cases}
      \left(\int_D \abs{f(x)}^p \, \pi(\dint x)\right)^{1/p}, & p\in[2,\infty),\\
      \pi\text{-}\esssup_{x\in D} \abs{f(x)}, & p=\infty.
    \end{cases}
\]
The kernel $K$ of the Markov chain determines the Markov operator
 \[
    Pf(x) = \int_D f(y)\, K(x,\dint y) , \quad x\in D,
  \]
 and $S(f)=\int_D f(x)\,\pi({\rm d}x)$
 can be considered as operator mapping into the constant functions.
 If $P$ is self-adjoint acting on $L_2$ then the Markov chain is called reversible.
 The asymptotic error is completely known if the underlying Markov chain is reversible, 
the initial distribution
  has a bounded density with respect to $\pi$ 
and one has $\norm{P^j-S}{L_1 \to L_1} \leq M \a^j $ for an $\a \in[0,1)$ and $M<\infty$, 
see 
Corollary~\ref{asymp_err_coro_gen}.
 One obtains
 \begin{equation} \label{asymp_err1}
   	\lim_{n\to \infty}  n\cdot 
	\sup_{\norm{f}{2} \leq 1} e_\nu(S_{n,n_0},f)^2
	= \frac{1+ \Lambda}{1-\Lambda} \leq \frac{2}{1-\Lambda},
 \end{equation}
and
 \begin{equation} \label{asymp_err2}
      \lim_{n_0 \to \infty} \sup_{\norm{f}{2}\leq 1} e_\nu(S_{n,n_0},f)^2 
  = \frac{1+\Lambda}{n(1-\Lambda)}-\frac{2\Lambda(1-\Lambda^n)}{n^2(1-\Lambda)^2} \leq \frac{2}{n(1-\Lambda)},
 \end{equation}
where $\Lambda=\sup\set{\a\mid\a\in\spec(P-S)}$.
Similar asymptotic estimates are shown in \cite{sokal,mathe1,bremaud,mathe2,RoRo}.
However, we want to have explicit error bounds.
The desired error estimate should behave asymptotically 
as described in \eqref{asymp_err1} and \eqref{asymp_err2}.
For $\Lambda$ close to $1$ the right hand sides of the equalities of the asymptotic error 
can be very well estimated 
by $\frac{2}{1-\Lambda}$ and $\frac{2}{n(1-\Lambda)}$.
The main goal is to prove non-asymptotic, explicit error bounds with respect to $\norm{f}{p}$ of the form
\[
	\sup_{\norm{f}{p} \leq 1} 
	e_\nu(S_{n,n_0},f)^2 \leq \frac{2}{n(1-\Lambda)}+ \frac{C_{\nu,p}\; \gamma^{n_0}}{n^2(1-\gamma)^2},
\]
where $C_{\nu,p}$ and $\gamma<1$ should be known.
If the initial distribution $\nu$ of the Markov chain is the stationary one, say $\pi$, 
then the influence of the initial part resulting from $\nu$ should vanish, i.e. $C_{\nu,p}=0$.
We give more details in the following.

  First we consider the special case where the state space is finite.
  Let the cardinality of $D$ be astronomically large, 
  say for example $\abs{D}=10^{30}$, such that an exact computation 
  of the sum \eqref{sum} might be practically impossible.
  Suppose that we have a Markov chain with transition matrix $P$ and initial distribution $\nu$.
  All definitions, such as stationarity, irreducibility, aperiodicity and all relevant facts of Markov chains 
  on finite state spaces are provided in Section~\ref{MC_finite}.
  The Markov chain is reversible if 
  the transition matrix $P=(p(x,y))_{x,y\in D}$ fulfills for a probability measure $\pi$ that
  \[
    \pi(x) p(x,y)=\pi(y) p(y,x) ,\quad x,y\in D.
  \]
  If the Markov chain is reversible, then let us define
  \[
      \beta=\norm{P-S}{\ell_2 \to \ell_2} = \max\set{\beta_1,\abs{\beta_{\abs{D}-1}}},
  \]
  where $\beta_1$ is the second largest and $\beta_{\abs{D}-1}$ the smallest eigenvalue of $P$.
  We consider reversible and ergodic Markov chains, 
  i.e. $\beta$, the second largest absolute eigenvalue of $P$, is 
  less than $1$. 
  Hence also $\beta_1$, the second largest eigenvalue of $P$, is less than $1$. 
  Section~\ref{sec_error_bound} contains the first error estimate.
  The explicit error bound is developed with respect to the $\ell_2$-norm of the integrand $f\in \R^D$.
  For 
  \[
    C=\sqrt{\norm{\frac{1}{\pi}}{\infty}} \norm{\frac{\nu}{\pi}-1}{2}
  \]
  we obtain in Theorem~\ref{main_fin} that
  \begin{equation} \label{intro_err}
    \sup_{\norm{f}{2} \leq 1}  e_\nu(S_{n,n_0},f)^2 \leq \frac{2 }{n(1-\beta_1)}+\frac{2C\beta^{n_0}}{n^2(1-\beta)^2}.
  \end{equation}
  Obviously $C$ is $0$ if $\nu$ is $\pi$ and the asymptotic estimates of 
\eqref{asymp_err1} and \eqref{asymp_err2} hold true.
However, the factor 
  $\norm{\frac{1}{\pi}}{\infty}$ is unsatisfactory for an extension to general state spaces.
  Furthermore we also provide a lower bound of the error, see Remark~\ref{rem_low_bound}.
  In Section~\ref{burn_in_fin} we suggest a choice of the burn-in.
  The main result is as follows.
\begin{cit}[Theorem~\ref{main_coro}]
Suppose that 
  \[
    n_0 = \max\set{\left\lceil \frac{\log\left(\sqrt{\norm{\frac{1}{\pi}}{\infty}} 
	  \norm{\frac{\nu}{\pi}-1}{2}\right)}{\log(\beta^{-1})} \right\rceil,0}.
  \]
Then 
\[
\frac{1+\beta_1}{n(1-\beta_1)}- \frac{4}{n^2(1-\beta)^2} \leq
\sup_{\norm{f}{2}\leq1}e_\nu(S_{n,n_0},f)^2
		\leq \frac{2}{n(1-\beta)} 
				+				 
				 \frac{2}{n^2(1-\beta)^2}.
\]  
\end{cit}
  The suggestion of the burn-in is optimal 
  in the following sense.
For $\eta>0$ let the number of steps $N=n+n_0$ of the Markov chain be large enough, let $\beta=\beta_1$ and assume that $C$ and $\beta$ obey an additional less restrictive 
condition.  
  Then the burn-in $n_{\text{opt}}$, 
  which minimizes the upper error bound of \eqref{intro_err}, 
  satisfies $n_{\text{opt}} \in [n_0,(1+\eta)n_0]$.

  In many examples an estimate for $\beta$ is available.
  In Section~\ref{toy_fin} we consider some illustrating examples 
  where all eigenvalues and eigenvectors are known,
  so that the exact error is computable.
  Then we compare the lower and upper estimates with the exact error.
  It turns out that the estimates are sharp depending on the available information of the eigenvalues.
  Similar estimates can be found in \cite{aldous} and \cite{niemiro_poka}.
  However, the suggestion of the burn-in and the lower bound seem to be new.

  After the study of Markov chains on finite state spaces 
  let us introduce the general state space setting.
  Assume that the measurable space $(D,\D)$ is given. 
  Then the desired expectation becomes an integral, see \eqref{int}.
 Suppose we have a Markov chain with transition kernel $K$ and initial distribution $\nu$. 
 Let us recall that the transition kernel $K$ defines the Markov operator 
  \[
    Pf(x) = \int_D f(y)\, K(x,\dint y), \quad x\in D,
  \]
  and $S(f)=\int_D f(x)\,\pi(\dint x)$
  can be considered as operator mapping into the constant functions. It is well known that reversibility of $K$ is equivalent
  to self-adjointness of $P$ acting on $L_2$. 
  In Section~\ref{MC_cont} we provide all definitions such as stationarity and reversibility in detail. 
  Furthermore it contains all relevant convergence properties of Markov chains.
  Mainly the two convergence properties of 
  Definition~\ref{L_p_exp_def} and Definition~\ref{def_spectral_gap} are essential:
  \begin{itemize}
   \item  Let $\a\in[0,1)$ and $M<\infty$.
	  The Markov chain is called $L_1$-exponentially convergent with $(\a,M)$
	  if 
          \[
	    \norm{P^j-S}{L_1\to L_1} \leq M\a^j , \quad j\in\N.
	  \]
	  For reversible Markov chains $L_1$-exponential convergence with $(\a,2M)$ 
	  is equivalent to $\pi$-a.e. uniform ergodicity with $(\a,M)$,
          see Proposition~\ref{uni_impl_geo}. 
   \item  The Markov operator has an $L_2$-spectral gap if 
	  \[
	    \beta=\norm{P-S}{L_2 \to L_2} <1,
	  \]
	  where the gap is given by $1-\beta$. 
	  The existence of an $L_2$-spectral gap implies an exponential convergence of $P^j$ to $S$
	  with respect to the $L_2$-operator norm for $j\to\infty$. 
  \end{itemize}
 Section~\ref{err_bound_gen} contains the error estimates for $S_{n,n_0}$.
 We explain the main results in the following.  
 Let $\Lambda$ be the largest element 
 of the spectrum of $P-S$ acting on $L_2$, i.e.  
  \[
   \Lambda=\sup\set{\a\mid\a\in\spec(P-S)}. 
  \]
 Suppose that the Markov chain is reversible and $L_1$-exponentially convergent with  $(\a,M)$.
 Furthermore assume that there exists a bounded density $\frac{d\nu}{d\pi}$ of the initial distribution $\nu$ with respect to $\pi$.
 For 
  \[
    C=M \norm{\frac{d\nu}{d\pi}-1}{\infty}
  \]
  we show in Theorem~\ref{main_thm_unif} that the error obeys
  \[
    \sup_{\norm{f}{2} \leq 1}  e_\nu(S_{n,n_0},f)^2 \leq \frac{2 }{n(1-\Lambda)}+\frac{2C\a^{n_0}}{n^2(1-\a)^2}.
  \]
  Note that the error bound is of the same form as for finite state spaces except for the fact that $\a$ of the
  $L_1$-exponential convergence appears. If the transition kernel is reversible one has 
  $\Lambda \leq \beta$ and in Proposition~\ref{uni_impl_geo} it is shown that $\beta\leq \a$.
  Hence one can further estimate the leading term of the error bound by using $(1-\Lambda)^{-1} \leq (1-\a)^{-1}$.
  Then a reasonable choice of the burn-in can be obtained by the same arguments as for finite state spaces.
  In Section~\ref{burn_in_sec} we also justify the choice of the burn-in by numerical experiments, which confirm the
  theoretical result. 
  \begin{cit}[Theorem~\ref{main_coro_gen}\,\eqref{erstens_burn_in}]
    Suppose that we have a Markov chain which
    is reversible with respect to $\pi$ and 
    $L_1$-exponentially convergent with $(\a,M)$. Let
\[
n_0= \max\set{\left\lceil \frac{\log(M \norm{\frac{d\nu}{d\pi}-1}{\infty})}{\log(\a^{-1})}\right\rceil,0}.
\]
Then 
\begin{align*}
\sup_{\norm{f}{2}\leq1}e_\nu(S_{n,n_0},f)^2
		& 	
		    \leq \frac{2}{n(1-\a)} 
				+				 
				 \frac{2}{n^2(1-\a)^2}.
\end{align*}
\end{cit}
  The condition that the Markov chain is $L_1$-exponentially convergent with $(\a,M)$ is rather restrictive. 
  This motivates the study of Markov chains which fulfill a weaker convergence property, 
  namely we assume that there is an $L_2$-spectral gap.
  Let us provide the main result.
\begin{cit}[Theorem~\ref{main_coro_gen}\,\eqref{zweitens_burn_in}]
  Suppose that 
  we have a Markov chain with Markov operator $P$ which has 
  an $L_2$-spectral gap, $1-\beta>0$. 
  For $p\in(2,\infty]$ let $n_0(p)$ be the smallest natural number (including zero) 
  which is greater than or equal to 
  \[
    \frac{1}{\log(\beta^{-1})} \cdot
    \begin{cases}
	\frac{p}{2(p-2)}\log\left(\frac{32p}{p-2} \norm{\frac{d\nu}{d\pi}-1}{\frac{p}{p-2}}\right), & p\in(2,4),\\
	\log\left(64 \norm{\frac{d\nu}{d\pi}-1}{2}\right) , & p\in[4,\infty]. 
    \end{cases}
  \]
  Then
  \[
    \sup_{\norm{f}{p}\leq1}e_\nu(S_{n,n_0(p)},f)^2
	\leq \frac{2}{n(1-\beta)} +\frac{2}{n^2(1-\beta)^2}.
  \]
\end{cit}
  For further details let us refer to Section~\ref{burn_in_sec}.
  There we justify the
   choice for the burn-in $n_0(p)$ by numerical experiments.
  Briefly summarized, 
  by weakening the convergence property we get an estimate of the error for a smaller class of functions.
  Namely, we prove an error bound for integrands $f$ which satisfy $\norm{f}{p}<\infty$ where $p>2$.

  The last chapter deals with applications.
  The problem of integration with respect to log-concave densities is the following.
  For a function $f\colon D\to \R$ and a convex body $D\subset\R^d$ 
  the goal is to approximate
  \[
    S(f,\rho)= \frac{\int_D f(x) \rho(x)\, \dint x}{\int_D \rho(x)\, \dint x},
  \]
  where $\rho$ is an unnormalized density. 
  The problem is linear in $f$ but not in $\rho$.
  Suppose that the domain $D$ is the $d$-dimensional Euclidean unit ball $\ball$. 
  Furthermore assume that $\rho$ is log-concave and 
  $\log \rho$ is Lipschitz continuous with Lipschitz constant $\text{\rm L}$.
  Hence we consider the class of densities
  \[
    \mathcal{R}^\text{\rm L}(\ball) = \set{\rho>0\mid  \rho\; 
		\text{is log-concave},\, \abs{ \log \rho(x) - \log \rho(y)  } \leq \text{\rm L} \norm{x-y}{\text{\rm E}} },
  \]
  where $\norm{\cdot}{\text{\rm E}}$ denotes the Euclidean norm.
  We analyze the Metropolis algorithm based on a $\delta$ ball walk,
  see Algorithm~\ref{mcmc_metropolis} on page \pageref{mcmc_metropolis} and 
  for the Procedure~\ref{ball_walk} see page \pageref{ball_walk}. 
  The algorithm generates the desired sample. 
  The sample, say $(X^\d_{n_0+1},\dots,X^\d_{n_0+n})$, is used to compute
  \[
    S_{n,n_0}^\d(f,\rho)=\frac{1}{n}\sum_{j=1}^n f(X^\d_{j+n_0}). 
  \]
  For an adapted $\d=\min\set{(d+1)^{-1},\text{\rm L}^{-1}}$ Math{\'e} and Novak showed in \cite{novak}
  that the Markov chain which is defined by the Metropolis algorithm based on a $\d$ ball walk
  has an $L_2$-spectral gap. This result is used to get an explicit error bound.
  We state the result for the unit ball and for simplicity we consider integrands $f$ with $\norm{f}{3}\leq1$.
  For
  \[
	n_0 
	\asymp  
	d \,\text{\rm L}  \max\set{d,\text{\rm L}^2} 
  \]
  the error obeys
  \[
  \sup_{\norm{f}{3}\leq1,\,\rho\in  \mathcal{R}^\text{\rm L}(\ball)} e(S_{n,n_0}^\d,(f,\rho)) 
  \prec \sqrt{\frac{d}{n}} \max\set{\sqrt{d},\text{\rm L}} + 
	\frac{d}{n} \max\set{d,\text{\rm L}^2},
 \]
  where $d\in\N$ and $\text{\rm L} \geq0$.\footnote{We use the notation $\prec$ and 
  $\asymp$
  as follows. 
  Let $(a_n)_{n\in\N}$ and $(b_n)_{n\in\N}$ be positive sequences. 
  We write $a_n \prec b_n$ if there exists an absolute constant $c$ such that $a_n \leq c\, b_n$ for all $n\in\N$. 
  We write $a_n \asymp b_n$ if $a_n \prec b_n$ and $b_n \prec a_n$.
}
  The geometry of the unit ball is essential for the estimate of the $L_2$-spectral gap of \cite{novak}, 
  since the ball walk 
  might get stuck on domains which have corners. 
  However, the results of Section~\ref{sec_S_f_rho} are slightly more general.
  There we treat balls with arbitrary radius $r>0$ and 
  the result is with respect to $\norm{f}{p}$ for $p>2$. 
  We refer to Theorem~\ref{err_S_f_rho} for the details.
  The number of function evaluations to obtain an error smaller than $\e$ is polynomially
  bounded in the dimension $d$ and the Lipschitz constant $\text{\rm L}$. 
  Hence the problem of integration with respect to a log-concave density 
  is tractable, see Novak and Wo{\'z}niakowski \cite{trac_vol1,trac_vol2}.

  The problem of integration on a convex body is as follows.
  Let  $A\subset \R^d$ be a convex body. The goal is to compute
  \begin{equation*}  
	S(f,A)
	    = \frac{1}{\vol_d(A)} \int_A f(x)\, \dint x, 
  \end{equation*}
  where $\vol_d(A)$ denotes the $d$-dimensional volume of $A$.
  In other words the goal is to approximate the expectation of $f$
  with respect to the uniform distribution, say $\mu_A$, on $A$. 
  The problem is linear in $f$ but not in $A$.
  Let $\ball \subset A \subset r\ball$ where $r\ball$ is the Euclidean ball with radius $r$ around $0$. 
  We assume that there is an oracle $\text{Or}_A(\ell)$ which returns a uniform 
  distributed state on $A\cap \ell$ for an arbitrary line $\ell$.
  Hence we consider state spaces from the class
  \[
    \mathcal{S}_d(r)=\set{ A\subset \R^d\; \text{convex} \mid \ball\subset A \subset r\ball}
  \]
  and we assume that $\text{Or}_A(\ell)$ is available for any $A\in \mathcal{S}_d(r)$.
  We analyze the hit-and-run algorithm, see Algorithm~\ref{mcmc_hit_and_run} 
  on page \pageref{mcmc_hit_and_run} and for the Procedure~\ref{hit_and_run} see page \pageref{hit_and_run}. 
  It generates the desired sample, say $(X_{n_0+1}^\text{har},\dots,X_{n+n_0}^\text{har})$.
  Afterwards we compute
  \[
    S_{n,n_0}^\text{har}(f,A)=\frac{1}{n} \sum_{j=1}^n f(X_{j+n_0}^\text{har}).
  \]
  The Markov chain generated by the hit-and-run algorithm has the right stationary distribution,
  see Lemma~\ref{hit_and_run_reversible} or \cite{hit_smith}.
  A result of Lov{\'a}sz and Vempala presented in \cite{hit_and_run_from_corner} provides an estimate of $1-\beta$.
  Hence there exists an $L_2$-spectral gap and we can apply the error bound
  of Theorem~\ref{main_coro_gen}\,\eqref{zweitens_burn_in}. 
  For simplicity suppose that $\norm{f}{3}\leq 1$.
  For 
  \[
    n_0 
    \asymp
    d^3 r^2 \log(r) 
  \]
  the error obeys
  \[
   \sup_{\norm{f}{3}\leq 1,\,A\in\mathcal{S}_d(r)} e(S_{n,n_0}^\text{har},(f,A)) 
    \prec  \frac{dr}{\sqrt{n}} + \frac{d^2 r^2}{n}.
  \]
  For the general result with respect to $\norm{f}{p}$ with $p>2$ we refer to Theorem~\ref{err_S_f_A}.
  The
  number of function evaluations to obtain an error smaller than $\e$ is polynomially
  bounded in the dimension $d$ and the radius $r$. 
  Hence the problem of integration on a convex body 
  is tractable, see \cite{trac_vol1,trac_vol2}.

\chapter{Finite state spaces} \label{ch_fin}

In the following we study the mean square error of Markov chain Monte Carlo methods 
on finite state spaces.
In Section~\ref{MC_finite} the basic definitions and properties of Markov chains on finite state spaces 
are stated. 
The estimate of the mean square error is shown in Section~\ref{sec_error_bound}.
We suggest and justify a recipe how to choose the burn-in.
Afterwards the error bound is applied to illustrating examples and
finally we discuss how the results fit into the published literature.  

\section{Markov chains}  \label{MC_finite}
In this section the basics of Markov chains on finite state spaces are provided.
Let $D$ be a finite set and $\Ps(D)$ the power set of $D$ such that the measurable state space $(D,\Ps(D))$ is given.


\begin{defin}[Markov chain]
A sequence of random variables $(X_n)_{n\in\N}$ 
on a probability space $(\Omega,\F,\Pr)$ mapping into $(D,\Ps(D))$
is called a \emph{Markov chain with transition matrix} $P=(p(x,y))_{x,y \in D}$ 
if for all $n\in\N$, all $x,y \in D$ and for all $x_1,\dots,x_{n-1}$ with
\[
  \Pr(X_1=x_1,\dots,X_{n-1}=x_{n-1},X_n=x)>0
\]
 one has
\begin{align*}
\Pr(X_{n+1}=y \mid X_1=x_1,\dots,X_{n-1}=x_{n-1},X_n=x) = \Pr(X_{n+1}=y \mid X_n=x)
= p(x,y).
\end{align*}
\end{defin}
All entries of the transition matrix $P$ are non-negative and the rows sum up to $1$. 
For $x,y\in D$ the value $p(x,y)$ is the probability of jumping from state $x$ 
to state $y$ in a single step of the chain.
The distribution 
\[
  \nu(x)=\Pr(X_1=x),\quad x\in D,
\]
is called the \emph{initial distribution}.\\


Suppose that we have a transition matrix $P$ and a probability measure $\nu$.
Any transition matrix $P$ has a \emph{random mapping representation}, see \cite[Proposition~1.5, p.~7]{peres}.
A random mapping representation of $P$ on state space $D$ is a function 
$\Phi\colon D\times [0,1] \to D$, which satisfies
\[
  \Pr(\Phi(x,Z)=y)=p(x,y),\quad x,y\in D,
\] 
where $Z:(\Omega, \F,\Pr) \to ([0,1],\Borel([0,1]))$ is a uniformly distributed random variable, where $\Borel([0,1])$
denotes the
Borel $\sigma$-algebra.
Then a Markov chain can be constructed as follows. 
If $(Z_n)_{n\in\N}$ is a sequence of i.i.d. random variables with uniform distribution,
and $X_1$ has distribution $\nu$, then it is easy to see that $(X_n)_{n\in \N}$ defined by
\[
  X_{n}=\Phi(X_{n-1},Z_n), \quad n\geq 2,
\]
is a Markov chain with transition matrix $P$ and initial distribution $\nu$.\\

In the following assume that we have a Markov chain $(X_n)_{n\in\N}$ with transition matrix $P$
and initial distribution $\nu$.
The expectation $\expect_{\nu,P}$ is taken with respect to 
the joint distribution of $(X_n)_{n\in\N}$, say $W_{\nu,P}$, 
which is defined on $(D^{\N},\sigma(\A))$ where
\begin{align*}
D^{\N} & = \set{ \omega=(x_1,x_2,\dots) \mid x_i \in D \text{ for all } i\geq1 } \quad \text{and} \\
\A     & =\bigcup_{k\in\N} \set{A_1\times A_2 \times \dots \times A_k \times D \times \dots \mid A_i\in \Ps(D),\; i=1,\dots,k } .
\end{align*} 
For $k\in \N$ one has with $A_1\times \dots \times A_k \subset D^{k}$ that
\begin{align*}  
W_{\nu,P}(A_1\times \dots \times A_k \times D \times \cdots) 
& = \sum_{x_1\in A_1} \dots \sum_{x_k\in A_k} \Pr(X_1=x_1,\dots,X_k=x_k).
\end{align*}
If the initial distribution $\nu$ is $\delta_x$, the point mass at $x\in D$, we say that the Markov chain starts at $x$. 
By 
\[
 Pf(x) =\sum_{y\in D} f(y)\, p(x,y)=\sum_{y\in D} f(y)\, \Pr (X_2=y \mid X_1=x)= \expect_{\delta_x,P}[ f(X_2) ]
\] 
one has the expectation of $f\in\R^{D}$ after a single step of the chain which starts at $x\in D$. 
The probability to get from $x$ to $y$ in $k\geq2$ steps is
\[
\Pr(X_{k+1}=y \mid X_1=x) = \sum_{x_2\in D} \sum_{x_3 \in D} \dots \sum_{x_{k}\in D} p(x,x_2) p(x_2,x_3) \dots p(x_k,y) = p^k(x,y),
\]
where $P^k=(p^k(x,y))_{x,y \in D}$ denotes the $k$th power of $P$. 
Then 
\[
P^kf(x)=\sum_{y\in D} f(y)\, p^k(x,y) = \sum_{y\in D} f(y)\, \Pr (X_{k+1}=y \mid X_1=x) 
= \expect_{\delta_x,P}[ f(X_{k+1})]
\]
is the expectation after $k$ steps of the Markov chain which starts at $x$.
Similarly we consider the application of $P$ to a probability measure $\nu$, i.e. 
\[
\nu P(x)= \sum_{y\in D}  p(y,x)\,\nu(y) = \sum_{y\in D} \Pr(X_2=x \mid X_1=y)\, \nu(y)
= \Pr(X_2=x).
\] 
This is the distribution which arises after a single transition where the initial state is chosen by $\nu$. 
The distribution which arises after $k\geq1$ steps is given by 
\[
\nu P^k(x)= \sum_{y\in D}  p^k(y,x)\,\nu(y) 
= \sum_{y\in D} \Pr(X_{k+1}=x \mid X_1=y)\, \nu(y)
 = \Pr(X_{k+1}=x).
\]


In the following we present properties of transition matrices. 


\begin{defin}[\index{irreducibility}irreducibility, \index{aperiodicity}aperiodicity, \index{periodicity} periodicity]
A transition matrix $P$ is called \emph{irreducible} if for all $x,y\in D$ there exists a $k\in\N$ such that
\[
p^k(x,y)>0,\quad \mbox{where} \quad P^k=(p^k(x,y))_{x,y\in D}.
\]
A transition matrix $P$ is called \emph{aperiodic} if we have for all $x\in D$ that
\[
\mbox{{\bf g}cd}(\set{k\in\N \mid p^k(x,x)>0})=1,
\]
where {\bf g}cd denotes the greatest common divisor. If $P$ is not aperiodic we call it \emph{periodic}.
\end{defin}
If a transition matrix is irreducible, then 
one can reach every state $y$ from every state $x$ in finitely many steps. 
Aperiodicity ensures that the number of steps to return 
 to an arbitrary state is not in $\set{m,2m,3m,\dots}$ for $m>1$.

\begin{defin}[stationarity\index{stationarity}]
Let $\pi$ be a probability measure on $D$. 
Then $\pi$ is called a \emph{stationary distribution} of a transition matrix $P$ if
\[
\pi P (x) = \pi(x), \quad x\in D.
\]
\end{defin}
If the initial distribution of a Markov chain with transition matrix $P$ is a stationary one, say $\pi$, 
then after a single transition 
the same distribution as the initial one appears, i.e. 
\[
 \Pr(X_1=x)=\pi(x)=\pi P(x)=\Pr(X_2=x), \quad x\in D.
\]

 
\begin{defin}[reversibility\index{reversibility}]
Let $\pi$ be a probability measure on $D$.
A transition matrix $P$ is called \emph{reversible with respect to} $\pi$ if 
\[
\pi(x)p(x,y)=\pi(y)p(y,x), \quad x,y\in D.
\]
\end{defin}
If a transition matrix $P$ is reversible with respect to a probability measure $\pi$, 
then $\pi$ is a stationary distribution (see \cite[Proposition~1.19, p.~14]{peres}).
If the initial distribution of a Markov chain with transition matrix $P$ is $\pi$, 
then reversibility with respect to $\pi$ is equivalent to 
\[
\Pr(X_1=x,X_2=y)=\Pr(X_1=y,X_2=x), \quad x,y \in D.
\]

\begin{defin}[lazy version\index{laziness}]
Let $P$ be a transition matrix and let $I$ be the identity matrix. 
Then we call 
\[
  \widetilde{P}=\frac{1}{2}(I+P),
\]
the \emph{lazy version of} $P$.
\end{defin}
Let $\pi$ be a stationary distribution of a transition matrix $P$, then $\pi$
is also stationary with respect to $\widetilde{P}$.
If $P$ is irreducible, reversible with respect to $\pi$ and periodic, 
then one can pass over to the lazy version $\widetilde{P}$ 
and obtains that $\widetilde{P}$ is irreducible, reversible with respect to $\pi$ 
and aperiodic.\\

A Markov chain is called irreducible, periodic, aperiodic and reversible with respect to $\pi$
if the corresponding transition matrix is 
irreducible, periodic, aperiodic and reversible with respect to $\pi$, respectively.\\

Let us state some well known implications of the different properties. 
For proofs or more details see \cite{bremaud,stroock_book,peres}. 
For every transition matrix there exists a stationary distribution 
and if the matrix is irreducible 
then there exists a unique stationary distribution, which is positive 
(\cite[Proposition~1.14, p.~12 and Corollary~1.17, p.~14]{peres}).
Note that if $\xi$ is an eigenvalue of a transition matrix $P$, 
then $\abs{\xi}\leq1$ (\cite[Lemma~12.1(i), p.~153]{peres}). 
Furthermore, for irreducible transition matrices $1$ is a simple eigenvalue (\cite[Lemma~12.1(ii), p.~153]{peres}). 
If the Markov chain is aperiodic and irreducible, 
then $-1$ is not an eigenvalue of $P$ (\cite[Lemma~12.1(iii), p.~153]{peres} or 
\cite[Theorem~5.1.14, p.~113]{stroock_book}). 
These eigenvalue results are also known as results of the Perron-Frobenius Theorem, see \cite{Sen}.\\

In the following we always assume that the Markov chains are irreducible, aperiodic 
and reversible with respect to a probability measure $\pi$. 
Hence $\pi$ is the stationary distribution.
Aperiodicity is not essential. 
For a Markov chain with periodic transition matrix $P$ and initial distribution $\nu$ 
we may consider
a \emph{lazy Markov chain}, i.e. a chain with aperiodic transition matrix $\widetilde{P}$, 
the lazy version of $P$,
and initial distribution $\nu$.\\

Let us define a weighted inner product for $f,g\in\R^{D}$ by
\[
\scalar{f}{g}=\sum_{x\in D} f(x)g(x)\,\pi(x)
\]
and let $\norm{f}{2}=\scalar{f}{f}^{1/2}$. 
By considering the inner product it is easy to see that reversibility is equivalent to $P$ being self-adjoint.  
Applying the spectral theorem for self-adjoint transition matrices 
and the fact that the Markov chain is irreducible one obtains that $P$ has real eigenvalues 
\[
1=\beta_0>\beta_1\geq \beta_2 \geq \dots \geq \beta_{\abs{D}-1} \geq -1.
\]
If the transition matrix is aperiodic, then $\beta_{\abs{D}-1} > -1$.
There exists a basis of orthonormal eigenfunctions (vectors) 
$\set{u_0,u_1,\dots,u_{\abs{D}-1}}$, i.e.
for $i,j\in\set{0,\dots,\abs{D}-1}$ one has
\[
Pu_i=\beta_i u_i, \qquad \scalar{u_i}{u_j} = \delta_{ij} = \begin{cases} 1, & i=j,\\
					            0, & i\not=j.
				            \end{cases}
\]
Clearly, $u_0(x)=1$ for all $x\in D$ and $S(u_i)=\scalar{u_i}{u_0}=0$ for 
$i\in\set{1,\dots,\abs{D}-1}$.
By the spectral structure of the transition matrix one has
\begin{equation} \label{P_k_spec}
P^k=(p^k(x,y))_{x,y \in D} = \sum_{i=0}^{\abs{D}-1} \beta_i^k\, \left(\,    u_i(x) u_i(y)\, \pi(y)   \,\right)_{x,y \in D},
\end{equation}  
see \cite[p. 203]{bremaud} for details.\\

For $ p\in[1,\infty]$ let 
\[
\norm{f}{p}= \begin{cases}
(\sum_{x\in D} \abs{f(x)}^p \pi(x) )^{1/p}, & p\in[1,\infty),\\
\sup_{x\in D}\abs{f(x)}, & p=\infty. 
\end{cases}
\]
The weighted vector space $\ell_p=\ell_p(D,\pi)$ is defined by the normed space $(\R^D,\norm{\cdot}{p})$.
Furthermore let
\[
\ell^0_p=\ell^0_p(D,\pi)=\set{f\in \ell_p\mid S(f)=0}.
\] 
Then 
\[
\ell_2= \ell_2^0 \oplus (\ell_2^0)^\bot,\quad \mbox{with} \quad
(\ell_2^0)^\bot=\set{f \in \R^D\mid f \equiv c,\; c\in \R}= \mbox{Eig}(P,1),
\]
where $\mbox{Eig}(P,1)$ is the eigenspace of $P$ with respect to the eigenvalue $1$.
Note that for the next well known result it is not assumed that the transition matrix is
reversible with respect to $\pi$.
\begin{lemma} \label{1_EW}
 Let $p\in[1,\infty]$ and $f\in \R^D$.
 For any transition matrix $P$ with stationary distribution $\pi$ one obtains 
 \[
 \norm{Pf}{p} \leq  \norm{f}{p} \quad \text{and} \quad \norm{P}{\ell_p\to \ell_p}=1.
 \]  
\end{lemma}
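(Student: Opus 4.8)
The plan is to establish the contraction inequality $\norm{Pf}{p}\le\norm{f}{p}$ first, and then read off the equality of operator norms by testing $P$ on the constant function. The guiding observation is that for each fixed $x\in D$ the $x$-th row $p(x,\cdot)$ of $P$ is itself a probability distribution on $D$ (non-negative entries summing to $1$), so $Pf(x)=\sum_{y\in D}f(y)\,p(x,y)$ is a weighted average of the values of $f$; combining this with the stationarity identity $\sum_{x\in D}\pi(x)\,p(x,y)=\pi(y)$ does all the work. For the endpoint $p=\infty$ I would argue directly: for every $x\in D$,
\[
\abs{Pf(x)}\le\sum_{y\in D}\abs{f(y)}\,p(x,y)\le\Bigl(\sup_{y\in D}\abs{f(y)}\Bigr)\sum_{y\in D}p(x,y)=\norm{f}{\infty},
\]
and taking the supremum over $x$ gives $\norm{Pf}{\infty}\le\norm{f}{\infty}$.

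For $p\in[1,\infty)$ I would apply the convexity of $t\mapsto\abs{t}^p$ (Jensen's inequality with respect to the probability distribution $p(x,\cdot)$) to get $\abs{Pf(x)}^p\le\sum_{y\in D}\abs{f(y)}^p\,p(x,y)$ for each $x$; for $p=1$ the triangle inequality already gives this. Then, multiplying by $\pi(x)$, summing over the finite set $D$, interchanging the two sums, and invoking stationarity yields
\[
\norm{Pf}{p}^p=\sum_{x\in D}\abs{Pf(x)}^p\,\pi(x)\le\sum_{y\in D}\abs{f(y)}^p\Bigl(\sum_{x\in D}\pi(x)\,p(x,y)\Bigr)=\sum_{y\in D}\abs{f(y)}^p\,\pi(y)=\norm{f}{p}^p,
\]
so $\norm{Pf}{p}\le\norm{f}{p}$. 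Hence $\norm{P}{\ell_p\to\ell_p}\le 1$ for every $p\in[1,\infty]$.

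For the matching lower bound I would test on the constant function $f\equiv 1$: since the rows of $P$ sum to $1$ we have $Pf=f$, and $\norm{f}{p}=1$ because $\pi$ is a probability measure, so $\norm{P}{\ell_p\to\ell_p}\ge\norm{Pf}{p}/\norm{f}{p}=1$. Combining the two inequalities gives $\norm{P}{\ell_p\to\ell_p}=1$, and the proof is complete without using reversibility of $P$. There is no real obstacle here; the only point requiring a little care is handling the two endpoint exponents $p=1$ and $p=\infty$ separately from the Jensen argument that covers $p\in(1,\infty)$.
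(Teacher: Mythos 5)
Your proof is correct and follows essentially the same route as the paper: Jensen's inequality applied to the probability distribution $p(x,\cdot)$ combined with stationarity for finite $p$, a direct row-sum bound for $p=\infty$, and testing on the constant function to obtain the matching lower bound on the operator norm.
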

\begin{proof}
By the Jensen inequality $(\text{J})$\footnote{
Let $(D, \D, \mu)$ be a probability space. 
For any convex function $h\colon \R \to \R$  and 
for any function $f\colon D \to \R$ that is integrable with respect to $\mu$, the Jensen inequality is
$
h(\int_D f\, \dint \mu ) \leq \int_D  (h \circ f)\, \dint \mu . 
$
} 
and stationarity (stat.) one has
\begin{align*}
\sum_{x\in D} \abs{Pf(x)}^p \pi(x) & \leq  \sum_{x\in D} \left(\sum_{y\in D} \abs{f(y)} p(x,y) \right)^p \pi(x) \\
 & \underset{(\text{J})}{\leq} \sum_{x\in D} \sum_{y\in D}  \abs{f(y)}^p p(x,y)\, \pi(x)\underset{(\text{stat.})}{=} \sum_{x\in D} \abs{f(x)}^p \pi(x).
\end{align*}
If $p=\infty$ then
\begin{align*}
\norm{Pf}{\infty}=\sup_{x\in D} \abs{Pf(x)} \leq \sup_{x\in D} \sum_{y\in D} \abs{ f(y)} p(x,y) 
 = \norm{f}{\infty}. 
\end{align*}
Since $\norm{Pf}{p} \leq  \norm{f}{p}$ and $Pu_0=u_0$ 
with $\norm{u_0}{p}=1$ we have $\norm{P}{\ell_p\to\ell_p}=1$.
\end{proof}



Let us briefly explain how to quantify the difference of two distributions.  
For any measure $\nu$ let 
\[
\norm{\frac{\nu}{\pi}}{2} = \left( \sum_{x\in D} \left(\frac{\nu(x)}{\pi(x)} \right)^2 \pi(x) \right)^{1/2}.
\]
If $\nu$ is a probability measure on $D$, then 
the quantity $\norm{\frac{\nu}{\pi}-1}{2}$ is related to the $\chi^2$-contrast, defined as follows.

\begin{defin}[\index{$\chi^2$-contrast}$\chi^2$-contrast]
The \emph{$\chi^2$-contrast} of a distribution $\nu$ and a positive distribution $\mu$ is defined by
\[
\chi^2(\nu,\mu)=\sum_{x \in D} \frac{(\nu(x)-\mu(x))^2}{\mu(x)}.
\]
\end{defin}
The $\chi^2$-contrast is not symmetric and therefore no distance.
By a simple calculation one obtains
\[
 \chi^2 (\nu,\pi) = \norm{\frac{\nu}{\pi}-1}{2}^2 . 
\] 

The functional $S$ can be interpreted as operator which maps into the constant functions,
consequently one can see
\[
S=(\,\pi(y)\,)_{x,y\in D}
\]
as a matrix.
Furthermore let 
\[
\beta=\max \set{\beta_1, \abs{\beta_{\abs{D}-1}}}
\]
be the second largest absolute value of the eigenvalues of $P$. 
Now we state a property of the matrix $P-S$.
\begin{lemma} \label{P_S_fin}
Let $P$ be a reversible transition matrix with respect to $\pi$. 
Then 
\begin{equation}
\label{P_S_2_ident}
   \norm{P^n-S}{\ell_2 \to \ell_2} = \norm{P^n}{\ell_2^0 \to \ell_2^0} = \beta^n,\quad n\in\N.
\end{equation}
\end{lemma}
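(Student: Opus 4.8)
The plan is to exploit the orthogonal decomposition $\ell_2=\ell_2^0\oplus(\ell_2^0)^\bot$ recalled above, where $(\ell_2^0)^\bot=\mbox{Eig}(P,1)$ consists of the constant functions. First I would check that $P$ respects this decomposition. It fixes the constants, since $Pu_0=u_0$, and for $f\in\ell_2^0$ stationarity of $\pi$ gives $S(Pf)=\sum_{x\in D}Pf(x)\pi(x)=\sum_{x\in D}f(x)\pi(x)=S(f)=0$, so $Pf\in\ell_2^0$ as well. Hence $P$, and therefore $P^n$, is block diagonal with respect to $\ell_2=\ell_2^0\oplus(\ell_2^0)^\bot$. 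The matrix $S=(\pi(y))_{x,y\in D}$ is likewise block diagonal: it annihilates $\ell_2^0$ (there $S(f)=0$ by definition) and equals the identity on the constants (since $Sc=c$ for a constant function $c$). Consequently $P^n-S$ vanishes identically on $(\ell_2^0)^\bot$ and coincides with $P^n$ on $\ell_2^0$.

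Since an operator that is block diagonal with respect to an orthogonal sum has operator norm equal to the maximum of the norms of its blocks (from $\norm{T(h_1+h_2)}{2}^2=\norm{T_1h_1}{2}^2+\norm{T_2h_2}{2}^2$ on $H_1\oplus H_2$), this already yields $\norm{P^n-S}{\ell_2\to\ell_2}=\max\set{0,\norm{P^n}{\ell_2^0\to\ell_2^0}}=\norm{P^n}{\ell_2^0\to\ell_2^0}$, which is the first equality in \eqref{P_S_2_ident}.

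For the second equality I would diagonalize $P$ on $\ell_2^0$. By the spectral theorem recalled above, $\set{u_1,\dots,u_{\abs{D}-1}}$ is an orthonormal basis of $\ell_2^0$ with $Pu_i=\beta_iu_i$. Writing $f=\sum_{i=1}^{\abs{D}-1}c_iu_i\in\ell_2^0$ we have $\norm{f}{2}^2=\sum_i c_i^2$ and $P^nf=\sum_i\beta_i^n c_iu_i$, hence $\norm{P^nf}{2}^2=\sum_i\beta_i^{2n}c_i^2\le(\max_{1\le i\le\abs{D}-1}\abs{\beta_i})^{2n}\norm{f}{2}^2$. Because $\beta_{\abs{D}-1}\le\beta_i\le\beta_1$ for every $i\ge1$, a short case distinction on the sign of $\beta_1$ gives $\max_{1\le i\le\abs{D}-1}\abs{\beta_i}=\max\set{\beta_1,\abs{\beta_{\abs{D}-1}}}=\beta$, and this maximum is attained at some index $j$. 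Taking $f=u_j$ then gives $\norm{P^nu_j}{2}=\abs{\beta_j}^n\norm{u_j}{2}=\beta^n$, so the estimate is sharp and $\norm{P^n}{\ell_2^0\to\ell_2^0}=\beta^n$.

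The argument is entirely elementary, and the main obstacle — modest as it is — lies in the two bookkeeping points: verifying the invariance $P(\ell_2^0)\subseteq\ell_2^0$, which is exactly where stationarity of $\pi$ enters and is what makes $P^n-S$ decouple into clean blocks, and carrying out the sign case-distinction that identifies $\max_{i\ge1}\abs{\beta_i}$ with $\beta$, including the possibility that $\beta_1<0$. Once these are in place the two equalities follow immediately.
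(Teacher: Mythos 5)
Your proof is correct and follows essentially the same route as the paper: both arguments rest on the orthogonal decomposition $\ell_2=\ell_2^0\oplus(\ell_2^0)^\bot$ to identify $\norm{P^n-S}{\ell_2\to\ell_2}$ with $\norm{P^n}{\ell_2^0\to\ell_2^0}$, and on the spectral decomposition of the self-adjoint operator $P$ to evaluate the latter as $\beta^n$. Your block-diagonal phrasing and the explicit eigenbasis computation merely spell out what the paper compresses into its two-sided inequality and the remark that self-adjointness gives $\norm{P^n}{\ell_2^0\to\ell_2^0}=\beta^n$.
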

\begin{proof}
The self-adjointness of $P$ implies 
$\norm{P}{\ell_2^0 \to \ell_2^0}=\max\set{\beta_1,\abs{\beta_{\abs{D}-1}}}=\beta$, 
consequently $\norm{P^n}{\ell_2^0 \to \ell_2^0}  =  \beta^n$.
By
\begin{align*}
\norm{P^n-S}{\ell_2\to \ell_2} 	& =\sup_{\norm{f}{2}\leq1} \norm{(P^n-S)f}{2} = \sup_{\norm{f}{2}\leq1} \norm{P^n(f-S(f))}{2} \\
			& \leq 
				\sup_{\norm{g}{2}\leq1,\; S(g)=0} \norm{P^n g}{2} 
  		= \norm{P^n}{\ell_2^0 \to \ell_2^0} 
\end{align*}	
and													
\begin{align*}
\norm{P^n}{\ell_2^0 \to \ell_2^0} & = \sup_{\norm{g}{2}\leq1,\; S(g)=0} \norm{P^n g}{2}
	= \sup_{\norm{g}{2}\leq1,\; S(g)=0} \norm{P^n g-S(g)}{2}\\
		&	\leq \sup_{\norm{f}{2}\leq1} \norm{(P^n-S)f}{2} = \norm{P^n-S}{\ell_2\to \ell_2}
\end{align*}
claim \eqref{P_S_2_ident} is shown. 
\end{proof}
%
%
%
This section is finished by stating a well known fact which shows that 
$\nu P^k$ converges to $\pi$ for increasing $k$
exponentially fast if $\beta<1$.

\begin{coro}
Let $P$ be a transition matrix and $\nu$ a probability measure on $D$.
Let $P$ be reversible with respect to $\pi$. 
Then
\[
 \norm{\frac{\nu P^k}{\pi} -1}{2} 
\leq \beta^k \norm{\frac{\nu}{\pi}-1}{2}, \quad k\in\N.
\]
\end{coro}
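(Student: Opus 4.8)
The plan is to reduce the statement about the measure $\nu P^k$ to the operator estimate of Lemma~\ref{P_S_fin} by passing to the density $\frac{\nu}{\pi}$ and interpreting the action of $P$ on measures as the action of the adjoint operator on densities. First I would introduce the density $h = \frac{\nu}{\pi} \in \R^D$, so that $\nu(x) = h(x)\pi(x)$. The key observation is the duality identity
\[
  \frac{\nu P^k}{\pi}(x) = \frac{1}{\pi(x)}\sum_{y\in D} p^k(y,x)\,\nu(y) = \frac{1}{\pi(x)}\sum_{y\in D} p^k(y,x)\,h(y)\,\pi(y).
\]
Using reversibility, $\pi(y)p^k(y,x) = \pi(x)p^k(x,y)$ (reversibility of $P$ passes to $P^k$), the right-hand side collapses to $\sum_{y\in D} p^k(x,y)\,h(y) = P^k h(x)$. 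Hence $\frac{\nu P^k}{\pi} = P^k h$, and in particular $\frac{\nu P^0}{\pi} = h = \frac{\nu}{\pi}$.

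Next I would use that $\nu$ is a probability measure, so $S(h) = \sum_{x\in D} h(x)\pi(x) = \sum_{x\in D}\nu(x) = 1$; equivalently $S(h-1)=0$, i.e. $h - 1 \in \ell_2^0$. Since $P^k \mathbf{1} = \mathbf{1}$ and $S$ maps into the constants with $S(h)=1$, we get
\[
  \frac{\nu P^k}{\pi} - 1 = P^k h - 1 = P^k h - S(h) = P^k(h - S(h)) = (P^k - S)(h-1),
\]
where the last equality uses $S(h-1) = S(h) - 1 = 0$. Now Lemma~\ref{P_S_fin} gives $\norm{P^k - S}{\ell_2\to\ell_2} = \beta^k$, so
\[
  \norm{\frac{\nu P^k}{\pi} - 1}{2} = \norm{(P^k - S)(h-1)}{2} \leq \beta^k \norm{h - 1}{2} = \beta^k \norm{\frac{\nu}{\pi} - 1}{2}.
\]

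The only genuinely delicate point is the duality computation $\frac{\nu P^k}{\pi} = P^k h$; everything else is bookkeeping with the definitions of $\ell_2$, $S$, and the fact (already recorded in the excerpt) that $\pi$ being stationary and $P$ reversible makes $S(h)=\sum \nu(x) = 1$. I expect the main obstacle to be stating the reversibility-of-$P^k$ step cleanly — one should either invoke that $P^k$ is itself reversible with respect to $\pi$ (which follows since reversibility is equivalent to self-adjointness of $P$, hence of $P^k$, on $\ell_2$), or unwind it directly from \eqref{P_k_spec}. Using the spectral representation \eqref{P_k_spec} is in fact the slickest route: it exhibits $p^k(y,x)\pi(y) = p^k(x,y)\pi(x)$ symmetrically and simultaneously makes the identity $\frac{\nu P^k}{\pi} - 1 = (P^k-S)(h-1)$ transparent, since $S$ corresponds exactly to the $i=0$ term.
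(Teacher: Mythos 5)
Your proposal is correct and follows essentially the same route as the paper: reversibility gives $\frac{\nu P^k}{\pi}=P^k\left(\frac{\nu}{\pi}\right)$, then $P^k\mathbf{1}=\mathbf{1}$ lets one pull the constant inside, and Lemma~\ref{P_S_fin} supplies the factor $\beta^k$ (the paper uses $\norm{P^k}{\ell_2^0\to\ell_2^0}=\beta^k$ on the mean-zero function $\frac{\nu}{\pi}-1$, which is the same bound as your $\norm{P^k-S}{\ell_2\to\ell_2}=\beta^k$). You merely spell out the duality computation and the mean-zero bookkeeping in more detail than the paper does.
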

\proof
The assertion is proven by
\begin{align*}
 \norm{\frac{\nu P^k}{\pi}-1}{2} \underset{\text{(rev.)}}{=} \norm{P^k\left(\frac{\nu}{\pi}\right)-1}{2} 
		= \norm{P^k\left(\frac{\nu}{\pi}-1\right)}{2}  
		&\leq 
		\beta^k \norm{\frac{\nu}{\pi}-1}{2}. \sq
\end{align*}

\section{Error bounds}  \label{sec_error_bound}

In this section explicit error bounds are proven. 
Let us repeat the idea of Markov chain Monte Carlo.
Suppose we have a Markov chain $(X_n)_{n\in\N}$ with transition matrix $P$ and initial distribution $\nu$,
where $\pi$ is a stationary distribution, and we compute
\[
  S_{n,n_0}(f)=\frac{1}{n}\sum_{j=1}^n f(X_{j+n_0})
\]
as approximation for $S(f)=\sum_{x\in D} f(x)\, \pi(x)$. 
The error is measured in the mean square sense, i.e.
\[
   e_\nu(S_{n,n_0},f)=\left( \expect_{\nu,P} \abs{S_{n,n_0}(f)-S(f)}^2 \right)^{1/2}.
\]
Now let us present a helpful result. 
\begin{lemma}
Let $(X_n)_{n\in\N}$ be a Markov chain with transition matrix $P$ and initial distribution $\nu$. 
Then for $i,j\in \N$ with $j\leq i$ it follows that
\begin{equation}
  \label{err_help}
	\expect_{\nu,P}[{f(X_i)f(X_j)}]=\sum_{x\in D} P^j(f P^{i-j}f)(x)\, \nu(x).
\end{equation}
Moreover, if $\pi$ is a stationary distribution and $\nu=\pi$ then
 \begin{equation}
   \label{err_help2}
	\expect_{\pi,P}[{f(X_i)f(X_j)}]=\scalar{f}{ P^{i-j}f}.
\end{equation}
\end{lemma}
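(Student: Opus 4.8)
The plan is to compute the expectation $\expect_{\nu,P}[f(X_i)f(X_j)]$ directly from the joint distribution $W_{\nu,P}$ by conditioning step by step, exploiting the Markov property. First I would write the expectation as a sum over all paths $(x_1,\dots,x_i)$, weighting $f(x_i)f(x_j)$ by the probability of the path, which by the Markov property factors as $\nu(x_1) p(x_1,x_2)\cdots p(x_{i-1},x_i)$. The key is then to carry out the summation in two stages, from the inside out.

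\textbf{Key steps.} I would first sum over the coordinates $x_{j+1},\dots,x_i$ while holding $x_1,\dots,x_j$ fixed. Since $f(X_j)=f(x_j)$ is already determined by this stage, the inner sum is
\[
  f(x_j) \sum_{x_{j+1},\dots,x_i} p(x_j,x_{j+1}) \cdots p(x_{i-1},x_i) f(x_i)
  = f(x_j)\, P^{i-j}f(x_j),
\]
using the definition of $P^k$ and the formula $P^kf(x)=\sum_y f(y) p^k(x,y)$ from earlier in the section. Write $g(x_j) := f(x_j) P^{i-j}f(x_j)$; this is the function $fP^{i-j}f$ evaluated at $x_j$. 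Now it remains to sum over $x_1,\dots,x_j$ the quantity $\nu(x_1) p(x_1,x_2)\cdots p(x_{j-1},x_j)\, g(x_j)$. Summing over $x_j$ first gives $\sum_{x_j} p(x_{j-1},x_j) g(x_j) = Pg(x_{j-1})$, then iterating (or appealing directly to the $k$-step formula $\Pr(X_{k+1}=y\mid X_1=x)=p^k(x,y)$ and $\nu P^k(x) = \sum_y p^k(y,x)\nu(y)$) yields $\sum_{x_1} \nu(x_1) p^{j-1}(x_1, x) (\dots)$; more cleanly, $\expect_{\nu,P}[g(X_j)] = \sum_{x_1\in D} P^{j-1}g(x_1)\,\nu(x_1)$. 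Substituting $g = fP^{i-j}f$ and reindexing the dummy variable gives exactly \eqref{err_help}, after noting $P^{j-1}$ composed appropriately — I would double-check the exponent bookkeeping: the path has $j$ vertices, so applying $P$ to $g$ a total of $j-1$ times, but since $P^j(h)(x) = \sum_y p^j(x,y)h(y)$ and the statement writes $\sum_x P^j(fP^{i-j}f)(x)\nu(x)$, I should confirm whether the intended reading is $\sum_x \nu(x) (P^{j-1}g)(x)$ or involves one extra transition from the initial distribution; reconciling this indexing convention with the paper's is the one place to be careful.

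\textbf{The stationary case.} For \eqref{err_help2}, substitute $\nu=\pi$ into \eqref{err_help}. Then $\sum_{x\in D} P^j(fP^{i-j}f)(x)\,\pi(x) = \sum_x (fP^{i-j}f)(x)\,\pi(x)$ by stationarity of $\pi$ under $P$ (equivalently, $\sum_x Ph(x)\pi(x) = \sum_x h(x)\pi(x)$, iterated $j$ times), and this last sum is by definition $\scalar{f}{P^{i-j}f}$ in the weighted inner product.

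\textbf{Main obstacle.} The only real subtlety is the exponent/indexing bookkeeping — making sure the number of applications of $P$ matches the number of transitions along the path and agrees with the form written in \eqref{err_help}; the probabilistic content (Markov property plus stationarity) is routine once the path-sum is set up correctly.
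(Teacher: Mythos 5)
Your proof is correct and follows essentially the same path-sum computation as the paper: sum out $x_{j+1},\dots,x_i$ to produce $fP^{i-j}f$ evaluated at $x_j$, then sum out the remaining coordinates, and use stationarity of $\pi$ for the case $\nu=\pi$. Your worry about the exponent is well founded: under the paper's convention that $X_1\sim\nu$ the path $(x_1,\dots,x_j)$ carries only $j-1$ transition factors, so the careful count gives $\sum_{x\in D} P^{j-1}(fP^{i-j}f)(x)\,\nu(x)$; the paper's own displayed computation makes the same silent jump from $j-1$ factors to $P^{j}$, a harmless off-by-one for the downstream estimates (which only use the geometric decay of $L_k$ in $k$), but you are right that the bookkeeping as literally stated does not quite match.
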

\begin{proof}
The calculation
\begin{align*}
\expect_{\nu,P}[{f(X_i)f(X_j)}] & 
= \sum_{x_1 \in D} \dots \sum_{x_i \in D}  f(x_j)f(x_i)\, p(x_{i-1},x_i) \cdots p(x_1,x_2) \nu(x_1)\\
& = \sum_{x_1 \in D} \cdots \sum_{x_j \in D} f(x_j) P^{i-j}f(x_j)\, p(x_{j-1},x_j)\cdots p(x_1,x_2) \nu(x_1)\\
& = \sum_{x\in D} P^j (f P^{i-j}f) (x)\, \nu(x)
\end{align*}
proves \eqref{err_help} and by using $\pi P(x)=\pi(x)$ one can see \eqref{err_help2}. 
\end{proof}
In the following a special case of the method $S_{n,n_0}$ is considered. 
In this case the initial distribution is a stationary one, thus, 
the distribution after a single transition does not change. Hence 
it is not necessary 
to do any burn-in, i.e. $n_0=0$. 
Afterwards the error representation of the special case is set in relation 
to the error where the initial distribution might differ from a stationary one.
The techniques which are used are adapted from \cite{expl_error} and \cite{Ru11}.
 
In the following $S_{n,0}$ is always denoted as $S_n$. 
Let us start with a result
stated in \cite[Proposition~2.1, p.~3]{bassetti_dia}.
\begin{prop} \label{expl_stat}
Let $f\in \R^D$ and let $(X_n)_{n\in\N}$ be a Markov chain with transition matrix $P$ 
and initial distribution $\pi$. Let $P$ be reversible with respect to $\pi$.
Then
\begin{equation} \label{err_present}
e_\pi(S_n,f)^2 = \frac{1}{n^2} \sum_{k=1}^{\abs{D}-1} 
a_k^2\, W(n,\beta_k),
\end{equation}
where
\begin{equation*}
 	a_k=\scalar{f}{u_k}\quad\mbox{and}\quad
 	W(n,\beta_k)=\frac{n(1-\beta_k^2)-2\beta_k(1-\beta_k^{n})}{(1-\beta_k)^2}.
\end{equation*}
\end{prop}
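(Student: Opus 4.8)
The plan is to reduce to the mean-zero case and then diagonalize using the spectral decomposition of $P$. Since $S_n$ is an average of function values, $S_n(f)-S(f)=S_n(f-S(f))$ with $S(f-S(f))=0$, so replacing $f$ by $f-S(f)$ leaves $e_\pi(S_n,f)$ unchanged; it also leaves the right-hand side of \eqref{err_present} unchanged, because $a_k=\scalar{f}{u_k}$ is unaffected for $k\ge1$ (while $a_0=\scalar{f}{u_0}=S(f)$, which is exactly the term excluded from the sum). So I may assume $S(f)=0$. Then
\[
e_\pi(S_n,f)^2=\frac{1}{n^2}\,\expect_{\pi,P}\Bigl|\sum_{j=1}^n f(X_j)\Bigr|^2=\frac{1}{n^2}\sum_{i=1}^n\sum_{j=1}^n\expect_{\pi,P}[f(X_i)f(X_j)],
\]
and by \eqref{err_help2} (applied with the larger index first and using symmetry in $i,j$) one gets $\expect_{\pi,P}[f(X_i)f(X_j)]=\scalar{f}{P^{|i-j|}f}$.

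Next I would expand $f$ in the orthonormal eigenbasis: since $S(f)=0$ we have $f=\sum_{k=1}^{\abs{D}-1}a_k u_k$ with $a_k=\scalar{f}{u_k}$ and no $u_0$-component. From $P^m u_k=\beta_k^m u_k$ and $\scalar{u_k}{u_\ell}=\delta_{k\ell}$ it follows that $\scalar{f}{P^m f}=\sum_{k=1}^{\abs{D}-1}a_k^2\beta_k^m$ for every $m\ge0$. Substituting this with $m=|i-j|$ and interchanging the (finite) sums yields
\[
e_\pi(S_n,f)^2=\frac{1}{n^2}\sum_{k=1}^{\abs{D}-1}a_k^2\sum_{i=1}^n\sum_{j=1}^n\beta_k^{|i-j|},
\]
so it only remains to show $\sum_{i,j=1}^n\beta_k^{|i-j|}=W(n,\beta_k)$.

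Finally I would evaluate the inner double sum by counting constant diagonals: there are $n$ pairs $(i,j)$ with $|i-j|=0$ and $2(n-d)$ pairs with $|i-j|=d$ for $1\le d\le n-1$, so $\sum_{i,j}\beta_k^{|i-j|}=n+2\sum_{d=1}^{n-1}(n-d)\beta_k^d$. Using the geometric sum $\sum_{d=1}^{n-1}\beta_k^d=\frac{\beta_k-\beta_k^n}{1-\beta_k}$ together with $\sum_{d=1}^{n-1}d\beta_k^d=\frac{\beta_k(1-n\beta_k^{n-1}+(n-1)\beta_k^n)}{(1-\beta_k)^2}$, one puts everything over the common denominator $(1-\beta_k)^2$ and, after cancellation, the numerator collapses to $n(1-\beta_k^2)-2\beta_k(1-\beta_k^n)$, which is $W(n,\beta_k)$ and gives \eqref{err_present}; note $\beta_k\neq1$ for $k\ge1$ by irreducibility, so the denominator causes no trouble. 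The only mildly delicate point in the whole argument is this last algebraic simplification, which is routine but error-prone; the reduction to $S(f)=0$ and the diagonalization are pure bookkeeping once \eqref{err_help2} and the orthonormal eigenbasis are in hand.
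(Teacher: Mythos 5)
Your proof is correct and follows essentially the same route as the paper's: reduce to the mean-zero case $g=f-S(f)$, compute $\expect_{\pi,P}[g(X_i)g(X_j)]=\scalar{g}{P^{|i-j|}g}=\sum_k a_k^2\beta_k^{|i-j|}$ via the orthonormal eigenbasis, and evaluate the resulting geometric double sum to obtain $W(n,\beta_k)$. The only difference is cosmetic — you organize the double sum by counting diagonals $|i-j|=d$ rather than splitting into diagonal plus twice the off-diagonal part as the paper does — and your closed-form simplification checks out.
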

\proof
 Let us consider $g=f-S(f)\in\R^D$.
 The error obeys
 \begin{align*}
      e_\pi(S_n,f)^2 &	=\expect_{\pi,P}\abs{\frac{1}{n}\sum_{j=1}^n g(X_j )}^2
     			=\frac{1}{n^2} \expect_{\pi,P} \abs{ \sum_{j=1}^n g(X_j)}^2 \\
		       &=\frac{1}{n^2} \sum_{j=1}^n \expect_{\pi,P}\,[g(X_j)^2]
     			 + \frac{2}{n^2} \sum_{j=1}^{n-1} \sum_{i=j+1}^n \expect_{\pi,P}\,[g(X_j)g(X_i)].
 \end{align*}
 By using the orthonormal basis $\set{u_0,u_1\dots,u_{\abs{D}-1} }$ we have 
 $g(x)=\sum_{k=1}^{\abs{D}-1} a_k u_k(x)$. 
 For $j\leq i$ one obtains
 \begin{align*}
  \expect_{\pi,P}\,[g(X_i)g(X_j)]	
    &\;= \sum_{k=1}^{\abs{D}-1} \sum_{l=1}^{\abs{D}-1} a_ka_l\, \expect_{\pi,P}\, [u_k(X_i)u_l(X_j)]\\ 
    &\underset{\eqref{err_help2}}{=} 
    \sum_{k=1}^{\abs{D}-1} \sum_{l=1}^{\abs{D}-1} a_ka_l\, \scalar{u_k}{P^{i-j}u_l}\\
    &\;= \sum_{k=1}^{\abs{D}-1} \sum_{l=1}^{\abs{D}-1} a_ka_l\, \beta_l^{i-j}\scalar{u_k}{u_l}
     = \sum_{k=1}^{\abs{D}-1} a_k^2\; \beta_k^{i-j}.
\end{align*} 		
 The last two equalities follow from the orthonormality of the basis of the eigenvectors.
 Altogether this gives
 \begin{align*}
 		 e_\pi(S_n,f)^2 & =\frac{1}{n^2} \sum_{k=1}^{\abs{D}-1} a_k^2 
 		\left[ n + 2 \sum_{j=1}^{n-1} \sum_{i=j+1}^n \beta_k^{i-j} \right]\\
 	&	=\frac{1}{n^2} \sum_{k=1}^{\abs{D}-1} a_k^2 
 		\left[ n + 2 \frac{(n-1)\beta_k-n\beta_k^2+\beta^{n+1}_k}{(1-\beta_k)^2}\right]
 		=\frac{1}{n^2} \sum_{k=1}^{\abs{D}-1} a_k^2\, W(n,\beta_k). \sq
  \end{align*} 

Let us consider $W(n,\beta_k)$ to simplify and interpret Proposition~\ref{expl_stat}.

\begin{lemma} \label{lemm_mono_inc}
For all $n\in\N$ and $k\in\set{1,\dots,\abs{D}-1}$ it follows that
\begin{align}  \label{mono_inc}
W(n,\beta_k) & \leq W(n,\beta_1)\leq \frac{2n}{1-\beta_1}.
\end{align}
\end{lemma}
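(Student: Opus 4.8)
The plan is to deduce both inequalities from the monotonicity of $\beta\mapsto W(n,\beta)$ on $[-1,1)$, together with one elementary estimate. First I would record the power-series form of $W(n,\beta_k)$ that is already visible inside the proof of Proposition~\ref{expl_stat}: since $\sum_{j=1}^{n-1}\sum_{i=j+1}^{n}\beta^{i-j}=\sum_{d=1}^{n-1}(n-d)\beta^{d}$, one has
\[
W(n,\beta)=n+2\sum_{d=1}^{n-1}(n-d)\,\beta^{d}.
\]
All coefficients $n-d$ are non-negative, so on $[0,1)$ the map $\beta\mapsto W(n,\beta)$ is non-decreasing, and $W(n,0)=n$.

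To obtain monotonicity on the negative part of the interval I would differentiate the closed form $W(n,\beta)=\dfrac{n(1-\beta^{2})-2\beta(1-\beta^{n})}{(1-\beta)^{2}}$. A routine simplification (writing $W'=\frac{N'(1-\beta)+2N}{(1-\beta)^{3}}$ with $N$ the numerator and collecting terms) gives
\[
W'(\beta)=\frac{2\bigl[(n-1)(1-\beta^{n+1})-(n+1)\beta(1-\beta^{n-1})\bigr]}{(1-\beta)^{3}}.
\]
For $\beta\in[-1,0)$ the denominator is positive, $-\beta\geq0$, and $|\beta|\leq1$ forces $\beta^{m}\leq1$ for every $m\in\N$, hence $1-\beta^{n+1}\geq0$ and $1-\beta^{n-1}\geq0$; therefore every summand in the bracket is non-negative and $W'(\beta)\geq0$. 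Combined with the previous paragraph and continuity at $0$, $\beta\mapsto W(n,\beta)$ is non-decreasing on all of $[-1,1)$. Since $\beta_{k}\leq\beta_{1}$ for every $k\in\{1,\dots,\abs{D}-1\}$, this yields the first inequality $W(n,\beta_{k})\leq W(n,\beta_{1})$.

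For the second inequality I would split according to the sign of $\beta_{1}$. If $\beta_{1}\in[0,1)$, rewrite $W(n,\beta_{1})=\dfrac{n(1+\beta_{1})}{1-\beta_{1}}-\dfrac{2\beta_{1}(1-\beta_{1}^{n})}{(1-\beta_{1})^{2}}$; the subtracted term is non-negative (as $0\leq\beta_{1}<1$) and $1+\beta_{1}<2$, so $W(n,\beta_{1})\leq\frac{2n}{1-\beta_{1}}$. If $\beta_{1}\in[-1,0)$, monotonicity gives $W(n,\beta_{1})\leq W(n,0)=n$, and $\beta_{1}\geq-1$ forces $1-\beta_{1}\leq2$, whence $n\leq\frac{2n}{1-\beta_{1}}$.

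I expect the only genuinely non-routine step to be the monotonicity on $[-1,0)$: a termwise or pairing argument directly on the power series fails there, because for $-1\leq\alpha\leq\beta<0$ the individual differences $\beta^{d}-\alpha^{d}$ need not be non-negative, so one really has to pass to the closed form of $W'$ above and use $\beta^{m}\leq1$ to sign the bracket.
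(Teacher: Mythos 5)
Your proof is correct and follows essentially the same route as the paper: establish that $\a\mapsto W(n,\a)$ is non-decreasing on $[-1,1)$ and then bound $W(n,\beta_1)$ from the closed form by a sign-based case split. Your derivative formula
$W'(\a)=2\bigl[(n-1)(1-\a^{n+1})-(n+1)\a(1-\a^{n-1})\bigr](1-\a)^{-3}$
agrees with the paper's expression $-2\bigl[(1+\a)\sum_{j=0}^{n-1}\a^{j}-n(1+\a^{n})\bigr](1-\a)^{-2}$ after summing the geometric series, and your signing of the bracket on $[-1,0)$ together with the non-negative power-series coefficients on $[0,1)$ is a perfectly valid hybrid. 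One small correction to your closing remark: a pairing argument does work uniformly on all of $[-1,1)$ --- the paper pairs terms as $\a^{j}+\a^{j+1}+\a^{n-j-1}+\a^{n-j}\leq 2(1+\a^{n})$, which follows from $\a^{n-i}+\a^{i}\leq 1+\a^{n}$ (valid for any $\abs{\a}\leq 1$ since $(1-\a^{i})\a^{n-i}\leq 1-\a^{i}$), so one does not have to treat the negative and positive ranges separately.
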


\begin{proof}
We will show that the mapping $x \mapsto W(n,x)$ is increasing on $[-1,1)$, 
so that $W(n,\beta_k) \leq W(n,\beta_1)$. 
For $i\in\set{0,\dots,n}$ we have
\[
x^{n-i}\leq1\quad \Longleftrightarrow \quad (1-x^i)\,x^{n-i} 
\leq 1-x^i \quad\Longleftrightarrow\quad x^{n-i}+x^i \leq 1+x^n.
\]
This implies
\[
x^j+x^{j+1}+x^{n-j-1}+x^{n-j} \leq 2(1+x^n), \quad j\in \set{0,\dots,n-1}
\]
and
\[
(1+x)\sum_{j=0}^{n-1} x^j =\frac{1}{2} \sum_{j=0}^{n-1} (x^j+x^{j+1}+x^{n-j-1}+x^{n-j}) \leq n(1+x^n).
\]
Now
\[
\frac{d \,W}{dx}(n,x)=-2\frac{(1+x)\sum_{j=0}^{n-1} x^j-n(1+x^n)}{(1-x)^2} \geq 0
\]
and the first inequality of the assertion is proven. By
\[
W(n,x)\leq 	
\begin{cases}
   \frac{n(1+x)-2xn}{1-x}\leq \frac{2n}{1-x},\quad 	& x\in[-1,0],\\
   \frac{n(1+x)}{1-x}\leq \frac{2n}{1-x},\quad		& x\in(0,1),
\end{cases}
\]

everything is shown.
\end{proof}

An explicit formula of the error is established
if the initial state is chosen by a stationary distribution. 
Let us consider the maximal error of $S_n$ for $f$ which satisfy $\norm{f}{2}\leq 1$.

\begin{coro}  \label{prop_stat}
Let $(X_n)_{n\in\N}$ be a Markov chain with transition matrix $P$ and initial distribution $\pi$. 
Let $P$ be reversible with respect to $\pi$.
Then
			\begin{equation}  \label{err_class_eq}
				\sup_{\norm{f}{2}\leq1}e_\pi(S_n,f)^2 = 
												\frac{1+\beta_1}{n(1-\beta_1)}-\frac{2\beta_1(1-\beta_1^n)}{n^2(1-\beta_1)^2}
												\leq \frac{2}{n(1-\beta_1)}.
			\end{equation}
\end{coro}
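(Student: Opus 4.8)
The statement follows by combining the exact error representation of Proposition~\ref{expl_stat} with the monotonicity estimate of Lemma~\ref{lemm_mono_inc}, plus a Parseval argument to control the coefficients $a_k$. First I would note that since $\set{u_0,u_1,\dots,u_{\abs{D}-1}}$ is an orthonormal basis of $(\R^D,\scalar{\cdot}{\cdot})$, one has $\norm{f}{2}^2=\sum_{k=0}^{\abs{D}-1} a_k^2$ with $a_k=\scalar{f}{u_k}$; in particular $\norm{f}{2}\leq 1$ forces $\sum_{k=1}^{\abs{D}-1} a_k^2\leq 1$. Then Proposition~\ref{expl_stat} and the first inequality in \eqref{mono_inc} give
\[
e_\pi(S_n,f)^2 = \frac{1}{n^2}\sum_{k=1}^{\abs{D}-1} a_k^2\, W(n,\beta_k)
\leq \frac{W(n,\beta_1)}{n^2}\sum_{k=1}^{\abs{D}-1} a_k^2 \leq \frac{W(n,\beta_1)}{n^2}.
\]

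Next I would observe that this upper bound is attained: the choice $f=u_1$ satisfies $\norm{u_1}{2}=1$ and $a_k=\delta_{k1}$ for $k\geq 0$, so $e_\pi(S_n,u_1)^2 = W(n,\beta_1)/n^2$ by Proposition~\ref{expl_stat}. Hence $\sup_{\norm{f}{2}\leq1}e_\pi(S_n,f)^2 = W(n,\beta_1)/n^2$.

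It then remains to rewrite $W(n,\beta_1)/n^2$ in the claimed closed form. Using the definition $W(n,\beta_1)=\frac{n(1-\beta_1^2)-2\beta_1(1-\beta_1^n)}{(1-\beta_1)^2}$ and the factorization $1-\beta_1^2=(1-\beta_1)(1+\beta_1)$, the first summand becomes $\frac{1+\beta_1}{n(1-\beta_1)}$ and the second $-\frac{2\beta_1(1-\beta_1^n)}{n^2(1-\beta_1)^2}$, which is exactly \eqref{err_class_eq}. Finally, the second inequality in \eqref{mono_inc}, namely $W(n,\beta_1)\leq \frac{2n}{1-\beta_1}$, yields $\sup_{\norm{f}{2}\leq1}e_\pi(S_n,f)^2\leq \frac{2}{n(1-\beta_1)}$.

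\textbf{Main obstacle.} There is essentially no hard step here: the proof is a bookkeeping exercise once Proposition~\ref{expl_stat} and Lemma~\ref{lemm_mono_inc} are in hand. The only point requiring a little care is that $W(n,\beta_k)\le W(n,\beta_1)$ must be invoked for all $k$, including indices with $\beta_k<0$; this is covered because Lemma~\ref{lemm_mono_inc} establishes monotonicity of $x\mapsto W(n,x)$ on all of $[-1,1)$ and $\beta_k\le\beta_1$ for every $k\ge1$. The attainment claim (testing with $f=u_1$) is what upgrades the inequality to an equality, and it is worth stating explicitly.
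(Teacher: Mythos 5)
Your proof is correct and follows essentially the same route as the paper: the exact representation from Proposition~\ref{expl_stat}, the monotonicity bound $W(n,\beta_k)\leq W(n,\beta_1)$ from Lemma~\ref{lemm_mono_inc}, attainment of the supremum at $f=u_1$, and the final estimate $W(n,\beta_1)\leq 2n/(1-\beta_1)$. The only cosmetic difference is that you invoke Parseval explicitly to get $\sum_{k\geq1}a_k^2\leq\norm{f}{2}^2$, where the paper states the same inequality without naming it.
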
 

\begin{proof}
The individual error of $f$ is 
\begin{align*}
 e_\pi(S_n,f)^2	& 	\underset{\eqref{err_present}}{=} \frac{1}{n^2} \sum_{k=1}^{\abs{D}-1} a_k^2\, W(n,\beta_k)
								\leq 		\frac{\norm{f}{2}^2}{n^2} \max_{k=1,\dots, \abs{D}-1}  W(n,\beta_k) \\
\displaybreak
	&
			\underset{\eqref{mono_inc}}{=} \frac{\norm{f}{2}^2}{n^2} W(n,\beta_1)
								=\frac{1+\beta_1}{n(1-\beta_1)}\norm{f}{2}^2-\frac{2\beta_1(1-\beta_1^n)}{n^2(1-\beta_1)^2}\norm{f}{2}^2,
\end{align*}
where $a_k$ is chosen as in Proposition~\ref{expl_stat} and therefore 
$\sum_{k=1}^{\abs{D}-1} a_k^2\leq\norm{f}{2}^2$.
From the preceding analysis of the individual error 
we have for $\norm{f}{2}\leq1$ the right upper 
error bound. 
Now we consider $f=u_1$, where $\norm{u_1}{2}=1$. By applying \eqref{err_present} we have
\[
e_\pi(S_n,u_1)^2=\frac{1+\beta_1}{n(1-\beta_1)}-\frac{2\beta_1(1-\beta_1^n)}{n^2(1-\beta_1)^2}.
\] 
Thus the equality of \eqref{err_class_eq} is proven and by \eqref{mono_inc} the inequality is shown.
\end{proof}


In Corollary~\ref{prop_stat}
an explicit error bound with respect to $\norm{\cdot}{2}$ is shown. 
Notice that the first part of \eqref{err_class_eq} is an equality, which means 
that the integration error is completely known if 
the initial distribution is stationary.\\

Suppose that the distribution $\pi$ can be simulated directly, i.e. we can apply a Monte Carlo method with an i.i.d. sample.
Then an i.i.d. sequence $(X_n)_{n\in\N}$, where every $X_n$ is distributed with respect to $\pi$, 
is a Markov chain with transition matrix $S=(\,\pi(y)\,)_{x,y\in D}$ 
and initial distribution $\pi$. In this setting one has 
\[
e_\pi(S_n,f)^2=\frac{1}{n} \norm{f-S(f)}{2}^2.
\]
This corresponds to $\beta_i=0$ for all $i>0$.
In some artificial cases other Markov chain Monte Carlo methods can do better.
For example
if there is a Markov chain where $\beta_i<0$ and the target is to approximate $S(u_i)$ or if all eigenvalues are smaller than zero. 
A simple transition matrix which satisfies this eigenvalue condition is given by
\[
P=\begin{pmatrix}
		0 & \frac{1}{\abs{D}-1} & \cdots & \frac{1}{\abs{D}-1}\\
		\frac{1}{\abs{D}-1} & 0 & \ddots & \vdots\\
		\vdots 	& \ddots & \ddots  & \frac{1}{\abs{D}-1} \\
	\frac{1}{\abs{D}-1} &\cdots & \frac{1}{\abs{D}-1} & 0 
	\end{pmatrix},
\]
where $\pi(x)=1/\abs{D}$ for all $x\in D$, see \cite[Remark~3, p.~617]{eigen_neg}.
It turns out that $\beta_1=\cdots=\beta_{\abs{D}-1}=-\frac{1}{\abs{D}-1}$. 
For large $\abs{D}$ it is unfortunately not possible to construct a transition matrix where $\beta_1$ is close 
to $-1$. 
\begin{prop}
Let $P$ be 
an irreducible  
 transition matrix. 
Then
\[
\beta_1 \geq -\frac{1}{\abs{D}-1}.
\]
\end{prop}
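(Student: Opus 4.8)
The plan is to use the trace of $P$. Write $N=\abs{D}$. Since $P$ is irreducible and reversible with respect to $\pi$, it has $N$ real eigenvalues which we order as $1=\beta_0>\beta_1\geq\beta_2\geq\dots\geq\beta_{N-1}$; the strict inequality $\beta_0>\beta_1$ is exactly the statement recorded in Section~\ref{MC_finite} that $1$ is a simple eigenvalue of an irreducible transition matrix. The first observation is that the trace of any transition matrix is nonnegative, because its diagonal entries are probabilities: $\operatorname{tr}(P)=\sum_{x\in D}p(x,x)\geq 0$.

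Next I would use that the trace equals the sum of the eigenvalues counted with multiplicity and split off the Perron eigenvalue $\beta_0=1$:
\[
  0\leq \operatorname{tr}(P)=\sum_{i=0}^{N-1}\beta_i=1+\sum_{i=1}^{N-1}\beta_i,
  \qquad\text{hence}\qquad \sum_{i=1}^{N-1}\beta_i\geq -1.
\]
Finally, $\beta_1$ is the maximum of the $N-1$ numbers $\beta_1,\dots,\beta_{N-1}$, so that $(N-1)\beta_1\geq\sum_{i=1}^{N-1}\beta_i\geq -1$, and dividing by $N-1$ gives $\beta_1\geq -\frac{1}{N-1}=-\frac{1}{\abs{D}-1}$, which is the assertion.

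There is no real obstacle in this argument; the only points to get right are the justification that the spectrum is real and totally ordered (reversibility together with irreducibility, from Section~\ref{MC_finite}) and the elementary fact that $\operatorname{tr}(P)\geq 0$. As a remark, the bound is sharp: the transition matrix displayed just before the statement has $\beta_1=\dots=\beta_{N-1}=-\frac{1}{N-1}$ and thus attains equality; in particular for $N=2$ the two-point swap chain gives $\beta_1=-1$.
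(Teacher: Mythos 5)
Your proof is correct and is essentially the paper's own argument: both rest on $\operatorname{tr}(P)=\sum_x p(x,x)\geq 0$, the identity $\operatorname{tr}(P)=1+\sum_{i=1}^{\abs{D}-1}\beta_i$, and the bound $\sum_{i=1}^{\abs{D}-1}\beta_i\leq(\abs{D}-1)\beta_1$. Your added remarks on why the spectrum is real (the standing reversibility assumption) and on sharpness of the bound are accurate but not part of the paper's proof.
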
 
\proof
Since $\beta_0=1$ one has
\[
1+ \sum_{i=1}^{\abs{D}-1} \beta_i =\sum_{i=0}^{\abs{D}-1} \beta_i= \mbox{trace}(P)= \sum_{x\in D} p(x,x)\geq 0.
\]
Then
\[
-1\leq \sum_{i=1}^{\abs{D}-1} \beta_i \leq (\abs{D}-1) \beta_1. \sq
\]

The error estimates under the assumption that the initial distribution is a 
stationary one seem to be restrictive.
If we could sample $\pi$ directly we would approximate $S(f)$ by Monte Carlo with an i.i.d. sample.
However, even if it is possible 
it might happen that the sampling procedure is computationally expensive. 
It can be reasonable to generate only the initial state by sampling from $\pi$ and afterwards
run a Markov chain with stationary distribution $\pi$. 
Perfect sampling might be helpful for the construction of such direct sampling procedures, see
\cite{PrWi,Haeg}.\\

In the following we consider the case, where the initial distribution is not necessarily stationary.
Let $\nu$ be a distribution on $D$ and $k\in\N$.
Then we define
\[
d_k(x)= \sum_{y\in D} \frac{\nu(y)}{\pi(y)} (p^k(x,y)-\pi(y)) 
      = P^k (\frac{\nu}{\pi})(x)-1 = (P^k-S)(\frac{\nu}{\pi}-1)(x),\quad x\in D.
\]
If $P$ is reversible with respect to $\pi$, then we obtain
\[
\norm{d_k}{2}=\norm{\frac{\nu P^k}{\pi} -1}{2},\quad k\in\N,
\]
thus $d_k$ determines the difference between $\nu P^k$ and the stationary distribution $\pi$.
Additionally by the spectral representation of $P^k$ (see \eqref{P_k_spec}) one obtains
\begin{equation}  \label{l_k_spec}
	d_k(x)=\sum_{i=1}^{\abs{D}-1} \beta_i^k  \sum_{y\in D} u_i(y) \nu(y)\, u_i(x) 
	      =\sum_{i=1}^{\abs{D}-1} \beta_i^k \scalar{\frac{\nu}{\pi}}{u_i} u_i(x), \quad x\in D .	
\end{equation}
The next statement gives
a relation between $e_\nu(S_{n,n_0},f)$ and  $e_\pi(S_n,f)$. 
\begin{prop} \label{fin_con_lem}
Let $f\in \R^D$ and $g=f-S(f)$.
Let $(X_n)_{n\in\N}$ be a Markov chain with transition matrix $P$ and initial distribution $\nu$.
Let $P$ be reversible with respect to $\pi$. 
Then
\begin{align}  \label{fin_con}
e_\nu(S_{n,n_0},f)^2 =  e_\pi(S_n,f)^2
+ \frac{1}{n^2}\sum_{j=1}^{n} L_{j+n_0}(g^2)
+ \frac{2}{n^2} \sum_{j=1}^{n-1} \sum_{k=j+1}^n L_{j+n_0}(gP^{k-j}g),
\end{align}
where
\[
L_i(h)=\scalar{d_i}{h}=\scalar{(P^i-S)(\frac{\nu}{\pi}-1)}{h},
\quad h\in \R^D,\, i\in\N.
\]
\end{prop}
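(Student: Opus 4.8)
The plan is to expand the mean square error directly and reduce everything to the pairwise-correlation formula \eqref{err_help}, isolating the terms that come from starting out of stationarity.

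First I would write, with $g=f-S(f)$, that $e_\nu(S_{n,n_0},f)^2=\frac{1}{n^2}\expect_{\nu,P}\abs{\sum_{j=1}^n g(X_{j+n_0})}^2$, and expand the square into the diagonal part $\sum_{j=1}^n \expect_{\nu,P}[g(X_{j+n_0})^2]$ plus twice $\sum_{1\le j<k\le n}\expect_{\nu,P}[g(X_{j+n_0})g(X_{k+n_0})]$. Applying \eqref{err_help} with the time indices shifted by $n_0$ gives $\expect_{\nu,P}[g(X_{j+n_0})g(X_{k+n_0})]=\sum_{x\in D} P^{j+n_0}(gP^{k-j}g)(x)\,\nu(x)$ for $j\le k$, the diagonal being the case $k=j$ with $P^0=I$.

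The heart of the argument is the identity $\sum_{x\in D} P^{m}(h)(x)\,\nu(x)=S(h)+L_m(h)$ for every $h\in\R^D$. To obtain it, interchange the order of summation to get $\sum_{y\in D} h(y)\,\nu P^m(y)$; reversibility yields the pointwise identity $\nu P^m(y)/\pi(y)=P^m(\nu/\pi)(y)=1+d_m(y)$ — this is exactly the computation behind $\norm{d_m}{2}=\norm{\nu P^m/\pi-1}{2}$, only carried out without passing to the norm — so $\sum_y h(y)\,\nu P^m(y)=\sum_y h(y)\pi(y)+\sum_y h(y)d_m(y)\pi(y)=S(h)+\scalar{d_m}{h}=S(h)+L_m(h)$. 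Taking $h=gP^{k-j}g$ and $m=j+n_0$, and using $S(gP^{k-j}g)=\scalar{g}{P^{k-j}g}=\expect_{\pi,P}[g(X_j)g(X_k)]$ (which is \eqref{err_help2}), I get $\expect_{\nu,P}[g(X_{j+n_0})g(X_{k+n_0})]=\expect_{\pi,P}[g(X_j)g(X_k)]+L_{j+n_0}(gP^{k-j}g)$, and likewise $\expect_{\nu,P}[g(X_{j+n_0})^2]=\expect_{\pi,P}[g(X_j)^2]+L_{j+n_0}(g^2)$.

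Finally I would sum these identities with the weights $1/n^2$ and $2/n^2$: the stationary pieces reassemble exactly into $e_\pi(S_n,f)^2$ (the same expansion as in the proof of Proposition~\ref{expl_stat}), and what remains is precisely the two correction sums in \eqref{fin_con}. No estimates enter, so there is no analytic obstacle; the only thing to watch is the bookkeeping — keeping the reversibility identity $d_m=P^m(\nu/\pi)-1=\nu P^m/\pi-1$ consistent, and making sure the Fubini interchange and the pairing $\scalar{d_m}{h}$ are weighted by the same $\pi$.
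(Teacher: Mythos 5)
Your proposal is correct and follows essentially the same route as the paper: expand the square, apply \eqref{err_help}, and transfer $P^{j+n_0}$ across the $\pi$-weighted pairing to split $\sum_x P^{j+n_0}(h)(x)\nu(x)$ into $S(h)+L_{j+n_0}(h)$. The only cosmetic difference is that you invoke reversibility via the identity $\nu P^m/\pi=P^m(\nu/\pi)=1+d_m$ on the measure side, whereas the paper uses the equivalent self-adjointness of $P$ on the function side, $\scalar{P^i h}{\frac{\nu}{\pi}-1}=\scalar{P^i(\frac{\nu}{\pi}-1)}{h}$.
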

\begin{proof}
It is easy to see that
\begin{align*}
& \expect_{\nu,P} \abs{S(f)-S_{n,n_0}(f)}^2
=\frac{1}{n^2} \sum_{j=1}^n \sum_{i=1}^n \expect_{\nu,P} [g(X_{n_0+j})g(X_{n_0+i})]\\
&\underset{\eqref{err_help}}{=} \frac{1}{n^2} \sum_{j=1}^n \sum_{x\in D} P^{n_0+j}(g^2)(x)\, \nu(x)+ \frac{2}{n^2} \sum_{j=1}^{n-1} \sum_{k=j+1}^n \sum_{x\in D} P^{n_0+j}(g P^{k-j}g)(x)\,\nu(x).
\end{align*}
Recall that reversibility with respect to $\pi$ is equivalent to self-adjointness (s-a) of $P$.
For every function $h\in \R^D$ and $i\in\N$ the following calculation holds
\begin{align*}
  &	\sum_{x\in D} (P^i h)(x)\, \nu(x)  = \scalar{P^i h}{\frac{\nu}{\pi}} 
		= \scalar{P^i h }{1} + \scalar{P^i h }{\frac{\nu}{\pi}-1} \\
&	\underset{\text{(s-a)}}{=} \scalar{P^i h }{1} + \scalar{P^i (\frac{\nu}{\pi}-1) }{h}
		= \scalar{P^i h }{1} + \scalar{(P^i-S) (\frac{\nu}{\pi}-1) }{h}\\
	&\;	= \sum_{x\in D} (P^i h)(x)\, \pi(x)+\scalar{d_i}{h}.
\end{align*}
Formula \eqref{fin_con} is shown by using the previous calculation
for $h=g^2$ and $h=g P^{k-j} g$. 
\end{proof}

\begin{coro}  \label{connection_u_i}
Under the same assumptions as in Proposition~\ref{fin_con_lem} we obtain for 
$i\in \set{1,\dots,\abs{D}-1}$
that
\begin{equation*} 
e_\nu(S_{n,n_0},u_i)^2=\frac{1+\beta_i}{n(1-\beta_i)}-\frac{2\beta_i(1-\beta_i^n)}{n^2(1-\beta_i)^2}
								+\frac{1}{n^2} \sum_{j=1}^n \left( \frac{1+\beta_i-2\beta_i^{n-j+1}}{1-\beta_i} \right) L_{j+n_0}(u_i^2),
\end{equation*}
where
\begin{equation}  \label{L_k_u}
 L_k(u_i^2)= \sum_{l=1}^{\abs{D}-1} \beta_l^k \scalar{\frac{\nu}{\pi}}{u_l} \scalar{u_l}{u_i^2}
					= \sum_{l=1}^{\abs{D}-1} \beta_l^k  \scalar{u_l}{u_i^2} \sum_{x\in D} u_l(x)\,\nu(x). 
\end{equation}
\end{coro}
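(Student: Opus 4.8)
The plan is to specialize Proposition~\ref{fin_con_lem} to the eigenfunction $f=u_i$ and evaluate the resulting geometric sums explicitly.

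Since $i\geq 1$ we have $S(u_i)=\scalar{u_i}{u_0}=0$, hence $g=u_i-S(u_i)=u_i$. The eigenfunction relation $P^m u_i=\beta_i^m u_i$ then yields $g^2=u_i^2$ and $g\,P^{k-j}g=\beta_i^{k-j}u_i^2$ for $k\geq j$. Inserting this into \eqref{fin_con} and using linearity of the functionals $L_{j+n_0}$, the last two terms of \eqref{fin_con} become $\frac{1}{n^2}\sum_{j=1}^n L_{j+n_0}(u_i^2)+\frac{2}{n^2}\sum_{j=1}^{n-1}\big(\sum_{k=j+1}^n\beta_i^{k-j}\big)\,L_{j+n_0}(u_i^2)$.

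Next I would treat the three contributions separately. For $e_\pi(S_n,u_i)^2$, Proposition~\ref{expl_stat} applied to $f=u_i$ has coefficients $a_k=\scalar{u_i}{u_k}=\delta_{ik}$, so only $k=i$ survives and $e_\pi(S_n,u_i)^2=\frac{1}{n^2}W(n,\beta_i)$; expanding $W(n,\beta_i)$ and using $1-\beta_i^2=(1-\beta_i)(1+\beta_i)$ gives exactly the first two summands $\frac{1+\beta_i}{n(1-\beta_i)}-\frac{2\beta_i(1-\beta_i^n)}{n^2(1-\beta_i)^2}$ of the claim. For the double sum, the inner geometric sum is $\sum_{k=j+1}^n\beta_i^{k-j}=\sum_{m=1}^{n-j}\beta_i^m=\beta_i\frac{1-\beta_i^{n-j}}{1-\beta_i}$, so the coefficient of $L_{j+n_0}(u_i^2)$ is $1+2\beta_i\frac{1-\beta_i^{n-j}}{1-\beta_i}=\frac{1+\beta_i-2\beta_i^{n-j+1}}{1-\beta_i}$ for $j\leq n-1$; for $j=n$ the double sum contributes nothing and the coefficient is $1=\frac{1+\beta_i-2\beta_i}{1-\beta_i}$, i.e.\ the same expression evaluated at $j=n$. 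Merging the first term with the double sum therefore produces $\frac{1}{n^2}\sum_{j=1}^n\big(\frac{1+\beta_i-2\beta_i^{n-j+1}}{1-\beta_i}\big)L_{j+n_0}(u_i^2)$, as asserted.

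It remains to verify \eqref{L_k_u}. I would substitute the spectral representation \eqref{l_k_spec} of $d_k$ into $L_k(u_i^2)=\scalar{d_k}{u_i^2}$ and use linearity of the scalar product to obtain $\sum_{l=1}^{\abs{D}-1}\beta_l^k\scalar{\frac{\nu}{\pi}}{u_l}\scalar{u_l}{u_i^2}$, and then rewrite $\scalar{\frac{\nu}{\pi}}{u_l}=\sum_{x\in D}\frac{\nu(x)}{\pi(x)}u_l(x)\pi(x)=\sum_{x\in D}u_l(x)\,\nu(x)$ from the definition of $\scalar{\cdot}{\cdot}$. There is no genuine obstacle here; the only points that need care are the boundary case $j=n$ when combining the two sums and the index shift $m=k-j$ in the geometric series.
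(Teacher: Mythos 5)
Your proposal is correct and follows essentially the same route as the paper: specialize the exact formula \eqref{fin_con} to $f=u_i$ (so $g=u_i$, $gP^{k-j}g=\beta_i^{k-j}u_i^2$), sum the geometric series to get the coefficient $\frac{1+\beta_i-2\beta_i^{n-j+1}}{1-\beta_i}$, identify $e_\pi(S_n,u_i)^2$ via Proposition~\ref{expl_stat} with $a_k=\delta_{ik}$, and read off \eqref{L_k_u} from \eqref{l_k_spec}. Your explicit check that the $j=n$ term of the single sum matches the same coefficient formula is a detail the paper leaves implicit, but it is the same argument.
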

\begin{proof}
By substituting 
\[
e_\pi(S_n,u_i)^2 = \frac{1+\beta_i}{n(1-\beta_i)}-\frac{2\beta_i(1-\beta_i^n)}{n^2(1-\beta_i)^2},
\]
and
\begin{align*}
		\sum_{j=1}^{n-1} \sum_{k=j+1}^n L_{j+n_0}(u_iP^{k-j}u_i) 
&= 	\sum_{j=1}^{n-1} \sum_{k=j+1}^n \beta_i^{k-j} L_{j+n_0}(u_i^2) 
=		\sum_{j=1}^{n-1} \frac{ L_{j+n_0}(u_i^2) ( 
		  \beta_i-\beta_i^{n-j+1})}{1-\beta_i}
\end{align*}
into \eqref{fin_con} one obtains the error formula. 
The equality of $L_k(u_i^2)$ is an implication of \eqref{l_k_spec}.   
\end{proof}

Equation \eqref{fin_con} and the result of Corollary~\ref{connection_u_i} 
are still exact error formulas. 
To get an upper bound for the error, we estimate the functional $L_k(\cdot)$. 
This estimate depends on the speed of convergence of $\nu P^k$ to $\pi$.

\begin{lemma}
Let  $h\in\R^{D}$, $k\in\N$ and recall that $\beta=\max\set{\beta_1,\abs{\beta_{\abs{D}-1}}}$. Then
\begin{equation} \label{2_2_fin}
\abs{L_k(h)} \leq \beta ^k \norm{\frac{\nu}{\pi}-1}{2} \norm{h}{2} \leq \beta ^k \norm{\frac{\nu}{\pi}-1}{2} \sqrt{\norm{\frac{1}{\pi}}{\infty}} \norm{h}{1}.
\end{equation}
\end{lemma}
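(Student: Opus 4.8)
The plan is to bound $L_k(h)=\scalar{d_k}{h}$ where $d_k=(P^k-S)(\frac{\nu}{\pi}-1)$. First I would apply the Cauchy--Schwarz inequality with respect to the weighted scalar product $\scalar{\cdot}{\cdot}$, which gives
\[
\abs{L_k(h)}=\abs{\scalar{d_k}{h}}\leq \norm{d_k}{2}\,\norm{h}{2}.
\]
Then I would estimate $\norm{d_k}{2}$. Since $d_k=(P^k-S)(\frac{\nu}{\pi}-1)$ and $P$ is reversible with respect to $\pi$, Lemma~\ref{P_S_fin} yields $\norm{P^k-S}{\ell_2\to\ell_2}=\beta^k$, so
\[
\norm{d_k}{2}\leq \beta^k\,\norm{\frac{\nu}{\pi}-1}{2}.
\]
Combining these two estimates gives the first inequality of \eqref{2_2_fin}. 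Alternatively, one could use the spectral representation \eqref{l_k_spec} of $d_k$ together with the identity $\norm{d_k}{2}=\norm{\frac{\nu P^k}{\pi}-1}{2}$ and the corollary on exponential convergence, but invoking Lemma~\ref{P_S_fin} directly is cleaner.

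For the second inequality, I would pass from the $\norm{\cdot}{2}$-norm of $h$ to its $\norm{\cdot}{1}$-norm. Writing out the definitions,
\[
\norm{h}{2}^2=\sum_{x\in D}\abs{h(x)}^2\pi(x)\leq \norm{h}{\infty}\sum_{x\in D}\abs{h(x)}\pi(x)=\norm{h}{\infty}\norm{h}{1},
\]
and separately $\norm{h}{\infty}=\sup_{x\in D}\abs{h(x)}\leq \norm{\frac{1}{\pi}}{\infty}\sum_{x\in D}\abs{h(x)}\pi(x)=\norm{\frac{1}{\pi}}{\infty}\norm{h}{1}$, since $\abs{h(x)}=\abs{h(x)}\pi(x)\cdot\frac{1}{\pi(x)}\leq \norm{\frac{1}{\pi}}{\infty}\,\abs{h(x)}\pi(x)$ is dominated termwise by the sum. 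Hence $\norm{h}{2}^2\leq \norm{\frac{1}{\pi}}{\infty}\norm{h}{1}^2$, i.e. $\norm{h}{2}\leq \sqrt{\norm{\frac{1}{\pi}}{\infty}}\,\norm{h}{1}$, which upgrades the first inequality to the second.

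There is no real obstacle here; the only point requiring a little care is the comparison $\norm{h}{2}\leq\sqrt{\norm{1/\pi}{\infty}}\norm{h}{1}$, where one must be careful to bound $\norm{h}{\infty}$ by $\norm{1/\pi}{\infty}\norm{h}{1}$ termwise rather than introducing an extra factor of $\abs{D}$. Everything else is a direct application of Cauchy--Schwarz and the already-established operator norm identity from Lemma~\ref{P_S_fin}.
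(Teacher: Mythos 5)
Your proposal is correct and follows essentially the same route as the paper: Cauchy--Schwarz applied to $L_k(h)=\scalar{d_k}{h}$, the identity $\norm{P^k-S}{\ell_2\to\ell_2}=\beta^k$ from Lemma~\ref{P_S_fin}, and finally $\norm{h}{2}\leq\sqrt{\norm{\frac{1}{\pi}}{\infty}}\,\norm{h}{1}$. The only difference is that you supply the (correct) termwise justification of this last norm comparison, which the paper simply asserts.
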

\begin{proof}
After applying the Cauchy-Schwarz inequality (CS) to $L_k(h)=\scalar{ d_k }{h}$ one obtains 
\[
  \abs{L_k(h)} 	\underset{\text{(CS)}}{\leq} \norm{d_k}{2}\norm{h}{2}
\leq \norm{P^k-S}{\ell_2\to \ell_2} \norm{\frac{\nu}{\pi}-1}{2} \norm{h}{2}.
\]
By Lemma~\ref{P_S_fin} the first inequality is proven and the rest is shown
by using $\norm{h}{2}\leq \sqrt{\norm{\frac{1}{\pi}}{\infty}} \norm{h}{1}$.
\end{proof}


The last lemma ensures an exponential decay of $L_k(\cdot)$ for increasing $k\in\N$. 
This fact is used to show that there exists a constant $C_{\nu,\pi,\beta}$, 
which is independent of $n$ and $n_0$, such that
\[
  \abs{e_\nu(S_{n,n_0},f)^2-e_\pi(S_n,f)^2 } \leq 
C_{\nu,\pi,\beta}\,\norm{f}{2}^2\, \frac{\beta^{n_0}}{n^2} .
\]
An immediate consequence of the inequality is an explicit error bound.
The following two lemmas imply such an inequality and provide $C_{\nu,\pi,\beta}$ explicitly.
\begin{lemma} \label{err_prop_fin_2}
Let $(X_n)_{n\in \N}$ be a Markov chain with transition matrix $P$ and initial distribution $\nu$.
Let $P$ be reversible with respect to $\pi$.
Let $f\in \R^D$ 
and
\begin{align*}
U(\beta,n)   & = \sum_{j=1}^n \beta^j + 2 \sum_{j=1}^{n-1} \sum_{k=j+1}^n \beta^k.
\end{align*}
Then
\begin{align} \label{err_l_2} 
  \abs{e_\nu(S_{n,n_0},f)^2-e_\pi(S_n,f)^2} \leq U(\beta,n)
        \sqrt{\norm{\frac{1}{\pi}}{\infty}}\norm{\frac{\nu}{\pi}-1}{2} 
	\norm{f}{2}^2 \frac{\beta^{n_0} }{n^2}.
\end{align}
\end{lemma}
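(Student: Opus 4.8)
The plan is to start from the exact error formula \eqref{fin_con} of Proposition~\ref{fin_con_lem} and bound the two correction terms uniformly, using the decay estimate \eqref{2_2_fin} for the functional $L_k$. Recall that \eqref{fin_con} reads
\[
e_\nu(S_{n,n_0},f)^2 - e_\pi(S_n,f)^2 = \frac{1}{n^2}\sum_{j=1}^{n} L_{j+n_0}(g^2) + \frac{2}{n^2} \sum_{j=1}^{n-1} \sum_{k=j+1}^n L_{j+n_0}(gP^{k-j}g),
\]
with $g = f - S(f)$. So the difference we must estimate is nothing but a double sum of $L$-values, and the triangle inequality reduces everything to bounding $\abs{L_{j+n_0}(g^2)}$ and $\abs{L_{j+n_0}(gP^{k-j}g)}$.

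First I would apply \eqref{2_2_fin} to each term. For the diagonal part, $\abs{L_{j+n_0}(g^2)} \leq \beta^{j+n_0} \norm{\frac{\nu}{\pi}-1}{2}\sqrt{\norm{\frac{1}{\pi}}{\infty}}\,\norm{g^2}{1}$, and since $\norm{g^2}{1} = \norm{g}{2}^2 \leq \norm{f}{2}^2$ (as $g$ is the centering of $f$, so $\norm{g}{2}\le\norm{f}{2}$), this gives a bound $\beta^{j+n_0}\sqrt{\norm{\frac{1}{\pi}}{\infty}}\norm{\frac{\nu}{\pi}-1}{2}\norm{f}{2}^2$. For the off-diagonal part, $\abs{L_{j+n_0}(gP^{k-j}g)} \leq \beta^{j+n_0}\sqrt{\norm{\frac{1}{\pi}}{\infty}}\norm{\frac{\nu}{\pi}-1}{2}\norm{gP^{k-j}g}{1}$, and here I would use $\norm{gP^{k-j}g}{1} \leq \norm{g}{2}\norm{P^{k-j}g}{2}$ by Cauchy--Schwarz for the weighted $\ell_1$-norm, then $\norm{P^{k-j}g}{2}\le\beta^{k-j}\norm{g}{2}$ by Lemma~\ref{P_S_fin} (since $S(g)=0$), yielding $\norm{gP^{k-j}g}{1}\le\beta^{k-j}\norm{f}{2}^2$. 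Factoring $\beta^{j+n_0}\cdot\beta^{k-j} = \beta^{k+n_0}$, the off-diagonal contribution is bounded by $\beta^{k+n_0}\sqrt{\norm{\frac{1}{\pi}}{\infty}}\norm{\frac{\nu}{\pi}-1}{2}\norm{f}{2}^2$.

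Collecting the two pieces and pulling out the common factor $\sqrt{\norm{\frac{1}{\pi}}{\infty}}\norm{\frac{\nu}{\pi}-1}{2}\norm{f}{2}^2\,\beta^{n_0}/n^2$, what remains inside is exactly $\sum_{j=1}^n \beta^j + 2\sum_{j=1}^{n-1}\sum_{k=j+1}^n \beta^k = U(\beta,n)$, which is how $U(\beta,n)$ is defined. This matches \eqref{err_l_2} and finishes the proof. I do not expect a genuine obstacle here; the one point requiring a little care is making sure the weighted-$\ell_1$ Cauchy--Schwarz step $\norm{gP^{k-j}g}{1}\le\norm{g}{2}\,\norm{P^{k-j}g}{2}$ is stated correctly with respect to the measure $\pi$, and that the centering inequality $\norm{g}{2}\le\norm{f}{2}$ (equivalently $\sum_{k\ge1}a_k^2\le\norm{f}{2}^2$, already noted in the proof of Corollary~\ref{prop_stat}) is invoked so that every bound is in terms of $\norm{f}{2}$ rather than $\norm{g}{2}$.
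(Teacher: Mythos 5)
Your proposal is correct and follows the paper's own argument essentially step for step: the exact formula \eqref{fin_con}, the triangle inequality, the bound \eqref{2_2_fin} on each $L_{j+n_0}$ term, the Cauchy--Schwarz estimate $\norm{gP^{k-j}g}{1}\le\norm{g}{2}\norm{P^{k-j}g}{2}\le\beta^{k-j}\norm{g}{2}^2$ via Lemma~\ref{P_S_fin}, and the final passage from $\norm{g}{2}$ to $\norm{f}{2}$. No differences worth noting.
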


\begin{proof}	
  Let $g=f-S(f)$. 
  The equation \eqref{fin_con} implies
	\begin{align*} \label{exact_error_fin_2}  
	\abs{e_\nu(S_{n,n_0},f)^2-e_\pi(S_n,f)^2} \leq
 \frac{1}{n^2}\sum_{j=1}^{n} \abs{ L_{j+n_0}(g^2)}
+ \frac{2}{n^2} \sum_{j=1}^{n-1} \sum_{k=j+1}^n \abs{ L_{j+n_0}(gP^{k-j}g)}.
\end{align*}	
Then by \eqref{2_2_fin} 
one gets
 \begin{align*}
 		\abs{L_{j+n_0}(g^2)} & 
 		\leq \beta^{j+n_0}  \sqrt{\norm{\frac{1}{\pi}}{\infty}} \norm{\frac{\nu}{\pi}-1}{2}\;   \norm{g}{2}^2,\\
 		\abs{L_{j+n_0}(gP^{k-j}g)}& 
 		\leq \beta^{j+n_0} \sqrt{\norm{\frac{1}{\pi}}{\infty}} \norm{\frac{\nu}{\pi}-1}{2}\;   \norm{g P^{k-j} g}{1}.
 \end{align*}
By the Cauchy-Schwarz inequality (CS) and $\norm{P^{k-j}}{\ell_2^0 \to \ell_2^0}= \beta^{k-j}$ it follows that
\[
\norm{gP^{k-j}g}{1} \underset{\text{(CS)}}{\leq} \norm{g}{2} \norm{P^{k-j}g}{2} 
\leq \norm{g}{2}^2 \norm{P^{k-j}}{\ell_2^0 \to \ell_2^0}
=\beta^{k-j} \norm{g}{2}^2.
\]
Let $\e_0=   \sqrt{\norm{\frac{1}{\pi}}{\infty}} \norm{\frac{\nu}{\pi}-1}{2} \beta^{n_0}$. Then
 \begin{align*}
 		\sum_{j=1}^n & \abs{L_{j+n_0}(g^2)} +
 		2\sum_{j=1}^{n-1} \sum_{k=j+1}^n \abs{L_{j+n_0}(gP^{k-j}g)}\\
 		&\leq \e_0 \norm{g}{2}^2  \sum_{j=1}^n   \beta^{j}
 		+  2 \e_0 \norm{g}{2}^2 \sum_{j=1}^{n-1}  \sum_{k=j+1}^n \beta^{k}\\
 		 &=  \e_0 \norm{g}{2}^2 \left(  \sum_{j=1}^n   \beta^{j} +  2 \sum_{j=1}^{n-1} \sum_{k=j+1}^n \beta^{k} \right)\\
 		 & 
		  \leq U(\beta, n) \cdot \e_0 \norm{f}{2}^2.
 \end{align*}
The last inequality follows from $\norm{f-S(f)}{2}\leq \norm{f}{2}$.  
\end{proof}
Note that one can also get a similar estimate as in \eqref{err_l_2} with respect to $\norm{f}{4}$
by using the first inequality of \eqref{2_2_fin} instead of the second one. 
In the resulting estimate the factor $\sqrt{\norm{\frac{1}{\pi}}{\infty}}$ does not appear.

Let us consider  
$U(\beta,n)$. If $\beta<1$, then the mapping $n\mapsto U(\beta,n)$ is bounded.
\begin{lemma} \label{lemma_U_fin}
Let $\beta<1$. For all $n\in \N$ we have
\[
U(\beta,n)\leq \frac{2}{(1-\beta)^2}. 
\] 
\end{lemma}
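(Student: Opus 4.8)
The plan is to bound $U(\beta,n)$ by computing the two finite geometric sums explicitly and then dropping the negative correction terms. First I would compute the single sum $\sum_{j=1}^n \beta^j = \frac{\beta(1-\beta^n)}{1-\beta} \leq \frac{\beta}{1-\beta} \leq \frac{1}{1-\beta}$, using $\beta \in [0,1)$ so that all terms are nonnegative and $1 - \beta^n \leq 1$.

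Next I would handle the double sum $2\sum_{j=1}^{n-1}\sum_{k=j+1}^n \beta^k$. For fixed $j$, the inner sum is $\sum_{k=j+1}^n \beta^k = \beta^{j+1}\frac{1-\beta^{n-j}}{1-\beta} \leq \frac{\beta^{j+1}}{1-\beta}$. Summing over $j$ from $1$ to $n-1$ gives $\sum_{j=1}^{n-1}\frac{\beta^{j+1}}{1-\beta} = \frac{1}{1-\beta}\sum_{j=1}^{n-1}\beta^{j+1} \leq \frac{1}{1-\beta}\cdot\frac{\beta^2}{1-\beta} \leq \frac{1}{(1-\beta)^2}$, so the doubled double sum is at most $\frac{2}{(1-\beta)^2}$.

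Adding the two contributions yields $U(\beta,n) \leq \frac{1}{1-\beta} + \frac{2}{(1-\beta)^2}$. Since $1 - \beta \leq 1$, we have $\frac{1}{1-\beta} \leq \frac{1}{(1-\beta)^2}$, hence $U(\beta,n) \leq \frac{3}{(1-\beta)^2}$, which is slightly weaker than the claimed bound. To get the sharper constant $2$, I would instead organize the estimate by swapping the order of summation more carefully, or simply note that $\sum_{j=1}^{n}\beta^j + 2\sum_{j=1}^{n-1}\sum_{k=j+1}^n \beta^k = \sum_{k=1}^n (2k-1)\beta^k$ (each $\beta^k$ is counted once from the single sum and $2(k-1)$ times from the double sum, since it appears for $j = 1, \dots, k-1$), and then bound $\sum_{k=1}^\infty (2k-1)\beta^k \leq 2\sum_{k=1}^\infty k\,\beta^k = \frac{2\beta}{(1-\beta)^2} \leq \frac{2}{(1-\beta)^2}$, using the standard identity $\sum_{k\geq 1} k\beta^k = \frac{\beta}{(1-\beta)^2}$ and $\beta < 1$.

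The only mild obstacle is getting the constant exactly right: the crude term-by-term geometric estimate overshoots to $3/(1-\beta)^2$, so the key observation to carry out first is the reindexing $U(\beta,n) = \sum_{k=1}^n (2k-1)\beta^k$, after which the bound follows from the elementary power series identity for $\sum k\beta^k$ and monotone convergence of the partial sums.
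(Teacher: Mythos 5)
Your proof is correct. The reindexing identity $U(\beta,n)=\sum_{k=1}^n(2k-1)\beta^k$ is right (each $\beta^k$ occurs once in the single sum and $2(k-1)$ times in the doubled double sum), and the bound $\sum_{k\geq1}(2k-1)\beta^k\leq 2\sum_{k\geq1}k\beta^k=\frac{2\beta}{(1-\beta)^2}\leq\frac{2}{(1-\beta)^2}$ gives the claim; note that extending to the infinite series uses $\beta\geq 0$, which holds here since $\beta$ is a maximum of absolute values of eigenvalues. The paper takes the route of your first attempt: it bounds the inner sum by $\frac{\beta^{j+1}}{1-\beta}$, obtains $U(\beta,n)\leq\frac{1+\beta}{1-\beta}\sum_{j=1}^n\beta^j$, and concludes via $\frac{(1+\beta)\beta}{(1-\beta)^2}\leq\frac{2}{(1-\beta)^2}$. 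Your first attempt only overshoots to $3/(1-\beta)^2$ because you rounded $\frac{\beta}{1-\beta}$ up to $\frac{1}{(1-\beta)^2}$ and $\frac{2\beta^2}{(1-\beta)^2}$ up to $\frac{2}{(1-\beta)^2}$ too early; keeping the factors of $\beta$ gives $\frac{\beta(1-\beta)+2\beta^2}{(1-\beta)^2}=\frac{\beta(1+\beta)}{(1-\beta)^2}\leq\frac{2}{(1-\beta)^2}$, exactly the paper's computation. So both routes use the same amount of slack ($\beta(1+\beta)\leq 2$ versus $2k-1\leq 2k$ together with $\beta\leq1$); your reindexed version is arguably the cleaner bookkeeping, while the paper's avoids any reindexing at the cost of one small algebraic combination step.
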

\proof
 By the infinite geometric series one obtains
\begin{align*}
U(\beta,n)
	&	\leq 	\sum_{j=1}^n \beta^j + \frac{2 \beta}{1-\beta}\sum_{j=1}^{n-1} \beta^{j}
			  \leq  \left(\frac{1+\beta}{1-\beta}\right)\sum_{j=1}^n \beta^j
		\leq \frac{2}{(1-\beta)^2}. \sq
\end{align*}

From Lemma~\ref{err_prop_fin_2} and Lemma~\ref{lemma_U_fin} it follows that
\[
e_\nu(S_{n,n_0},f)^2 \leq e_\pi(S_n,f)^2
		+  \frac{2 \beta^{n_0} \sqrt{\norm{\frac{1}{\pi}}{\infty}} 
 		\norm{\frac{\nu}{\pi}-1}{2} }{n^2(1-\beta)^2} \norm{f}{2}^2.
\]
If the initial distribution $\nu$ is $\pi$ then the 
error can be represented as in Proposition~\ref{expl_stat} and bounded as in Corollary~\ref{prop_stat}.

The next theorem summarizes the main result of this section.
\begin{theorem} \label{main_fin}
Let $f\in\R^{D}$ and let $(X_n)_{n\in\N}$ be a Markov chain 
with transition matrix $P$ and initial distribution $\nu$. Let $P$ be reversible 
with respect to $\pi$ and let $\beta<1$.
Then
\begin{equation} \label{l2_err_fin}
	e_\nu(S_{n,n_0},f)^2 \leq \frac{2}{n(1-\beta_1)} \norm{f}{2}^2 
			+  \frac{2 \beta^{n_0} \sqrt{\norm{\frac{1}{\pi}}{\infty}} 
			\norm{\frac{\nu}{\pi}-1}{2} }{n^2(1-\beta)^2} \norm{f}{2}^2.
\end{equation}
For $a_k=\scalar{f}{u_k}$ one has 
\begin{align}  \label{asymp_fin}
 \lim_{n\to \infty} n\cdot e_\nu(S_{n,n_0},f)^2 &=\lim_{n\to \infty} n\cdot e_\pi(S_{n},f)^2 
= \sum_{k=1}^{\abs{D}-1} a_k^2\, \frac{1+\beta_k}{1-\beta_k}.
 \end{align}
\end{theorem}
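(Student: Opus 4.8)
The plan is to obtain both assertions by assembling results already proved in this section; essentially no new computation is required.

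\textbf{The bound \eqref{l2_err_fin}.} I would start from the inequality obtained by combining Lemma~\ref{err_prop_fin_2} with Lemma~\ref{lemma_U_fin}, namely
\[
e_\nu(S_{n,n_0},f)^2 \le e_\pi(S_n,f)^2 + \frac{2\beta^{n_0}\sqrt{\norm{\frac{1}{\pi}}{\infty}}\,\norm{\frac{\nu}{\pi}-1}{2}}{n^2(1-\beta)^2}\,\norm{f}{2}^2,
\]
and then estimate the stationary term. As in the proof of Corollary~\ref{prop_stat}, one has $e_\pi(S_n,f)^2 \le \frac{\norm{f}{2}^2}{n^2}W(n,\beta_1)$, and by \eqref{mono_inc} this is at most $\frac{2}{n(1-\beta_1)}\norm{f}{2}^2$. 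Substituting this estimate into the displayed inequality gives \eqref{l2_err_fin} at once.

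\textbf{The asymptotics \eqref{asymp_fin}.} Here I would argue in two steps. First, the same combination of Lemma~\ref{err_prop_fin_2} and Lemma~\ref{lemma_U_fin} shows $\abs{e_\nu(S_{n,n_0},f)^2 - e_\pi(S_n,f)^2} \le C/n^2$, with $C$ a constant depending on $f,\nu,\pi,\beta,n_0$ but not on $n$; hence $n\,\abs{e_\nu(S_{n,n_0},f)^2 - e_\pi(S_n,f)^2} \le C/n \to 0$, so the two limits coincide as soon as one of them exists. Second, by Proposition~\ref{expl_stat},
\[
n\cdot e_\pi(S_n,f)^2 = \frac{1}{n}\sum_{k=1}^{\abs{D}-1} a_k^2\,W(n,\beta_k) = \sum_{k=1}^{\abs{D}-1} a_k^2\,\frac{(1-\beta_k^2) - \tfrac{2\beta_k(1-\beta_k^n)}{n}}{(1-\beta_k)^2},
\]
and since $\beta<1$ forces $\beta_k^n\to 0$ for every $k\ge 1$, each summand converges to $a_k^2\frac{1-\beta_k^2}{(1-\beta_k)^2}=a_k^2\frac{1+\beta_k}{1-\beta_k}$. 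As the sum over $k$ is finite, the limit passes inside, giving $\lim_{n\to\infty} n\cdot e_\pi(S_n,f)^2 = \sum_{k=1}^{\abs{D}-1} a_k^2\frac{1+\beta_k}{1-\beta_k}$, and by the first step the same value is the limit of $n\cdot e_\nu(S_{n,n_0},f)^2$.

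There is no genuine obstacle: everything reduces to Proposition~\ref{expl_stat}, Corollary~\ref{prop_stat}, Lemma~\ref{err_prop_fin_2} and Lemma~\ref{lemma_U_fin}. The only points that need care are (i) carrying the factor $\norm{f}{2}^2$ through instead of normalizing, and (ii) observing that the initial-distribution correction decays like $n^{-2}$, which is fast enough to disappear after multiplication by $n$ — so the burn-in $n_0$ and the choice of $\nu$ have no effect on the leading-order asymptotics, which is precisely the content of \eqref{asymp_fin}.
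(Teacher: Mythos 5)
Your proposal is correct and follows exactly the paper's route: the bound \eqref{l2_err_fin} is assembled from Lemma~\ref{err_prop_fin_2}, Corollary~\ref{prop_stat} and Lemma~\ref{lemma_U_fin}, and the asymptotics \eqref{asymp_fin} from the $\Oh(n^{-2})$ decay of the initial-distribution correction together with the exact formula of Proposition~\ref{expl_stat}. You merely spell out the termwise limit $W(n,\beta_k)/n \to \frac{1+\beta_k}{1-\beta_k}$ that the paper leaves implicit; there is no substantive difference.
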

\proof
By Lemma~\ref{err_prop_fin_2}, Corollary~\ref{prop_stat} and Lemma~\ref{lemma_U_fin} the estimate 
of \eqref{l2_err_fin} is proven.
By Lemma~\ref{err_prop_fin_2} and Lemma~\ref{lemma_U_fin} the first equality of \eqref{asymp_fin} holds. Then, by Proposition~\ref{expl_stat}
\[
\lim_{n\to\infty} n\cdot e_\pi(S_n,f)^2 = \sum_{k=1}^{\abs{D}-1} a_k^2\, \frac{1+\beta_k}{1-\beta_k}. \sq
\]

\begin{remark}
The error bound \eqref{l2_err_fin} can be interpreted as follows:
The burn-in $n_0$ is necessary to eliminate the influence of the initial distribution $\nu$, 
while $n$ must be large to decrease $e_\pi(S_n,f)$.
Unfortunately the dependence of the initial distribution on the estimate is 
disillusioning for an extension to general state spaces, 
because of the factor $\sqrt{\norm{\frac{1}{\pi}}{\infty}}$. 
One can avoid this factor if one considers error bounds 
with respect to $\norm{f}{p}$ with $p>2$, see Section~\ref{err_bound_gen}.
\end{remark}

Another consequence of Lemma~\ref{err_prop_fin_2} and Lemma~\ref{lemma_U_fin} is 
the following result concerning the asymptotic error for $\norm{f}{2}\leq1$.

\begin{coro}  \label{asymp_err_coro}
  Under the same assumptions as in Theorem~\ref{main_fin} it follows that
  \[
    \lim_{n\to \infty} n\cdot \sup_{\norm{f}{2}\leq 1} e_{\nu}(S_{n,n_0},f)^2 
    = \frac{1+\beta_1}{1-\beta_1}
  \]
  and
  \[
    \lim_{n_0 \to \infty}  \sup_{\norm{f}{2}\leq 1} e_{\nu}(S_{n,n_0},f)^2 
    = 	\frac{1+\beta_1}{n(1-\beta_1)}-\frac{2\beta_1(1-\beta_1^n)}{n^2(1-\beta_1)^2}.
  \]
\end{coro}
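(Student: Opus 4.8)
The plan is to obtain the two limits by a squeeze argument, combining the uniform comparison bound of Lemma~\ref{err_prop_fin_2} (together with Lemma~\ref{lemma_U_fin}) with the exact formula of Corollary~\ref{prop_stat}. Set $C=2\sqrt{\norm{\frac{1}{\pi}}{\infty}}\,\norm{\frac{\nu}{\pi}-1}{2}\,(1-\beta)^{-2}$, a constant depending on neither $n$ nor $n_0$. By Lemma~\ref{err_prop_fin_2} and Lemma~\ref{lemma_U_fin}, for every $f$ with $\norm{f}{2}\le1$,
\[
e_\pi(S_n,f)^2 - \frac{C\beta^{n_0}}{n^2} \;\le\; e_\nu(S_{n,n_0},f)^2 \;\le\; e_\pi(S_n,f)^2 + \frac{C\beta^{n_0}}{n^2}.
\]
Since the correction term does not depend on $f$, I would take the supremum over $\norm{f}{2}\le1$ throughout, which preserves both inequalities, to get
\[
\sup_{\norm{f}{2}\le1}e_\pi(S_n,f)^2 - \frac{C\beta^{n_0}}{n^2} \;\le\; \sup_{\norm{f}{2}\le1}e_\nu(S_{n,n_0},f)^2 \;\le\; \sup_{\norm{f}{2}\le1}e_\pi(S_n,f)^2 + \frac{C\beta^{n_0}}{n^2}.
\]

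Next I would substitute the exact value $\sup_{\norm{f}{2}\le1}e_\pi(S_n,f)^2=\frac{1+\beta_1}{n(1-\beta_1)}-\frac{2\beta_1(1-\beta_1^n)}{n^2(1-\beta_1)^2}$ from Corollary~\ref{prop_stat}. For the first assertion, multiply the displayed two-sided bound by $n$: the outer error becomes $C\beta^{n_0}/n\to0$ as $n\to\infty$ ($n_0$ being fixed), while $n\cdot\sup_{\norm{f}{2}\le1}e_\pi(S_n,f)^2=\frac{1+\beta_1}{1-\beta_1}-\frac{2\beta_1(1-\beta_1^n)}{n(1-\beta_1)^2}\to\frac{1+\beta_1}{1-\beta_1}$. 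Both outer sequences thus converge to $\frac{1+\beta_1}{1-\beta_1}$, and the squeeze gives the first limit.

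For the second assertion I would instead keep $n$ fixed and let $n_0\to\infty$; since $\beta<1$ we have $C\beta^{n_0}/n^2\to0$, so both sides of the two-sided bound tend to $\sup_{\norm{f}{2}\le1}e_\pi(S_n,f)^2=\frac{1+\beta_1}{n(1-\beta_1)}-\frac{2\beta_1(1-\beta_1^n)}{n^2(1-\beta_1)^2}$, which is exactly the claimed value. There is no genuine obstacle here: the only point that needs a little care is that the comparison estimate of Lemma~\ref{err_prop_fin_2} is \emph{uniform} in $f$, so one is allowed to pass to the supremum first and only afterwards take the limit, rather than attempting to interchange a supremum with a limit; the remaining content is the elementary observation that $\beta^{n_0}/n\to0$ and $\beta^{n_0}\to0$ in the respective regimes.
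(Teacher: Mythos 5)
Your proof is correct and follows essentially the same route as the paper: the two-sided bound from Lemma~\ref{err_prop_fin_2} and Lemma~\ref{lemma_U_fin} (your $C\beta^{n_0}/n^2$ is exactly the paper's $c_{n,n_0}$), passing to the supremum using uniformity in $f$, substituting the exact expression from Corollary~\ref{prop_stat}, and squeezing in each limit regime. No gaps.
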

\begin{proof}
 Let us define
  \[
    c_{n,n_0}=\frac{2\beta^{n_0} \sqrt{\norm{\frac{1}{\pi}}{\infty}} 
      \norm{\frac{\nu}{\pi}-1}{2}}{n^2(1-\beta)^2}.
  \]
  One has $\lim_{n\to \infty}n\cdot c_{n,n_0}=0$ and $\lim_{n_0\to \infty} c_{n,n_0}=0$.
  For $\norm{f}{2}\leq1$ we obtain by Lemma~\ref{err_prop_fin_2} and Lemma~\ref{lemma_U_fin} that
  \[
    \abs{e_\nu(S_{n,n_0},f)^2-e_\pi(S_{n},f)^2} \leq c_{n,n_0}.
  \] 
  Hence
\begin{equation}  \label{eq_low_b}
  \sup_{\norm{f}{2}\leq 1}e_\pi(S_{n},f)^2 - c_{n,n_0} 
  \leq \sup_{\norm{f}{2}\leq 1}  e_\nu(S_{n,n_0},f)^2 
\leq \sup_{\norm{f}{2}\leq 1} e_\pi(S_{n},f)^2 + c_{n,n_0}.
\end{equation}
By Corollary~\ref{prop_stat} 
we have
\[
\sup_{\norm{f}{2}\leq1}e_\pi(S_n,f)^2
= 	\frac{1+\beta_1}{n(1-\beta_1)}-\frac{2\beta_1(1-\beta_1^n)}{n^2(1-\beta_1)^2} .
\]
By taking the limits in \eqref{eq_low_b} the assertions are proven.
\end{proof}

\begin{remark}
The number
\[
\tau=\frac{1+\beta_1}{1-\beta_1}
\]
is called an \index{autocorrelation time}\emph{autocorrelation time} of $P$, see \cite{sokal,mathe1}.
If one could sample from $\pi$ then $\beta_1=0$ so that $\tau=1$.
Hence $\tau$
is the factor of computing time 
which quantifies the asymptotic difference of Markov chain Monte Carlo compared to 
Monte Carlo with an i.i.d. sample from the distribution $\pi$.   
\end{remark}
\begin{remark}  \label{rem_low_bound}
Observe that one obtains from \eqref{eq_low_b} a lower error bound for $S_{n,n_0}$. 
We have
\begin{equation*} \label{low_est_fin}
\frac{1+\beta_1}{n(1-\beta_1)}-\frac{2}{n^2(1-\beta_1)^2}  - c_{n,n_0} 
\leq \sup_{\norm{f}{2}\leq1} e_\nu(S_{n,n_0},f)^2 
\leq  \frac{2}{n(1-\beta_1)} + c_{n,n_0}
\end{equation*}
with $c_{n,n_0}$ defined as in the proof of Corollary~\ref{asymp_err_coro}.
For a reasonable burn-in of the Markov chain the error can be effectively approximated by these estimates. 
We apply these estimates to illustrating examples, see Section~\ref{toy_fin}.
Now let us discuss which burn-in is reasonable.
\end{remark}

\section{Burn-in} \label{burn_in_fin}

Assume that computer resources for $N$ steps of the Markov chain are available, i.e. $N=n+n_0$.
The goal is to choose the burn-in $n_0$ and the number $n$ such that the upper error bound is as small as possible. 
There is obviously a trade-off between the choice of $n$ and $n_0$.
In the next statement the error for an explicitly given burn-in is stated. 
\begin{theorem}
\label{main_coro}
Suppose that 
\[
n_0= \max\set{\left\lceil \frac{\log(\sqrt{\norm{\frac{1}{\pi}}{\infty}} \norm{\frac{\nu}{\pi}-1}{2})}{\log(\beta^{-1})}\right\rceil,0}.
\]
Then 
\[
\frac{1+\beta_1}{n(1-\beta_1)}- \frac{4}{n^2(1-\beta)^2}  \leq \sup_{\norm{f}{2}\leq1}e_\nu(S_{n,n_0},f)^2
		\leq \frac{2}{n(1-\beta_1)} 
				+				 
				 \frac{2}{n^2(1-\beta)^2}.
\]
\end{theorem}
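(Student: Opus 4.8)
The plan is to reduce everything to the two-sided bound already recorded in Remark~\ref{rem_low_bound},
\[
\frac{1+\beta_1}{n(1-\beta_1)}-\frac{2}{n^2(1-\beta_1)^2}-c_{n,n_0}
\leq \sup_{\norm{f}{2}\leq1}e_\nu(S_{n,n_0},f)^2
\leq \frac{2}{n(1-\beta_1)}+c_{n,n_0},
\]
with $c_{n,n_0}=\dfrac{2\beta^{n_0}\sqrt{\norm{\frac{1}{\pi}}{\infty}}\norm{\frac{\nu}{\pi}-1}{2}}{n^2(1-\beta)^2}$ as in the proof of Corollary~\ref{asymp_err_coro}. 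Then the whole task is to show that the prescribed choice of $n_0$ forces $c_{n,n_0}\leq \dfrac{2}{n^2(1-\beta)^2}$ and to clean up the $\beta_1$ versus $\beta$ discrepancy in the lower bound.

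First I would abbreviate $C=\sqrt{\norm{\frac{1}{\pi}}{\infty}}\norm{\frac{\nu}{\pi}-1}{2}$ and prove the elementary claim $\beta^{n_0}C\leq 1$ by a two-case argument. If $C\leq 1$, then $\log C\leq 0$, hence $\log(C)/\log(\beta^{-1})\leq 0$ (note $\log(\beta^{-1})>0$ since $\beta<1$), so $n_0=0$ and $\beta^{n_0}C=C\leq 1$. If $C>1$, then $n_0=\lceil \log(C)/\log(\beta^{-1})\rceil\geq \log(C)/\log(\beta^{-1})$, so $n_0\log(\beta^{-1})\geq \log C$, i.e. $\beta^{n_0}\leq C^{-1}$, i.e. $\beta^{n_0}C\leq 1$. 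In both cases $c_{n,n_0}=\dfrac{2\beta^{n_0}C}{n^2(1-\beta)^2}\leq \dfrac{2}{n^2(1-\beta)^2}$.

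The upper bound is then immediate: $\sup_{\norm{f}{2}\leq1}e_\nu(S_{n,n_0},f)^2\leq \dfrac{2}{n(1-\beta_1)}+c_{n,n_0}\leq \dfrac{2}{n(1-\beta_1)}+\dfrac{2}{n^2(1-\beta)^2}$. For the lower bound I would plug the same estimate on $c_{n,n_0}$ into the left inequality above, giving
\[
\sup_{\norm{f}{2}\leq1}e_\nu(S_{n,n_0},f)^2
\geq \frac{1+\beta_1}{n(1-\beta_1)}-\frac{2}{n^2(1-\beta_1)^2}-\frac{2}{n^2(1-\beta)^2},
\]
and then use $\beta_1\leq\beta$, hence $1-\beta_1\geq 1-\beta>0$ and $(1-\beta_1)^{-2}\leq (1-\beta)^{-2}$, so the two negative terms together are at least $-\dfrac{4}{n^2(1-\beta)^2}$. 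This yields exactly the claimed lower bound.

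There is essentially no hard step here; the only point requiring a little care is the case distinction establishing $\beta^{n_0}C\leq 1$ (in particular correctly handling the truncation at $0$ and the sign of $\log C$), together with remembering that the lower bound must be expressed in terms of $\beta$ rather than $\beta_1$, which is where the inequality $\beta_1\leq\beta$ is used.
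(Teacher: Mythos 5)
Your proposal is correct and follows essentially the same route as the paper, whose proof is simply the one-line observation that the claim follows from Theorem~\ref{main_fin} and Remark~\ref{rem_low_bound}; you have just spelled out the two details the paper leaves implicit, namely that the chosen $n_0$ gives $\beta^{n_0}C\leq 1$ (so $c_{n,n_0}\leq 2/(n^2(1-\beta)^2)$) and that $\beta_1\leq\beta$ lets the two negative terms in the lower bound be merged into $-4/(n^2(1-\beta)^2)$.
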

\begin{proof}
The assertion follows from Theorem~\ref{main_fin} and Remark~\ref{rem_low_bound}. 
\end{proof}
Note that $\log(\beta^{-1})=(1-\beta)+\sum_{j=2}^\infty \frac{(1-\beta)^j}{j !}$ 
and $\log(\beta^{-1})\geq1-\beta$.
One might use this observation to estimate the suggested burn-in. 
The choice of the burn-in of Theorem~\ref{main_coro} is justified by the following.

Let us define
\[
  C=\sqrt{\norm{\frac{1}{\pi}}{\infty}} \norm{\frac{\nu}{\pi}-1}{2}
\]
and assume that $\beta_1 = \beta$. 
If the assumption does not hold we may estimate the error bound of 
Theorem~\ref{main_fin} by using $(1-\beta_1)^{-1}\leq (1-\beta)^{-1}$.
For $\norm{f}{2}\leq1$ we want to minimize the error estimate 
\[
\text{est}(n,n_0)=\sqrt{\frac{2}{n(1-\beta)}+\frac{2C \beta^{n_0}}{n^2(1-\beta)^2}} 
\quad
\text{under the constraint that} 
\quad N=n+n_0.
\]

\begin{lemma}
 For $\eta>0$ let
 \begin{align}
C 		&	
			> \left(\frac{\log(\beta^{-1})}{1-\beta}\right)^{1/\eta}, \label{reasonable_C}\\
N		&	> (1+\eta)\,\frac{\log(C)}{\log(\beta^{-1})}+2[\log(\beta^{-1})-(1-\beta)]^{-1}. \label{reasonable_N}
\end{align}
Then there exists an 
\[
 n_{\text{opt}}\in\left[\frac{\log(C)}{\log(\beta^{-1})},(1+\eta)\frac{\log(C)}{\log(\beta^{-1})}\right],
\]
which minimizes the mapping  $n_0\mapsto \text{\rm est}(N-n_0,n_0)$.
\end{lemma}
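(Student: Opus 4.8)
The plan is to analyze the function $n_0 \mapsto \text{est}(N-n_0,n_0)$ directly, or rather its square, since squaring does not change the minimizer. Write $\phi(n_0) = \frac{2}{(N-n_0)(1-\beta)} + \frac{2C\beta^{n_0}}{(N-n_0)^2(1-\beta)^2}$ and treat $n_0$ as a continuous variable in $[0,N)$. First I would compute the derivative $\phi'(n_0)$. Both terms depend on $n_0$ through the factor $(N-n_0)$ in the denominator (which is increasing in $n_0$, pushing toward larger $n_0$) and the second term additionally through $\beta^{n_0}$ (which is exponentially decreasing, also pushing toward larger $n_0$ in that term but the $(N-n_0)^{-2}$ works against it). The competition is: increasing $n_0$ shrinks the $\beta^{n_0}$ factor fast, but also shrinks the usable sample size $n = N-n_0$, which inflates the leading $\frac{2}{n(1-\beta)}$ term. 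So $\phi$ should be decreasing for small $n_0$ and increasing for large $n_0$, giving an interior minimum.

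The key steps, in order: (1) differentiate $\phi$ and clear positive common factors to reduce $\phi'(n_0)=0$ to a transcendental equation of the form $A(N-n_0) = C\beta^{n_0}\bigl(B_1(N-n_0) + B_2\bigr)$ for explicit constants involving $1-\beta$ and $\log(\beta^{-1})$; (2) show $\phi'(0) < 0$ using the hypothesis \eqref{reasonable_N} on $N$ (so that $C\beta^{0}=C$ is large enough relative to $N$ that the second term still dominates the derivative at the left endpoint) — here the $2[\log(\beta^{-1})-(1-\beta)]^{-1}$ correction term in \eqref{reasonable_N} is exactly what is needed to absorb the lower-order discrepancy between $\log(\beta^{-1})$ and $1-\beta$; (3) show $\phi'$ is eventually positive, in particular at $n_0 = (1+\eta)\frac{\log C}{\log(\beta^{-1})}$, where $\beta^{n_0} = C^{-(1+\eta)}$ is so small that the first term's derivative wins — this is where hypothesis \eqref{reasonable_C}, $C > (\log(\beta^{-1})/(1-\beta))^{1/\eta}$, enters, since it guarantees $C^{-\eta} < (1-\beta)/\log(\beta^{-1})$, making the exponentially small factor beat the $\log(\beta^{-1})$ that appears from differentiating $\beta^{n_0}$; (4) conclude by the intermediate value theorem that a zero of $\phi'$, hence a minimizer $n_{\text{opt}}$, lies in the stated interval $\bigl[\frac{\log C}{\log(\beta^{-1})}, (1+\eta)\frac{\log C}{\log(\beta^{-1})}\bigr]$. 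One should also check $\phi$ has no minimizer at the endpoint $n_0 = 0$ (covered by step 2) and that the lower endpoint $\frac{\log C}{\log(\beta^{-1})}$ is itself in $[0,N)$ (covered by \eqref{reasonable_N}, which makes $N$ strictly larger than even $(1+\eta)$ times that quantity).

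The main obstacle I expect is step (2), bounding $\phi'(0)$: the sign of the derivative at the left endpoint is governed by a delicate comparison where the naive estimate $\log(\beta^{-1}) \approx 1-\beta$ is not quite good enough, and one must carry the exact relation $\log(\beta^{-1}) = (1-\beta) + \sum_{j\geq 2}\frac{(1-\beta)^j}{j!} \geq 1-\beta$ noted after Theorem~\ref{main_coro}. The hypothesis \eqref{reasonable_N} is tailored to this — its second summand $2[\log(\beta^{-1})-(1-\beta)]^{-1}$ is precisely the slack needed — so the argument is really a matter of organizing the inequalities so that this term lands where it is needed rather than any deep idea. Steps (1), (3), (4) are routine calculus once the algebra in (1) is written cleanly; the only care needed is to keep all discarded factors manifestly positive so that signs are preserved.
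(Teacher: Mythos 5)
There is a genuine gap in the localization. Your plan checks the sign of $\phi'$ at $n_0=0$ and at $n_0=(1+\eta)\frac{\log C}{\log(\beta^{-1})}$ and then invokes the intermediate value theorem; this only places a \emph{critical point} of $\phi$ somewhere in $\bigl(0,(1+\eta)\frac{\log C}{\log(\beta^{-1})}\bigr)$, which is strictly weaker than the claim. To get a \emph{minimizer} inside $\bigl[\frac{\log C}{\log(\beta^{-1})},(1+\eta)\frac{\log C}{\log(\beta^{-1})}\bigr]$ you must show that $\phi$ is decreasing on all of $\bigl[0,\frac{\log C}{\log(\beta^{-1})}\bigr]$ and increasing on all of $\bigl[(1+\eta)\frac{\log C}{\log(\beta^{-1})},N\bigr)$ — i.e.\ the sign of the derivative must be controlled at the endpoints of the \emph{target} interval and then propagated to the whole complement, not established only at $n_0=0$. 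The paper does exactly this: writing $n=N-n_0$, $a=N-(1+\eta)\frac{\log C}{\log(\beta^{-1})}$, $b=N-\frac{\log C}{\log(\beta^{-1})}$, it computes
\[
\text{est}^2(n)'=\frac{2}{n^2(1-\beta)}\left[\frac{C\beta^{N-n}}{1-\beta}\Bigl(\log(\beta^{-1})-\frac{2}{n}\Bigr)-1\right]
\]
and shows the bracket is negative for every $\tilde a\le a$ and positive for every $\tilde b\ge b$ (the ``for every'' step is one line because both factors $C\beta^{N-n}$ and $\log(\beta^{-1})-\tfrac2n$ move monotonically in the favourable direction once the sign is secured at $a$ resp.\ $b$).

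A second, related misidentification: you locate the ``delicate comparison'' requiring \eqref{reasonable_N} at $n_0=0$, but at $n_0=0$ the factor $C\beta^{n_0}=C\gg1$ makes the inequality easy. The tight point is $n_0=\frac{\log C}{\log(\beta^{-1})}$, where $C\beta^{n_0}=1$ exactly and the condition for the derivative to still point the right way reduces to $b\bigl(\log(\beta^{-1})-(1-\beta)\bigr)>2$ — this is precisely what the summand $2[\log(\beta^{-1})-(1-\beta)]^{-1}$ in \eqref{reasonable_N} buys. Your use of \eqref{reasonable_C} at the other endpoint, where $C\beta^{n_0}=C^{-\eta}$ and one needs $C^{-\eta}<(1-\beta)/\log(\beta^{-1})$, is correct. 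Both defects are repairable with the same calculus you propose, but as written the argument does not prove the stated interval.
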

If $\eta=10^{-3}$, then \eqref{reasonable_C} implies for $\beta=0.99$ that $C>152$ and 
for $C=10^{30}$ that $\beta>0.87$. 
Hence the assumptions are not restrictive, 
since $\beta$ is usually close to $1$, 
$C$ is 
large\footnote{The constant $C$ might depend exponentially on additional parameters, 
see the example ``Random walk on the hypercube'' in Section~\ref{toy_fin} or see Section~\ref{appl}.}  
and the computational resources $N$ should be sufficiently large.
\begin{proof}
 Let  
\[
 a=N-(1+\eta)\frac{\log(C)}{\log(\beta^{-1})} \quad \text{and}\quad b=N-\frac{\log(C)}{\log(\beta^{-1})}.
\] 
 Note that \eqref{reasonable_N} gives that $b>a>0$.
 It is enough to show that there exists an $m_{\text{opt}}\in[a,b]$ which minimizes $n \mapsto \text{est}^2(n)$
 given by
 \[
  \text{est}^2(n)= (\text{est}(n,N-n))^2=\frac{2}{n(1-\beta)}+\frac{2C \beta^{N-n}}{n^2(1-\beta)^2}.
 \]
 We have
\[
\text{est}^2(n)'= \frac{d}{dn}\,  \text{est}^2(n) =\frac{2}{n^2(1-\beta)}
	\left[ \frac{C\beta^{N-n}}{(1-\beta)}\left( \log(\beta^{-1}) - \frac{2}{n} \right)-1 \right].
\]
We will show for any $\tilde{a}\leq a$ and $\tilde{b} \geq b$ that
\[
 \text{est}^2(\tilde{a})'<0 \quad \text{and} \quad \text{est}^2(\tilde{b})'>0,
\]
consequently there exists an $m_\text{opt} \in [a,b]$ which minimizes $\text{est}^2(n)$. 
Let $\tilde{b}\geq b$. Then the inequality $\text{est}^2(\tilde{b})'>0$ follows by 
\eqref{reasonable_N} and
\begin{align*}
& \qquad\quad\; 
			N>\frac{\log(C)}{\log(\beta^{-1})}
			\frac{\left( \frac{2}{\log(C)} + \left( 1- \frac{1-\beta}{\log(\beta^{-1})} \right)  \right)}
			{\left( 1- \frac{1-\beta}{\log(\beta^{-1})} \right) }
			\\
& \Longleftrightarrow \quad 	
	N>\frac{(1-\frac{1-\beta}{\log(\beta^{-1})})\log(C)+2}{\log(\beta^{-1})-(1-\beta)}\\
& \Longleftrightarrow \quad	
				N(\log(\beta^{-1})-(1-\beta))+\log(C)\left(\frac{1-\beta}{\log(\beta^{-1})}-1\right)>2\\
&\Longleftrightarrow \quad		\left( N-\frac{\log(C)}{\log(\beta^{-1})} \right)\log(\beta^{-1})
												-			\left( N-\frac{\log(C)}{\log(\beta^{-1})} \right)(1-\beta)>2				\\
&\Longleftrightarrow \quad b \log(\beta^{-1}) - b (1-\beta)>2\\												
& \Longleftrightarrow \quad 
 \log(\beta^{-1})-\frac{2}{b}>(1-\beta) \\
& \Longleftrightarrow \quad	
	\frac{1}{1-\beta}\left( \log(\beta^{-1}) - \frac{2}{b} \right)-1
=	\frac{C\beta^{N-b}}{1-\beta}\left( \log(\beta^{-1}) - \frac{2}{b} \right)-1  
   > 0\\
& \;\Longrightarrow \quad 
\frac{C\beta^{N-\tilde{b}}}{1-\beta}\left( \log(\beta^{-1}) - \frac{2}{\tilde{b}} \right)-1  
   > 0.
\end{align*}
On the other hand for $\tilde{a}\leq a$ we obtain $\text{est}^2(\tilde{a})'<0$. 
This is shown by the following calculation. By
\eqref{reasonable_C} one has
\begin{align*}
& \qquad\quad\; C^{\eta} \underset{\eqref{reasonable_C}}{>} 
  \frac{\log(\beta^{-1})}{(1-\beta)}\\
& \Longleftrightarrow \quad \log(\beta^{-1})-(1-\beta) C^{\eta} < 0 \\
& \;\Longrightarrow \quad \log(\beta^{-1})-(1-\beta) C^{\eta} < \frac{2}{a}\\
& \Longleftrightarrow \quad \log(\beta^{-1})- \frac{2}{a} < (1-\beta)C^{\eta} \\
& \Longleftrightarrow \quad 
	\frac{C^{-\eta}}{(1-\beta)}\left( \log(\beta^{-1})-\frac{2}{a} \right)-1=
	\frac{C\beta^{N-a}}{(1-\beta)}\left( \log(\beta^{-1})-\frac{2}{a} \right)-1<0\\
& \;\Longrightarrow \quad
 \frac{C\beta^{N-\tilde{a}}}{(1-\beta)}\left( \log(\beta^{-1})-\frac{2}{\tilde{a}} \right)-1<0.
\end{align*}
Altogether this implies that there is an 
\[
 n_\text{opt}\in\left 
[\frac{\log(C)}{\log(\beta^{-1})},(1+\eta)\frac{\log(C)}{\log(\beta^{-1})}\right]
\]
which minimizes the mapping $n_0\mapsto \text{est}(N-n_0,n_0)$.
\end{proof}


If an error of at most $\e \in (0,1)$ is desired, then the suggested choice of the burn-in $n_0$ 
is independent of the precision $\e$, we choose 
\[
n_0= \max\set{\left\lceil \frac{\log(\sqrt{\norm{\frac{1}{\pi}}{\infty}} \norm{\frac{\nu}{\pi}-1}{2})}{\log(\beta^{-1})}\right\rceil,0}
\]
and
\[
n\geq \frac{ 1+\sqrt{1+4\e^2}}{(1-\beta)\e^2} \qquad \text{to achieve}\qquad e_\nu(S_{n,n_0},f) \leq \e.
\]  

\section{Examples}  \label{toy_fin}
The goal is to compare the upper bounds of Theorem~\ref{main_fin} and Theorem~\ref{main_coro} 
with the exact error for a given function $f\in  \R^D$. 
It is not known which $f$ with $\norm{f}{2}\leq 1$ maximizes
$
e_\nu(S_{n,n_0},f)^2.
$ 
But by 
Corollary~\ref{asymp_err_coro} one has
\[
\lim_{n_0\to \infty} \sup_{\norm{f}{2}\leq 1} e_\nu(S_{n,n_0},f)^2 = e_\pi(S_n,u_1)^2,
\]
where $u_1$ is the eigenfunction corresponding to $\beta_1$. 
This motivates the study of
the individual error for $u_1$,
which gives the maximal error for integrands $f$ with $\norm{f}{2}\leq 1$ if $n_0$ goes to infinity. 
In this section illustrating examples are considered, 
where the eigenvalues and the eigenfunctions are available. 
The Markov chains are very well studied in the literature, see
\cite{meise,saloff_coste, stroock_book,bassetti_dia, peres}. 
\subsection*{Random walk on a circle}
Let $T \geq 3$ be an odd natural number. Let $D=\Z_T$ be the underlying state space, 
where $\Z_T= \Z \mod T$ denotes the cyclic group of order $T$. 
The $T\times T$ transition matrix of the random walk is determined by
\[
p(x,y)=	\begin{cases}
		\frac{1}{2}, & \quad y=x\pm 1 \mod T, \\
		0, 	     & \quad \text{otherwise}.
	\end{cases}			
\]
The transition matrix is reversible with respect to the uniform distribution 
given by $\pi(x)=1/T$ for $x\in D$. 
Since $T$ is an odd number we obtain that the transition matrix is aperiodic, 
for even $T$ it would be periodic. 
The eigenvalues of the transition matrix are
\[
  \beta_0=1, \quad 
  \beta_{2j-1}=\beta_{2j}=\cos\left(\frac{2\pi j}{T}\right),\qquad j=1,\dots,\frac{T-1}{2},
\]
and the orthonormal eigenfunctions $\set{ u_0,u_1,\dots,u_{T-1}}$ are
\[
  u_0(x)=1, \quad u_{2j-1}(x)=\sqrt{2} \cos\left(2\pi\, \frac{jx}{T}\right),
	    \quad u_{2j}(x)=\sqrt{2} \sin\left(2\pi\, \frac{jx}{T}\right),
\]
where $j=1,\dots,\frac{T-1}{2}$ and $x\in D$.
Clearly $\beta=\abs{\beta_{T-1} }=\cos(\frac{\pi}{T})$, thus $\beta\neq \beta_1$.\\ 

Let us consider $f=u_1$. 
The initial distribution is chosen  
as $\nu=\delta_0$
, so that the initial state is $0\in D$.
By $(u_1)^2=u_0+\frac{1}{\sqrt{2}} u_3$ it is
\[
\scalar{u_i}{(u_1)^2}=\scalar{u_i}{u_0}+\frac{1}{\sqrt{2}} \scalar{u_i}{u_3} 
			= \begin{cases}
			1, & \quad i=0,\\
		        \frac{1}{\sqrt{2}}, &\quad i=3,\\
			0, & \quad \text{otherwise}.
			\end{cases}
\]
Hence by \eqref{L_k_u} we obtain
\begin{align*}
L_k((u_1)^2)
& = \sum_{i=1}^{T-1} \beta_i^k 
	\scalar{u_i}{(u_1)^2} u_i(0) 
=\beta_3^k.
\end{align*}
Additionally with $\beta_1=\cos(\frac{2\pi}{T})$ it is
\[
e_\pi(S_n,u_1)^2 
= \frac{1+\cos(\frac{2\pi}{T})}{n(1-\cos(\frac{2\pi}{T}))}
  -\frac{2\cos(\frac{2\pi}{T})(1-\cos^n(\frac{2\pi}{T}))}{n^2(1-\cos(\frac{2\pi}{T}))^2}.
\]
The exact error is determined by Corollary~\ref{connection_u_i} with 
 $\beta_3=\cos(\frac{4\pi}{T})$ so that 
\begin{align}  \label{circle_exact}
e_\nu(S_{n,n_0},u_1) & = \Bigl(  e_\pi(S_n,u_1)^2 \notag \\								
		& \qquad +\frac{1}{n^2} \sum_{j=1}^n \Bigl( \frac{1+\cos(\frac{2\pi}{T})-2\cos^{n-j+1}(\frac{2\pi}{T})}{1-\cos(\frac{2\pi}{T})} \Bigr) 
							\cos^{j+n_0}(\frac{4\pi}{T})) \Bigr)^{1/2}.
\end{align}
We apply Theorem~\ref{main_coro} to get a lower error bound and \eqref{l2_err_fin} of
 Theorem~\ref{main_fin} to get an upper error bound,
 since $\beta\neq \beta_1$.
Hence the burn-in is chosen as suggested in Theorem~\ref{main_coro}, i.e.    
\[
n_0=
		\left \lceil 
		\frac{1}{2}\frac{\log(T^2-T)}{\log(\cos^{-1}(\frac{\pi}{T}))}
		\right \rceil .
\]
Then 
\begin{equation} \label{circle_ub}
	e_\nu(S_{n,n_0},u_1)
	\leq \left( 
      \frac{2}{n(1-\cos(\frac{2\pi}{T}))}+\frac{2}{n^2(1-\cos(\frac{\pi}{T}))^2}
 	  \right)^{1/2}
\end{equation}
and
\begin{equation} \label{circle_lb}
   \left(
\frac{1+\cos(\frac{2\pi}{T})}{n(1-\cos(\frac{2\pi}{T}))}- \frac{4}{n^2(1-\cos(\frac{\pi}{T}))^2}
    \right)^{1/2} 
    \leq e_\nu(S_{n,n_0},u_1).
\end{equation}
We have an explicit exact error formula \eqref{circle_exact}, 
a lower error bound \eqref{circle_lb} and an upper error bound \eqref{circle_ub}. 

\begin{figure}[htb] 
  \begin{center}
    \includegraphics[height=9cm]{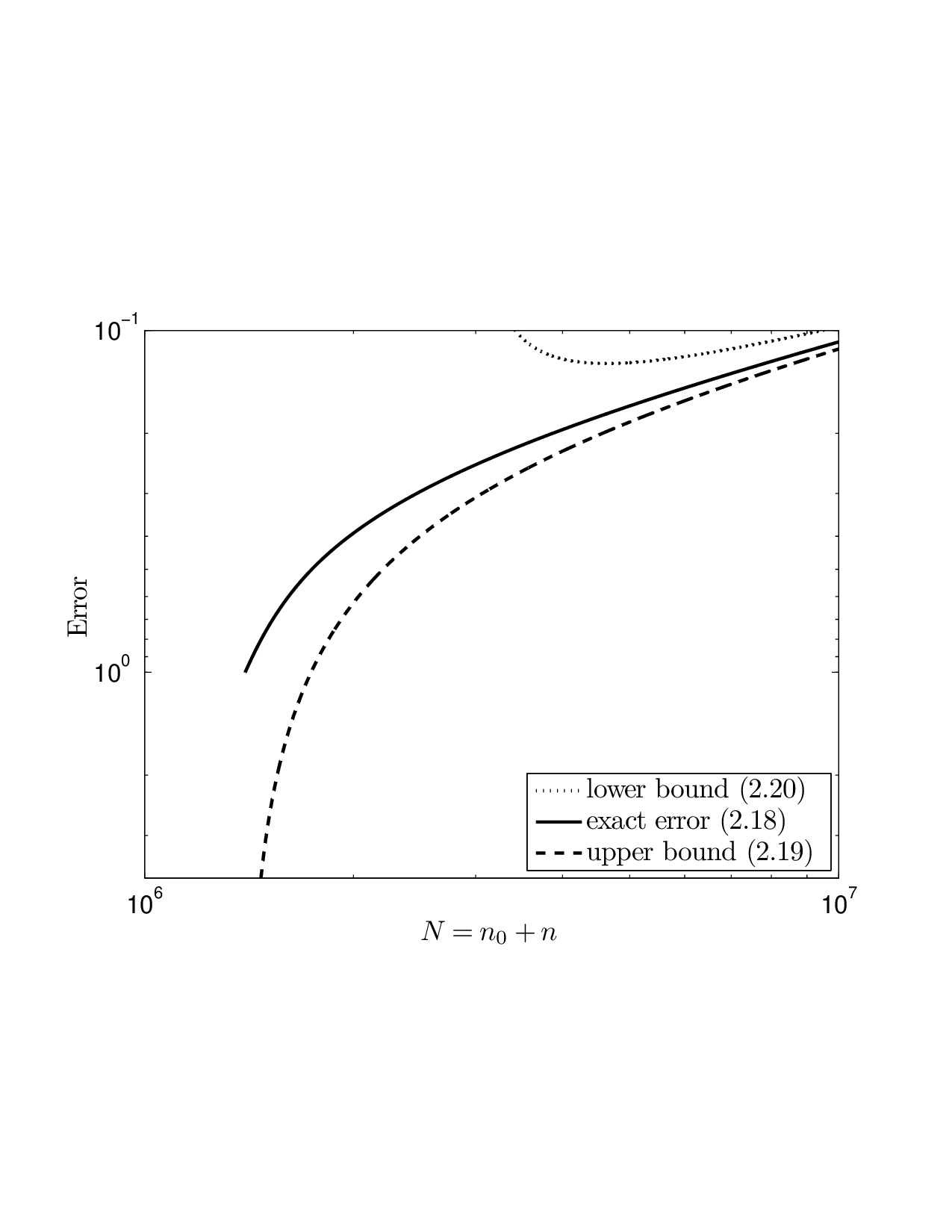}
    \caption{Random walk on a circle: Exact error and error bounds,
$T=999$ and $n_0=\left \lceil \frac{1}{2}\frac{\log(T^2-T)}{\log(\cos^{-1}(\frac{\pi}{T}))}\right \rceil $ = 1396699.}
    \label{n_opt_low_up_p999}
    \end{center}  
\end{figure} 

In Figure~\ref{n_opt_low_up_p999} the different bounds 
of \eqref{circle_ub}, \eqref{circle_lb} and the exact error of \eqref{circle_exact} are plotted for $T=999$. 
The curves start at $N = n_0$, since
the computational resources must be larger than the burn-in $n_0=1396699$. 
The lower error bound gives a non-trivial estimate if $N \geq n_0 + 1617911 = 3014610$, 
since for $n\geq \frac{4(1-\beta_1)}{(1+\beta_1)(1-\beta)^2}=1617911$ one obtains a lower bound larger as zero.
\subsection*{Random walk on the hypercube}
Let $d$ be a natural number.
Let $D=\set{0,1}^d$ be the state space and $\abs{\tilde{x}}=\sum_{i=1}^d \abs{\tilde{x}_i}$ for $\tilde{x}\in \set{-1,0,1}^d$.
The $2^d \times 2^d $ transition matrix is given by
\[
p(x,y)=
\begin{cases}
	\frac{1}{2}, 	& 	\quad x=y,\\
	\frac{1}{2d}, 	&	\quad \abs{x-y}=1,\\
	0,		&	\quad \text{otherwise}.
\end{cases}
\] 
The transition matrix is reversible with respect to $\pi(x)=2^{-d}$ for $x\in D$.
Furthermore, it is aperiodic and irreducible.
We use a different notation for the index of the eigenvalues and orthonormal eigenfunctions, 
for $z\in \set{0,1}^d$ one has 
 \[
\beta_z = 1-\frac{\abs{z}}{d} \quad \text{ and } \quad u_z (x) = (-1)^{\sum_{i=1}^d z_i x_i}, \quad x \in D.
\]
Set $[0]=(0,\dots,0)$ and set $[1]=(1,0,\dots,0)$ so that
\begin{align*}
 \beta_{[0]} &=1, \qquad\qquad \text{ and} \qquad u_{[0]}(x)=1,\quad x\in D,\\
 \beta_{[1]} &= 1- \frac{1}{d}, \qquad \text{ and } \qquad  u_{[1]}(x)=(-1)^{x_1},\quad x\in D.
\end{align*}

Obviously for all indizes $z\in \set{0,1}^d$ the eigenvalue $\beta_z \geq 0$ so that $\beta_{[1]}=\beta$.\\

Let us choose the initial state of the Markov chain deterministically in $(0,\dots,0)\in D$, i.e.
$\nu=\delta_{[0]}$. 
By $(u_{[1]})^2 = u_{[0]}$ one has for index $z\in\set{0,1}^d$ that
\[
\scalar{u_z}{(u_{[1]})^2}=\begin{cases}
				1, & \quad z=[0],\\ 
				0,	& \quad \text{otherwise}.
		      \end{cases}
\]
This implies
\[
L_k((u_{[1]})^2)=0,\quad k\in\N.
\]
The error of $S_n$, if the initial state is chosen by $\pi$, obeys
\[
e_\pi(S_n,u_{[1]})^2 
= \frac{2d-1}{n}-\frac{2(d^2-d)}{n^2} 
				\left( 1-\left(1-\frac{1}{d}\right)^n \right). 
\]
Then by Corollary~\ref{connection_u_i} it is
\begin{equation} \label{hyper_exact}
	e_\nu(S_{n,n_0},u_{[1]})=e_\pi(S_n,u_{[1]}).
\end{equation}
The burn-in and the error bounds are determined by Theorem~\ref{main_coro}. One obtains 
\[
n_0=\left\lceil \frac{1}{2} \frac{\log(2^{2d}-2^d)}{\log(1-\frac{1}{d})^{-1}}  \right\rceil
\]
such that 
\begin{equation}  \label{hyper_u}
 	e_\nu (S_{n,n_0},u_{[1]} ) \leq 
	\sqrt{\frac{2d}{n} + \frac{2d^2}{n^2}},
\end{equation}
and
\begin{equation}  \label{hyper_l}
    \sqrt{\frac{2d-1}{n} - \frac{4d^2}{n^2}}
\leq e_\nu (S_{n,n_0},u_{[1]} ).
\end{equation}
In Figure~\ref{n_opt_low_up_d50} for $d=50$ the exact error \eqref{hyper_exact}, 
the upper error bound \eqref{hyper_u} and the lower error bound \eqref{hyper_l} are plotted. 
It can be seen that after the burn-in the curves are close to each other. 
\begin{figure}[htb] 
  \begin{center}
    \includegraphics[height=9cm]{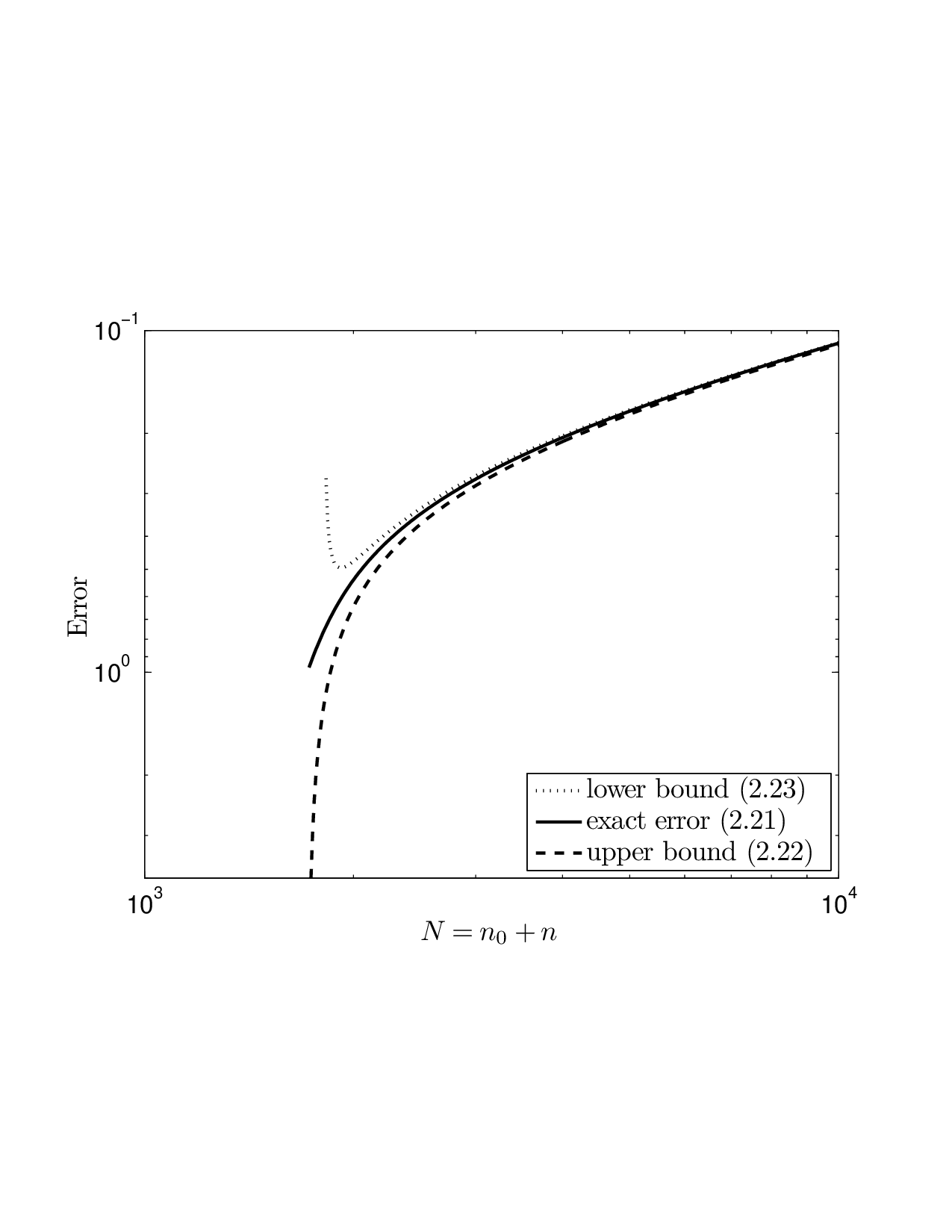}
    \caption{Random walk on the hypercube: 
    Exact error and error bounds, 
    $d=50$ and $n_0=\left\lceil \frac{1}{2} \frac{\log(2^{2d}-2^d)}{\log(1-\frac{1}{d})^{-1}}  \right\rceil
 = 1716.$}
    \label{n_opt_low_up_d50}
    \end{center}  
\end{figure} 
The error bounds are polynomial in $d$
which is of the magnitude of $\log(\abs{D})$.
%
%
\subsection*{Random walk on the star}
Let $T\geq 2$ be an even natural number. Let the state space $D=\set{0,1,\dots,T}$. 
The $(T+1)\times(T+1)$ transition matrix is given by
\[
p(x,y)=	\begin{cases}
		\theta, 	& \quad x=y=0,\\
		\frac{1-\theta}{T},	& \quad x=0,\;y \in D\setminus\set{0},\\
		1,				&	\quad x\in D\setminus\set{0},\;y=0,\\
		0, 		& \quad \text{otherwise},
	\end{cases}
\]
with a parameter $\theta \in (0,1)$.
The transition graph is star shaped since every state is connected solely with the center $0$.
The transition matrix is reversible with respect to $\pi$, for $x\in D$ given by
\[
\pi(x)=	\begin{cases}
	\frac{1}{2-\theta},		&  	\quad x=0,\\
	\frac{1-\theta}{T(2-\theta)},	&	\quad \text{otherwise}.
	\end{cases}
\]
One obtains $\beta_0=1$, $\beta_{T}  =\theta-1$ 
and for $x\in D$ one has
\[
u_0(x)=1, \qquad
    u_T(x)= \sqrt{1-\theta}
	\begin{cases}	
	\;\;\,1, & \quad x=0,\\
		\frac{1}{\theta-1}, &\quad \text{otherwise}.
	\end{cases}
\]
The eigenvalue $\beta_i=0$ for 
$i\in\set{1,\dots, T-1}$
is of multiplicity $T-1$. 
Without loss of generality we may assume that for any $x\in D$ one has
\[
u_1(x)=\begin{cases}
	 \quad  0,& \quad x=0,\\
	\;\;\,\sqrt{\frac{2-\theta}{1-\theta}},& \quad x=1,\dots,T/2,\\
	-\sqrt{\frac{2-\theta}{1-\theta}},& \quad x=T/2+1,\dots,T.
       \end{cases}
\]
The remaining eigenvectors $u_2,\dots,u_{T-1}$ are arbitrarily chosen such that we get an orthonormal basis
$\set{u_0,u_1,\dots,u_T}$.
One has an aperiodic and irreducible transition matrix where $\beta_1=0$ 
and $\beta=\max\set{ \beta_1,\abs{\beta_T} } = 1-\theta$.
We consider the error for $f=u_1$.
The initial state is given as the center of the star, i.e. $0$. 
Then $\nu=\delta_0$.
From $(u_1)^2= u_0 - \frac{1}{\sqrt{1-\theta}}\, u_T$
one gets
\[
\scalar{u_i}{(u_1)^2} = \begin{cases}
			 \quad 1, & \quad i=0,\\
			-\frac{1}{\sqrt{1-\theta}},	&	\quad i=T,\\
			\quad 0, & \quad \text{otherwise}.
			\end{cases}
\]
By \eqref{L_k_u} this implies
\begin{align*}
L_k((u_1)^2) &  
= \sum_{i=1}^T \beta_i^k \scalar{u_i}{(u_1)^2} u_i(0)
= -\beta_T^k 
= -(\theta-1)^{k}.
\end{align*}
The error where the Markov chain is initialized by the stationary distribution obeys
\[
e_\pi(S_n,u_1)^2 = \frac{1}{n}.
\]
Then by Corollary~\ref{connection_u_i} it follows that
\begin{align}
e_\nu( S_{n,n_0}, u_1)
\label{star_exact}
 = \left(\frac{1}{n}	- 
   \frac{(\theta-1)^{n_0+1}\left(  (\theta-1)^{n}-1 \right)}
    {(\theta-2)n^2}\right)^{1/2}.																					
\end{align}	
Recall that $\beta_1\neq \beta$. 
However, we only use the error bounds of Theorem~\ref{main_coro}.
The burn-in is chosen as 
\[
n_0=\left\lceil \frac{\log((2-\theta)T)}{2\log(1-\theta)^{-1}} \right\rceil.
\]
%
%
Then the upper bound is
\begin{equation} \label{star_u}
 e_{\nu} (S_{n,n_0}, u_1) \leq 
  \sqrt{\frac{2}{\theta n} + \frac{2}{\theta^2 n^2}},
\end{equation}
and the lower bound is given as 
\begin{equation} \label{star_l}
  \sqrt{    \frac{1}{n}-\frac{4}{\theta^2 n^2} }
\leq e_{\nu} (S_{n,n_0}, u_1).
\end{equation}
In Figure~\ref{n_opt_low_up_start_T100000_theta01} 
for $\theta=0.1$ and $T=10^5$ the exact error \eqref{star_exact}, 
the upper error bound \eqref{star_u} and the lower bound \eqref{star_l} are plotted. 
\begin{figure}[htb] 
  \begin{center}
    \includegraphics[height=9cm]{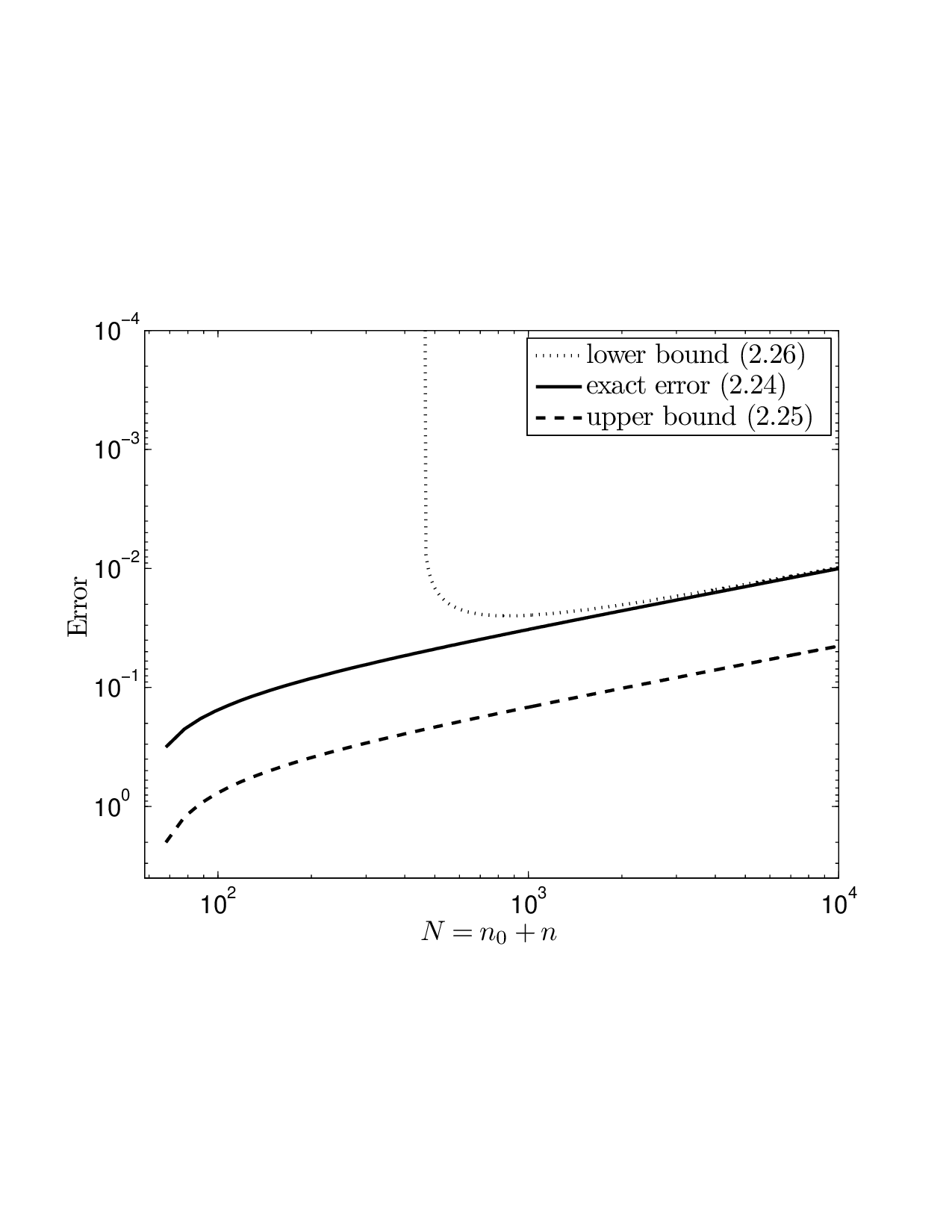}
    \caption{Random walk on the star: Exact error and error bounds, 
    	$\theta=0.1$, $T=10^5$ and
    	$n_0=\left\lceil \frac{\log((2-\theta)T)}{2\log(1-\theta)^{-1}} \right\rceil=58$.}
    \label{n_opt_low_up_start_T100000_theta01}
    \end{center}  
\end{figure}
For $n\geq \frac{4}{\theta^2}$ we get a non-trivial estimate by the lower bound. 
The upper error bound is shifted down since $\beta \neq \beta_1$. 
One could improve this by using \eqref{l2_err_fin} of Theorem~\ref{main_fin} directly. 
In the present setting one looses asymptotically a factor of $\sqrt{2/\theta}$.\\
%
%
%

Let us summarize the important facts of this section.
The error was considered for the eigenfunction $u_1$
corresponding to $\beta_1$. If $n_0$ goes to infinity, then $u_1$
is the function which maximizes the error 
for integrands $f$ with $\norm{f}{2}\leq1$. 
The bound of Theorem~\ref{main_coro} applied in this setting gives tight results if $\beta_1=\beta$. 
Otherwise Theorem~\ref{main_fin} achieves the right asymptotic coefficient if $\beta_1$ and $\beta$ are known. 
For the considered examples one has the eigenvalues and the eigenfunctions explicitly.
In applications it is usually difficult to estimate $\beta_1$ or $\beta$, 
but there are different auxiliary tools, e.g. canonical path technique, 
conductance (see \cite{jerrum} and \cite{dia_stroock_eigen}), log-Sobolev inequalities and path coupling,
see \cite{peres}.  
However, if the eigenvalues $\beta_1$ and $\beta_{\abs{D}-1}$ are available, then the error can be 
approximated by the lower and upper bound.

\section{Notes and remarks}  \label{notes_finite}
Let us comment how the results fit into the published literature. 
An elementary and powerful technique how to bound the error for $S_{n,n_0}$ or $S_n$ is based on Doeblin's theory,
see \cite[pp.~27]{stroock_book}. 
Let $A_k=(a_k(x,y))_{x,y\in D}$ be the $k$th Ces{\`a}ro sum given by
\[
A_k = \frac{1}{k} \sum_{j=0}^{k-1} P^j. 
\]
Assume that
\begin{equation} \label{doeblin_cond}
	 \exists M \in \N\setminus\set{1},\, y_0 \in D \;\text{and}\; \gamma>0 \quad\text{such that}\quad \forall x\in D:\quad
 a_M(x,y_0) \geq \gamma.
\end{equation}
Then for any $n_0$ the error obeys
\[
e_\nu(S_{n,n_0},f)^2 \leq \frac{8(M-1)}{n \gamma} \norm{f}{\infty}^2.
\]
Condition \eqref{doeblin_cond} states that there is a state $y_0$ where 
the expected value of visiting it, in average, until $M$ 
from every other state is uniformly bounded from below by rate $\gamma$. 
If the transition matrix is irreducible then there exists an $M$ 
such that $A_M > 0$ and one has that \eqref{doeblin_cond} is satisfied (see for example \cite[Lemma~7.3, p.~50]{behrends}). 
It is difficult to obtain $\gamma$ and $M$. 
Let us state a toy example where one can compute $\gamma$ and 
$M$ explicitely. 
Let $D=\set{0,1}^d$. 
We consider a Markov chain, which independently samples with respect to $\pi$, 
with $\pi(x)=2^{-d}$ for $x\in D$. This is Monte Carlo with an i.i.d. 
sample. Consequently we get as best possible parameters $\gamma=2^{-d-1}$ and $M=2$. 
The error estimate behaves exponentially bad in terms of $d$. 
In contrast, the estimate of Theorem~\ref{main_fin} is independent of $d$.
In general, even if one can get $\gamma$ and $M$, then these constants are often exponentially bad in terms of some other parameters.  
Usually $\gamma$ is close to zero and $M$ is huge.
However, with this bound even the periodic case is covered and reversibility is not necessary. 
But on the other hand 
the optimal coefficient $\frac{1+\beta_1}{1-\beta_1}$ of the leading term of 
Corollary~\ref{prop_stat} is not reached and the burn-in cannot be used to tune the algorithm.\\


The approach to use the spectral representation of reversible transition matrices is not new.
In \cite{bassetti_dia} the result of Proposition~\ref{expl_stat} is presented.
By the same arguments a slightly worse bound is shown in \cite[Proposition~4.1, p.~40]{aldous}. It applies if $\beta_1 \geq 0$ and gives 
\begin{equation}  \label{aldous_stat}
e_\pi(S_n,f)^2 \leq \frac{2}{n(1-\beta_1)}\norm{f}{2}^2 + \frac{2 \exp\set{-n(1-\beta_1)}  }{n^2(1-\beta_1)^2 }\norm{f}{2}^2.
\end{equation}
Furthermore if the initial distribution $\nu$ is not the stationary one, a different algorithm is considered.
Namely, the burn-in $n_0^*$ is randomly chosen, independent of $(X_n)_{n\in\N}$, 
by the Poisson distribution with parameter $n_0$, and
\[
S^*_{n,n_0} (f) = \frac{1}{n} \sum_{j=1}^n f(X_{j+n_0^*}). 
\]
Then it is proven in \cite[Proposition~4.2, p.~41]{aldous} that
\[
e_\nu(S^*_{n,n_0},f)^2 \leq e_\pi(S_n,f)^2 \left( 1 +  \norm{\frac{1}{\pi}}{\infty} \exp\set{-n_0(1-\beta_1)}  \right).
\]
This bound applies also for periodic Markov chains and after applying \eqref{aldous_stat} 
it gives an estimate with respect to $\norm{\cdot}{2}$. 
The optimal coefficient $\frac{1+\beta_1}{1-\beta_1}$ of the leading term, see Corollary~\ref{asymp_err_coro}, is not reached, 
also if Corollary~\ref{prop_stat} instead of \eqref{aldous_stat} is applied. 
The burn-in $n_0^*$ is randomly chosen rather than deterministically, since then one can translate the discrete time Markov chain into a continuous time Markov chain and avoids discussions of negative eigenvalues. 
This technique is similar to the idea of considering a lazy Markov chain. \\

In \cite{niemiro_poka} an explicit error bound is published which holds
also for non-reversible Markov chains with an absolute $\ell_2$-spectral gap, 
i.e. $\beta=\norm{P}{\ell_2^0 \to \ell_2^0} < 1$.
In the proof of the error bound the multiplicative reversibilization $PP^*$ of $P$ is used, 
where $P^*$ is the adjoint operator of $P$ 
acting on $\ell_2$.
It follows from \cite[Corollary~4.2, p.~320]{niemiro_poka} that
\[
e_\nu(S_{n,n_0},f)^2 \leq \frac{1+\beta}{n(1-\beta)} \norm{f}{2}^2 + \frac{2 \beta}{(1-\beta )^2 n^2} \norm{f}{2}^2 
	+ \frac{  2(1+\beta)   
	\beta^{n_0} \norm{\frac{\nu}{\pi}-1}{2}}{ (1-\beta ) n^2} \norm{f}{\infty} \norm{f}{2}.
\]
One obtains an error bound uniformly with respect to $\norm{f}{2}$ by using 
$\norm{f}{\infty} \leq \norm{\frac{1}{\pi}}{\infty}^{1/2} \norm{f}{2}$.
The spectral gap can be implied by aperiodicity and irreducibility of the Markov chain, 
see \cite[Lemma~12.1, p.~153]{peres}.
But it is remarkable that the chain can be non-reversible. 
If $\beta=\beta_1$ 
then the error bound has the right coefficient of the leading term. 
Then it is essentially the same bound as in Theorem~\ref{main_fin}.\\

Also confidence estimates of $S_{n,n_0}$ are of interest.  
The goal is to achieve 
for given precision $\e \in (0,1)$ and confidence parameter $\a \in (0,1)$ that
\begin{equation}  \label{confidence}
\Pr( \abs{S_{n,n_0}(f)-S(f)} \geq \e ) \leq \a.
\end{equation}
Such
approximations for confidence intervals can be implied by the mean square error.
\begin{lemma} \label{lemm_markov}
Let $(X_n)_{n\in\N}$ be a Markov chain with transition matrix $P$ and initial distribution $\nu$ 
and let $\e \in (0,1)$. Then 
\[
\Pr( \abs{S_{n,n_0}(f)-S(f)} \geq \e ) \leq \frac{e_\nu(S_{n,n_0},f)^2}{\e^2}.
\] 
\end{lemma}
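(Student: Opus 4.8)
The plan is to recognize this as nothing more than Markov's inequality applied to the squared deviation. First I would introduce the non-negative random variable $Y = \abs{S_{n,n_0}(f)-S(f)}^2$ on the probability space carrying the Markov chain, and note that its expectation with respect to $\expect_{\nu,P}$ is exactly $e_\nu(S_{n,n_0},f)^2$ by the definition of the mean square error recalled at the start of Section~\ref{sec_error_bound}.

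Next I would observe that for $\e\in(0,1)$ the event $\set{\abs{S_{n,n_0}(f)-S(f)} \geq \e}$ coincides with $\set{Y \geq \e^2}$, since squaring is monotone on the non-negative reals. Applying Markov's inequality to $Y$ then yields
\[
  \Pr\lr{\abs{S_{n,n_0}(f)-S(f)} \geq \e} = \Pr(Y \geq \e^2) \leq \frac{\expect_{\nu,P}[Y]}{\e^2} = \frac{e_\nu(S_{n,n_0},f)^2}{\e^2},
\]
which is the claimed bound.

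There is essentially no obstacle here: the only thing to keep straight is that the error $e_\nu(S_{n,n_0},f)$ is defined as the square root of the second moment, so the numerator on the right-hand side is genuinely $\expect_{\nu,P}[Y]$ and not something else. One could, if desired, state Markov's inequality in a footnote for completeness, but it is standard enough that citing it by name suffices. The restriction $\e\in(0,1)$ plays no role in the argument and is presumably kept only for consistency with the surrounding discussion of precision parameters; the inequality holds for every $\e>0$.
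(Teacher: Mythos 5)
Your proof is correct and takes exactly the route the paper intends: its entire proof is the single sentence that the result is an application of the Markov inequality, which you have simply spelled out. Your side remark that the restriction $\e\in(0,1)$ is immaterial to the argument is also accurate.
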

\begin{proof}
The result is an application of the Markov inequality.
\end{proof}
Suppose that $\norm{f}{2}\leq 1$. 
If one applies 
Lemma~\ref{lemm_markov} and the burn-in is chosen as in Theorem~\ref{main_coro} then it follows for
\[
n_0 \geq \frac{\log(\sqrt{\norm{\frac{1}{\pi}}{\infty}} \norm{\frac{\nu}{\pi}-1}{2})}{\log(\beta^{-1})}
\quad \text{and} \quad
n\geq \frac{4\a^{-1}\e^{-2}}{ 1-\beta }
\]
that \eqref{confidence} is true. Note that the burn-in is chosen independently of $\a$.
In \cite[Theorem~12.19, p.~165]{peres} a similar bound is deduced by coupling arguments.
It implies a slightly worse result if the initial state is deterministically chosen. If
\[
n_0 \geq \frac{\log(2 \a^{-1} \norm{ \frac{1}{\pi}}{\infty})}{1-\beta} 
\quad \text{and} \quad
n	\geq \frac{4 \a^{-1} \e^{-2}}{1-\beta}
\]
then \eqref{confidence} is true. The main difference is the dependence of $\a$ in the choice of the burn-in.
One can essentially boost this confidence level 
by using a median of independent runs of the Markov chain Monte Carlo method. 
This is explained in \cite{niemiro_poka}.\\

However, both presented results are far away from well known Chernoff bounds.
These exponential inequalities for finite Markov chains 
are shown in \cite{gillman_chern} for random walks on graphs. 
In \cite{Lez98}, this Chernoff bound was extended 
and refined for Markov chains on finite and general state spaces, 
furthermore for discrete and continuous time.
For irreducible and reversible Markov chains on finite $D$ and $\norm{f}{\infty}\leq 1$ one obtains
from \cite[Theorem~1.1, p.~850]{Lez98} that
\begin{equation} \label{lezaud_exp_in}
%
\Pr( \abs{S_{n,n_0}(f)-S(f)} \geq \e ) \leq 3 \norm{\frac{\nu P^{n_0}}{\pi}}{2} \exp\set{-n (1-\beta_1) \frac{\e^2}{12}}.
\end{equation}
In other words, if 
\[
n_0\geq \frac{\log(\norm{\frac{\nu}{\pi}-1}{2})}{\log(\beta^{-1})} \quad \text{and} \quad 
%
%
%
n\geq \frac{12\e^{-2} \log(6\a^{-1})}{1-\beta_1}
\]
then \eqref{confidence} holds true. This is better than using Lemma~\ref{lemm_markov}.
In \cite{hoeffding_bounds} Hoeffding bounds for reversible Markov chains are presented.\\

Such exponential inequalities also imply an error bound of 
the mean square error by the following well known formula, see for example \cite[Lemma~2.4, p.~26]{kallenberg}.
\begin{lemma} \label{moments_tails}
Let $(X_n)_{n\in\N}$ be a Markov chain with transition matrix $P$ and initial distribution $\nu$.
Then
 \[
  e_\nu(S_{n,n_0},f)^2 = \int_0^\infty \Pr( \abs{S_{n,n_0}(f)-S(f)} \geq \sqrt{\eps} )\, \dint \e.
 \]
\end{lemma}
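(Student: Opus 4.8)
The claimed identity is the standard "layer-cake" (tail-integral) formula for the second moment of a nonnegative random variable, applied to $Y := |S_{n,n_0}(f) - S(f)|$. Concretely, $e_\nu(S_{n,n_0},f)^2 = \expect_{\nu,P}[Y^2]$, and I want to show this equals $\int_0^\infty \Pr(Y \geq \sqrt{\e})\, \dint\e$. First I would recall the elementary fact that for any nonnegative random variable $Y$ and any $p>0$ one has $\expect[Y^p] = \int_0^\infty \Pr(Y \geq t^{1/p})\,\dint t$, which is exactly the cited \cite[Lemma~2.4, p.~26]{kallenberg}. Applying this with $p=2$ and substituting $t = \e$ gives the statement immediately, so the proof is essentially a one-line invocation of a known result together with the observation that $e_\nu(S_{n,n_0},f)^2$ is by definition $\expect_{\nu,P}[Y^2]$.

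If one wanted a self-contained argument rather than a citation, I would proceed as follows. Write $\expect_{\nu,P}[Y^2] = \expect_{\nu,P}\left[\int_0^{Y^2} \dint\e\right] = \expect_{\nu,P}\left[\int_0^\infty \ind\{\e \leq Y^2\}\, \dint\e\right]$, where $\ind$ denotes the indicator. Then apply Tonelli's theorem to interchange the expectation and the integral over $\e$ — this is legitimate since the integrand $(\omega,\e) \mapsto \ind\{\e \leq Y(\omega)^2\}$ is nonnegative and jointly measurable — obtaining $\int_0^\infty \expect_{\nu,P}[\ind\{\e \leq Y^2\}]\, \dint\e = \int_0^\infty \Pr(Y^2 \geq \e)\, \dint\e$. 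Finally observe that $\{Y^2 \geq \e\} = \{Y \geq \sqrt{\e}\}$ since $Y \geq 0$, which yields the claimed formula.

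The main (and only mild) technical point is the measurability and integrability needed to justify the Tonelli interchange; but here $Y$ is a finite function of finitely many coordinates of the chain (a finite sum of bounded functions of the $X_j$), so $Y$ is bounded, $\expect_{\nu,P}[Y^2] < \infty$, and there is no subtlety whatsoever — both sides are finite and the interchange is unconditionally valid. In the write-up I would simply state the layer-cake lemma, note $e_\nu(S_{n,n_0},f)^2 = \expect_{\nu,P}[|S_{n,n_0}(f)-S(f)|^2]$ from the definition, and conclude. I do not expect any real obstacle; this lemma is a bookkeeping device used later to convert the Chernoff-type tail bounds of \cite{Lez98,gillman_chern,hoeffding_bounds} into mean-square error bounds.
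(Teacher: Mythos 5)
Your proof is correct and matches the paper's treatment: the paper gives no proof at all, simply citing the layer-cake formula from Kallenberg (Lemma~2.4), which is exactly the identity $\expect[Y^2]=\int_0^\infty \Pr(Y\geq\sqrt{\e})\,\dint\e$ you invoke and then verify via Tonelli. Your self-contained Tonelli argument is sound (and, as you note, the interchange is unconditional for nonnegative integrands, so even the boundedness remark is only a convenience here since $D$ is finite).
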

By Lemma~\ref{moments_tails} and by \eqref{lezaud_exp_in} one obtains the following error bound
\[
 \sup_{\norm{f}{\infty}\leq 1}e_\nu(S_{n,n_0},f)^2 \leq 
 \frac{36(1 + \beta^{n_0}\norm{\frac{\nu}{\pi}-1}{2}) }{n(1- \beta_1)}.
\]
The asymptotic coefficient as described in Corollary~\ref{prop_stat} and Corollary~\ref{asymp_err_coro} is not reached.
However, the error bound applies also for periodic Markov chains.\\

Let us provide a conclusion.
Different explicit error bounds for finite state spaces are known.
The results presented in Section~\ref{sec_error_bound} are not entirely new. 
In the literature one can find similar estimates where some of the assumptions 
like aperiodicity or reversibility are weakened. 
The justification and discussion of the burn-in in Section~\ref{burn_in_fin} 
and the lower bound of Theorem~\ref{main_coro} seem to be new.
In the following we will extend the results to general state spaces.

\chapter{General state spaces} \label{ch_gen}
In the following we study the mean square error of Markov chain Monte Carlo methods on general state spaces.
The state space can be countable or uncountable. 
In Section~\ref{MC_cont} we provide the basic definitions and properties of Markov chains on general state spaces. 
The estimates of the mean square error are shown in Section~\ref{err_bound_gen}. 
We suggest and justify a recipe how to choose the burn-in in Section~\ref{burn_in_sec}.
Afterwards the error bound is applied to illustrating examples and
finally we discuss how the results fit into the published literature. 

\section{Markov chains} \label{MC_cont}
In this section facts and definitions of Markov chains on general state spaces are stated.
The paper \cite{rosenthal} of Rosenthal and Roberts surveys various results about Markov chains on general state spaces.
For further reading we refer to \cite{revuz,Nummelin,tweed}.\\

Let $(D,\D)$ be a measurable space. 
In most examples $D$ is contained in $\R^d$ and $\D$ is given by $\Borel(D)$, 
where $\Borel(D)$ denotes the Borel $\sigma$-algebra over $D$.
In the following we provide the definition of a transition kernel 
and a Markov chain. 

\begin{defin}[\index{Markov kernel}Markov kernel, \index{transition kernel}transition kernel]
The function $K\colon D \times \D \to [0,1]$ is called a
\emph{Markov kernel} or a \emph{transition kernel} if   
\begin{enumerate}[(i)]
 \item for each $x\in D$ the mapping $A\in\D \mapsto K(x,A)$ is a probability measure on $(D,\D)$, 
 \item for each $A\in\D$ the mapping $x\in D \mapsto K(x,A)$ is a $\D$-measurable real-valued function.
\end{enumerate}
\end{defin}

\begin{defin}[\index{Markov chain}Markov chain]
A sequence of random variables $(X_n)_{n\in\N}$ on a probability space $(\Omega,\F,\Pr)$ mapping into $(D,\D)$ 
is called a \emph{Markov chain with transition kernel $K$} 
if for all $n\in\N$ and $A\in \D$ one has
\[
 \Pr(X_{n+1}\in A\mid X_1,\dots,X_n)=\Pr(X_{n+1}\in A\mid X_n)=K(X_n,A) \quad\text{almost surely.}
\] 
The distribution 
\[
  \nu(A)=\Pr(X_1\in A),\quad A\in\D
\]
is called the \emph{initial distribution}.
\end{defin}


Suppose that we have a transition kernel $K$ and a probability measure $\nu$.
For simplicity let us assume that $D\subset \R^d$ and $\D=\Borel(D)$.
For any transition kernel there exists 
a \emph{random mapping representation}, see for example 
Kallenberg \cite[Lemma~2.22, p.~34]{kallenberg}.
A random mapping representation is a measurable function $\Phi\colon D\times [0,1] \to D$, which satisfies
\[
  \Pr(\Phi(x,Z)\in A) = K(x,A), \quad x\in D,\, A\in \D,   
\] 
where the random variable $Z\colon(\Omega,\F,\Pr) \to ([0,1],\Borel([0,1]))$ 
is uniformly distributed.
Then a Markov chain can be constructed as follows.  
Let $(Z_n)_{n\in\N}$, with $Z_n:(\Omega,\F,\Pr) \to ([0,1],\Borel([0,1]))$, 
be a sequence of i.i.d. random variables with uniform distribution,  
and assume that $X_1$ has distribution $\nu$, then one can see that $(X_n)_{n\in\N}$ defined by 
\[
  X_n=\Phi(X_{n-1},Z_n),\quad n\geq2,
\]
is a Markov chain with transition kernel $K$ and initial distribution $\nu$.\\


The transition kernel $K$ of a Markov chain describes the probability of getting from state $x\in D$ to $A\in\D$ in one step, 
i.e. for all $k\in \N$ one has
\[
K(x,A)=\Pr(X_{k+1}\in A \mid X_k=x).
\] 
The $n$ step transition kernel is inductively given by
\[
K^n(x,A)=\int_D K^{n-1}(y,A)\, K(x,\dint y)=\int_{D}K(y,A)\,K^{n-1}(x,\dint y).
\]
The first equality of the previously stated equation is the definition and for 
a proof of the second equality see \cite[Proposition~1.6, p.~11]{revuz} or \cite[Theorem~3.4.2, p.~61]{tweed}.
The function $K^n$ again constitutes a transition kernel.
The $n$ step transition probability from state $x\in D$ to $A\in\D$ is
\[
\Pr(X_{k+n}\in A \mid X_k=x)=K^{n}(x,A).
\] 
This is seen by integrating over the conditional distribution of the previous step:
\begin{align*}
\Pr(X_{k+1} \in A \mid X_k=x) 	& =K(x,A),\\
\Pr(X_{k+2} \in A \mid  X_k=x) 	& =\int_D \Pr(X_{k+2} \in A \mid X_{k+1}=y , X_k=x) \Pr(X_{k+1}\in \dint y\mid X_k=x)\\
				& =\int_D \Pr(X_{k+2} \in A \mid X_{k+1}=y)\, K(x,\dint y) = K^2(x,A),\\
\vdots			\quad \quad \quad \quad \quad 					& \quad\quad\quad			\vdots \\
\Pr(X_{k+n} \in A \mid  X_k=x) 	& =\int_D \Pr(X_{k+n} \in A \mid X_{k+n-1}=y , X_k=x) \\
				& \qquad \qquad \qquad \qquad \qquad \qquad \qquad \quad \times \Pr(X_{k+n-1}\in \dint y\mid X_k=x)\\
				& =\int_D \Pr(X_{k+n} \in A \mid X_{k+n-1}=y)\, K^{n-1}(x,\dint y) = K^{n}(x,A).	
\end{align*}

In the following let us assume that we have a Markov chain $(X_n)_{n\in\N}$ with transition kernel $K$ 
and initial distribution $\nu$. 
The expectation $\expect_{\nu,K}$ is taken with respect to the joint distribution of $(X_n)_{n\in\N}$,
say $W_{\nu,K}$, 
which is defined on $(D^{\N},\sigma(\A))$  where
\begin{align*}
  D^{\N} & = \set{ \omega=(x_1,x_2,\dots) \mid x_i \in D \text{ for all } i\geq1 } \quad \text{and}\\
  \A & = \bigcup_{k\in\N}\set{A_1\times A_2 \times \dots \times A_k \times D \times \dots \mid    A_i \in \D,\; i=1,\dots,k},
\end{align*}
see \cite[Theorem~3.4.1, p.~60]{tweed} or \cite[Theorem~2.8, p.~17]{revuz}. 
For $k\in \N$ one has with $A_1 \times \dots \times A_k \subset D^k$ that
\begin{equation} \label{Pr_gen}
\begin{split}
& W_{\nu,K}(A_1 \times \dots \times A_k \times D \times \cdots)  = \Pr(X_1 \in A_1,\dots,X_{k} \in A_k) \\ 
& \quad = \int_{A_1} \int_{A_2} \dots  \int_{A_{k-1}}   K(x_{k-1},A_k) K(x_{k-2},\dint x_{k-1}) \dots K(x_1,\dint x_2)\, \nu( \dint x_1). 
\end{split}
\end{equation}
\\

Now we present properties of transition kernels. 
These properties have finite state space counterparts, see Section~\ref{MC_finite}.

By $\mathcal{M}(D)$ denote the set of \emph{real-valued
signed measures}\footnote{The set function $\mu\colon \D \to \R$ is a real-valued signed measure 
if $\mu(\emptyset)=0$ and for pairwise disjoint $A_1,A_2,\dots$, with $A_k\in\D$ for $k\in\N$, one has 
$\mu(\cup_{k=1}^\infty A_k)=\sum_{k=1}^\infty \mu(A_k).$}  on $(D,\D)$.
For any $\nu\in\mathcal{M}(D)$ let us define
\[
\nu P^m(A)=\int_D K^m(x,A)\, \nu(\dint x),\quad A \in\D,\ m\in\N.
\]
Note that the mapping $\nu \mapsto \nu P^m$ defines a linear operator on $\mathcal{M}(D)$.
 If $\nu$ is a probability measure then $\nu P^m$ 
 is the distribution of $X_{m+1}$, where $(X_n)_{n\in\N}$ is a Markov chain
with transition kernel $K$ and initial distribution $\nu$.  

\begin{defin}[\index{stationarity}stationarity]
Let $\pi$ be a probability measure on $(D,\D)$. 
Then $\pi$ is called a \emph{stationary distribution} of a transition kernel $K$
if 
\begin{equation*}
	\pi P (A)
=\pi(A) , \quad A\in \D.
\end{equation*}
\end{defin}  
Roughly spoken that means: Choosing the initial state with respect to a stationary distribution $\pi$, 
then, after a single transitions the same distribution as before arises, i.e. 
\[
 \Pr(X_1\in A)=\pi(A)=\pi P(A)=\Pr(X_2\in A), \quad A\in \D.
\]
\begin{defin}[\index{reversibility}reversibility]
Let $\pi$ be a probability measure on $(D,\D)$. A 
transition kernel $K$ is called \emph{reversible with respect to $\pi$} if 
\begin{equation*}
 \int_B K(x,A)\, \pi(\dint x)=\int_A K(x,B)\, \pi(\dint x), \quad A, B\in\D. 	
\end{equation*}
\end{defin} 
If a transition kernel $K$ is reversible with respect to a distribution $\pi$, then
$\pi$ is a stationary distribution of $K$.
If the initial distribution of a Markov chain with transition kernel $K$ is $\pi$,
then reversibility with respect to $\pi$ is equivalent to 
\[
\Pr(X_1\in A, X_2\in B)= \Pr(X_1 \in B, X_2 \in A),\quad A,B\in \D.
\]
A Markov chain is called reversible with respect to $\pi$, if the corresponding transition kernel is reversible
with respect to $\pi$. 
\begin{defin}[\index{lazy version}lazy version]
Let $K$ be a transition kernel and let $\mathbf{1}_A(x)$ be the indicator function of $A\in\D$ for $x\in D$. 
Then we call
\begin{equation*}
 \widetilde{K}(x,A)= \frac{1}{2}(\mathbf{1}_A(x)+K(x,A)),\quad x\in D,\;A\in\D,
\end{equation*}
the \emph{lazy version of $K$}.
\end{defin} 
If $\pi$ is a stationary distribution of $K$, then $\pi$ is also a stationary distribution of $\widetilde{K}$.
If $K$ is reversible with respect to $\pi$, 
then $\widetilde{K}$ is also reversible with respect to $\pi$. 
For a Markov chain with transition kernel $K$ and initial distribution $\nu$ we may define a \emph{lazy Markov chain},
a Markov chain with transition kernel $\widetilde{K}$ and initial distribution $\nu$.\\

Assume that $\pi$ is a stationary distribution of a transition kernel $K$ and let 
$f\colon D\to\R$ be an integrable function with respect to $\pi$. 
Let us define
\[
P^m f (x) =\int_D f(y)\, K^m(x,\dint y),\quad x\in D,\ m\in\N.
\]
We call $P$ the \emph{Markov operator} or the \emph{transition operator}.
If a Markov chain $(X_n)_{n\in\N}$ with transition kernel $K$ and initial distribution $\delta_x$, 
the point mass at $x\in D$, is given, 
then $P^m f(x)$ is the expectation of $f(X_{m+1})$. \\

Let us state some well known properties of the operator $P$ acting on functions and on signed measures.

\begin{lemma} \label{lem_nuP_pun}
Let $\pi$ be a stationary distribution of the transition kernel $K$ and 
let $f\colon D\to \R$ be an integrable function with respect to $\pi$.
Then one obtains for $\nu \in \mathcal{M}(D)$ that
\begin{equation}   \label{nuP_Pun}
\int_D f(x)\, (\nu P^m) (\dint x) = \int_D (P^m f)(x)\, \nu(\dint x), \quad m\in \N,
\end{equation}
whenever one of the integrals exist.
In particular 
\begin{equation} \label{rem_f}
S(f)=\int_D f(x)\, \pi(\dint x)= \int_D (P^m f)(x)\, \pi(\dint x), \quad m\in\N.
\end{equation}
\end{lemma}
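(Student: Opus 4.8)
The plan is to establish \eqref{nuP_Pun} by the standard measure-theoretic bootstrap from indicator functions up to $\pi$-integrable functions, and then read off \eqref{rem_f} as the special case $\nu=\pi$. First I would check the identity for $f=\mathbf{1}_A$ with $A\in\D$. Unwinding the definitions of $\nu P^m$ and of $P^m$ gives
\[
\int_D \mathbf{1}_A(x)\,(\nu P^m)(\dint x) = (\nu P^m)(A) = \int_D K^m(x,A)\, \nu(\dint x) = \int_D (P^m \mathbf{1}_A)(x)\, \nu(\dint x),
\]
so the claim holds with equality for indicators, and by linearity of both sides in $f$ it holds for every nonnegative simple function.

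Next I would pass to nonnegative measurable $f$, being careful that $\nu$ is a \emph{signed} measure. I would first do the case of a finite positive measure $\nu$: pick simple functions $f_k\uparrow f$ pointwise; then $P^m f_k\uparrow P^m f$ pointwise by monotone convergence inside the kernel integral, and a second application of monotone convergence on each side of \eqref{nuP_Pun} yields the identity for all measurable $f\ge 0$ (both sides being allowed to equal $+\infty$ simultaneously). For a general $\nu\in\mathcal M(D)$ I would invoke the Jordan decomposition $\nu=\nu^+-\nu^-$ into finite positive measures; since $\nu^\pm$ are finite and $K^m$ is a probability kernel, $\nu^\pm P^m$ are finite positive measures, the identity holds for $\nu^+$ and for $\nu^-$, and subtracting finite quantities gives it for $\nu$ — still for $f\ge 0$. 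Finally, for $f$ integrable with respect to the relevant measure I would split $f=f^+-f^-$ and apply the previous step to $f^+$ and $f^-$; the hedge ``whenever one of the integrals exists'' is exactly what makes the subtraction legitimate, since the bootstrap shows that finiteness of one of the two integrals forces finiteness (and equality) of the other.

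The role of the stationarity hypothesis is to guarantee this finiteness in the case $\nu=\pi$, which simultaneously proves \eqref{rem_f}: applying the nonnegative version to $|f|$ gives $\int_D (P^m|f|)(x)\,\pi(\dint x)=\int_D |f|(x)\,(\pi P^m)(\dint x)=\int_D |f|(x)\,\pi(\dint x)<\infty$, so $P^m f$ is $\pi$-integrable and $\int_D (P^m f)\,\dint\pi = S(P^m f) = S(f)$ by subtracting the $f^\pm$ identities. The only genuine subtlety — and the step I would be most careful to state explicitly rather than gloss over — is the order-of-integration interchange buried in the passage from $\int \mathbf{1}_A\,\dint(\nu P^m)$ to $\int K^m(\cdot,A)\,\dint\nu$ and its propagation to general $f$; this is precisely Tonelli's theorem applied to the product of the finite measure $\nu^\pm$ with the probability kernel $K^m$, so it is routine but must be named. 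Everything else is bookkeeping.
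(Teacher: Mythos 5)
Your proof is correct and follows essentially the same route as the paper's: verify \eqref{nuP_Pun} on indicators, extend by linearity to simple functions, then by the standard bootstrap to nonnegative and finally to integrable $f$, with \eqref{rem_f} falling out from $\pi P^m=\pi$. You simply spell out the details (Jordan decomposition of the signed measure, Tonelli for the kernel interchange) that the paper leaves implicit.
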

\begin{proof}
Equation \eqref{rem_f} is an immediate consequence of \eqref{nuP_Pun} and stationarity.
Hence one has to prove \eqref{nuP_Pun}.
The equality holds for indicator functions and for simple functions. Then by the standard 
procedure of integration theory the equality can be extended
to positive and afterwards to integrable functions. 
\end{proof}
Note that if a Markov chain $(X_n)_{n\in\N}$ with transition kernel $K$ and initial distribution $\nu$
is given, then \eqref{nuP_Pun} can be rewritten as
\[
    \expect_{\nu,K}[f(X_{m+1})] = \expect_{\nu,K}[\expect_{\nu,K}[f(X_{m+1})\mid X_1]].
\]
The following result is well known, see for example \cite[equation (1.2), p.~365]{lova_simo1}.
\begin{lemma}  \label{f_eq}
Let the transition kernel $K$ be reversible with respect to $\pi$ and 
let $F\colon D \times D \to \R$. 
Then
\begin{equation}   \label{F_x_y_n}
\int_D \int_D F(x,y)\,K^m(x,\dint y)\,\pi(\dint x)=\int_D \int_D F(y,x)\,K^m(x,\dint y)\,\pi(\dint x),\quad m\in\N,
\end{equation} 
whenever one of the integrals exist.
\end{lemma}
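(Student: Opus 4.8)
The plan is to reduce \eqref{F_x_y_n} to two ingredients: (a) the identity for $m=1$, i.e.\ $\int_D\int_D F(x,y)\,K(x,\dint y)\,\pi(\dint x)=\int_D\int_D F(y,x)\,K(x,\dint y)\,\pi(\dint x)$ whenever one side exists; and (b) the fact that $K^m$ is again a transition kernel that is reversible with respect to $\pi$. Given these, \eqref{F_x_y_n} follows at once by applying (a) to the kernel $K^m$ in place of $K$. Throughout, $F$ is tacitly assumed measurable with respect to the product $\sigma$-algebra $\D\otimes\D$, so that the integrals make sense.

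For (a) I would run the standard measure-theoretic extension. By the definition of reversibility, $\int_B K(x,A)\,\pi(\dint x)=\int_A K(x,B)\,\pi(\dint x)$ for all $A,B\in\D$, and this is precisely the $m=1$ identity for $F=\mathbf 1_{B\times A}$; hence it holds for indicators of measurable rectangles. Both sides of the identity are linear in $F$ and, for nonnegative $F$, increasing under monotone limits (apply monotone convergence first to the inner integral against $K(x,\dint y)$, then to the outer integral against $\pi$), so the class of sets $E\in\D\otimes\D$ for which the identity holds with $F=\mathbf 1_E$ is a Dynkin system containing the intersection-stable generator consisting of rectangles; by Dynkin's lemma it is all of $\D\otimes\D$. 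Linearity extends the identity to nonnegative simple functions, monotone convergence to all nonnegative measurable $F$, and finally, applying the nonnegative case to $F^{+}$ and $F^{-}$ and subtracting (finiteness of one side forces finiteness of the matching side, since the nonnegative case applied to $|F|$ gives $\int_D\int_D |F(x,y)|\,K(x,\dint y)\,\pi(\dint x)=\int_D\int_D |F(y,x)|\,K(x,\dint y)\,\pi(\dint x)$), one obtains the identity whenever one of the two integrals exists.

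For (b) I would induct on $m$; the case $m=1$ is the hypothesis. Suppose $K^{m-1}$ is reversible with respect to $\pi$. Since $K^{m-1}$ is itself a transition kernel, part (a) applies to it. Using $K^m(x,A)=\int_D K(y,A)\,K^{m-1}(x,\dint y)$, for $A,B\in\D$ we have
\[
\int_B K^m(x,A)\,\pi(\dint x)=\int_D\!\int_D \mathbf 1_B(x)\,K(y,A)\,K^{m-1}(x,\dint y)\,\pi(\dint x);
\]
applying (a) for the reversible kernel $K^{m-1}$ with $F(x,y)=\mathbf 1_B(x)K(y,A)$ turns the right-hand side into $\int_D K(x,A)\,K^{m-1}(x,B)\,\pi(\dint x)=\int_D\int_D \mathbf 1_A(y)\,K^{m-1}(x,B)\,K(x,\dint y)\,\pi(\dint x)$, and then applying (a) for $K$ with $F(x,y)=\mathbf 1_A(y)K^{m-1}(x,B)$ gives $\int_A\int_D K^{m-1}(y,B)\,K(x,\dint y)\,\pi(\dint x)=\int_A K^m(x,B)\,\pi(\dint x)$. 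Thus $K^m$ is reversible, closing the induction. Alternatively, since reversibility of $K$ amounts to self-adjointness of the (bounded) Markov operator $P$ on $L_2(\pi)$, the power $P^m$ is self-adjoint, hence $K^m$ is reversible; I would use whichever of these is already at hand in Section~\ref{MC_cont}.

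The only point requiring care is the extension step in (a): one must be sure the Dynkin-class argument is carried out on $\D\otimes\D$, and that the clause ``whenever one of the integrals exists'' is dispatched by routing through $|F|$, $F^{+}$ and $F^{-}$. Everything else is routine rearrangement of iterated integrals of nonnegative kernels.
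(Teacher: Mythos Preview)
Your proposal is correct and follows essentially the same route as the paper: reduce to $m=1$ by first establishing that $K^m$ is reversible, then verify the $m=1$ identity on rectangles, push it to all of $\D\otimes\D$ via Dynkin's theorem, and extend from indicators to simple, nonnegative, and finally integrable $F$. The paper's proof is much terser (it simply asserts that reversibility of $K$ implies reversibility of $K^m$ without spelling out the induction or the self-adjointness argument), but your added detail on this point and on handling the clause ``whenever one of the integrals exists'' via $|F|$, $F^+$, $F^-$ is sound and in the same spirit.
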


\begin{proof}
The reversibility of the transition kernel $K$ implies reversibility 
of the $m$ step transition kernel $K^m$. Hence it is sufficient to show the assertion for $m=1$.
By using the reversibility one has
\[
\int_D \int_D \mathbf{1}_{A\times B} (x,y)\, K(x,\dint y)\, \pi(\dint x)
=\int_D \int_D \mathbf{1}_{A\times B} (y,x)\, K(x,\dint y)\, \pi(\dint x), \quad A,B\in\D.
\]
The equality of the integrals can be extended to arbitrary sets $C\in \D \otimes \D$, 
where $\D \otimes \D$ is the product $\sigma$-algebra of $\D$ with itself. 
This is an application of the Dynkin's Theorem. 
Then it is straightforward to consider the cases where $F$ is a simple function, 
a positive function and finally an integrable one.
\end{proof}

For $p\in[1,\infty)$ let us define
\[
L_p=L_p(D,\pi)=\set{ f\colon D\to \R \mid \norm{f}{p}^p = \int_D \abs{f(x)}^p \pi(\dint x) < \infty }.
\]
For $p=\infty$ the essential-supremum norm with respect to $\pi$ is defined by
\[
\norm{f}{\infty}=\esssup_{y\in D} \abs{f(y)}=\inf_{N \in \D,\ \pi(N)=0} \sup_{y\in D\setminus N} \abs{f(y)},
\] 
such that
\[
L_\infty=L_\infty(D,\pi)=\set{ f\colon D\to \R \mid  \norm{f}{\infty} < \infty }.
\] 
Sometimes it is convenient to consider bounded functions on $D$, 
not $\pi$-a.e. bounded ones, thus we define
\[
L_B=L_B(D)=\set{f \colon D\to \R \mid \abs{f}=\sup_{x\in D}\abs{f(x)}<\infty}.
\]

The next result is standard, see for example \cite[Lemma~1, p. 334]{baxter}.
\begin{lemma} \label{op_norm}
Let $p\in[1,\infty]$. For any transition kernel $K$ with a stationary distribution $\pi$ 
it follows that
\[
\norm{Pf}{p} \leq \norm{f}{p} \quad \text{and}\quad \norm{P}{L_p\to L_p}=1  .
\]
\end{lemma}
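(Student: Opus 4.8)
The plan is to transcribe the finite-state-space argument of Lemma~\ref{1_EW} to the integral setting, the only genuinely new point being the treatment of the essential supremum when $p=\infty$. For $p\in[1,\infty)$ I would fix $x\in D$ and apply Jensen's inequality to the convex function $t\mapsto\abs{t}^p$ together with the probability measure $K(x,\cdot)$ to obtain
\[
\abs{Pf(x)}^p=\abs{\int_D f(y)\,K(x,\dint y)}^p\le\int_D\abs{f(y)}^p\,K(x,\dint y),\qquad x\in D.
\]
Integrating this against $\pi$ and interchanging the order of integration by Tonelli's theorem (the integrand $(x,y)\mapsto\abs{f(y)}^p$ is nonnegative and jointly measurable) gives
\[
\norm{Pf}{p}^p\le\int_D\int_D\abs{f(y)}^p\,K(x,\dint y)\,\pi(\dint x)=\int_D\abs{f(y)}^p\,(\pi P)(\dint y),
\]
and since $\pi$ is stationary one has $\pi P=\pi$ as measures, so the right-hand side equals $\norm{f}{p}^p$. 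This proves $\norm{Pf}{p}\le\norm{f}{p}$ for $p<\infty$; for $p\in(1,\infty)$ the same conclusion can alternatively be reached via Hölder's inequality or by invoking equations \eqref{nuP_Pun} and \eqref{rem_f} of Lemma~\ref{lem_nuP_pun}.

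The case $p=\infty$ is where the main --- though still elementary --- obstacle lies, because $\norm{f}{\infty}$ is the $\pi$-essential supremum, not the genuine one. Here I would first record that $\pi$-null sets are $K(x,\cdot)$-null for $\pi$-almost every $x$: if $\pi(N)=0$, then $\int_D K(x,N)\,\pi(\dint x)=(\pi P)(N)=\pi(N)=0$, and nonnegativity of $K(x,N)$ forces $K(x,N)=0$ for $\pi$-a.e.\ $x$. Choosing $N$ with $\pi(N)=0$ and $\abs{f(y)}\le\norm{f}{\infty}$ for all $y\notin N$, it follows that for $\pi$-a.e.\ $x$ the function $f$ is $K(x,\cdot)$-integrable and
\[
\abs{Pf(x)}\le\int_{D\setminus N}\abs{f(y)}\,K(x,\dint y)\le\norm{f}{\infty}\,K(x,D\setminus N)\le\norm{f}{\infty},
\]
which yields $\norm{Pf}{\infty}\le\norm{f}{\infty}$ and, incidentally, the well-definedness of $Pf$ as an element of $L_\infty$.

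Finally, the norm identity follows by testing on the constant function $\mathbf{1}$, which lies in every $L_p$ with $\norm{\mathbf{1}}{p}=1$ because $\pi$ is a probability measure: since $P\mathbf{1}(x)=K(x,D)=1$ we get $\norm{P\mathbf{1}}{p}=1=\norm{\mathbf{1}}{p}$, hence $\norm{P}{L_p\to L_p}\ge1$; combined with the contraction bound just established this gives $\norm{P}{L_p\to L_p}=1$. I expect no further difficulty beyond this routine bookkeeping.
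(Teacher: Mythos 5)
Your proposal is correct and follows essentially the same route as the paper: Jensen's inequality for the convex function $t\mapsto\abs{t}^p$ against the probability measure $K(x,\cdot)$, followed by stationarity of $\pi$ (you phrase it as $\pi P=\pi$ plus Tonelli, the paper invokes its equation \eqref{rem_f} — same substance), the observation that $\pi$-null sets are $K(x,\cdot)$-null for $\pi$-a.e.\ $x$ to handle the essential supremum when $p=\infty$, and finally testing on the constant function to get equality of the operator norm. No gaps.
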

\begin{proof}
If $p<\infty$, then by Jensen inequality $(\text{J})$ and \eqref{rem_f} one obtains
\begin{align*}
\int_D \abs{Pf(x)}^p \pi(\dint x) & \leq  \int_D \left(\int_D \abs{f(y)} K(x,\dint y) \right)^p \pi(\dint x) \\
 & \underset{(\text{J})}{\leq} \int_D \int_D  \abs{f(y)}^p K(x,\dint y)\, \pi(\dint x) 
\underset{\eqref{rem_f}}{=} \int_D \abs{f(x)}^p \pi(\dint x).
\end{align*}
Since $\pi$ is a stationary distribution of the transition kernel one has for $N\in \D$ that
\[
\pi(N)=0\quad  \Longleftrightarrow \quad K(\cdot,N)=0\quad \pi\mbox{-a.e.}
\]
Null sets with respect to $\pi$ are the same as null sets with respect to $K(x,\cdot)$ 
for almost all $x\in D$. 
Hence
\[
\abs{Pf(x)}\leq \int_D \abs{f(y)} K(x,\dint y) \leq \norm{f}{\infty} \quad \pi\mbox{-a.e.} 
\]
and we have $\norm{Pf}{p}\leq \norm{f}{p}$ for $p\in[1,\infty]$. 
Let $u(x)=1$ for all $x\in D$. Then $Pu=u$ with $\norm{u}{p}=1$ and we obtain $\norm{P}{L_p\to L_p}=1$.
\end{proof}
The closed subspace
\[
L_p^0=\{f\in L_p\mid S(f)=0\} 
\]
of $L_p$ is important.
Note that $L_2$ and $L_2^0$ are Hilbert spaces with inner product 
\[
\scalar{f}{g}=\int_D f(x) g(x)\, \pi(\dint x).
\] 
Then
\[
L_2=L_2^0 \oplus (L_2^0)^\bot, \quad \mbox{where} \quad 
(L_2^0)^{\bot}  = \set{ f\in L_2\mid f\equiv c 
,\;c\in\R}.
\]
On the Hilbert spaces $L_2$ and $L_2^0$ there exists the \emph{adjoint operator} $P^*$ such that
\[
\scalar{Pf}{g}=\scalar{f}{P^*g}.
\]
Furthermore
\[
\norm{P}{L^0_2\to L^0_2} = \norm{P^*}{L^0_2\to L^0_2}
 \quad \text{and} \quad 
\norm{P-S}{L_2\to L_2} = \norm{P^*-S}{L_2\to L_2}.
\]
The following facts about adjoint operators are helpful.
Let $T:L_p\to L_p$, with $p\in [1,\infty)$,  be a linear  bounded operator. 
Then the adjoint operator $T^*:L_q \to L_q$, with $q\in(1,\infty]$, is defined as follows.
Suppose that $p$ and $q$ are chosen such that $p^{-1}+q^{-1}=1$. 
It is well known that $L_q$ is isometrically isomorphic to the dual space $(L_p)'$, 
where the isomorphism is given by 
\[
A \colon L_q \to(L_p)' , \quad A(g) (f) = \scalar{f}{g}, \quad f\in L_p.
\]
Then there exists the dual operator $T^\times \colon (L_p)' \to (L_p)'$ 
and the adjoint operator acting on $L_q$ can be defined as $T^* = A^{-1} T^\times A$.
Figure~\ref{komm_dia} illustrates the construction by a diagram.
\begin{figure}[H] 
\centering
\begin{tikzpicture}
  \matrix (m) [matrix of math nodes, row sep=3em, column sep=3em]
    { L_q & L_q\\ 
      (L_p)' & (L_p)'\\ };
   { [start chain] \chainin (m-1-1);
    \chainin (m-2-1) [join={node[right] {$\scriptstyle A$}}];
    \chainin (m-2-2) [join={node[above] {$\scriptstyle T^\times$}}];
    \chainin (m-1-2) [join={node[right] {$\scriptstyle A^{-1}$}}];
   }
  
  { [start chain] \chainin (m-1-1);
    \chainin (m-1-2) [join={node[above] {$\scriptstyle T^*$}}];
   }
\end{tikzpicture}
\caption{Illustration of the definition of the adjoint operator $T^* \colon L_q \to L_q$ of $T\colon L_p \to L_p$.}
\label{komm_dia}
\end{figure}
Furthermore, for all $f\in L_p$ and for all $g\in L_q$ one has
\begin{align*}
\scalar{f}{T^* g} & = \scalar{f}{A^{-1} T^\times A g} = A (A^{-1} T^\times A g )(f) \\
		  & = (T^\times A)( g )(f) \underset{(\text{dual operator})}{=} A(g)(Tf) =\scalar{Tf}{g}.
\end{align*}
Then 
\begin{align*}
\norm{T}{L_p \to L_p} & = \norm{T^\times }{(L_p)' \to (L_p)'} 
= \sup_{\norm{Ag}{(L_p)'}\leq1} \norm{T^\times Ag}{(L_p)'}\\
& = \sup_{\norm{g}{q}\leq1} \norm{A^{-1} T^\times Ag}{q} 
= \norm{T^*}{L_q \to L_q}.
\end{align*}
If 
$T=P-S$, then it follows that
\[
\norm{P-S}{L_p \to L_p} = \norm{P^*-S}{L_q \to L_q}.
\]
Let $\nu \in \mathcal{M}(D)$. If there exists a density of $\nu$ with respect to $\pi$ then
we denote it by $\frac{d\nu}{d\pi}$ and for $q\in[1,\infty]$ let
\[
\norm{\nu}{q}= 
\begin{cases}
  \norm{\frac{d\nu}{d\pi}}{q}, & \nu \ll \pi,\\
  \infty, & \mbox{otherwise}. 
\end{cases}
\]
Set
\[
\mathcal{M}_q = \mathcal{M}_q(D,\pi)= \set{ \nu \in \mathcal{M}(D)\mid \norm{\nu}{q} < \infty  }.
\] 
The function space $L_q$ is isometrically isomorphic to the space of signed measures $\mathcal{M}_q$, 
in symbols $L_q \cong \mathcal{M}_q$.
The space of singed measures $\mathcal{M}_2$ is a Hilbert space 
and the inner product is the inner product of $L_2$ of the densities, one has 
\[
\scalar{\nu}{\mu}=\int_D \frac{d\nu}{d\pi}(x)\,\frac{d\mu}{d\pi}(x)\, \pi(\dint x)=
\scalar{\frac{d\nu}{d\pi}}{\frac{d\mu}{d\pi}}, \quad \nu,\mu \in \mathcal{M}_2. 
\]
Furthermore set
\[
\mathcal{M}_q^0 = \set{ \nu \in \mathcal{M}_q\mid \nu(D)=0 }.
\]
Then
\[
\mathcal{M}_2=\mathcal{M}_2^0 \oplus (\mathcal{M}_2^0)^\bot, \quad \mbox{where} \quad 
(\mathcal{M}_2^0)^{\bot}  = \set{ \nu\in \mathcal{M}_2\mid \nu=c\cdot \pi 
,\;c\in\R}.
\]
Clearly, $\mathcal{M}_2^0$ is also a Hilbert space. We have
$L_2^0 \cong \mathcal{M}_2^0$ and $(L_2^0)^{\bot} \cong (\mathcal{M}_2^0)^{\bot}$. 
Let us recall that the transition kernel applies to signed measures $\nu\in \mathcal{M}_q$ as
\[
\nu P (A) = \int_D K(x,A)\, \nu(\dint x), \quad A\in\D.
\]
\begin{lemma} \label{P_norm}
Let $K$ be a transition kernel and
let $\pi$ be a stationary distribution of $K$.    
    \begin{enumerate}[(i)]
     \item \label{dense} 	
        Let $q\in(1,\infty]$ and $\nu \in \mathcal{M}_q$. Then 
	\[
 	\frac{d(\nu P)}{d\pi}(x)=P^*(\frac{d\nu}{d\pi})(x) \quad \pi\text{-a.e.}
 	\]
 	and
 	\[
 	\norm{P}{L^0_2\to L^0_2} = \norm{P}{\mathcal{M}^0_2 \to \mathcal{M}^0_2}. 
	\]
 	\item 	\label{adj_rev}
 	Reversibility with respect to $\pi$ is equivalent 
 	to $P$ being self-adjoint acting on $L_2$ and $\mathcal{M}_2$, i.e.
 	\[
	  \scalar{Pf}{g}=\scalar{f}{Pg} 
	  \quad \mbox{and} \quad 
	  \scalar{\nu P}{\mu} = \scalar{\nu}{\mu P}.
 	\]
 \end{enumerate}
\end{lemma}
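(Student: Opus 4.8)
The plan is to derive both parts from the adjoint relation $\scalar{Pf}{g}=\scalar{f}{P^{*}g}$ established in the paragraphs above, together with the isometric identification $\mathcal M_q\cong L_q$ that sends a signed measure to its $\pi$-density. For the density formula in (i), fix $q\in(1,\infty]$, let $p$ be its conjugate exponent (so $p<\infty$), and write $h=\frac{d\nu}{d\pi}\in L_q$. For any $A\in\D$ one computes
\[
\nu P(A)=\int_D K(x,A)\,\nu(\dint x)=\int_D (P\mathbf 1_A)(x)\,h(x)\,\pi(\dint x)=\scalar{P\mathbf 1_A}{h}.
\]
Since $p<\infty$ we have $\mathbf 1_A\in L_p$, so the adjoint $P^{*}\colon L_q\to L_q$ constructed above applies and gives $\scalar{P\mathbf 1_A}{h}=\scalar{\mathbf 1_A}{P^{*}h}=\int_A (P^{*}h)(x)\,\pi(\dint x)$. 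As $A\in\D$ is arbitrary, $P^{*}h$ is a $\pi$-density of $\nu P$, which is the first claim; in particular $\norm{\nu P}{q}=\norm{P^{*}h}{q}\le\norm{h}{q}=\norm{\nu}{q}$ using $\norm{P^{*}}{L_q\to L_q}=\norm{P}{L_p\to L_p}=1$, so $P$ maps $\mathcal M_q$ into itself.

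For the norm identity in (i) I would specialize to $q=p=2$. First, $\nu P(D)=\int_D K(x,D)\,\nu(\dint x)=\nu(D)$, so $P$ preserves $\mathcal M_2^{0}$; likewise $P^{*}$ preserves $L_2^{0}$, since $S(P^{*}h)=\scalar{P^{*}h}{\mathbf 1_D}=\scalar{h}{P\mathbf 1_D}=\scalar{h}{\mathbf 1_D}=S(h)$ because the constant function is fixed by $P$. Under the isometry $\mathcal M_2^{0}\cong L_2^{0}$, $\nu\mapsto\frac{d\nu}{d\pi}$, the density formula shows that $P$ acting on $\mathcal M_2^{0}$ is conjugate to $P^{*}$ acting on $L_2^{0}$; hence $\norm{P}{\mathcal M_2^{0}\to\mathcal M_2^{0}}=\norm{P^{*}}{L_2^{0}\to L_2^{0}}=\norm{P}{L_2^{0}\to L_2^{0}}$, the last step being the identity $\norm{P}{L_2^0\to L_2^0}=\norm{P^{*}}{L_2^0\to L_2^0}$ noted earlier.

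For (ii), the starting observation is that the defining equation of reversibility, tested against indicators, is exactly $\scalar{P\mathbf 1_A}{\mathbf 1_B}=\scalar{\mathbf 1_A}{P\mathbf 1_B}$ for all $A,B\in\D$, because $\scalar{P\mathbf 1_A}{\mathbf 1_B}=\int_B K(x,A)\,\pi(\dint x)$. If $P=P^{*}$ on $L_2$ this holds trivially; conversely, from reversibility linearity extends the identity to simple functions, and since simple functions are dense in $L_2$ and $P$ is bounded on $L_2$ by Lemma~\ref{op_norm}, passing to the limit yields $\scalar{Pf}{g}=\scalar{f}{Pg}$ for all $f,g\in L_2$. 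For the $\mathcal M_2$ statement I would invoke part (i): with $h=\frac{d\nu}{d\pi}$ and $k=\frac{d\mu}{d\pi}$ one has $\scalar{\nu P}{\mu}=\scalar{P^{*}h}{k}=\scalar{h}{Pk}$ and $\scalar{\nu}{\mu P}=\scalar{h}{P^{*}k}$ with all scalar products in $L_2$, and since every element of $L_2$ is the density of some $\nu\in\mathcal M_2$, the identity $\scalar{\nu P}{\mu}=\scalar{\nu}{\mu P}$ for all $\nu,\mu\in\mathcal M_2$ is equivalent to $P=P^{*}$ on $L_2$, hence to reversibility.

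The calculations are all routine measure theory and Hilbert-space duality; the single point that really requires care is the hypothesis $q>1$ in (i). It is exactly this that makes the conjugate exponent $p$ finite, so that $\mathbf 1_A$ belongs to $L_p$ and the adjoint $P^{*}$ from the construction above is available — for $q=1$ one would instead need a dual operator acting on $L_\infty$, which is outside the framework set up here.
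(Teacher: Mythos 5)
Your proof is correct and follows essentially the same route as the paper's: you identify the density of $\nu P$ through the adjoint relation (the paper tests against general $f\in L_p$ via Lemma~\ref{lem_nuP_pun}, you test against indicators, which suffices), transport the operator norm through the isometry $\mathcal{M}_2^0\cong L_2^0$ using $\norm{P^*}{L_2^0\to L_2^0}=\norm{P}{L_2^0\to L_2^0}$, and reduce self-adjointness on $\mathcal{M}_2$ to self-adjointness on $L_2$. The only cosmetic difference is in (ii), where you re-derive the equivalence of reversibility and self-adjointness from indicators by linearity and density of simple functions, whereas the paper cites Lemma~\ref{f_eq} -- whose own proof is that same extension argument.
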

\begin{proof}
	First, let us prove assertion \eqref{dense}.
	For all $f\in L_p$ with $p$ chosen such that $p^{-1}+q^{-1}=1$ one has
  \begin{align*}
\scalar{f}{\frac{d(\nu P)}{d\pi}} & = \int_D f(x)\, (\nu P)(\dint x) \underset{\eqref{nuP_Pun}}{=} \int_D (Pf)(x)\, \nu(\dint x) 
				    = \scalar{Pf}{\frac{d\nu}{d\pi}}	= \scalar{f}{P^*(\frac{d\nu}{d\pi})}.
 \end{align*}
Hence 
we have $\pi$-a.e.  
\[
  \frac{d(\nu P)}{d\pi}(x)=P^*(\frac{d\nu}{d\pi})(x).
\]
By using the previous equation one obtains
   \begin{align*}
 		\norm{P}{\mathcal{M}^0_2 \to \mathcal{M}^0_2} & = \sup_{\norm{\frac{d\mu}{d\pi}}{2}=1,\,\mu(D)=0} \norm{\frac{d(\mu P)}{d\pi} }{2} 
 		= \sup_{\norm{\frac{d\mu}{d\pi}}{2}=1,\,S(\frac{d\mu}{d\pi})=0}  
 		  \norm{P^*(\frac{d\mu}{d\pi})}{2} \\
 		& = \norm{P^*}{L^0_2\to L^0_2} = \norm{P}{L^0_2\to L^0_2} .
 \end{align*} 
 Let us turn to assertion \eqref{adj_rev}.
 It is clear that self-adjointness implies reversibility.
 The other direction follows by
 \begin{align*}
 \scalar{Pf}{g} 
= \int_D \int_D f(y)g(x) K(x,\dint y)\,  \pi(\dint x) 
& \underset{\eqref{F_x_y_n}}{=} \int_D \int_D f(x) g(y)\, K(x,\dint y)\, \pi(\dint x) 
= \scalar{f}{Pg}.
 \end{align*}
  The result with respect to $\mathcal{M}_2$ is shown by using \eqref{dense} and the self-adjointness of $P$ on $L_2$.
 \end{proof}

In the following we introduce several convergence properties of a Markov chain $(X_n)_{n\in\N}$
with transition kernel $K$ and initial distribution $\nu$. We assume that $\pi$ is a stationary distribution
of $K$. The goal is to quantify the speed of convergence of $\nu P^m$ to $\pi$ for increasing $m\in\N$.
%
For further details let us refer to 
\cite{hybrid}, \cite{rosenthal}  or \cite{chen}.

\begin{defin}[$L_2$-spectral gap\index{$L_2$-spectral gap}] \label{def_spectral_gap}
Let $P$ be the Markov operator with corresponding transition kernel $K$. 
Then there exists an (absolute) \emph{$L_2$-spectral gap}, if
\[
\beta=\norm{P}{L_2^0 \to L_2^0}<1,
\] 
where
the $L_2$-spectral gap is given by $1-\beta$.
\end{defin}
Let us briefly explain what this means for reversible transition kernel.
If the transition kernel $K$ is reversible with respect to $\pi$, then let $\spec(P | L_2)$ be the spectrum 
of the self-adjoint operator $P$ acting on $L_2$ and $\spec(P | L^0_2)$ be the spectrum of $P$ acting on $L_2^0$. 
Since $\norm{P}{L_2 \to L_2} \leq 1$ the spectrum $\spec(P | L_2)$ is contained in $[-1,1]$.
Let us define
\[
  \lambda=\inf \set{ \a \mid \a\in \spec(P|L_2^0)}
  \quad \text{and} \quad
  \Lambda=\sup \set{\a \mid \a\in \spec(P|L_2^0)}.
\]
Since $P$ is self-adjoint, it is well known that
\[
\lambda=\inf_{\norm{g}{2}=1,\, g\in L_2^0} \scalar{Pg}{g}
\quad \text{and} \quad
 \Lambda=\sup_{\norm{g}{2}=1,\, g\in L_2^0} \scalar{Pg}{g}.
\]
Then we have
\[
\spec(P|L_2^0) \subset [\lambda,\Lambda] 
\quad \mbox{and} \quad
\beta=\norm{P}{L_2^0 \to L_2^0}=\max\{ \Lambda,\abs{\lambda}\}.
\]
The existence of an $L_2$-spectral gap implies that $-1<\lambda \leq \Lambda<1$,
consequently there is a gap between $1\in\spec{(P| L_2)}$ 
and $\beta$, the second largest absolute value of $\spec(P|L_2)$.
%
%
 
\begin{defin}[\index{$L_2$-geometric ergodicity}$L_2$-geometric ergodicity] \label{geo_erg}
A transition kernel $K$ with stationary distribution $\pi$ is called \emph{$L_2$-geometrically ergodic}, if 
for all probability measures $\nu \in \mathcal{M}_2$ 
there exists an $\a \in[0,1)$ and $C_\nu<\infty$
such that 
\[
	\norm{\nu P^n-\pi}{2} \leq C_\nu\, \a^n  ,\quad n\in\N. 
\] 
\end{defin}
An $L_2$-spectral gap implies $L_2$-geometric ergodicity.
\begin{prop} \label{gap_geo}
Let $K$ be a transition kernel with stationary distribution $\pi$.
Assume that the Markov operator $P$ has an $L_2$-spectral gap, i.e. $1-\beta>0$. 
Then 
the transition kernel $K$ is $L_2$-geometrically ergodic.
\end{prop}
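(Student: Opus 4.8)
The plan is to move the contraction estimate on $L_2^0$ supplied by the spectral gap over to the side of signed measures, using the adjoint operator $P^*$ together with part~(i) of Lemma~\ref{P_norm}. Fix a probability measure $\nu\in\mathcal{M}_2$. Then $\nu\ll\pi$, the density $h:=\frac{d\nu}{d\pi}-1$ belongs to $L_2$, and $S(h)=\nu(D)-\pi(D)=0$, so in fact $h\in L_2^0$. Since $\pi$ is a stationary distribution we have $\pi P^n=\pi$ for all $n\in\N$, and because $\mu\mapsto\mu P$ is linear on $\mathcal{M}(D)$,
\[
\nu P^n-\pi=\nu P^n-\pi P^n=(\nu-\pi)P^n,\qquad n\in\N,
\]
where the signed measure $\nu-\pi$ has density $h$ and lies in $\mathcal{M}_2$.

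First I would check that $P^*$ maps $L_2^0$ into itself. For $g\in L_2^0$,
\[
S(P^*g)=\scalar{P^*g}{1}=\scalar{g}{P1}=\scalar{g}{1}=S(g)=0,
\]
using the defining relation of the adjoint and that $P$ fixes the constant function $1$ (as in the proof of Lemma~\ref{op_norm}). Hence $P^*$ restricts to a bounded operator on $L_2^0$, and by the identity $\norm{P^*}{L_2^0\to L_2^0}=\norm{P}{L_2^0\to L_2^0}$ together with Definition~\ref{def_spectral_gap} its operator norm equals $\beta<1$. Iterating part~(i) of Lemma~\ref{P_norm}, which is applicable at each step since the relevant density $(P^*)^kh$ remains in $L_2$, one obtains
\[
\frac{d(\nu P^n-\pi)}{d\pi}=(P^*)^nh\quad\pi\text{-a.e.},\qquad n\in\N.
\]

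Combining the last two displays with the submultiplicativity of the operator norm gives
\[
\norm{\nu P^n-\pi}{2}=\norm{(P^*)^nh}{2}\le\beta^n\,\norm{h}{2},\qquad n\in\N,
\]
so the assertion follows with $\a:=\beta\in[0,1)$, where the hypothesis of an $L_2$-spectral gap enters, and $C_\nu:=\norm{\frac{d\nu}{d\pi}-1}{2}<\infty$; this is precisely the defining inequality of $L_2$-geometric ergodicity from Definition~\ref{geo_erg}. No step here is genuinely deep; the only point demanding a little care is the Radon--Nikodym bookkeeping needed to identify $\frac{d(\nu P^n-\pi)}{d\pi}$ with $(P^*)^nh$ and thereby transport the operator estimate on $L_2^0$ to a bound on $\norm{\nu P^n-\pi}{2}$, which is exactly what part~(i) of Lemma~\ref{P_norm} and the invariance of $L_2^0$ under $P^*$ provide.
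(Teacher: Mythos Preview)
Your proof is correct and follows essentially the same route as the paper: write $\nu P^n-\pi=(\nu-\pi)P^n$, observe that $\nu-\pi\in\mathcal{M}_2^0$, and use Lemma~\ref{P_norm}\,(\ref{dense}) to transport the $L_2^0$-contraction to measures. The only cosmetic difference is that the paper invokes the norm identity $\norm{P}{\mathcal{M}_2^0\to\mathcal{M}_2^0}=\norm{P}{L_2^0\to L_2^0}=\beta$ from that lemma directly, whereas you unpack it by passing to densities and working with $(P^*)^n$ on $L_2^0$; the content is the same.
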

\proof
If $\nu\in \mathcal{M}_2$ and $\nu(D)=1$, then one obtains $(\nu-\pi)(D)=0$ and the proof is completed by
\[
\norm{\nu P^n-\pi}{2}=\norm{(\nu-\pi)P^n}{2}\leq \norm{P}{\mathcal{M}_2^0\to \mathcal{M}_2^0}^n \norm{\nu-\pi}{2}
= \beta^n \norm{\nu-\pi}{2}.
\sq
\]
If the transition kernel is reversible with respect to $\pi$, then 
$L_2$-geometric ergodicity 
and the existence of an $L_2$-spectral gap are equivalent. 
This result is shown in \cite{hybrid}. 

\begin{prop} \label{geo_geo}
Let the transition kernel $K$ be reversible with respect to $\pi$. 
Then the following statements are equivalent:
\begin{enumerate}[(i)]
\item
The transition kernel is $L_2$-geometrically ergodic.
\item 
The Markov operator $P$ has an $L_2$-spectral gap.
\end{enumerate}
\end{prop}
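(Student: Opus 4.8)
The direction (ii) $\Rightarrow$ (i) is already contained in Proposition~\ref{gap_geo} and uses only stationarity, so the whole content is the implication (i) $\Rightarrow$ (ii). Assume that $K$ is reversible with respect to $\pi$ and $L_2$-geometrically ergodic. Since reversibility gives $P=P^*$ on $L_2$ (Lemma~\ref{P_norm}), I would proceed in two steps: first reformulate geometric ergodicity as a decay property of $P^n$ on the Hilbert space $L_2^0$, and then extract a \emph{uniform} geometric rate by a Baire category argument, which is exactly what yields $\beta=\norm{P}{L_2^0\to L_2^0}<1$.

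\textbf{Step 1 (reformulation).} I claim $L_2$-geometric ergodicity is equivalent to: for every $h\in L_2^0$ there are $C_h<\infty$ and $\alpha_h\in[0,1)$ with $\norm{P^n h}{2}\le C_h\,\alpha_h^n$ for all $n\in\N$. For a probability measure $\nu\in\mathcal M_2$ the density $h_\nu:=\frac{d\nu}{d\pi}-1$ lies in $L_2^0$, and by Lemma~\ref{P_norm} together with $P^*=P$ one has $\frac{d(\nu P^n-\pi)}{d\pi}=P^n h_\nu$, hence $\norm{\nu P^n-\pi}{2}=\norm{P^n h_\nu}{2}$; this is the direction needed to go from the decay property back to Definition~\ref{geo_erg}. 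For the converse, given arbitrary $h\in L_2^0$ write $h=h^+-h^-$ with $h^\pm\ge0$; since $\int h\,d\pi=0$ we have $a:=\int h^+d\pi=\int h^-d\pi$. If $a=0$ then $h=0$; otherwise $\nu_\pm$ with $\frac{d\nu_\pm}{d\pi}=h^\pm/a$ are probability measures in $\mathcal M_2$, $h=a(h_{\nu_+}-h_{\nu_-})$, and
\[
\norm{P^n h}{2}\le a\bigl(\norm{\nu_+P^n-\pi}{2}+\norm{\nu_-P^n-\pi}{2}\bigr),
\]
which decays geometrically with rate $\max\set{\alpha_{\nu_+},\alpha_{\nu_-}}<1$.

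\textbf{Step 2 (uniform rate via Baire).} For $k\in\N$ put $V_k=\set{h\in L_2^0\mid \norm{P^n h}{2}\le k(1-1/k)^n\ \text{for all }n\in\N}$. Each $V_k$ is closed, being an intersection over $n$ of preimages of closed intervals under the continuous map $h\mapsto\norm{P^n h}{2}$, and by Step~1 we have $L_2^0=\bigcup_{k\in\N}V_k$ (given $h$, take $k\ge\max\set{C_h,(1-\alpha_h)^{-1}}$). Since $L_2^0$ is a closed subspace of the Hilbert space $L_2$, it is complete, so by the Baire category theorem some $V_{k_0}$ contains a ball $B(h_0,r)$ with $r>0$. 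For $h_1,h_2\in V_{k_0}$ and all $n$, using $(1-1/k_0)^n\le(1-1/(2k_0))^n$, one gets $\norm{P^n(h_1-h_2)}{2}\le 2k_0(1-1/k_0)^n\le 2k_0(1-1/(2k_0))^n$, so $V_{k_0}-V_{k_0}\subseteq V_{2k_0}$ and in particular $B(0,2r)\subseteq V_{2k_0}$. Scaling yields $\norm{P^n}{L_2^0\to L_2^0}\le \tfrac{k_0}{r}(1-1/(2k_0))^n$ for all $n$. On the other hand, self-adjointness of $P$ on $L_2^0$ gives $\norm{P^n}{L_2^0\to L_2^0}=\beta^n$, so $(\beta/(1-1/(2k_0)))^n\le k_0/r$ for all $n$, which forces $\beta\le 1-1/(2k_0)<1$. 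Hence $P$ has an $L_2$-spectral gap.

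\textbf{Main obstacle.} The delicate point is Step~1: the equivalence must cover all of $L_2^0$, not just the cone of centred densities of probability measures in $\mathcal M_2$; the Jordan/positive--negative decomposition repairs this, but it crucially relies on reversibility through the identification $\norm{\nu P^n-\pi}{2}=\norm{P^n(\tfrac{d\nu}{d\pi}-1)}{2}$. Without $P^*=P$ one would only get $P^{*n}$ acting on densities, and the passage between the operator norm of $P^n$ and geometric ergodicity would fail — which is precisely why the equivalence is special to reversible kernels. The Baire argument in Step~2 is then routine.
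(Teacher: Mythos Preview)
Your argument is correct. The paper does not actually prove this proposition; it simply cites \cite[Theorem~2.1]{hybrid} (Roberts--Rosenthal), so there is no ``paper proof'' to compare line by line. Your Baire-category route is a clean self-contained alternative to the argument in that reference, which instead uses the spectral theorem directly: assuming $\beta=1$, Roberts--Rosenthal build, from spectral projections onto intervals approaching $\pm1$, an explicit $h\in L_2^0$ (hence a measure $\nu$) for which $\norm{P^n h}{2}$ fails to decay geometrically. Your approach trades that construction for soft functional analysis; theirs is constructive and shows a witness, yours avoids the spectral calculus entirely. Both rely on self-adjointness at the same crucial point --- turning a bound $\norm{P^n}{L_2^0\to L_2^0}\le C\gamma^n$ into $\beta\le\gamma$.

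Two small slips, neither affecting the conclusion. In Step~2 the scaling gives $\norm{P^n}{L_2^0\to L_2^0}\le \tfrac{2k_0}{r}(1-1/(2k_0))^n$, not $\tfrac{k_0}{r}$; the extra factor of~$2$ is harmless. In your closing remark, the equivalence actually persists for normal $P$ (via the spectral-radius formula, as the paper itself uses in Proposition~\ref{spec_L_2conv}); reversibility is sufficient but slightly stronger than necessary. Your diagnosis that without $P^*=P$ one only controls $(P^*)^n$ on densities is correct, and the genuine obstruction is then passing from $\norm{P^n}{}\le C\gamma^n$ to $\norm{P}{}<1$, which fails for non-normal operators.
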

\begin{proof}
See \cite[Theorem~2.1, p.~17]{hybrid}.
\end{proof}
For further details and even more equivalences of $L_2$-geometric ergodicity,
see \cite{hybrid,roberts_tweed}.
The next definition is similar to $L_p$-exponential convergence in \cite{chen}.
\begin{defin}[\index{$L_p$-exponential convergence}$L_p$-exponential convergence]  \label{L_p_exp_def}
Let $p\in[1,\infty]$, let $\a\in[0,1)$ and $M<\infty$. Then 
the transition kernel $K$ with stationary distribution $\pi$ 
is called 
\emph{$L_p$-exponentially convergent with $(\a,M)$} if 
\[
\norm{P^n-S}{L_p\to L_p} \leq M \a^n, \quad n\in \N.
\]
\end{defin}
The transition kernel is called \emph{$L_p$-exponentially convergent} if there exist an 
$M<\infty$ and an $\a\in[0,1)$ such that it is $L_p$-exponentially convergent with $(\a,M)$.\\

The Markov chain is called $L_2$-geometrically ergodic or $L_p$-exponentially convergent
if the corresponding transition kernel $K$ is $L_2$-geometrically ergodic or $L_p$-expo\-nentially convergent.

Let $p$ and $q$ be chosen such that $p^{-1}+q^{-1}=1$.
The condition of $L_p$-exponential convergence implies convergence of $\nu P^n$
to the
stationary distribution $\pi$ for increasing $n\in\N$ in $\mathcal{M}_q$.
\begin{coro}
Let $p\in[1,\infty)$ and $\nu \in \mathcal{M}_q$
with $p^{-1}+q^{-1}=1$.
Let the transition kernel $K$ with stationary distribution $\pi$ be $L_p$-exponentially convergent with $(\a,M)$. 
Then
\[
 \norm{\nu P^n-\pi}{q} 
\leq  M \norm{\nu-\pi}{q} \a ^n,\quad n\in \N.
\]
\end{coro}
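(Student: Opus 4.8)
The plan is to reduce the estimate to the operator-norm bound $\norm{P^n-S}{L_p\to L_p}\le M\a^n$ supplied by the hypothesis, by dualizing. First I would note that, since $\pi$ is stationary, $\pi P^n=\pi$, and since $\mu\mapsto\mu P^n$ is linear on $\mathcal{M}(D)$, one has $\nu P^n-\pi=(\nu-\pi)P^n$. Writing $\mu=\nu-\pi$, this lies in $\mathcal{M}_q$ (which is a vector space containing both $\nu$ and $\pi$, as $\frac{d\pi}{d\pi}=1\in L_q$), and, $\nu$ being a probability measure, $\mu(D)=0$; equivalently $\frac{d\mu}{d\pi}=\frac{d\nu}{d\pi}-1\in L_q^0$, so $S\bigl(\frac{d\mu}{d\pi}\bigr)=\mu(D)=0$.

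Next I would apply Lemma~\ref{P_norm}\eqref{dense} and iterate it over the one-step passages $\mu P^k=(\mu P^{k-1})P$ — which stay in $\mathcal{M}_q$ because $\norm{P^*}{L_q\to L_q}=\norm{P}{L_p\to L_p}=1$ by Lemma~\ref{op_norm} together with the adjoint-norm identity — to obtain
\[
\frac{d(\mu P^n)}{d\pi}=(P^*)^n\Bigl(\frac{d\mu}{d\pi}\Bigr)\qquad\pi\text{-a.e.}
\]
Using the isometry $L_q\cong\mathcal{M}_q$, and inserting $S$ at no cost since $S\bigl(\frac{d\mu}{d\pi}\bigr)=0$, this gives
\[
\norm{\nu P^n-\pi}{q}=\norm{\mu P^n}{q}=\norm{(P^*)^n\Bigl(\tfrac{d\mu}{d\pi}\Bigr)}{q}=\norm{\bigl((P^*)^n-S\bigr)\Bigl(\tfrac{d\mu}{d\pi}\Bigr)}{q}.
\]
(The harmless insertion of $S$ uses that $S$ annihilates mean-zero functions, together with $P^*\mathbf{1}=\mathbf{1}$, which follows from stationarity via $\scalar{f}{P^*\mathbf{1}}=\scalar{Pf}{\mathbf{1}}=\int_D Pf\,d\pi=\int_D f\,d\pi=\scalar{f}{\mathbf{1}}$.)

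The decisive step is the operator-norm duality established earlier in this section, $\norm{T}{L_p\to L_p}=\norm{T^*}{L_q\to L_q}$ for bounded $T\colon L_p\to L_p$. Applying it to $T=P^n-S$, whose adjoint is $T^*=(P^n)^*-S^*=(P^*)^n-S$ (using $(P^n)^*=(P^*)^n$ and $S^*=S$), yields $\norm{(P^*)^n-S}{L_q\to L_q}=\norm{P^n-S}{L_p\to L_p}\le M\a^n$ by $L_p$-exponential convergence. Combined with the preceding display,
\[
\norm{\nu P^n-\pi}{q}\le\norm{(P^*)^n-S}{L_q\to L_q}\;\norm{\tfrac{d\mu}{d\pi}}{q}\le M\a^n\,\norm{\nu-\pi}{q},
\]
which is the claim. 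The only genuine content is the dualization — Lemma~\ref{P_norm}\eqref{dense} plus the $\norm{T}{L_p\to L_p}=\norm{T^*}{L_q\to L_q}$ identity; everything else is bookkeeping, and notably no reversibility of $K$ is needed.
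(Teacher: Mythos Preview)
Your proof is correct and follows essentially the same route as the paper: rewrite $\nu P^n-\pi=(\nu-\pi)P^n$, pass to densities via Lemma~\ref{P_norm}\eqref{dense} to obtain $(P^*)^n(\frac{d\nu}{d\pi}-1)$, insert $S$ for free because the argument has mean zero, and then invoke the duality $\norm{(P^n-S)^*}{L_q\to L_q}=\norm{P^n-S}{L_p\to L_p}\le M\a^n$. The paper's proof is exactly this chain of equalities in condensed form; your version just spells out a few intermediate justifications more carefully.
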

\proof
The assertion is proven by
\begin{align*}
\norm{\nu P^n -\pi}{q}  &= \norm{(\nu-\pi)P^n}{q} = 
\norm{\frac{d((\nu-\pi) P^n)}{d \pi}}{q} 
		= \norm{(P^n)^*\left(\frac{d\nu}{d\pi}-1\right)}{q}  \\
	&	= \norm{((P^n)^*-S)\left(\frac{d\nu}{d\pi}-1\right)}{q}
		\leq \norm{(P^n-S)^*}{L_q\to L_q} \norm{\frac{d\nu}{d\pi}-1}{q} \\
	&	\leq \norm{P^n-S}{L_p \to L_p} \norm{\frac{d\nu}{d\pi}-1}{q}
		\leq M \norm{\nu-\pi}{q} \a^n.
\sq
\end{align*}
	
In the following we consider relations between 
the existence of an $L_2$-spectral gap and $L_p$-exponential convergence. 
First, let us add some helpful inequalities.

\begin{lemma} \label{P_S_gen}
Let $\pi$ be a stationary distribution of the transition kernel $K$. Then
\begin{equation} \label{P_S_2_ident_gen}
\norm{P^n}{L_2^0 \to L_2^0}=\norm{P^n-S}{L_2\to L_2} \leq \beta^n,\quad n\in \N.
\end{equation}
If $\,p\in[1, \infty]$ then
\begin{equation} \label{P_S_p_ident_gen}
\norm{P^n}{L_p^0\to L_p^0} \leq \norm{P^n-S}{L_p\to L_p} \leq 2 \norm{P^n}{L_p^0\to L_p^0},\quad n\in \N.
\end{equation}
\end{lemma}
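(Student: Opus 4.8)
The plan is to reduce everything to one algebraic identity together with the decomposition of $L_p$ into $L_p^0$ and the constants. Since $Sf=\int_D f(x)\,\pi(\dint x)$ is a constant function and, by stationarity (equation~\eqref{rem_f}), $S(Pf)=S(f)$, the operator $P$ — and hence $P^n$ — fixes the constants and maps $L_p^0$ into itself. Therefore, for any $f$, $(P^n-S)f=P^nf-S(f)=P^nf-P^n(S(f))=P^n(f-S(f))$ with $f-S(f)\in L_p^0$. I want to stress that this uses only stationarity, not reversibility, so the argument cannot silently invoke self-adjointness.

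First I would handle \eqref{P_S_2_ident_gen}. Taking $g\in L_2^0$ one has $S(g)=0$, so $(P^n-S)g=P^ng$ and the inequality $\norm{P^n}{L_2^0\to L_2^0}\le\norm{P^n-S}{L_2\to L_2}$ is immediate. For the reverse inequality I would use that in $L_2=L_2^0\oplus(L_2^0)^\bot$ the element $f-S(f)$ is the orthogonal projection of $f$ onto $L_2^0$, so $\norm{f-S(f)}{2}\le\norm{f}{2}$ by Pythagoras; plugging this into the identity above gives $\norm{(P^n-S)f}{2}\le\norm{P^n}{L_2^0\to L_2^0}\norm{f}{2}$, hence equality. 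The bound $\le\beta^n$ is then just submultiplicativity of the operator norm, $\norm{P^n}{L_2^0\to L_2^0}\le\norm{P}{L_2^0\to L_2^0}^n=\beta^n$ (with $\beta\le 1$ by Lemma~\ref{op_norm}).

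For \eqref{P_S_p_ident_gen} the lower bound is obtained in the same way by restricting to $g\in L_p^0$. For the upper bound the new point — the only place where one must be slightly careful — is that for $p\ne 2$ the splitting $f=(f-S(f))+S(f)$ is not orthogonal, so one loses a constant: by the triangle inequality $\norm{f-S(f)}{p}\le\norm{f}{p}+\norm{S(f)}{p}$, and since $\pi$ is a probability measure and $S(f)$ is constant, $\norm{S(f)}{p}=\abs{S(f)}=\abs{\int_D f(x)\,\pi(\dint x)}\le\norm{f}{1}\le\norm{f}{p}$ (Jensen's inequality, or monotonicity of $L_p$-norms on a probability space). Hence $\norm{f-S(f)}{p}\le 2\norm{f}{p}$, and the displayed identity yields $\norm{P^n-S}{L_p\to L_p}\le 2\norm{P^n}{L_p^0\to L_p^0}$. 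I do not expect a genuine obstacle here; the factor $2$ is intrinsic to the non-Hilbert case, and the one thing worth double-checking is that every invariance used ($P$ fixes constants, $P$ stabilizes $L_p^0$, whence the same for $P^n$) is justified from stationarity alone.
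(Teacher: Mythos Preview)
Your proof is correct and follows essentially the same route as the paper: the identity $(P^n-S)f=P^n(f-S(f))$, the bound $\norm{f-S(f)}{2}\le\norm{f}{2}$ from orthogonal projection for the $L_2$ equality, submultiplicativity for $\beta^n$, and the estimate $\norm{f-S(f)}{p}\le 2\norm{f}{p}$ for the factor $2$ in general $L_p$. The paper presents the last step slightly more tersely (writing $\norm{P^nf-Sf}{p}=2\norm{P^n(\tfrac12(f-Sf))}{p}$ and passing directly to the supremum over the unit ball of $L_p^0$), but the content is identical.
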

\begin{proof}
Note that if $P$ is a \emph{normal operator}, i.e. $PP^*=P^* P$, then $\norm{P^n}{L_2^0 \to L_2^0} = \beta^n$, 
otherwise one has $\norm{P^n}{L_2^0 \to L_2^0}  \leq \norm{P}{L_2^0 \to L_2^0}^n = \beta^n$.
By
\begin{align*}
\norm{P^n-S}{L_2\to L_2} & =\sup_{\norm{f}{2}\leq1} \norm{(P^n-S)f}{2} = \sup_{\norm{f}{2}\leq1} \norm{P^n(f-S(f))}{2} \\
			 & \leq 
				\sup_{\norm{g}{2}\leq1,\; S(g)=0} \norm{P^n g}{2} 
			= \norm{P^n}{L_2^0 \to L_2^0} 
\end{align*}	
and													
\begin{align*}
\norm{P^n}{L_p^0 \to L_p^0} & = \sup_{\norm{g}{p}\leq1,\; S(g)=0} \norm{P^n g}{p}
			= \sup_{\norm{g}{p}\leq1,\; S(g)=0} \norm{P^n g-S(g)}{p}\\
			&	\leq \sup_{\norm{f}{p}\leq1} \norm{(P^n-S)f}{p} = \norm{P^n-S}{L_p\to L_p}
\end{align*}
claim \eqref{P_S_2_ident_gen} and the first part of \eqref{P_S_p_ident_gen} are shown. 
Furthermore one obtains
\begin{align*}
\norm{P^n-S}{L_p\to L_p} & = \sup_{\norm{f}{p}\leq1} \norm{P^n f-Sf}{p} 
=2 \sup_{\norm{f}{p}\leq1} \norm{P^n (\frac{1}{2}(f-Sf))}{p} \\
& \leq 2 
	\sup_{\norm{g}{p}\leq1,\;S(g)=0} \norm{P^n g}{p}
= 2 \norm{P^n }{L_p^0\to L^0_p},
\end{align*}
which finishes the proof.
\end{proof}

In a general setting it follows that 
an $L_2$-spectral gap implies $L_p$-exponential convergence
for all $p\in(1,\infty)$. 

\begin{prop} \label{prop_Lp_gen}
Let $p \in(1, \infty)$.
Let $\pi$ be a stationary distribution of the transition kernel $K$ and $n\in\N$. 
The existence of an $L_2$-spectral gap, $1-\beta>0$, implies $L_p$-exponential convergence. 
We obtain
\begin{equation}\label{norm_Lp_gen}
	\norm{P^n-S}{L_p\to L_p} 	\leq
	\begin{cases}
	  2^{2/p}\,\beta^{{2n\frac{p-1}{p}}}, 	& p\in(1,2),\\
	   2^{2\frac{p-1}{p}}\,\beta^{2n/p},		&	p\in[2,\infty).
	\end{cases}
	\end{equation} 
\end{prop}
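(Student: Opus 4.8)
The strategy is classical interpolation together with the norm bounds already assembled in Lemma~\ref{P_S_gen}. The operator $P^n - S$ is bounded on $L_1$, on $L_2$, and on $L_\infty$, and the $L_2$-spectral gap gives a quantitatively good bound precisely on $L_2$. So the plan is to interpolate between a trivial estimate on one endpoint and the sharp estimate on $L_2$, using the Riesz--Thoren interpolation theorem, and then optimize which endpoint to use depending on whether $p<2$ or $p\geq 2$.

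First I would record the two ingredients. From Lemma~\ref{op_norm} applied to $P$ and the trivial bound $\norm{S}{L_p\to L_p}=1$ we get $\norm{P^n-S}{L_1\to L_1}\leq 2$ and $\norm{P^n-S}{L_\infty\to L_\infty}\leq 2$. From \eqref{P_S_2_ident_gen} in Lemma~\ref{P_S_gen} we have $\norm{P^n-S}{L_2\to L_2}\leq \beta^n$. Note $P^n-S$ maps real functions to real functions and is a linear operator to which Riesz--Thoren applies (after complexification, which does not change the norms up to the usual constants; since we only need the stated powers of $2$ we can be generous here).

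Second, for $p\in[2,\infty)$ I would interpolate between $L_2$ and $L_\infty$: writing $\frac1p=\frac{\theta}{2}+\frac{1-\theta}{\infty}$ gives $\theta = 2/p$, so Riesz--Thoren yields
\[
\norm{P^n-S}{L_p\to L_p}\leq \left(\beta^n\right)^{2/p}\cdot 2^{1-2/p}=2^{2\frac{p-1}{p}}\beta^{2n/p},
\]
which is the second line of \eqref{norm_Lp_gen}. For $p\in(1,2)$ I would instead interpolate between $L_1$ and $L_2$: writing $\frac1p=\frac{\theta}{1}+\frac{1-\theta}{2}$ gives $\theta=\frac{2-p}{p}=\frac2p-1$ and $1-\theta=2\frac{p-1}{p}$, so
\[
\norm{P^n-S}{L_p\to L_p}\leq 2^{\theta}\left(\beta^n\right)^{1-\theta}=2^{2/p-1}\beta^{2n\frac{p-1}{p}}\leq 2^{2/p}\beta^{2n\frac{p-1}{p}},
\]
which gives the first line. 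Exponential convergence then follows by taking $\a$ to be the relevant power of $\beta$ (which lies in $[0,1)$ since $\beta<1$ and the exponent of $n$ is positive) and $M$ the constant prefactor.

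The only genuinely delicate point is the legitimacy of applying Riesz--Thoren here: one must check that $P^n-S$ is a well-defined linear operator simultaneously on the relevant $L_p$ spaces over a common dense subspace (e.g. simple functions), which is immediate from Lemma~\ref{op_norm} and the definition of $S$, and that complexification costs nothing relevant. Everything else is a one-line computation of the interpolation parameter $\theta$. I would therefore present the two cases side by side, cite Riesz--Thoren, and finish with the observation that in each case $\a\in[0,1)$ so Definition~\ref{L_p_exp_def} is satisfied.
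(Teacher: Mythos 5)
Your proof is correct and follows essentially the same route as the paper: both use the endpoint bounds $\norm{P^n-S}{L_1\to L_1}\leq 2$, $\norm{P^n-S}{L_\infty\to L_\infty}\leq 2$, $\norm{P^n-S}{L_2\to L_2}\leq\beta^n$ from Lemma~\ref{P_S_gen} and then interpolate via Riesz--Thorin, with $L_1$--$L_2$ for $p\in(1,2)$ and $L_2$--$L_\infty$ for $p\in[2,\infty)$. The only blemish is that in the $p\geq 2$ case you write $(\beta^n)^{2/p}\cdot 2^{1-2/p}=2^{2(p-1)/p}\beta^{2n/p}$, which is off by exactly the factor $2$ that the real-scalar version of Riesz--Thorin (Proposition~\ref{riesz_thorin} in the paper) carries; your own caveat about being generous with powers of $2$ covers this, and the stated bound is the correct one.
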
 

\proof
Let $p\in(1,2)$. 
Lemma~\ref{P_S_gen} gives 
\[
\norm{P^n-S}{L_2 \to L_2} \leq \beta^n \quad \text{and} \quad \norm{P^n-S}{L_1 \to L_1} \leq 2.
\]
We apply Proposition~\ref{riesz_thorin} (Interpolation Theorem of Riesz-Thorin), where $T=P^n-S$ and $q_1=2$, $q_2=1$ such that $\theta=\frac{2-p}{p}$.  
The case where $p\in(2,\infty)$ follows by the same interpolation argument, since 
by Lemma~\ref{P_S_gen} one has
\[
\norm{P^n-S}{L_2 \to L_2} \leq \beta^n \quad \text{and} \quad \norm{P^n-S}{L_\infty \to L_\infty} \leq 2.
\sq
\]
From Proposition~\ref{prop_Lp_gen} and actually already 
from \eqref{P_S_2_ident_gen} it follows that an $L_2$-spectral gap implies $L_2$-exponential convergence.
With the additional assumption of normality of $P$ one can prove the reverse direction.
\begin{prop} \label{spec_L_2conv}
Let $\pi$ be a stationary distribution of the transition kernel $K$. 
Let the Markov operator $P$ be normal, i.e. $PP^*=P^*P$.
Then the following statements are equivalent: 
\begin{enumerate}[(i)]
	\item \label{gap_prop}
				There exists an $L_2$-spectral gap, i.e. $1-\beta>0$.
	\item \label{L_2_imply_spec} 
	There exist an $\a\in[0,1)$ and $M<\infty$ such that the transition kernel $K$ is
				$L_2$-exponentially convergent with $(\a,M)$.
\end{enumerate}
In particular \eqref{L_2_imply_spec} implies
\begin{equation*}
	\beta = \norm{P-S}{L_2\to L_2} 	\leq \a, 
\end{equation*} 
so that 
\[
\beta=\min\set{\a \mid \exists \; M < \infty \; \text{with} \; \norm{P^n-S}{L_2 \to L_2} \leq M \a ^n, \,n\in\N}.
\]
\end{prop}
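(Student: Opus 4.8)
The plan is to reduce the whole statement to one clean fact about powers of the Markov operator in $L_2$. Since $P$ is normal and the constants $(L_2^0)^{\bot}$ are invariant under both $P$ and $P^*$, the subspace $L_2^0$ reduces $P$, so the restriction $P|_{L_2^0}$ is again normal; by the $C^{*}$-identity one has $\norm{P^2}{L_2^0\to L_2^0}=\norm{P}{L_2^0\to L_2^0}^2$, and a standard dyadic argument upgrades this to $\norm{P^n}{L_2^0\to L_2^0}=\beta^n$ for all $n$, where $\beta=\norm{P}{L_2^0\to L_2^0}$. This is precisely the normal-operator case already recorded in Lemma~\ref{P_S_gen}, which moreover identifies $\norm{P^n-S}{L_2\to L_2}=\norm{P^n}{L_2^0\to L_2^0}$; together with $\beta\le 1$ from Lemma~\ref{op_norm} this yields the exact formula
\[
\norm{P^n-S}{L_2\to L_2}=\beta^n,\qquad n\in\N,
\]
and everything else is immediate.

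First I would settle \eqref{gap_prop}$\Rightarrow$\eqref{L_2_imply_spec}: assuming $1-\beta>0$, the displayed identity says that $K$ is $L_2$-exponentially convergent with the pair $(\beta,1)$, hence with some $(\a,M)$ satisfying $\a\in[0,1)$ and $M<\infty$. For the converse \eqref{L_2_imply_spec}$\Rightarrow$\eqref{gap_prop}, suppose $\norm{P^n-S}{L_2\to L_2}\le M\a^n$ for all $n\in\N$ with $\a\in[0,1)$ and $M<\infty$. By the displayed identity this forces $\beta^n\le M\a^n$ for every $n$; taking $n$-th roots gives $\beta\le M^{1/n}\a$, and letting $n\to\infty$ yields $\beta\le\a<1$, so an $L_2$-spectral gap exists. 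This inequality $\beta\le\a$ is exactly the ``in particular'' assertion.

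For the concluding $\min$-characterization, note that the displayed identity (with $M=1$) shows that $\beta$ itself belongs to $\set{\a\mid \exists\, M<\infty\colon \norm{P^n-S}{L_2\to L_2}\le M\a^n,\ n\in\N}$, while the argument just given shows every element of that set is $\ge\beta$; hence $\beta$ is its minimum. I do not expect a genuine obstacle here: the only mildly delicate point is the passage from normality of $P$ on $L_2$ to the power identity $\norm{P^n}{L_2^0\to L_2^0}=\beta^n$, and since that is already supplied by Lemma~\ref{P_S_gen} it can simply be quoted rather than reproved.
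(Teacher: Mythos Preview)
Your proof is correct and follows essentially the same idea as the paper's: both hinge on the fact that for a normal operator the norm of powers equals the power of the norm, so $\norm{P^n-S}{L_2\to L_2}=\beta^n$, whence any bound $\beta^n\le M\a^n$ forces $\beta\le\a$ by taking $n$-th roots. The paper derives this slightly less directly, rewriting $\norm{P^n}{L_2^0\to L_2^0}^2=\norm{(PP^*)^n}{L_2^0\to L_2^0}$ via normality and then invoking the spectral radius formula for the self-adjoint operator $PP^*$; you instead quote the equality $\norm{P^n}{L_2^0\to L_2^0}=\beta^n$ already noted in the proof of Lemma~\ref{P_S_gen}, which is a cleaner route to the same conclusion.
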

\begin{proof}
By \eqref{P_S_2_ident_gen} of Lemma~\ref{P_S_gen} 
one has that \eqref{gap_prop}
implies \eqref{L_2_imply_spec} with $(\a,M)=(\beta,1)$.
Now we show that $\eqref{L_2_imply_spec}$ implies \eqref{gap_prop}.
One has
\[
\norm{P}{L^0_2 \to L^0_2}^2 = \norm{PP^*}{L^0_2 \to L^0_2},
\]
where $PP^*$ is self-adjoint and $(P^*)^n=(P^n)^*$ for all $n\in\N$.
Then
\begin{align*} 
 \norm{P^n-S}{L_2 \to L_2}^2 & = \norm{P^n}{L_2^0 \to L_2^0}^2 
=\norm{P^n (P^n)^*}{L^0_2 \to L^0_2}\\ 
& = \norm{P^n (P^*)^n}{L^0_2 \to L^0_2} 
\underset{\text{(normality)}}{=} \norm{(PP^*)^n}{L^0_2 \to L^0_2}
\end{align*}
such that
\begin{equation}  \label{equi_2}
\norm{P^n-S}{L_2 \to L_2} \leq M \a^n  
\quad \Longleftrightarrow
\quad
\norm{(PP^*)^n}{L^0_2 \to L^0_2} \leq M^2 \a^{2n}.  
\end{equation}
By the spectral radius formula and the self-adjointness (s-a) of $PP^*$ one obtains
\begin{align*}
& \qquad \norm{P}{L_2^0 \to L_2^0}^2=\norm{PP^*}{L^0_2 \to L^0_2}  \underset{\text{(s-a)}}{=} r[PP^*] \\
& 	=\lim_{n\to \infty} (\norm{(PP^*)^n}{L^0_2 \to L^0_2})^{1/n}
\underset{\eqref{equi_2}}{\leq} \a^2 \lim_{n\to \infty} (M^2)^{1/n} \leq \a^2. 
\end{align*}
Hence the proof is completed.
\end{proof}

By an interpolation argument we get that 
$L_\infty$-exponential convergence or $L_1$-exponential convergence imply an $L_2$-spectral gap 
if the Markov operator is normal.

\begin{prop}  \label{L_1conv_spec}
        Let $\pi$ be a stationary distribution of the transition kernel $K$. 
	Let $K$ be $L_1$-exponentially convergent or $L_\infty$-exponentially 
	convergent with $(\a,M)$.
	Suppose that the Markov operator $P$ is normal, i.e. $PP^*=P^*P$.
	Then there exists an $L_2$-spectral gap, in particular one obtains
	\begin{equation}  \label{L_1conv_spec_eq}
		\beta= \norm{P-S}{L_2\to L_2} \leq \sqrt{\a}.
	\end{equation}
\end{prop}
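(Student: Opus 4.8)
The plan is to reduce everything to the case of $L_2$-exponential convergence, which was already settled in Proposition~\ref{spec_L_2conv}, by using the Riesz--Thorin interpolation theorem to pass from the endpoint $L_1$ (respectively $L_\infty$) to $L_2$.

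First I would record two trivial endpoint estimates that hold without any convergence assumption. By Lemma~\ref{op_norm} one has $\norm{P}{L_p^0\to L_p^0}\le 1$, hence $\norm{P^n}{L_p^0\to L_p^0}\le 1$, and then \eqref{P_S_p_ident_gen} of Lemma~\ref{P_S_gen} gives
\[
\norm{P^n-S}{L_1\to L_1}\le 2\quad\text{and}\quad\norm{P^n-S}{L_\infty\to L_\infty}\le 2,\qquad n\in\N.
\]
Now assume first that $K$ is $L_1$-exponentially convergent with $(\a,M)$, so that in addition $\norm{P^n-S}{L_1\to L_1}\le M\a^n$. Applying Proposition~\ref{riesz_thorin} to $T=P^n-S$ with $q_1=1$, $q_2=\infty$ and $\theta=\tfrac12$ (so that $\tfrac1q=\tfrac{1-\theta}{q_1}+\tfrac{\theta}{q_2}=\tfrac12$, i.e.\ $q=2$) yields
\[
\norm{P^n-S}{L_2\to L_2}\le (M\a^n)^{1/2}\cdot 2^{1/2}=\sqrt{2M}\,\bigl(\sqrt{\a}\bigr)^n,\qquad n\in\N.
\]
If instead $K$ is $L_\infty$-exponentially convergent with $(\a,M)$, one interchanges the roles of the two endpoint bounds (now $\norm{P^n-S}{L_\infty\to L_\infty}\le M\a^n$ and $\norm{P^n-S}{L_1\to L_1}\le2$) and obtains the same conclusion. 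In either case $K$ is $L_2$-exponentially convergent with $(\sqrt{\a},\sqrt{2M})$; note $\sqrt{\a}<1$ since $\a<1$, and $\sqrt{2M}<\infty$, so this is a genuine exponential convergence.

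Finally, since $P$ is assumed normal, Proposition~\ref{spec_L_2conv} applies with $\a$ replaced by $\sqrt{\a}$: statement \eqref{L_2_imply_spec} implies statement \eqref{gap_prop}, so there exists an $L_2$-spectral gap, and moreover the ``in particular'' part of that proposition gives
\[
\beta=\norm{P-S}{L_2\to L_2}\le\sqrt{\a},
\]
which is exactly \eqref{L_1conv_spec_eq}. The only step needing a little care is the verification of the two endpoint bounds $\norm{P^n-S}{L_1\to L_1}\le2$ and $\norm{P^n-S}{L_\infty\to L_\infty}\le2$ that feed the interpolation theorem; once these are in place, the argument is a direct combination of Proposition~\ref{riesz_thorin} and Proposition~\ref{spec_L_2conv}.
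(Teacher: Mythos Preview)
Your proof is correct and follows essentially the same route as the paper: interpolate $P^n-S$ between the assumed $L_1$ (or $L_\infty$) bound and the trivial $L_\infty$ (or $L_1$) bound $\le 2$ from Lemma~\ref{P_S_gen} to obtain $L_2$-exponential convergence with rate $\sqrt{\a}$, then invoke Proposition~\ref{spec_L_2conv} and normality to conclude $\beta\le\sqrt{\a}$. One cosmetic point: the version of Riesz--Thorin stated in Proposition~\ref{riesz_thorin} carries an extra factor $2$ (real scalars), so the constant should be $2^{3/2}M^{1/2}$ rather than $\sqrt{2M}$, but this is irrelevant since Proposition~\ref{spec_L_2conv} kills any finite constant via the spectral radius formula.
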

\begin{proof}
	We show that $L_1$-exponential convergence with $(\a,M)$ implies $\beta\leq \sqrt{\a}$. 
	For $L_\infty$-exponentially convergent Markov chains the claim follows by the same arguments, 
	where the roles of $L_\infty$ and $L_1$ are interchanged.\\
	By the assumptions of the proposition and Lemma~\ref{P_S_gen} one has
	\[
		\norm{P^n-S}{L_1\to L_1} \leq \a^n M,\quad \text{and} \quad 
		\norm{P^n-S}{L_\infty \to L_\infty } \leq 2.
	\]
	By Proposition~\ref{riesz_thorin} (Interpolation Theorem of Riesz-Thorin), where $T=P^n-S$ and $q_1=1$, $q_2=\infty$, $\theta=\frac{1}{2}$
	one obtains $L_2$-exponential convergence with $(\sqrt{\a},2^{3/2}M^{1/2})$. 
	Then Proposition~\ref{spec_L_2conv} implies $\beta\leq \sqrt{\a}$ and the proof is completed.
\end{proof}

Another way to measure the convergence of $\nu P^n$ to $\pi$ for increasing $n\in\N$
is provided by using the total variation distance, defined as follows.

\begin{defin}[\index{total variation distance}total variation distance]
The \emph{total variation distance} between two probability measures $\nu,\mu \in \mathcal{M}(D)$ is defined by
\[
\norm{\nu-\mu}{\text{\rm tv}}=\sup_{A\in\D}\abs{\nu(A)-\mu(A)}.
\]
\end{defin}

The total variation distance can be considered as an $L_1$-norm.
\begin{lemma}  \label{tv_present}
Let $\nu,\mu \in \mathcal{M}(D)$ be probability measures. Then
\begin{equation} \label{bounded_fct}
\norm{\nu-\mu}{\text{\rm tv}}=\frac{1}{2} \sup_{\abs{f}\leq 1} \abs{\int_D f(x) (\nu(\dint x)-\mu(\dint x))},
\end{equation}
where $\abs{f}= \sup_{x\in D} \abs{f(x)}$.
If $\nu,\mu \in \mathcal{M}_1$, then
$
\norm{\nu-\mu}{\text{\rm tv}}=\frac{1}{2}\norm{\nu-\mu}{1}.
$
\end{lemma}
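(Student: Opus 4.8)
The plan is to reduce both identities to the Hahn--Jordan decomposition of the finite signed measure $\lambda=\nu-\mu$. Since $\nu$ and $\mu$ are probability measures, $\lambda(D)=0$ and $\abs{\lambda(A)}\leq 2$ for every $A\in\D$, so $\lambda$ is finite and admits a Hahn decomposition $D=D^+\cup D^-$; writing $\lambda^+$, $\lambda^-$ for the associated positive and negative parts and $\abs{\lambda}=\lambda^++\lambda^-$ for the total variation measure, the condition $\lambda(D)=0$ forces $\lambda^+(D)=\lambda^-(D)=\tfrac12\abs{\lambda}(D)$. This single observation is the engine behind everything.

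First I would handle $\norm{\nu-\mu}{\text{\rm tv}}$. For any $A\in\D$ one has $\abs{\lambda(A)}=\abs{\lambda^+(A)-\lambda^-(A)}\leq\max\{\lambda^+(D),\lambda^-(D)\}=\tfrac12\abs{\lambda}(D)$, while the choice $A=D^+$ attains this value. Hence $\norm{\nu-\mu}{\text{\rm tv}}=\sup_{A\in\D}\abs{\lambda(A)}=\tfrac12\abs{\lambda}(D)$.

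Next I would treat the supremum over bounded functions. For $\abs{f}\leq1$, splitting $\int f\,\dint\lambda=\int f\,\dint\lambda^+-\int f\,\dint\lambda^-$ and applying the triangle inequality gives $\abs{\int_D f\,\dint\lambda}\leq\lambda^+(D)+\lambda^-(D)=\abs{\lambda}(D)$; conversely the test function $\mathbf 1_{D^+}-\mathbf 1_{D^-}$, which has sup-norm at most $1$, yields exactly $\abs{\lambda}(D)$. Therefore $\sup_{\abs{f}\leq1}\abs{\int_D f\,\dint\lambda}=\abs{\lambda}(D)$, and dividing by $2$ matches the value of $\norm{\nu-\mu}{\text{\rm tv}}$ computed above, which is \eqref{bounded_fct}.

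Finally, if $\nu,\mu\in\mathcal M_1$ then, since $\mathcal M_1$ is a linear subspace, $\lambda=\nu-\mu$ has $\pi$-density $\frac{d\nu}{d\pi}-\frac{d\mu}{d\pi}$, so $\abs{\lambda}$ has $\pi$-density $\abs{\frac{d\nu}{d\pi}-\frac{d\mu}{d\pi}}$ and $\abs{\lambda}(D)=\norm{\frac{d\nu}{d\pi}-\frac{d\mu}{d\pi}}{1}=\norm{\nu-\mu}{1}$; combined with the previous step this gives $\norm{\nu-\mu}{\text{\rm tv}}=\tfrac12\norm{\nu-\mu}{1}$. There is no real obstacle here: the only points needing care are checking that $\lambda$ is finite so that the Hahn decomposition is available, verifying measurability of the test function $\mathbf 1_{D^+}-\mathbf 1_{D^-}$, and, in the last step, that subtracting densities is legitimate; all of these are routine.
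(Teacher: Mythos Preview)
Your argument via the Hahn--Jordan decomposition is correct and complete. The paper itself does not give a proof of this lemma at all: it simply refers the reader to \cite[Proposition~3, p.~28]{rosenthal}. So there is nothing to compare against; you have supplied a standard self-contained proof where the paper opted for a citation.
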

\begin{proof}
See \cite[Proposition~3, p.~28]{rosenthal}.
\end{proof}


Now we can define uniform ergodicity of a transition kernel $K$.

\begin{defin}[\index{uniform ergodicity}uniform ergodicity,
	      \index{$\pi$-a.e. uniformly ergodicity}$\pi$-a.e. uniform ergodicity]
Let $M<\infty$ and $\a\in [0,1)$. Then the transition kernel $K$ with stationary distribution $\pi$ 
is called \emph{uniformly ergodic with $(\a,M)$} if one has
for all $x\in D$ that
\begin{equation}  \label{pi_uni_erg}
	\norm{K^n(x,\cdot)-\pi}{\text{\rm tv}} \leq M \a^n  ,\quad n\in\N. 
\end{equation}
If the inequality of $\eqref{pi_uni_erg}$ holds $\pi$-a.e, rather than for all $x\in D$, then the transition kernel $K$ 
is called \emph{$\pi$-a.e uniformly ergodic with $(\a,M)$}. 
A Markov chain with transition kernel $K$ is called \emph{uniformly ergodic} or \emph{$\pi$-a.e uniformly ergodic} 
if there exist an $M<\infty$ 
and an $\a\in[0,1)$ such that $K$ is 
uniformly ergodic or $\pi$-a.e uniformly ergodic with $(\a,M)$. 

\end{defin}
Obviously, if the transition kernel is uniformly ergodic then it is also $\pi$-a.e. uniformly ergodic.
Note that in other references, e.g. \cite{chen}, uniform ergodicity is called strong ergodicity.\\

Uniform ergodicity is closely related to $L_\infty$-exponential convergence.
An important relation is presented in the following proposition.
Recall that $L_B=L_B(D)$ denotes the class of bounded functions on $D$. 
\begin{prop} \label{uniform_L_inf}
Let $\a\in[0,1)$ and $M<\infty$. Let $\pi$ be a stationary distribution of the transition kernel $K$. 
Then the following statements are equivalent:
\begin{enumerate}[(i')]
	\item \label{richtig_unif_erg}
	The transition kernel $K$ is uniformly ergodic with $(\a,M)$.
	\item \label{B_infty_erg}
	The transition operator $P$ satisfies 
	\[
	\norm{P^n-S}{L_B \to L_B}\leq 2M\,\a^n,\quad n\in\N. 
	\]
\end{enumerate}
Furthermore (\ref{richtig_unif_erg}') and (\ref{B_infty_erg}') 
imply the following equivalent statements:
\begin{enumerate}[(i)]
	\item \label{unif_erg}
	The transition kernel $K$ is $\pi$-a.e. uniformly ergodic with $(\a,M)$.
	\item \label{L_infty_erg}
	The transition kernel $K$ is $L_\infty$-exponentially convergent with $(\a,2M)$.
\end{enumerate}
\end{prop}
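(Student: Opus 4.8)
The plan is to prove the equivalences in two blocks: first the equivalence of the two \emph{uniform} statements $(\text{i}')\Leftrightarrow(\text{ii}')$, then the equivalence of the two $\pi$-a.e.\ statements $(\text{i})\Leftrightarrow(\text{ii})$, and finally the implication from the uniform statements to the $\pi$-a.e.\ ones (which is immediate once the equivalences are in place, since the bound holding for all $x$ trivially holds $\pi$-a.e.). The main tool throughout is Lemma~\ref{tv_present}, which rewrites the total variation distance as an $L_\infty$-type operator norm against bounded test functions: for probability measures $\nu,\mu$ one has $\norm{\nu-\mu}{\text{tv}}=\frac12\sup_{\abs{f}\leq1}\abs{\int_D f\,(\dint\nu-\dint\mu)}$.

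For $(\text{i}')\Rightarrow(\text{ii}')$: fix $f\in L_B$ with $\abs{f}\leq1$ and $x\in D$. Since $\pi$ is stationary, $Sf=S(f)$ is the constant $\int_D f\,\dint\pi$, and by \eqref{rem_f} we have $(P^n-S)f(x)=\int_D f(y)\,K^n(x,\dint y)-\int_D f(y)\,\pi(\dint y)=\int_D f(y)\,(K^n(x,\cdot)-\pi)(\dint y)$. Taking absolute values and using \eqref{bounded_fct} applied to the pair $K^n(x,\cdot),\pi$ gives $\abs{(P^n-S)f(x)}\leq 2\norm{K^n(x,\cdot)-\pi}{\text{tv}}\leq 2M\a^n$. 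Taking the supremum over $x$ and over $\abs{f}\leq1$ yields $\norm{P^n-S}{L_B\to L_B}\leq 2M\a^n$. For the converse $(\text{ii}')\Rightarrow(\text{i}')$: fix $x\in D$; by Lemma~\ref{tv_present} and the same identity, $\norm{K^n(x,\cdot)-\pi}{\text{tv}}=\frac12\sup_{\abs{f}\leq1}\abs{(P^n-S)f(x)}\leq\frac12\norm{P^n-S}{L_B\to L_B}\leq M\a^n$, which is \eqref{pi_uni_erg}. So the two uniform statements are literally the same statement read through Lemma~\ref{tv_present}, up to the harmless factor $2$.

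The block $(\text{i})\Leftrightarrow(\text{ii})$ runs along the same lines but one must be careful about null sets. The direction $(\text{ii})\Rightarrow(\text{i})$: $L_\infty$-exponential convergence with $(\a,2M)$ means $\norm{P^n-S}{L_\infty\to L_\infty}\leq 2M\a^n$; since $\pi$-a.e.\ null sets coincide with $K(x,\cdot)$-null sets for $\pi$-a.e.\ $x$ (the observation proved inside Lemma~\ref{op_norm}), a bounded function and its $\pi$-essential bound behave compatibly under $K^n(x,\cdot)$ for $\pi$-a.e.\ $x$, so repeating the $L_B$ computation but now only for $\pi$-a.e.\ $x$ gives $\norm{K^n(x,\cdot)-\pi}{\text{tv}}\leq M\a^n$ for $\pi$-a.e.\ $x$, which is $\pi$-a.e.\ uniform ergodicity with $(\a,M)$. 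For $(\text{i})\Rightarrow(\text{ii})$: given $f\in L_\infty$, pick a version with $\abs{f(y)}\leq\norm{f}{\infty}$ for all $y\in D$ (modifying on a $\pi$-null set $N$); then for $\pi$-a.e.\ $x$ the measure $K^n(x,\cdot)$ charges $N$ with mass zero, so $(P^n-S)f(x)=\int_D f\,(K^n(x,\cdot)-\pi)$ with the integrand bounded by $\norm{f}{\infty}$, and \eqref{bounded_fct} gives $\abs{(P^n-S)f(x)}\leq 2\norm{f}{\infty}\norm{K^n(x,\cdot)-\pi}{\text{tv}}\leq 2M\a^n\norm{f}{\infty}$ for $\pi$-a.e.\ $x$; taking the $\pi$-essential supremum over $x$ yields $\norm{P^n-S}{L_\infty\to L_\infty}\leq 2M\a^n$. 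Finally $(\text{i}')\Rightarrow(\text{i})$ is trivial, which closes the diagram.

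The one point needing genuine care — and the step I expect to be the main obstacle — is the bookkeeping with $\pi$-null sets in the $L_\infty$ block: $L_\infty$ functions are equivalence classes, so one must choose representatives and invoke the equivalence ``$\pi(N)=0 \Leftrightarrow K(\cdot,N)=0$ $\pi$-a.e.'' (hence also for $K^n$) before the pointwise estimates make sense, and one must be sure the exceptional $\pi$-null set of $x$'s does not depend on $f$ in a way that breaks the essential-supremum step — it does not, because the bound $\norm{K^n(x,\cdot)-\pi}{\text{tv}}\leq M\a^n$ holds on a fixed ($f$-independent) $\pi$-full set. Everything else is a direct translation through Lemma~\ref{tv_present} and the identity \eqref{rem_f}.
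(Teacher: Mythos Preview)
Your proposal is correct and follows essentially the same approach as the paper: both reduce $(\text{i}')\Leftrightarrow(\text{ii}')$ to Lemma~\ref{tv_present}, and both handle $(\text{i})\Leftrightarrow(\text{ii})$ by passing between $L_\infty$ and $L_B$ via the null-set equivalence $\pi(N)=0\Leftrightarrow K^n(\cdot,N)=0$ $\pi$-a.e.\ together with a truncation/bounded-representative trick. The only organizational difference is that the paper packages the $\pi$-a.e.\ block into a single identity $\norm{P^n-S}{L_\infty\to L_\infty}=2\,\pi\text{-}\esssup_{x}\norm{K^n(x,\cdot)-\pi}{\text{tv}}$ (by first proving the pointwise $\pi$-a.e.\ equality $\sup_{\norm{f}{\infty}\leq1}\abs{(P^n-S)f(x)}=\sup_{\abs{f}\leq1}\abs{(P^n-S)f(x)}$ and then taking the essential supremum), whereas you argue the two implications separately; the ingredients and the level of care about null sets are the same.
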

\begin{proof}
By Lemma~\ref{tv_present} the equivalence of (\ref{richtig_unif_erg}') and (\ref{B_infty_erg}') holds true.
The equivalence of \eqref{unif_erg} and \eqref{L_infty_erg} remains to prove.
First, let us show that $\pi$-a.e.
\[
\sup_{\norm{f}{\infty}\leq 1} \abs{P^n f(x)-S(f)} = \sup_{\abs{f}\leq 1} \abs{ P^n f(x)-S(f) }.
\]
Note that
\[
\pi(N)=0\quad \Longleftrightarrow \quad K^n(\cdot,N)=0\quad \pi \mbox{-a.e.}
\]
for all $N\in\D$ and $n\in\N$, since $\pi$ is the stationary distribution.  
Suppose that $f\in L_\infty$.
Obviously, if $N\in\D$ and $\pi(N)=0$ then $\pi$-a.e.
\[
\abs{P^n f(x)-S(f)}=\abs{P^n (\mathbf{1}_{N^c}f)(x)-S(\mathbf{1}_{N^c}f)}.
\]
Let $\norm{f}{\infty}\leq 1$, i.e. $\pi(\set{x\in D: f(x)>1})=0$. 
Define 
\[
g(x)=	\begin{cases}
			f(x), & f(x)\leq 1, \\
			1,		&	f(x)>1,
			\end{cases}
\] 
such that $f(x)=g(x)$ holds $\pi$-a.e. and $\abs{g}\leq1$. Thus, $\pi$-a.e.
\[
\abs{P^n f(x)-S(f)} = \abs{P^n g(x)-S(g)} \leq \sup_{\abs{g}\leq1} \abs{P^n g(x)-S(g)}, 
\]
so that $\pi$-a.e.
\[
\sup_{\norm{f}{\infty}\leq1}\abs{P^n f(x)-S(f)} \leq \sup_{\abs{g}\leq1} \abs{P^n g(x)-S(g)}. 
\]
The inequality in the other direction is clearly also correct, i.e. $\pi$-a.e.
\begin{equation*} 
\sup_{\norm{f}{\infty}\leq1}\abs{P^n f(x)-S(f)} = \sup_{\abs{g}\leq1} \abs{P^n g(x)-S(g)}.
\end{equation*}
By applying the essential-supremum on both sides of the previous equation and \eqref{bounded_fct} one obtains
\[
\norm{P^n-S}{L_\infty \to L_\infty} = 2  
\esssup_{x\in D} \norm{K^n(x,\cdot)-\pi}{\text{\rm tv}}.
\]
Hence the proof is completed.
\end{proof}

It is known that there are transition kernels 
where the Markov operators have an $L_2$-spectral gap
and the transition kernels are not uniformly ergodic, see \cite{mengersen}.
Furthermore, uniform ergodicity implies an $L_2$-spectral gap, see \cite{hybrid}. 
In this sense uniform ergodicity is a stronger property than the existence of an $L_2$-spectral gap.  

\begin{prop}  \label{uni_impl_geo}
Let $\a\in[0,1)$ and $M<\infty$.
Let the transition kernel $K$ be reversible with respect to $\pi$.
Then the following statements are equivalent:
\begin{enumerate}[(i)]
\item \label{L_1_erg}
The transition kernel $K$ is $L_1$-exponentially convergent with $(\a,2M)$.
\item \label{L_inf_erg}
The transition kernel $K$ is $L_\infty$-exponentially convergent with $(\a,2M)$.
\item \label{uni_erg}
The transition kernel $K$ is $\pi$-a.e. uniformly ergodic with $(\a,M)$.
\end{enumerate}

Each of the conditions 
imply that the Markov operator has an $L_2$-spectral gap.
We have
\[
\beta=\norm{P}{L_2^0\to L_2^0} \leq \a.
\]
\end{prop}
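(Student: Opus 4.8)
The plan is to prove the three equivalences first and then read off the $L_2$-spectral gap from the resulting two-sided exponential bound. The equivalence (ii)$\Leftrightarrow$(iii) needs no work: by Proposition~\ref{uniform_L_inf} (equivalence of \eqref{unif_erg} and \eqref{L_infty_erg}), $\pi$-a.e. uniform ergodicity with $(\a,M)$ is precisely the statement $\norm{P^n-S}{L_\infty\to L_\infty}\le 2M\a^n$ for all $n\in\N$, i.e. $L_\infty$-exponential convergence with $(\a,2M)$.

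For (i)$\Leftrightarrow$(ii) I would invoke the duality $\norm{T}{L_p\to L_p}=\norm{T^*}{L_q\to L_q}$ with $p^{-1}+q^{-1}=1$, applied to $T=P^n-S$, $p=1$, $q=\infty$. Reversibility gives $P^*=P$ on $L_2$ by Lemma~\ref{P_norm}\,\eqref{adj_rev}, hence $(P^n)^*=(P^*)^n=P^n$ and $S^*=S$, so $(P^n-S)^*=P^n-S$. Therefore $\norm{P^n-S}{L_1\to L_1}=\norm{P^n-S}{L_\infty\to L_\infty}$ for every $n$, which makes conditions (i) and (ii) literally the same statement.

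It then remains to deduce the spectral gap. Assuming any of (i)--(iii), we have both $\norm{P^n-S}{L_1\to L_1}\le 2M\a^n$ and $\norm{P^n-S}{L_\infty\to L_\infty}\le 2M\a^n$. Applying the Riesz--Thorin interpolation theorem (Proposition~\ref{riesz_thorin}) to $T=P^n-S$ with endpoints $q_1=1$, $q_2=\infty$ and $\theta=\tfrac12$ yields $\norm{P^n-S}{L_2\to L_2}\le 2M\a^n$. Since $K$ is reversible, $P-S$ is self-adjoint on $L_2$ (with $PS=SP=S^2=S$, so $(P-S)^n=P^n-S$), hence $\norm{P^n-S}{L_2\to L_2}=\beta^n$; equivalently, use the normal-operator case of Lemma~\ref{P_S_gen}. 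Thus $\beta^n\le 2M\a^n$, i.e. $\beta\le(2M)^{1/n}\a$ for all $n$, and letting $n\to\infty$ gives $\beta\le\a<1$, so an $L_2$-spectral gap exists.

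The step I expect to require the most care is the duality identity $\norm{P^n-S}{L_1\to L_1}=\norm{P^n-S}{L_\infty\to L_\infty}$: one must verify that the $L_2$-adjoint of the integral operator $P$ really coincides with $P$ itself once transported to $L_q$ through the canonical isomorphism $(L_p)'\cong L_q$, and it is exactly here that reversibility enters, via Lemma~\ref{f_eq}. Everything else is a citation of a previously established result or a one-line limit. Note, finally, that this argument is sharper than the generic estimate $\beta\le\sqrt{\a}$ of Proposition~\ref{L_1conv_spec}: reversibility promotes the trivial bound $\norm{P^n-S}{L_\infty\to L_\infty}\le 2$ to the exponentially decaying $\norm{P^n-S}{L_\infty\to L_\infty}\le 2M\a^n$, and interpolating this two-sided decay at the midpoint is what forces $\beta\le\a$.
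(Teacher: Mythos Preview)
Your proposal is correct and follows essentially the same route as the paper: the equivalence (ii)$\Leftrightarrow$(iii) is quoted from Proposition~\ref{uniform_L_inf}; the equivalence (i)$\Leftrightarrow$(ii) comes from identifying the $L_1$--$L_\infty$ adjoint of $P^n-S$ with itself via reversibility (the paper writes this out directly as $\scalar{(P^n-S)f}{h}=\scalar{f}{(P^n-S)h}$ for $f\in L_1$, $h\in L_\infty$ using \eqref{F_x_y_n}, which is exactly the Lemma~\ref{f_eq} you flag as the delicate point); and the spectral-gap conclusion is obtained by interpolating the two endpoint bounds at $\theta=\tfrac12$ and then passing to the limit in $n$, which is the spectral-radius argument the paper also uses. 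One cosmetic slip: the real-valued Riesz--Thorin of Proposition~\ref{riesz_thorin} carries an extra factor $2$, so the interpolated bound is $\norm{P^n-S}{L_2\to L_2}\le 4M\a^n$ rather than $2M\a^n$; this of course does not affect $\beta\le\a$ after $n\to\infty$.
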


\proof
First we prove the equivalence of \eqref{L_1_erg} and \eqref{L_inf_erg}.
By reversibility one can see for $f\in L_1$ and $h\in L_\infty$ that
\[
\scalar{(P^n-S)f}{h}\underset{\eqref{F_x_y_n}}{=}\scalar{f}{(P^n-S)h}.
\]
The adjoint operator of $P^n-S$ acting on $L_1$ is $P^n-S$ acting on $L_\infty$.
Then, 
one has
\[
\norm{P^n-S}{L_1 \to L_1} = \norm{P^n-S}{L_\infty\to L_\infty}
\]
and the equivalence is obvious.

By Proposition~\ref{uniform_L_inf} one has that \eqref{L_inf_erg} is equivalent to \eqref{uni_erg}.

The last implication follows by an interpolation argument.
Proposition~\ref{riesz_thorin} (Interpolation Theorem of Riesz-Thorin) with $q_1=\infty$, $q_2=1$ and $\theta=1/2$ is applied. 
Then,
\begin{equation}  \label{L_2_est_L_2}
\norm{P^n}{L_2^0 \to L_2^0} \underset{\eqref{P_S_2_ident_gen}}{=}\norm{P^n-S}{L_2\to L_2} \leq 4 M \a^n,\quad n\in\N.
\end{equation}
Because of the self-adjointness (s-a) of $P$ one can apply
the spectral radius formula and one obtains
\begin{align*}
\beta 	& = \norm{P}{L_2^0 \to L_2^0}
				  \underset{\text{(s-a)}}{=} 
				  r[P]=\lim_{n\to \infty} (\norm{P^n}{L^0_2 \to L^0_2})^{1/n} 
				\underset{\eqref{L_2_est_L_2}}{\leq} \a \cdot \lim_{n\to \infty } (4 M)^{1/n} = \a. \sq 
\end{align*}

In Figure~\ref{rev_erg} we present a survey of the discussed relations between 
the terms of convergence and ergodicity.
\begin{figure}[htb] 
 	\centering
    \input{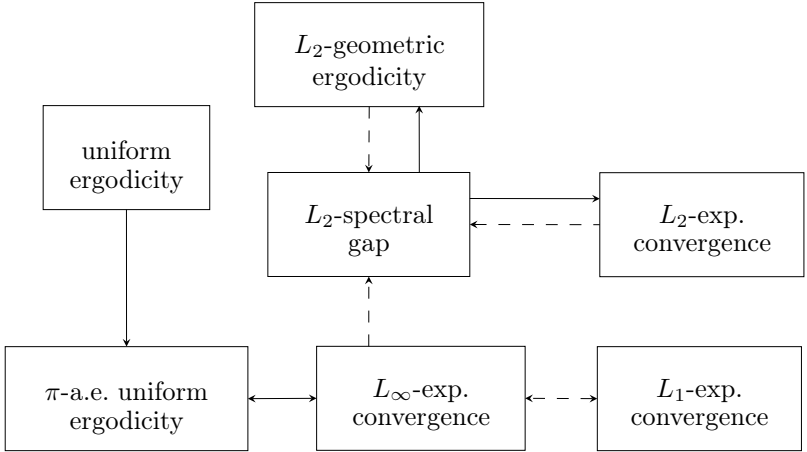}
    \caption{Ergodicity terms and their relations are illustrated. 
    A solid line represents the implication without any assumption of reversibility.
    A dashed line represents the implication under the assumption of reversibility. 
    }
    \label{rev_erg}
\end{figure} 


\section{Error bounds} \label{err_bound_gen}
In this section we prove error bounds on general state spaces. 
We assume that we have a Markov chain $(X_n)_{n\in\N}$ with transition kernel $K$ and initial distribution $\nu$, 
where $\pi$ is a stationary distribution, and
compute 
\[
  S_{n,n_0}(f)=\frac{1}{n}\sum_{j=1}^n f(X_{j+n_0})
\]
as approximation for $S(f)=\int_{D} f(x)\, \pi(\dint x)$. 
The error is measured in the mean square sense, i.e.
\[
   e_\nu(S_{n,n_0},f)=\left( \expect_{\nu,K} \abs{S_{n,n_0}(f)-S(f)}^2 \right)^{1/2}.
\]
Now let us present a helpful result.
\begin{lemma}
Let $(X_n)_{n\in\N}$ be a Markov chain with transition kernel $K$ and initial distribution $\nu$. 
Then for $i,j\in \N$ with $j\leq i$ it follows that
\begin{equation} \label{expect_ff}
\expect_{\nu,K}[f(X_i)f(X_j)] = \int_D P^j ( fP^{i-j}f )(x)\, \nu(\dint x).
\end{equation}
Moreover, if $\pi$ is a stationary distribution and $\nu=\pi$ then
\begin{equation} \label{expect_pi}
  \expect_{\pi,K}[f(X_i)f(X_j)]=\scalar{f}{P^{i-j}f}.
\end{equation}
\end{lemma}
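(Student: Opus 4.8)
The plan is to mirror the finite state space argument behind \eqref{err_help}, with sums replaced by integrals against the transition kernel. First I would expand the left-hand side using the explicit form \eqref{Pr_gen} of the joint law $W_{\nu,K}$ of $(X_1,\dots,X_i)$: for $j\le i$,
\[
\expect_{\nu,K}[f(X_i)f(X_j)] = \int_D\!\cdots\!\int_D f(x_j)\,f(x_i)\,K(x_{i-1},\dint x_i)\cdots K(x_1,\dint x_2)\,\nu(\dint x_1),
\]
which is legitimate as soon as the expectation is finite (the surrounding results work under $f\in L_2$ with a bounded density $\frac{d\nu}{d\pi}$; in general one first treats $f\ge 0$ by Tonelli and then splits $f=f^+-f^-$).

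Next I would collapse the ``inner block''. Integrating out $x_i$, then $x_{i-1}$, and so on down to $x_{j+1}$, and using the defining formula $Pg(x)=\int_D g(y)\,K(x,\dint y)$ together with the Chapman--Kolmogorov identity $K^m(x,\cdot)=\int_D K^{m-1}(y,\cdot)\,K(x,\dint y)$ recalled in the excerpt, the $i-j$ kernels joining $x_j$ to $x_i$ together with $f(x_i)$ fold up into $P^{i-j}f(x_j)$. What remains is an $\expect_{\nu,K}$-expectation of the single function $\big(f\cdot P^{i-j}f\big)(X_j)$, and integrating out the remaining variables $x_j,\dots,x_2,x_1$ in the same manner yields the integral on the right-hand side of \eqref{expect_ff}. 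Equivalently, and this is the cleanest way to see it: the Markov property gives $\expect_{\nu,K}[f(X_i)\mid X_1,\dots,X_j]=(P^{i-j}f)(X_j)$ a.s., so by the tower rule $\expect_{\nu,K}[f(X_i)f(X_j)]=\expect_{\nu,K}\big[(f\,P^{i-j}f)(X_j)\big]$, and then \eqref{nuP_Pun} of Lemma~\ref{lem_nuP_pun}, applied with the function $f\,P^{i-j}f$ and the law of $X_j$ (the successive image of $\nu$ under the kernel), delivers the representation in \eqref{expect_ff}.

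For the ``moreover'' part I set $\nu=\pi$. By stationarity, \eqref{rem_f} of Lemma~\ref{lem_nuP_pun} gives $\int_D P^m h(x)\,\pi(\dint x)=\int_D h(x)\,\pi(\dint x)$ for every $m$ and every $\pi$-integrable $h$; applying this with $h=f\,P^{i-j}f$ reduces the right-hand side of \eqref{expect_ff} to $\int_D f(x)\,P^{i-j}f(x)\,\pi(\dint x)=\scalar{f}{P^{i-j}f}$, which is \eqref{expect_pi}.

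I expect the only real obstacle to be the measure-theoretic bookkeeping: repeatedly invoking Fubini--Tonelli and Chapman--Kolmogorov, and making sure integrability is in place before interchanging integrations and before sliding the operators $P^m$ past the kernels. Once $f$ is reduced to the bounded nonnegative case all of this is routine, and no idea beyond the finite state space proof is needed.
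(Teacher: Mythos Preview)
Your proposal is correct and matches the paper's proof essentially line for line: the paper writes the $i$-fold iterated integral from \eqref{Pr_gen}, collapses the inner $i-j$ kernels into $P^{i-j}f(x_j)$, then collapses the remaining integrals to obtain \eqref{expect_ff}, and invokes \eqref{rem_f} for \eqref{expect_pi}. Your additional remarks on Fubini--Tonelli and the tower-rule reformulation are sound but more detailed than what the paper records.
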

\begin{proof}
The calculation
\begin{align*}
\expect_{\nu,K}[f(X_i)f(X_j)] & 
= \underbrace{\int_D \dots \int_D}_{i\text{-times}} 
			f(x_i)f(x_j)\, K(x_{i-1},\dint x_i) \dots K(x_1,\dint x_2)\, \nu(\dint x_1)\\
& = \underbrace{\int_D \dots \int_D}_{j\text{-times}} 
			f(x_j) P^{i-j}f (x_j)\, K(x_{j-1},\dint x_j)\dots K(x_1,\dint x_2)\, \nu(\dint x_1)\\
& = \int_D P^j(fP^{i-j}f)(x)\, \nu(\dint x)
\end{align*}
proves \eqref{expect_ff} and by \eqref{rem_f} one can see \eqref{expect_pi}.
\end{proof}
First we assume that the initial distribution of the Markov chain is a stationary one.
Hence it is not necessary to do any burn-in, i.e. $n_0=0$.
The resulting method is denoted by $S_n$ instead of $S_{n,0}$.
Afterwards we turn to the general method $S_{n,n_0}$ where the initial distribution might differ from
a stationary one.

In the next statement we assume that the transition kernel is reversible with respect to $\pi$.
Then we can apply the Spectral
Theorem for linear, bounded and self-adjoint operators, see Theorem~\ref{spec_thm}.

\begin{prop} \label{expl_stat_c}
Let $f\in L_2$ and $g=f-S(f)$. Let $(X_n)_{n\in\N}$ be a Markov chain with transition kernel $K$ 
and initial distribution $\pi$, let $K$ be reversible with respect to $\pi$ and let
\[
\lambda = \inf \set{ \a\mid \a \in \spec(P|L_2^0) },
 \quad\quad
\Lambda = \sup \set{ \a\mid \a \in \spec(P|L_2^0) }. 
\]
Suppose that $\Lambda<1$. Then 
\begin{equation} \label{err_present_c}
e_\pi(S_n,f)^2 = 
  \frac{1}{n^2} \int_\lambda^\Lambda W(n,\a)\, \dint\scalar{E_{\set{\a}} g}{g} = \frac{1}{n^2} \scalar{W(n,P)g}{g},
\end{equation}
where $E$ denotes the spectral measure\footnote{The definition of a spectral measure and the Spectral Theorem for linear, bounded self-adjoint operators 
are stated in Section~\ref{sec_spec}.} which corresponds to $P\colon L_2^0 \to L_2^0$ 
and recall that  
\begin{equation*}
   	W(n,\a)=\frac{n(1-\a^2)-2\a(1-\a^{n})}{(1-\a)^2}, \quad \alpha\in[-1,1)	.
\end{equation*}
\end{prop}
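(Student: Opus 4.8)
The plan is to mimic the finite-dimensional computation in Proposition~\ref{expl_stat}, replacing the sum over eigenvalues by an integral against the spectral measure. First I would set $g=f-S(f)\in L_2^0$ and expand the square exactly as in the finite case:
\begin{align*}
e_\pi(S_n,f)^2 &= \expect_{\pi,K}\abs{\tfrac1n\sum_{j=1}^n g(X_j)}^2\\
 &= \frac{1}{n^2}\sum_{j=1}^n \expect_{\pi,K}[g(X_j)^2] + \frac{2}{n^2}\sum_{j=1}^{n-1}\sum_{i=j+1}^n \expect_{\pi,K}[g(X_j)g(X_i)].
\end{align*}
By \eqref{expect_pi} the mixed term $\expect_{\pi,K}[g(X_j)g(X_i)]$ equals $\scalar{g}{P^{i-j}g}$ and the diagonal term equals $\scalar{g}{g}$, so that $e_\pi(S_n,f)^2 = \frac{1}{n^2}\big(n\scalar{g}{g} + 2\sum_{j=1}^{n-1}\sum_{i=j+1}^n \scalar{g}{P^{i-j}g}\big)$.

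Next I would invoke the Spectral Theorem for the bounded self-adjoint operator $P\colon L_2^0\to L_2^0$ (Theorem~\ref{spec_thm}), which gives $\scalar{g}{P^k g} = \int_\lambda^\Lambda \a^k\, \dint\scalar{E_{\set{\a}}g}{g}$ for every $k\geq 0$, since $\spec(P|L_2^0)\subset[\lambda,\Lambda]$ by the discussion preceding Definition~\ref{geo_erg}. Substituting and interchanging the (finite) sums with the integral — legitimate because the measure $\dint\scalar{E_{\set{\a}}g}{g}$ is finite with total mass $\norm{g}{2}^2$ and the integrands are bounded on the compact set $[\lambda,\Lambda]$ — yields
\[
e_\pi(S_n,f)^2 = \frac{1}{n^2}\int_\lambda^\Lambda \Big( n + 2\sum_{j=1}^{n-1}\sum_{i=j+1}^n \a^{i-j} \Big)\, \dint\scalar{E_{\set{\a}}g}{g}.
\]
The inner bracket is exactly the quantity evaluated in the proof of Proposition~\ref{expl_stat}: one has $n + 2\sum_{j=1}^{n-1}\sum_{i=j+1}^n \a^{i-j} = \frac{n(1-\a^2)-2\a(1-\a^n)}{(1-\a)^2} = W(n,\a)$ for $\a\in[-1,1)$ (the algebra is the geometric-series identity already carried out there, and $\Lambda<1$ guarantees the integrand is continuous, hence bounded, on $[\lambda,\Lambda]$). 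This gives the first equality in \eqref{err_present_c}. The second equality, $\int_\lambda^\Lambda W(n,\a)\,\dint\scalar{E_{\set{\a}}g}{g} = \scalar{W(n,P)g}{g}$, is simply the definition of the bounded Borel function $W(n,\cdot)$ applied to $P$ via the functional calculus.

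The main obstacle is not conceptual but a matter of care: I must justify that $g\in L_2^0$ (so that $P^k g$ makes sense as an element of $L_2^0$ and the spectral calculus on $L_2^0$ applies), which follows since $f\in L_2$ implies $g=f-S(f)\in L_2$ and $S(g)=0$; and I must be careful that \eqref{expect_pi} is being applied to $g$ rather than $f$, which is harmless since $g$ is $\pi$-integrable and square-integrable. The only genuinely delicate point is the Fubini-type interchange of the double sum with the spectral integral, but since both sums are finite this reduces to linearity of the integral and poses no real difficulty. Everything else is a transcription of the finite-state argument with eigenvalue sums replaced by integrals against the spectral measure.
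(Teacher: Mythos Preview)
Your proposal is correct and follows essentially the same route as the paper: expand the mean square error, use \eqref{expect_pi} to rewrite the covariances as $\scalar{g}{P^{i-j}g}$, apply the Spectral Theorem to pass to $\int_\lambda^\Lambda \a^{i-j}\,\dint\scalar{E_{\set{\a}}g}{g}$, and then sum the geometric series inside the integral to obtain $W(n,\a)$. Your extra remarks on the finite-sum/integral interchange and on continuity of $W(n,\cdot)$ are valid refinements the paper leaves implicit.
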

\proof
 Since $f\in L_2$ we have $g\in L_2^0$. The error obeys
 \begin{align*}
     e_\pi(S_n,f)^2 &=\expect_{\pi,K}\abs{\frac{1}{n}\sum_{j=1}^n g(X_j )}^2
     		     =\frac{1}{n^2}\, \expect_{\pi,K} \abs{ \sum_{j=1}^n g(X_j)}^2 \\
     		   & =\frac{1}{n^2} \sum_{j=1}^n \expect_{\pi,K}[g(X_j)^2]
     				+ \frac{2}{n^2} \sum_{j=1}^{n-1} \sum_{i=j+1}^n \expect_{\pi,K}[g(X_j)g(X_i)].
 \end{align*}
 For $i,j\in\N$ with $j\leq i$ we obtain
 \begin{align*}
  	 \expect_{\pi,K}[g(X_i)g(X_j)]	
\underset{\eqref{expect_pi}}{=} 
	\scalar{g}{P^{i-j}g}
 	= \int_\lambda^\Lambda \a^{i-j}\,\dint\scalar{E_{\set{\a}} g}{g},
\end{align*} 		
 where the last equality is an application of Theorem~\ref{spec_thm}. 
 Altogether this gives
 \begin{align*}
 e_\pi(S_n,f)^2 & =
                 \frac{1}{n^2} \int_\lambda^\Lambda 
 		  \left[ n + 2 \sum_{j=1}^{n-1} \sum_{i=j+1}^n \a^{i-j} \right] \,\dint\scalar{E_{\set{\a}} g}{g}\\
 		&	= \frac{1}{n^2} \int_\lambda^\Lambda 
 				\left[ n + 2 \frac{(n-1)\a-n\a^2+\a^{n+1}}{(1-\a)^2}\right]\,\dint\scalar{E_{\set{\a}} g}{g}\\
 		&=
 		\frac{1}{n^2} \int_\lambda^\Lambda W(n,\a)\,\dint\scalar{E_{\set{\a}} g}{g}	= \frac{1}{n^2} \scalar{W(n,P)g}{g}.
\sq
\end{align*} 

By the Spectral Theorem we have a representation of the error depending on the Markov operator $P$.
In this setting one can show a relation between the operator norm of $W(n,P)\colon L_2^0 \to L_2^0$ 
and the maximal error of $S_n$ for integrands $f$ which satisfy $\norm{f}{2}\leq1$.
This is stated in the next corollary.

\begin{coro}  \label{worst_stat_gen}
Let $(X_n)_{n\in\N}$ be a Markov chain with transition kernel $K$ and initial distribution $\pi$, let $K$ be 
reversible with respect to $\pi$ and suppose that $\Lambda<1$. Then 
\[
\sup_{\norm{f}{2}\leq1}e_\pi(S_n,f)^2 
= \frac{1}{n^2} \norm{W(n,P)}{L_2^0\to L_2^0} 
=\frac{1+\Lambda}{n(1-\Lambda)}-\frac{2\Lambda(1-\Lambda^n)}{n^2(1-\Lambda)^2} \leq \frac{2}{n(1-\Lambda)}.
\]
\end{coro}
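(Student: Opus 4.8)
The starting point is Proposition~\ref{expl_stat_c}, which already gives the exact identity $e_\pi(S_n,f)^2=\frac{1}{n^2}\scalar{W(n,P)g}{g}$ with $g=f-S(f)\in L_2^0$ and $W(n,P)$ defined through the functional calculus of Theorem~\ref{spec_thm}. The plan is first to reduce the supremum over $\norm{f}{2}\le1$ to a supremum over the unit ball of $L_2^0$: on the one hand $\norm{g}{2}\le\norm{f}{2}$, and on the other hand every $g\in L_2^0$ with $\norm{g}{2}\le1$ is itself admissible as an integrand (take $f=g$, so $S(f)=0$). Hence
\[
\sup_{\norm{f}{2}\le1}e_\pi(S_n,f)^2=\frac{1}{n^2}\sup_{\norm{g}{2}\le1,\ g\in L_2^0}\scalar{W(n,P)g}{g}.
\]

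Next I would identify this quadratic-form supremum with the operator norm. The operator $W(n,P)$ is self-adjoint on $L_2^0$ (a real function applied to the self-adjoint $P|L_2^0$), and it is positive: either because $e_\pi(S_n,f)^2\ge0$ for every $f$, so $\scalar{W(n,P)g}{g}\ge0$ for all $g\in L_2^0$, or directly because $\alpha\mapsto W(n,\alpha)$ is non-negative on $[-1,1)$. For a positive bounded self-adjoint operator $T$ one has $\sup_{\norm{g}{2}\le1}\scalar{Tg}{g}=\sup\spec(T)=\norm{T}{L_2^0\to L_2^0}$; applied to $T=W(n,P)$ this yields the first asserted equality $\sup_{\norm{f}{2}\le1}e_\pi(S_n,f)^2=\frac{1}{n^2}\norm{W(n,P)}{L_2^0\to L_2^0}$.

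It then remains to compute $\norm{W(n,P)}{L_2^0\to L_2^0}$. Since $\spec(P|L_2^0)\subseteq[\lambda,\Lambda]\subseteq[-1,1)$ and $W(n,\cdot)$ is continuous on a neighbourhood of this interval, the spectral mapping theorem gives $\spec(W(n,P)|L_2^0)=W(n,\spec(P|L_2^0))$. Recalling the elementary monotonicity established in the proof of Lemma~\ref{lemm_mono_inc}, $\alpha\mapsto W(n,\alpha)$ is increasing on $[-1,1)$, and since $\Lambda\in\spec(P|L_2^0)$ we obtain $\sup\spec(W(n,P)|L_2^0)=W(n,\Lambda)$. Therefore $\norm{W(n,P)}{L_2^0\to L_2^0}=W(n,\Lambda)=\frac{n(1-\Lambda^2)-2\Lambda(1-\Lambda^n)}{(1-\Lambda)^2}$, and dividing by $n^2$ gives $\frac{1+\Lambda}{n(1-\Lambda)}-\frac{2\Lambda(1-\Lambda^n)}{n^2(1-\Lambda)^2}$; the final inequality is just the bound $W(n,\Lambda)\le\frac{2n}{1-\Lambda}$, again from the proof of Lemma~\ref{lemm_mono_inc}.

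The main obstacle is the bookkeeping in the middle step: making precise that the numerical-range supremum, the supremum of the spectrum, and the operator norm all coincide with $W(n,\Lambda)$. This needs the positivity of $W(n,P)$ (so that $\norm{\cdot}$ equals $\sup\spec$ rather than $\max(|\sup\spec|,|\inf\spec|)$), the spectral mapping theorem for continuous functions of a bounded self-adjoint operator, the monotonicity of $W(n,\cdot)$ on $[-1,1)$, and — crucially — the fact, already built into the setup of Proposition~\ref{expl_stat_c}, that $\Lambda$ genuinely lies in $\spec(P|L_2^0)$, so the spectral supremum is attained at $\Lambda$ and not merely approached. Everything else is the routine algebraic simplification of $W(n,\Lambda)/n^2$.
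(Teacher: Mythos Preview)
Your proposal is correct and follows essentially the same route as the paper: both reduce to the quadratic form $\scalar{W(n,P)g}{g}$ on $L_2^0$ via Proposition~\ref{expl_stat_c}, identify the supremum with $\norm{W(n,P)}{L_2^0\to L_2^0}$ using self-adjointness, and evaluate this as $W(n,\Lambda)$ via the monotonicity of $\alpha\mapsto W(n,\alpha)$ from Lemma~\ref{lemm_mono_inc}. Your explicit remark that positivity of $W(n,\cdot)$ is needed to pass from $\max|W(n,\alpha)|$ to $W(n,\Lambda)$ is a helpful clarification of a step the paper leaves implicit.
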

\proof
The last inequality of the assertion follows by Lemma~\ref{lemm_mono_inc}.
The mapping $\alpha \mapsto W(n,\a)$ of Proposition~\ref{expl_stat_c} is increasing, 
see also Lemma~\ref{lemm_mono_inc}.
For $g=f-S(f)$ we have
\begin{align*}
e_\pi(S_n,f)^2	& =		
		\frac{1}{n^2}\int_\lambda^\Lambda W(n,\a)\,\dint\scalar{E_{\set{\a}} g}{g}
		\leq	\frac{1}{n^2} W(n,\Lambda) \int_\lambda^\Lambda \,\dint \scalar{E_{\set{\a}} g}{g}\\
		& = \frac{1}{n^2} W(n,\Lambda) \scalar{g}{g}
		= \left(\frac{1+\Lambda}{n(1-\Lambda)}-\frac{2\Lambda(1-\Lambda^n)}{n^2(1-\Lambda)^2}	\right) \norm{g}{2}^2.	
\end{align*}
The assertion is proven by 
\begin{align*}
W(n,\Lambda) & 	= \max_{\a \in \spec(P|L_2^0)} \abs{W(n,\a)} 
							= \norm{W(n,P)}{L_2^0\to L_2^0}  =\sup_{\norm{g}{2}\leq1,\;g\in L_2^0} \scalar{W(n,P)g}{g}\\
					 &	= \sup_{\norm{g}{2}\leq1,\;g\in L_2^0} n^2 \cdot e_\pi(S_n,g)^2 \leq n^2 \sup_{\norm{f}{2}\leq1}  e_\pi(S_n,f)^2.
\sq
\end{align*}
If the transition kernel $K$ is reversible with respect to $\pi$ and the Markov operator has an $L_2$-spectral gap, 
then
\[
\beta=\norm{P}{L_2^0\to L_2^0}=\max\{\Lambda,\abs{\lambda}\}<1.
\] 
Note that Proposition~\ref{expl_stat_c} holds already if $\Lambda<1$.
Hence an $L_2$-spectral gap 
is not necessary.
If the transition kernel $K$ is not reversible
but one has an $L_2$-spectral gap, then the following error bound can be shown.
\begin{prop}  \label{ohne_rev_stat}
Let $(X_n)_{n\in\N}$ be a Markov chain with transition kernel $K$ and initial distribution $\pi$.
Let $\pi$ be a stationary distribution of $K$. 
Let $f\in L_2$ and assume that there exists an $L_2$-spectral gap $1-\beta>0$.
Then
\begin{equation} \label{err_present_non_rev}
e_\pi(S_n,f)^2 \leq \frac{2}{n(1-\beta)} \norm{f}{2}^2.
\end{equation}
\end{prop}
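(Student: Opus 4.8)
The plan is to mimic the opening of the proof of Proposition~\ref{expl_stat_c} but to avoid the Spectral Theorem, which is not available here because $K$ is not assumed reversible. Set $g=f-S(f)$; since $\pi$ is stationary, \eqref{rem_f} gives $S(Pg)=S(g)=0$, so $g\in L_2^0$ and $P$ maps $L_2^0$ into itself. Expanding the square and using \eqref{expect_pi},
\[
e_\pi(S_n,f)^2=\frac{1}{n^2}\left[\sum_{j=1}^n\expect_{\pi,K}[g(X_j)^2]+2\sum_{j=1}^{n-1}\sum_{i=j+1}^n\expect_{\pi,K}[g(X_j)g(X_i)]\right]
=\frac{1}{n^2}\left[n\scalar{g}{g}+2\sum_{j=1}^{n-1}\sum_{i=j+1}^n\scalar{g}{P^{i-j}g}\right].
\]

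The key step is to bound each covariance term by an operator norm instead of a spectral integral. Since $\norm{P}{L_2^0\to L_2^0}=\beta$, submultiplicativity gives $\norm{P^m}{L_2^0\to L_2^0}\leq\beta^m$, and hence by Cauchy--Schwarz $\abs{\scalar{g}{P^{i-j}g}}\leq\norm{g}{2}\,\norm{P^{i-j}g}{2}\leq\beta^{i-j}\norm{g}{2}^2$ for $j\leq i$. Summing the geometric series, $\sum_{i=j+1}^n\beta^{i-j}=\sum_{k=1}^{n-j}\beta^k\leq\beta/(1-\beta)$, so the double sum is at most $(n-1)\beta/(1-\beta)\leq n\beta/(1-\beta)$, which yields
\[
e_\pi(S_n,f)^2\leq\frac{\norm{g}{2}^2}{n^2}\left(n+\frac{2n\beta}{1-\beta}\right)=\frac{1+\beta}{n(1-\beta)}\norm{g}{2}^2\leq\frac{2}{n(1-\beta)}\norm{g}{2}^2.
\]
Finally $\norm{g}{2}^2=\norm{f}{2}^2-S(f)^2\leq\norm{f}{2}^2$, which gives \eqref{err_present_non_rev}. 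Alternatively, one can recognize $n+2\sum_{j=1}^{n-1}\sum_{i=j+1}^n\beta^{i-j}=W(n,\beta)$ exactly as in the proof of Proposition~\ref{expl_stat}, and then invoke the bound $W(n,x)\leq 2n/(1-x)$ established in the proof of Lemma~\ref{lemm_mono_inc}.

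There is no real obstacle: the only place where reversibility entered the earlier argument was to diagonalize $P$ and obtain the exact formula of Corollary~\ref{worst_stat_gen}, and here we simply replace that exact evaluation of $\scalar{g}{P^mg}$ by the crude bound $\beta^m\norm{g}{2}^2$. This is lossless for the purpose of an upper bound, and it is the reason the conclusion is an inequality rather than an identity. One should only take care that $P$ restricted to $L_2^0$ still has norm $\beta$, so that the geometric decay of the covariances is available --- this is precisely the hypothesis that an $L_2$-spectral gap exists.
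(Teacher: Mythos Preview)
Your proof is correct and follows essentially the same approach as the paper: expand the mean square error, bound each covariance $\scalar{g}{P^{i-j}g}$ by $\beta^{i-j}\norm{g}{2}^2$ via Cauchy--Schwarz and the operator norm on $L_2^0$, and sum. The paper packages the resulting sum as $W(n,\beta)$ and cites Lemma~\ref{lemm_mono_inc}, which is exactly the alternative you mention at the end.
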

\proof
 Let $g=f-S(f)$.
 The error obeys
 \begin{align*}
     & e_\pi(S_n,f)^2
     	=\frac{1}{n^2} \sum_{j=1}^n \expect_{\pi,K}[g(X_j)^2]
     	+ \frac{2}{n^2} \sum_{j=1}^{n-1} \sum_{i=j+1}^n \expect_{\pi,K}[g(X_j)g(X_i)].
 \end{align*}
 For $i,j\in \N$ with $j\leq i$ we have by the Cauchy-Schwarz inequality (CS) that  
 \begin{align*}
  	& \expect_{\pi,K}[g(X_i)g(X_j)]	
 			  	= \scalar{g}{P^{i-j}g}
 			\underset{\text{(CS)}}{\leq} \norm{P^{i-j}}{L_2^0 \to L_2^0} \norm{g}{2}^2.
\end{align*} 		
 Then, with $W(n,\beta)$ from Proposition~\ref{expl_stat_c} one has
 \[
  e_\pi(S_n,f)^2 \leq \frac{W(n,\beta)}{n^2} \norm{g}{2}^2 
  \underset{\eqref{mono_inc}}{\leq} \frac{2}{n(1-\beta)} \norm{f}{2}^2.
 \sq
 \]

The estimates of the error under the assumption that the initial distribution is a
stationary one seem to be restrictive.
If we could sample $\pi$ directly we would approximate $S(f)$ by Monte Carlo with an i.i.d. sample.
However, even if it is possible 
it might happen that the direct sampling procedure is computationally expensive, such that 
it is reasonable to generate only the initial state by sampling from $\pi$ and afterwards 
run a Markov chain with stationary distribution $\pi$.  

The error of a Markov chain Monte Carlo method with stationary initial distribution 
is related to the error with not necessarily stationary initial distribution.
\begin{prop} \label{connect_lem}
Let $r\in[1,2]$, let $f\in L_{2r}$ and let $\nu\in \mathcal{M}_{r/(r-1)}$ be a probability measure. 
Let $(X_n)_{n\in\N}$ be a Markov chain with transition kernel $K$ and initial distribution $\nu$ 
and let $\pi$ be a stationary distribution of $K$. 
Then
\begin{align}  \label{gen_con}
e_\nu(S_{n,n_0},f)^2 =  e_\pi(S_n,f)^2
+ \frac{1}{n^2}\sum_{j=1}^{n} L_{j+n_0}(g^2)
+ \frac{2}{n^2} \sum_{j=1}^{n-1} \sum_{k=j+1}^n L_{j+n_0}(gP^{k-j}g),
\end{align}
where $g=f-S(f)$ and
\[
L_i(h) = 	\scalar{(P^i-S)h}{(\frac{d\nu}{d\pi}-1)}, \quad h\in L_r,\,i\in\N.
\]
\end{prop}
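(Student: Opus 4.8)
The plan is to follow the proof of Proposition~\ref{fin_con_lem} in the general setting, with integrals in place of sums and \eqref{expect_ff} in place of \eqref{err_help}. Since the functional $L_i$ is now defined by pairing $(P^i-S)h$ against $\frac{d\nu}{d\pi}-1$ (and not the other way round), no reversibility of $K$ is needed; all that is used is that $\pi$ is stationary and $\nu$ a probability measure.

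First I would write, with $g=f-S(f)$, that $S_{n,n_0}(f)-S(f)=\frac1n\sum_{j=1}^n g(X_{j+n_0})$, so
\[
e_\nu(S_{n,n_0},f)^2=\frac{1}{n^2}\sum_{j=1}^n\sum_{i=1}^n\expect_{\nu,K}[g(X_{j+n_0})g(X_{i+n_0})],
\]
and split the double sum into its diagonal $i=j$ and, by symmetry, twice the part $j<i$. For $j\le i$, \eqref{expect_ff} gives $\expect_{\nu,K}[g(X_{j+n_0})g(X_{i+n_0})]=\int_D P^{n_0+j}(gP^{i-j}g)(x)\,\nu(\dint x)$, whence, writing $k$ for $i$ on the off-diagonal part,
\[
e_\nu(S_{n,n_0},f)^2=\frac{1}{n^2}\sum_{j=1}^n\int_D P^{n_0+j}(g^2)(x)\,\nu(\dint x)+\frac{2}{n^2}\sum_{j=1}^{n-1}\sum_{k=j+1}^n\int_D P^{n_0+j}(gP^{k-j}g)(x)\,\nu(\dint x).
\]

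The heart of the argument is the identity, valid for $h\in L_r$ and $m\in\N$,
\[
\int_D (P^m h)(x)\,\nu(\dint x)=\int_D (P^m h)(x)\,\pi(\dint x)+L_m(h).
\]
I would prove it by writing $\int_D P^m h\,\dint\nu=\scalar{P^m h}{\frac{d\nu}{d\pi}}=\scalar{P^m h}{1}+\scalar{P^m h}{\frac{d\nu}{d\pi}-1}$, noting that $\scalar{P^m h}{1}=S(P^m h)=S(h)=\int_D P^m h\,\dint\pi$ by \eqref{rem_f}, and that $\scalar{1}{\frac{d\nu}{d\pi}-1}=\nu(D)-1=0$ because $\nu$ is a probability measure, so that $\scalar{Sh}{\frac{d\nu}{d\pi}-1}=0$ and $\scalar{P^m h}{\frac{d\nu}{d\pi}-1}=\scalar{(P^m-S)h}{\frac{d\nu}{d\pi}-1}=L_m(h)$. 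Applying this with $h=g^2$ and $h=gP^{k-j}g$, and then using \eqref{rem_f} and \eqref{expect_pi} to recognise $\int_D P^{n_0+j}(g^2)\,\dint\pi=\expect_{\pi,K}[g(X_j)^2]$ and $\int_D P^{n_0+j}(gP^{k-j}g)\,\dint\pi=\scalar{g}{P^{k-j}g}=\expect_{\pi,K}[g(X_j)g(X_k)]$, the $\pi$-parts reassemble into $e_\pi(S_n,f)^2$ and the $\nu$-parts into the two sums of $L$-terms, which is exactly \eqref{gen_con}.

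The only genuine work beyond these formal steps is checking integrability, which the hypotheses are tailored to provide. With $q=r/(r-1)$ one has $\tfrac1r+\tfrac1q=1$, and $f\in L_{2r}$ forces $g\in L_{2r}$, hence $g^2\in L_r$ and also $gP^{k-j}g\in L_r$ (since $P^{k-j}g\in L_{2r}$ by Lemma~\ref{op_norm} and the product of two $L_{2r}$ functions lies in $L_r$ by H\"older). Together with $\frac{d\nu}{d\pi}-1\in L_q$ this makes all pairings $\scalar{P^m h}{\frac{d\nu}{d\pi}}$ and $L_m(h)$ finite, and via \eqref{nuP_Pun} it gives $\int_D g^2\,\dint(\nu P^m)=\int_D P^m(g^2)\,\dint\nu<\infty$, which legitimises the expansion of the square in the first display. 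I expect this bookkeeping — confirming that $g^2$ and $gP^{k-j}g$ land in $L_r$ and that the distribution of each $X_{j+n_0}$ integrates $g^2$ — to be the main, though routine, obstacle.
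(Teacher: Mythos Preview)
Your proof is correct and follows essentially the same approach as the paper: expand the squared error via \eqref{expect_ff}, then use the identity $\int_D P^m h\,\dint\nu=\int_D P^m h\,\dint\pi+L_m(h)$ (derived exactly as you do, by splitting $\frac{d\nu}{d\pi}=1+(\frac{d\nu}{d\pi}-1)$ and using $S(\frac{d\nu}{d\pi}-1)=0$) for $h=g^2$ and $h=gP^{k-j}g$. Your integrability discussion is in fact more explicit than the paper's, which simply notes that $\frac{d\nu}{d\pi}\cdot P^i h$ is $\pi$-integrable under the hypotheses.
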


\begin{proof}
The proof is adapted from \cite[Lemma~6, p.~17]{expl_error}. 
One has
\begin{align*}
& \expect_{\nu,K} \abs{S(f)-S_{n,n_0}(f)}^2
  =\frac{1}{n^2} \sum_{j=1}^n \sum_{i=1}^n \expect_{\nu,K} [g(X_{n_0+j})g(X_{n_0+i})]\\
& = \frac{1}{n^2} \sum_{j=1}^n \int_{D} P^{n_0+j}(g^2)(x)\, \nu(\dint x)
+ \frac{2}{n^2} \sum_{j=1}^{n-1} \sum_{k=j+1}^n \int_{D} P^{n_0+j}(g P^{k-j}g)(x)\,\nu(\dint x).
\end{align*}
For $h\in L_r$ and $\nu\in \mathcal{M}_{r/(r-1)}$ we have for all $i\in\N$ that $\frac{d\nu}{d\pi}\cdot P^i h$ 
is integrable with respect to $\pi$. Then
the following transformation holds true
\begin{align*} 
& \int_{D} (P^i h)(x)\, \nu(\dint x) 
	= \scalar{ P^i h }{ \frac{d\nu}{d\pi}}
	= \scalar{ P^i h }{1} + \scalar{ P^i h }{ (\frac{d\nu}{d\pi}-1)}\\
& = \scalar{ P^i h }{1} + \scalar{ P^i h }{ (\frac{d\nu}{d\pi}-1)}
	-	\underbrace{\scalar{ h }{ S(\frac{d\nu}{d\pi}-1)}}_{=0}\\
& = \scalar{ P^i h }{1} + \scalar{ (P^i-S) h }{ (\frac{d\nu}{d\pi}-1)}\\
& =	\int_{ D} (P^i h)(x)\pi(\dint x) 
		+ \scalar{ (P^i-S) h }{ (\frac{d\nu}{d\pi}-1)}.
\end{align*}
Formula \eqref{gen_con} is shown by using the previous calculation
for $h=g^2$ and $h=g P^{k-j} g$.
\end{proof}

Equation \eqref{gen_con} 
is still an exact error formula. 
The next lemma provides an estimate of the functional $L_k(\cdot)$ for $k\in\N$.

\begin{lemma} \label{L_est}
Let $r\in[1,2]$, $\nu\in \mathcal{M}_{r/(r-1)}$ and $h\in L_r$. 
Recall that $\beta=\norm{P}{L_2^0 \to L_2^0}$. 
\begin{enumerate}[(i)] 
\item \label{spec_L_k}
If $r\in (1,2]$, then
\begin{align}
  \label{q_r_gen}
	\abs{L_k(h)} & 
		\leq 2^{2/r}\beta^{2k\frac{r-1}{r}} \norm{\frac{d\nu}{d\pi}-1}{\frac{r}{r-1}}   \norm{h}{r},\quad k\in\N.
\end{align}
\item \label{zwei_L1}
If $r=1$ and the transition kernel is $L_1$-exponentially convergent with $(\a,M)$, then
\begin{equation}  \label{L_uniform}
	\abs{L_k(h)}\leq M \a^k \norm{\frac{d\nu}{d\pi}-1}{\infty} \norm{h}{1},\quad k\in\N.
\end{equation}

\end{enumerate}
\end{lemma}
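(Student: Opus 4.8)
The plan is to start from the definition $L_k(h) = \scalar{(P^k-S)h}{(\frac{d\nu}{d\pi}-1)}$ and apply H\"older's inequality in $L_r$ and $L_{r/(r-1)}$, so that
\[
\abs{L_k(h)} \leq \norm{(P^k-S)h}{r}\, \norm{\tfrac{d\nu}{d\pi}-1}{\frac{r}{r-1}}
\leq \norm{P^k-S}{L_r\to L_r}\, \norm{h}{r}\, \norm{\tfrac{d\nu}{d\pi}-1}{\frac{r}{r-1}}.
\]
For part~\eqref{spec_L_k}, with $r\in(1,2]$, I would then invoke Proposition~\ref{prop_Lp_gen}: since there is an $L_2$-spectral gap $1-\beta>0$, the operator norm $\norm{P^k-S}{L_r\to L_r}$ is bounded by $2^{2/r}\beta^{2k\frac{r-1}{r}}$ for $r\in(1,2)$ (the exponent $\theta$ in the Riesz--Thorin interpolation between $L_2$ and $L_1$ applied to $T=P^k-S$ gives precisely this rate), and for $r=2$ directly by $\beta^k = 2^{2/2}\beta^{2k\cdot\frac12}/2$, which is dominated by $2^{2/r}\beta^{2k(r-1)/r}$ at $r=2$. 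Combining the two displays yields \eqref{q_r_gen}. The main thing to check carefully is that the constant and the exponent coming out of Proposition~\ref{prop_Lp_gen} match the claimed ones on the nose, and that the endpoint $r=2$ is covered (it is, with room to spare).

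For part~\eqref{zwei_L1}, with $r=1$, the dual exponent is $\infty$, so the same H\"older step gives
\[
\abs{L_k(h)} \leq \norm{(P^k-S)h}{1}\, \norm{\tfrac{d\nu}{d\pi}-1}{\infty}
\leq \norm{P^k-S}{L_1\to L_1}\, \norm{h}{1}\, \norm{\tfrac{d\nu}{d\pi}-1}{\infty},
\]
and now I would simply insert the hypothesis that the transition kernel is $L_1$-exponentially convergent with $(\a,M)$, i.e. $\norm{P^k-S}{L_1\to L_1}\leq M\a^k$, to obtain \eqref{L_uniform}. Here the only subtlety is that for $h\in L_1$ and $\nu\in\mathcal{M}_\infty$ the pairing $\scalar{(P^k-S)h}{(\frac{d\nu}{d\pi}-1)}$ is well defined, which follows because $(P^k-S)h\in L_1$ (by Lemma~\ref{op_norm} and boundedness of $S$ on $L_1$) and $\frac{d\nu}{d\pi}-1\in L_\infty$; this integrability is exactly what was already arranged in the proof of Proposition~\ref{connect_lem}.

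I do not expect a serious obstacle here: the lemma is essentially H\"older's inequality followed by the already-established operator-norm bounds. The one place to be attentive is bookkeeping of constants in part~\eqref{spec_L_k} — making sure the $2^{2/r}$ and the exponent $2k\frac{r-1}{r}$ are quoted from Proposition~\ref{prop_Lp_gen} in the regime $r\in(1,2)$ and that the $r=2$ case is subsumed rather than requiring a separate (slightly better) bound.
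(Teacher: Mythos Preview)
Your proposal is correct and follows exactly the paper's proof: apply H\"older with exponents $r$ and $s=\frac{r}{r-1}$ to $L_k(h)=\scalar{(P^k-S)h}{(\frac{d\nu}{d\pi}-1)}$, then bound $\norm{P^k-S}{L_r\to L_r}$ via \eqref{norm_Lp_gen} for part~\eqref{spec_L_k} and via the $L_1$-exponential convergence hypothesis for part~\eqref{zwei_L1}. Your remark that $r=2$ admits the sharper bound $\beta^k\norm{\frac{d\nu}{d\pi}-1}{2}\norm{h}{2}$ is also noted in the paper immediately after the lemma.
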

\begin{proof}
After applying H\"older's inequality (HI) with conjugate parameter $r$ and $s=\frac{r}{r-1}$ 
to $L_k(h)=\scalar{(P^k-S)h}{(\frac{d\nu}{d\pi}-1)}$ one has
\[
  \abs{L_k(h)} \underset{\text{(HI)}}{\leq} 
  		\norm{(P^k-S)h}{r}\norm{\frac{d\nu}{d\pi}-1}{s}
  		\leq \norm{P^k-S}{L_r\to L_r} \norm{\frac{d\nu}{d\pi}-1}{s} \norm{h}{r}.
\]
By equation \eqref{norm_Lp_gen} the claim of \eqref{spec_L_k} is proven and by the $L_1$-exponential convergence 
the inequality of $\eqref{zwei_L1}$ holds.
\end{proof}
Note that if $r=2$ then one has 
	$ \abs{L_k(h)}\leq \beta^k \norm{\frac{d\nu}{d\pi}-1}{2} \norm{h}{2} $, see \eqref{P_S_2_ident_gen}.
This is by a factor of two better than \eqref{q_r_gen}, but not essentially different.\\

In Lemma~\ref{L_est} we have seen that 
under suitable assumptions one can ensure
an exponential decay of $L_k(\cdot)$ for increasing $k\in\N$.
This fact is used to show for reversible Markov chains which are
$L_1$-exponentially ergodic with $(\a,M)$ that there exists a constant $C_{\nu,\a,M}$, 
which is independent of $n$ and $n_0$, such that
\[
\abs{e_\nu(S_{n,n_0},f)^2 - e_\pi(S_n,f)^2} \leq C_{\nu,\a,M} \norm{f}{2}^2 \frac{\a^{n_0}}{n^2}.
\]
An immediate consequence of the inequality is an explicit error bound.
The following lemma and remark imply such an inequality and provide $C_{\nu,\a,M}$ explicitly.

\begin{lemma} \label{err_thm_uni}
Let $(X_{n})_{n\in\N}$ be a 
Markov chain with transition kernel $K$ and initial distribution $\nu$, where $\nu\in \mathcal{M}_\infty$. 
Let $K$ be reversible with respect to $\pi$ 
and $L_1$-exponentially convergent with $(\a,M)$.
Let $f\in L_2$ 
and
\begin{equation*}
U(\a,n) = \sum_{j=1}^n   \a^{j} +  2 \sum_{j=1}^{n-1} \sum_{k=j+1}^n \a^{k}.
\end{equation*}
Then
\begin{equation} \label{L1_lemm}
\abs{e_\nu(S_{n,n_0},f)^2-e_\pi(S_n,f)^2} \leq \frac{U(\a,n)}{n^2}\;  
		M \norm{\frac{d\nu}{d\pi}-1}{\infty} \a^{n_0} \norm{f}{2}^2.
\end{equation}
\end{lemma}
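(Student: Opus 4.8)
The plan is to mimic exactly the finite-state-space argument of Lemma~\ref{err_prop_fin_2}, replacing the finite sums by integrals and the estimate \eqref{2_2_fin} by its general-state-space analogue from Lemma~\ref{L_est}\,\eqref{zwei_L1}. First I would set $g=f-S(f)$ and invoke the exact error identity \eqref{gen_con} of Proposition~\ref{connect_lem}, applied with $r=1$ (which is legitimate since $\nu\in\mathcal{M}_\infty$, so $\nu\in\mathcal{M}_{r/(r-1)}$ with the convention $r/(r-1)=\infty$, and since $f\in L_2$ implies $g^2\in L_1$ and $gP^{k-j}g\in L_1$). This gives
\[
\abs{e_\nu(S_{n,n_0},f)^2-e_\pi(S_n,f)^2}\leq \frac{1}{n^2}\sum_{j=1}^n \abs{L_{j+n_0}(g^2)}+\frac{2}{n^2}\sum_{j=1}^{n-1}\sum_{k=j+1}^n \abs{L_{j+n_0}(gP^{k-j}g)}.
\]

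Next I would bound each term. By Lemma~\ref{L_est}\,\eqref{zwei_L1} with $h=g^2$ and with $h=gP^{k-j}g$ respectively,
\[
\abs{L_{j+n_0}(g^2)}\leq M\a^{j+n_0}\norm{\tfrac{d\nu}{d\pi}-1}{\infty}\norm{g^2}{1},\qquad
\abs{L_{j+n_0}(gP^{k-j}g)}\leq M\a^{j+n_0}\norm{\tfrac{d\nu}{d\pi}-1}{\infty}\norm{gP^{k-j}g}{1}.
\]
Now I need $\norm{g^2}{1}=\norm{g}{2}^2$ (trivially) and $\norm{gP^{k-j}g}{1}\leq \norm{g}{2}\norm{P^{k-j}g}{2}\leq \norm{P^{k-j}}{L_2^0\to L_2^0}\norm{g}{2}^2\leq \beta^{k-j}\norm{g}{2}^2$ by Cauchy--Schwarz and \eqref{P_S_2_ident_gen} of Lemma~\ref{P_S_gen} (note $g\in L_2^0$). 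Since $\beta\leq\a$ by Proposition~\ref{uni_impl_geo} (the chain is reversible and $L_1$-exponentially convergent with $(\a,M)$), we may further replace $\beta^{k-j}$ by $\a^{k-j}$, so that the double sum is dominated by $\sum_{j=1}^{n-1}\sum_{k=j+1}^n \a^{j+n_0}\a^{k-j}\norm{g}{2}^2 = \a^{n_0}\norm{g}{2}^2\sum_{j=1}^{n-1}\sum_{k=j+1}^n\a^k$.

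Collecting terms and writing $\e_0=M\norm{\tfrac{d\nu}{d\pi}-1}{\infty}\a^{n_0}$, the right-hand side becomes
\[
\frac{\e_0\norm{g}{2}^2}{n^2}\Bigl(\sum_{j=1}^n\a^j+2\sum_{j=1}^{n-1}\sum_{k=j+1}^n\a^k\Bigr)=\frac{\e_0\norm{g}{2}^2}{n^2}\,U(\a,n),
\]
and the proof finishes with $\norm{g}{2}=\norm{f-S(f)}{2}\leq\norm{f}{2}$. The only point that requires a little care — and the closest thing to an obstacle — is making sure the $r=1$ case of Proposition~\ref{connect_lem} and Lemma~\ref{L_est} genuinely applies, i.e. that $\nu\in\mathcal{M}_\infty$ supplies the boundedness of $\tfrac{d\nu}{d\pi}$ needed for $L_1$-exponential convergence to be usable and that $g^2$, $gP^{k-j}g$ indeed lie in $L_1$; both hold because $f\in L_2$ and $\norm{P}{L_2\to L_2}=1$. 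Everything else is the same bookkeeping as in the finite case, with $U(\a,n)$ estimated later (Lemma~\ref{lemma_U_fin}-type bound) if an explicit constant is wanted.
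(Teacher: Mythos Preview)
Your proposal is correct and follows essentially the same approach as the paper's proof: invoke Proposition~\ref{connect_lem} with $r=1$, estimate the $L_{j+n_0}$-terms via Lemma~\ref{L_est}\,\eqref{zwei_L1}, use Cauchy--Schwarz together with $\beta\leq\a$ from Proposition~\ref{uni_impl_geo} to control $\norm{gP^{k-j}g}{1}$, and then collect into $U(\a,n)$. Your additional remark justifying the applicability of the $r=1$ case (that $g^2,\,gP^{k-j}g\in L_1$ because $f\in L_2$ and $\norm{P}{L_2\to L_2}=1$) is a helpful clarification the paper leaves implicit.
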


\begin{proof}
	Let $g=f-S(f)$.
	The equation \eqref{gen_con} implies
	\begin{align*}
	\abs{e_\nu(S_{n,n_0},f)^2-e_\pi(S_n,f)^2} \leq 
 \frac{1}{n^2}\sum_{j=1}^{n} \abs{ L_{j+n_0}(g^2)}
+ \frac{2}{n^2} \sum_{j=1}^{n-1} \sum_{k=j+1}^n \abs{ L_{j+n_0}(gP^{k-j}g)}.
\end{align*}	
By \eqref{L_uniform} of Lemma~\ref{L_est} one obtains
 \begin{align*}
 		\abs{L_{j+n_0}(g^2)} & 
 		\leq  M  \a^{j+n_0} \norm{\frac{d\nu}{d\pi}-1}{\infty}\;   \norm{g}{2}^2,\\
 		\abs{L_{j+n_0}(gP^{k-j}g)}& 
 		\leq  M  \a^{j+n_0} \norm{\frac{d\nu}{d\pi}-1}{\infty}\;   \norm{g P^{k-j} g}{1}.
 \end{align*}
By the reversibility and $L_1$-exponential convergence of $K$ we get 
from Proposition~\ref{uni_impl_geo} that $\beta=\norm{P}{L_2^0 \to L_2^0}\leq \a$.
Then by applying the Cauchy-Schwarz inequality (CS) 
one has
\[
\norm{g P^{k-j}g }{1} \underset{\text{(CS)}}{\leq} \norm{g}{2} \norm{P^{k-j}g}{2}
		\leq \norm{g}{2}^2 \norm{P^{k-j}}{L_2^0\to L_2^0} 
		\leq  \a^{k-j} \norm{g}{2}^2.
\]
Let $\e_0= \a^{n_0} M \norm{\frac{d\nu}{d\pi}-1}{\infty}$. Then
  \begin{align*}
 		\sum_{j=1}^n& \abs{L_{j+n_0}(g^2)} +
 		2\sum_{j=1}^{n-1} \sum_{k=j+1}^n \abs{L_{j+n_0}(gP^{k-j}g)}\\
	&\leq  \e_0 \norm{g}{2}^2  \sum_{j=1}^n   \a^{j}
 				+  2 \e_0 \norm{g}{2}^2 \sum_{j=1}^{n-1} \sum_{k=j+1}^n \a^{k}\\
 	    &=  \e_0 \norm{g}{2}^2 \left( \sum_{j=1}^n   \a^{j} +  2 \sum_{j=1}^{n-1}  \sum_{k=j+1}^n \a^{k} \right)\\
	    &=  \e_0 \cdot U(\a,n) \cdot  \norm{g}{2}^2
 		 \leq  \e_0 \cdot U(\a,n) \cdot  \norm{f}{2}^2.
 \end{align*}
Thus the proof is completed.
\end{proof}

\begin{remark}
The function $U(\a,n)$ is already studied in Lemma~\ref{lemma_U_fin}.
Let us repeat the result.
For all $n\in \N$ we have
\[
U(\a,n)\leq \frac{2}{(1-\a)^2}.
\] 
Then, from Lemma~\ref{err_thm_uni} it follows that
\[
e_\nu(S_{n,n_0},f)^2 \leq e_\pi(S_n,f)^2
		+  \frac{  2M \norm{\frac{d\nu}{d\pi}-1}{\infty} \a^{n_0} }{n^2  (1-\a)^2}\norm{f}{2}^2	.
\]
If the initial distribution $\nu$ is $\pi$ then one has the error formula of  
Proposition~\ref{expl_stat_c}. 
\end{remark}

\begin{remark}  \label{normal_op}
Note that in Lemma~\ref{err_thm_uni} reversibility of $K$ was essentially 
used to apply Proposition~\ref{uni_impl_geo}.
If the Markov operator is normal, i.e. $PP^*=P^*P$, then one has by Proposition~\ref{L_1conv_spec}
that $\beta=\norm{P}{L_2^0 \to L_2^0} \leq \sqrt{\a}$.
By this observation we get a very similar estimate as in Lemma~\ref{err_thm_uni} 
for normal Markov operators which are not necessarily reversible.
The only difference to \eqref{L1_lemm} is that $\a$ has to be substituted by $\sqrt{\a}$.
Then 
\[
U(\sqrt{\a},n) 
\leq \frac{2}{(1-\sqrt{\a})^2}
\leq \frac{8}{(1-\a)^2}.
\]
The last inequality is implied by $1-\a^{r}\geq r(1-\a)$ for $r\in[0,1]$ 
which is a conclusion of the \emph{Bernoulli inequality with real exponent}\footnote{
The Bernoulli inequality with real exponent $r\in[0,1]$ states
for any real number $x>-1$ that $ (1+x)^r \leq 1+rx $.
}
. 
\end{remark}

The next theorem summarizes the main result for a Markov chain with a 
reversible and $L_1$-exponentially 
convergent transition kernel.

\begin{theorem} \label{main_thm_unif}

Let $(X_n)_{n\in\N}$ be a Markov chain with transition kernel $K$ and initial distribution $\nu$.
Let $K$ be reversible with respect to $\pi$ and $L_1$-exponentially convergent with $(\a,M)$.
Let $f\in L_2$ and assume that the probability measure $\nu\in \mathcal{M}_\infty$. 
%
	Then
\begin{align} \label{uni_b}
e_\nu(S_{n,n_0},f)^2 	& \leq \frac{2}{n(1-\Lambda)}\norm{f}{2}^2 
  			+\frac{  2M \norm{\frac{d\nu}{d\pi}-1}{\infty} \a^{n_0} }{n^2  (1-\a)^2}\norm{f}{2}^2	
	 \end{align}
and for $g=f-S(f)$ we have	 
\begin{align}\label{asymp_gen_unif} 
  \lim_{n\to \infty} n\cdot e_\nu(S_{n,n_0},f)^2 &=\lim_{n\to \infty} n\cdot e_\pi(S_{n},f)^2 	
  =\scalar{(I+P)(I-P)^{-1}g}{g}.
\end{align}
\end{theorem}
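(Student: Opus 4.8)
The plan is to derive both assertions by assembling results already available, since nothing genuinely new is needed beyond an exchange of limit and integral. First I would record that the hypotheses force $\Lambda<1$: since $K$ is reversible and $L_1$-exponentially convergent with $(\a,M)$, Proposition~\ref{uni_impl_geo} yields an $L_2$-spectral gap with $\beta=\norm{P}{L_2^0\to L_2^0}\leq\a<1$, hence $\Lambda\leq\beta<1$. This is exactly the condition needed to invoke Proposition~\ref{expl_stat_c} and Corollary~\ref{worst_stat_gen}, and it is the same fact already used inside Lemma~\ref{err_thm_uni}.

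For the inequality \eqref{uni_b} I would combine Lemma~\ref{err_thm_uni} with the bound $U(\a,n)\leq 2(1-\a)^{-2}$ (Lemma~\ref{lemma_U_fin} and the remark following Lemma~\ref{err_thm_uni}) to get
\[
\abs{e_\nu(S_{n,n_0},f)^2-e_\pi(S_n,f)^2}\leq \frac{2M\norm{\frac{d\nu}{d\pi}-1}{\infty}\a^{n_0}}{n^2(1-\a)^2}\norm{f}{2}^2,
\]
and then estimate the stationary term by Corollary~\ref{worst_stat_gen}: writing $g=f-S(f)$ one has $e_\pi(S_n,f)^2=e_\pi(S_n,g)^2$ and, by homogeneity and $\norm{g}{2}\leq\norm{f}{2}$, $e_\pi(S_n,f)^2\leq \frac{2}{n(1-\Lambda)}\norm{f}{2}^2$. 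Adding the two estimates gives \eqref{uni_b}.

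For \eqref{asymp_gen_unif} the first equality is immediate: multiplying the displayed difference above by $n$ shows $n\abs{e_\nu(S_{n,n_0},f)^2-e_\pi(S_n,f)^2}\to 0$ as $n\to\infty$. For the second equality I would start from the spectral representation of Proposition~\ref{expl_stat_c}, so that $n\cdot e_\pi(S_n,f)^2=\frac{1}{n}\int_\lambda^\Lambda W(n,\a)\,\dint\scalar{E_{\set{\a}}g}{g}$. For each fixed $\a\in[-1,1)$ one computes $\frac{1}{n}W(n,\a)=\frac{(1-\a^2)-\frac{2\a(1-\a^n)}{n}}{(1-\a)^2}\to\frac{1+\a}{1-\a}$, while by Lemma~\ref{lemm_mono_inc} the integrand is dominated on $[\lambda,\Lambda]$ by the constant $\frac{2}{1-\a}\leq\frac{2}{1-\Lambda}$, which is integrable against the finite measure $\scalar{E_{\set{\cdot}}g}{g}$ of total mass $\norm{g}{2}^2$. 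Dominated convergence then gives $\lim_{n\to\infty}n\cdot e_\pi(S_n,f)^2=\int_\lambda^\Lambda\frac{1+\a}{1-\a}\,\dint\scalar{E_{\set{\a}}g}{g}$, and this spectral integral equals $\scalar{(I+P)(I-P)^{-1}g}{g}$ by the functional calculus applied to $\a\mapsto\frac{1+\a}{1-\a}$.

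The argument is essentially bookkeeping; the one place that deserves care is the final passage to the limit in the spectral integral — justifying the interchange of limit and integral (via the uniform domination above) and then matching $\int\frac{1+\a}{1-\a}\,\dint\scalar{E_{\set{\a}}g}{g}$ with the resolvent expression $(I+P)(I-P)^{-1}$. The latter identification is legitimate precisely because $\Lambda\leq\beta\leq\a<1$ keeps $\spec(P|L_2^0)$ bounded away from $1$, so $\a\mapsto\frac{1+\a}{1-\a}$ is continuous on the spectrum and $(I-P)^{-1}$ is a bounded operator on $L_2^0$.
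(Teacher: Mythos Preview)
Your proposal is correct and follows essentially the same route as the paper: Lemma~\ref{err_thm_uni} together with Lemma~\ref{lemma_U_fin} for the difference bound and the first equality of \eqref{asymp_gen_unif}, Corollary~\ref{worst_stat_gen} for the stationary estimate in \eqref{uni_b}, and Proposition~\ref{expl_stat_c} for the limit $\lim_{n\to\infty}\frac{1}{n}\scalar{W(n,P)g}{g}=\scalar{(I+P)(I-P)^{-1}g}{g}$. You supply more detail than the paper does---in particular the dominated-convergence justification for the last limit, which the paper simply asserts---but the structure of the argument is identical.
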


\begin{proof}
By Lemma~\ref{err_thm_uni} and Lemma~\ref{lemma_U_fin} the first equality of \eqref{asymp_gen_unif} holds true. 
By the reversibility of the transition kernel Proposition~\ref{expl_stat_c} applies, so that
\[
\lim_{n\to \infty} n\cdot e_\pi(S_{n},f)^2 = \lim_{n\to \infty} \frac{1}{n} \scalar{W(n,P)g}{g} = \scalar{(I+P)(I-P)^{-1}g}{g}.
\]
The rest follows via Lemma~\ref{err_thm_uni}, Corollary~\ref{worst_stat_gen} and Lemma~\ref{lemma_U_fin}.
\end{proof}

\begin{remark}
Under the assumptions of Theorem~\ref{main_thm_unif} one has by Proposition~\ref{uni_impl_geo} that 
$\pi$-a.e. uniform ergodicity with $(\a,\widetilde{M})$ 
is equivalent to $L_1$-exponential convergence with $(\a,2 \widetilde{M})$. 
Hence one can restate Theorem~\ref{main_thm_unif} 
for uniformly ergodic Markov chains 
and obtains the same result with $M=2\widetilde{M}$. 
This is the general state space counterpart to Theorem~\ref{main_fin}, 
where $\widetilde{M}$ is of the magnitude of $\norm{\frac{1}{\pi}}{\infty}$ 
and $\beta=\a$.\\
Furthermore note that
if the Markov operator is normal and not necessarily reversible, 
then one can get a similar
error bound by using 
Remark~\ref{normal_op}.
\end{remark}

\begin{remark}
  The error bound of \eqref{uni_b} might be interpreted as follows:
  The burn-in $n_0$ is reasonable to eliminate the influence of the initial distribution, 
  while $n$ has to decrease $e_\pi(S_n,f)$.
  For large $n$ the error behaves exactly as the error 
	where one started by the stationary distribution. 
	Hence the bias of the initial distribution disappears after sufficiently many steps.
  If the initial distribution falls together with the stationary one, 
  then the bias of the initial part vanishes completely.
\end{remark}  

Another consequence of Lemma~\ref{err_thm_uni} and Lemma~\ref{lemma_U_fin} is 
the following result concerning the asymptotic error for $\norm{f}{2}\leq1$.

\begin{coro}  \label{asymp_err_coro_gen}
Under the same assumptions as in Theorem~\ref{main_thm_unif} it follows that
  \[
    \lim_{n\to \infty} n\cdot \sup_{\norm{f}{2}\leq 1} e_{\nu}(S_{n,n_0},f)^2 
    = \frac{1+\Lambda}{1-\Lambda}
  \]
  and
  \[
    \lim_{n_0 \to \infty}  \sup_{\norm{f}{2}\leq 1} e_{\nu}(S_{n,n_0},f)^2 
    = 	\frac{1+\Lambda}{n(1-\Lambda)}-\frac{2\Lambda(1-\Lambda^n)}{n^2(1-\Lambda)^2}.
  \]
\end{coro}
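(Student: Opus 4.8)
The plan is to deduce Corollary~\ref{asymp_err_coro_gen} as an immediate consequence of the two-sided sandwich provided by Lemma~\ref{err_thm_uni} together with the boundedness estimate of Lemma~\ref{lemma_U_fin}, in exactly the same way Corollary~\ref{asymp_err_coro} was obtained in the finite case. First I would introduce the quantity
\[
c_{n,n_0}=\frac{2M\norm{\frac{d\nu}{d\pi}-1}{\infty}\,\a^{n_0}}{n^2(1-\a)^2},
\]
which collects the ``initial-distribution'' part of the error bound. Combining \eqref{L1_lemm} of Lemma~\ref{err_thm_uni} with the inequality $U(\a,n)\le 2(1-\a)^{-2}$ of Lemma~\ref{lemma_U_fin}, one gets for every $f$ with $\norm{f}{2}\le 1$ that
\[
\abs{e_\nu(S_{n,n_0},f)^2-e_\pi(S_n,f)^2}\le c_{n,n_0}.
\]
Taking the supremum over $\norm{f}{2}\le 1$ then yields
\[
\sup_{\norm{f}{2}\le1}e_\pi(S_n,f)^2-c_{n,n_0}\le\sup_{\norm{f}{2}\le1}e_\nu(S_{n,n_0},f)^2\le\sup_{\norm{f}{2}\le1}e_\pi(S_n,f)^2+c_{n,n_0}.
\]

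Next I would substitute the exact value of the stationary worst-case error. Since $K$ is reversible and $L_1$-exponentially convergent, Proposition~\ref{uni_impl_geo} gives an $L_2$-spectral gap, so $\Lambda<1$ and Corollary~\ref{worst_stat_gen} applies, giving
\[
\sup_{\norm{f}{2}\le1}e_\pi(S_n,f)^2=\frac{1+\Lambda}{n(1-\Lambda)}-\frac{2\Lambda(1-\Lambda^n)}{n^2(1-\Lambda)^2}.
\]
Plugging this into the sandwich, the two assertions follow by taking the appropriate limits. For the first, multiply through by $n$ and let $n\to\infty$: one has $n\,c_{n,n_0}\to 0$ (the $n^{-2}$ beats the factor $n$), while $n$ times the stationary worst-case error tends to $\frac{1+\Lambda}{1-\Lambda}$ because the second term is $O(n^{-1})$. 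For the second, fix $n$ and let $n_0\to\infty$: then $c_{n,n_0}\to 0$ since $\a<1$, and the stationary worst-case error does not depend on $n_0$, so the squeeze gives exactly $\frac{1+\Lambda}{n(1-\Lambda)}-\frac{2\Lambda(1-\Lambda^n)}{n^2(1-\Lambda)^2}$.

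There is essentially no obstacle here: every ingredient has already been established, and the argument is the verbatim general-state-space analogue of the proof of Corollary~\ref{asymp_err_coro}. The only points requiring a word of care are (i) noting that the hypotheses of Theorem~\ref{main_thm_unif}, hence of Lemma~\ref{err_thm_uni}, indeed force $\Lambda<1$ so that Corollary~\ref{worst_stat_gen} is legitimately invoked, and (ii) checking the order of the limits and the decay rates of $c_{n,n_0}$ in each of the two statements, which is straightforward since $\a<1$ is fixed and $c_{n,n_0}$ carries an explicit $\a^{n_0}/n^2$. Thus the proof will read simply: ``By Lemma~\ref{err_thm_uni} and Lemma~\ref{lemma_U_fin} one has the sandwich above with $c_{n,n_0}\to0$ in the relevant sense; combining with Corollary~\ref{worst_stat_gen} and taking limits proves the claim.''
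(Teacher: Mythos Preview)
Your proposal is correct and follows essentially the same approach as the paper: define $c_{n,n_0}$, use Lemma~\ref{err_thm_uni} with Lemma~\ref{lemma_U_fin} to obtain the sandwich inequality, invoke Corollary~\ref{worst_stat_gen} for the exact stationary worst-case error, and pass to the limits. Your explicit remark that Proposition~\ref{uni_impl_geo} ensures $\Lambda<1$ (so Corollary~\ref{worst_stat_gen} applies) is a nice clarification that the paper leaves implicit.
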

\begin{proof}
 Let us define
  \[
    c_{n,n_0}=\frac{2\a^{n_0} M \norm{\frac{d\nu}{d\pi}-1}{\infty}}{n^2(1-\a)^2}.
  \]
  One has $\lim_{n\to \infty} n\cdot c_{n,n_0}=0$ and $\lim_{n_0\to \infty} c_{n,n_0}=0$.
  For $\norm{f}{2}\leq1$ we obtain by Lemma~\ref{err_thm_uni} and Lemma~\ref{lemma_U_fin} that
  \[
    \abs{e_\nu(S_{n,n_0},f)^2-e_\pi(S_{n},f)^2} \leq c_{n,n_0}.
  \] 
  Hence
\begin{equation}  \label{eq_low_gen}
  \sup_{\norm{f}{2}\leq 1}e_\pi(S_{n},f)^2 - c_{n,n_0} 
  \leq \sup_{\norm{f}{2}\leq 1}  e_\nu(S_{n,n_0},f)^2 
\leq \sup_{\norm{f}{2}\leq 1} e_\pi(S_{n},f)^2 + c_{n,n_0}.
\end{equation}
Recall that $\Lambda= \sup\set{ \a\mid \a \in \spec(P|L_2^0) }$. Then by Corollary~\ref{worst_stat_gen}  
we have
\[
\sup_{\norm{f}{2}\leq1}e_\pi(S_n,f)^2
= 	\frac{1+\Lambda}{n(1-\Lambda)}-\frac{2\Lambda(1-\Lambda^n)}{n^2(1-\Lambda)^2} .
\]
By taking the limits in \eqref{eq_low_gen} the assertions are proven.
\end{proof}

In many examples it is known that the transition kernel is $L_1$-exponentially convergent or $\pi$-a.e. uniformly ergodic,
but it is difficult to obtain reasonable values of $(\a,M)$ explicitly.
  Then at least the asymptotic result can be used. 
  This is similar to results of \cite{sokal,bremaud,mathe1}.

\begin{remark}  \label{lower_bound_gen}
Observe that we have a lower and an upper bound of the error of $S_{n,n_0}$.
Exactly as in Remark~\ref{rem_low_bound} one obtains by \eqref{eq_low_gen} that
\[
\frac{1+\Lambda}{n(1-\Lambda)}-\frac{2}{n^2(1-\Lambda)^2}-c_{n,n_0}
\leq \sup_{\norm{f}{2}\leq1}	e_\nu(S_{n,n_0},f)^2
\leq \frac{2}{n(1-\Lambda)} +c_{n,n_0}.
\]
\end{remark}

We showed an error bound of $S_{n,n_0}$ with respect to $\norm{\cdot}{2}$ 
for Markov chains which are reversible and
$L_1$-exponentially convergent.
The condition of the $L_1$-exponential convergence is rather restrictive.
This motivates the study of Markov chains which satisfy a weaker convergence property,
namely we assume that there is an $L_2$-spectral gap, i.e. $1-\beta>0$.
This is enough to obtain error bounds for integrands $f\in L_p$ with $p\in(2,\infty]$. 
The following lemmas lead to the fact that 
there exists a constant $C_{\nu,\beta,p}$, independent of $n_0$ and $n$, such that
\[
\abs{e_\nu(S_{n,n_0},f)^2 - e_\pi(S_n,f)^2} \leq C_{\nu,\beta,p} \norm{f}{p}^2 \frac{\beta^{n_0}}{n^2}.
\]
Note that it is not assumed that the Markov chain is reversible with respect to $\pi$.

\begin{lemma} \label{err_thm_gen}
Let $(X_n)_{n\in\N}$ be a Markov chain with transition kernel $K$ and initial distribution $\nu$.
Let $\pi$ be a stationary distribution of $K$.
Let $f\in L_p$, let $\nu\in \mathcal{M}_{\max\set{2,\frac{p}{p-2}}}$ with $p\in(2,\infty]$ and
\begin{equation*}
V(\beta,n,p) = 4
\begin{cases}
 2^{4/p} \sum_{j=1}^n\beta^{2j\frac{p-2}{p}}
 				+ 2^{\frac{3p+2}{p}}\sum_{j=1}^{n-1}\beta^{2j\frac{p-3}{p}}\sum_{k=j+1}^n \beta^{2k/p}, 
 						& p\in(2,4),\\
 2\sum_{j=1}^n  \beta^j+ 2^{\frac{3p+2}{p}}  \sum_{j=1}^{n-1} \beta^{2j/p} \sum_{k=j+1}^n \beta^{k\frac{p-2}{p}}, 
 						& p\in[4,\infty].
\end{cases}
\end{equation*}
\begin{enumerate}[(i)]
	\item \label{err_L_p} 
		If $p\in(2,4)$, then
		 	\begin{align*}
	 			\abs{e_\nu(S_{n,n_0},f)^2-e_\pi(S_n,f)^2} \leq 
	 			\frac{V(\beta,n,p)}{n^2}\, \beta^{2n_0\frac{p-2}{p}} \norm{\frac{d\nu}{d\pi}-1}{\frac{p}{p-2}}\norm{f}{p}^2.
	 		\end{align*}
	\item \label{err_L_gro_4}
	 	If $p\in[4,\infty]$, then 
	 		\begin{align*}
	 			\abs{e_\nu(S_{n,n_0},f)^2-e_\pi(S_n,f)^2} \leq 
	 			\frac{V(\beta,n,p)}{n^2}\,\beta^{n_0} \norm{\frac{d\nu}{d\pi}-1}{2} \norm{f}{p }^2.
	 		\end{align*}
\end{enumerate}
\end{lemma}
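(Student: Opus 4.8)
The plan is to start from the exact decomposition \eqref{gen_con} of Proposition~\ref{connect_lem}: writing $g=f-S(f)$,
\[
e_\nu(S_{n,n_0},f)^2-e_\pi(S_n,f)^2=\frac{1}{n^2}\sum_{j=1}^{n}L_{j+n_0}(g^2)+\frac{2}{n^2}\sum_{j=1}^{n-1}\sum_{k=j+1}^{n}L_{j+n_0}(gP^{k-j}g),
\]
so that everything reduces to estimating the two families of functionals $L_{j+n_0}(g^2)$ and $L_{j+n_0}(gP^{k-j}g)$ and summing the resulting geometric‑type series. First I would record the crude bound $\norm{g}{p}\le 2\norm{f}{p}$, which follows from $\abs{S(f)}\le\norm{f}{1}\le\norm{f}{p}$ and which is exactly the source of the global factor $4$ in $V(\beta,n,p)$.

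The workhorse for both families is inequality \eqref{q_r_gen} of Lemma~\ref{L_est}, applied with $r=p/2$ when $p\in(2,4)$ and with $r=2$ when $p\in[4,\infty]$; this choice makes the conjugate exponent $r/(r-1)$ equal to $\tfrac{p}{p-2}$ for $p<4$ and equal to $2$ for $p\ge4$, which is precisely why the hypothesis $\nu\in\mathcal M_{\max\{2,p/(p-2)\}}$ is the right one. For $h=g^2$ the bound is immediate, since $\norm{g^2}{r}=\norm{g}{2r}^2\le\norm{g}{p}^2$ (here $2r=p$ when $p<4$ and $2r=4\le p$ when $p\ge4$, so $L_p$ embeds into $L_{2r}$). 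For the mixed term $h=gP^{k-j}g$ I would first split it by H\"older's inequality, keeping one factor $g$ in $L_p$: for $p\in(2,4)$, $\norm{gP^{k-j}g}{p/2}\le\norm{g}{p}\norm{P^{k-j}g}{p}$, and for $p\in[4,\infty]$, $\norm{gP^{k-j}g}{2}\le\norm{g}{p}\norm{P^{k-j}g}{2p/(p-2)}$ (legitimate because $2\le\tfrac{2p}{p-2}\le p$). Since $S(g)=0$ we have $P^{k-j}g=(P^{k-j}-S)g$, so \eqref{norm_Lp_gen} of Proposition~\ref{prop_Lp_gen} on $L_p$ (resp.\ on $L_{2p/(p-2)}$), together with $\norm{g}{q}\le\norm{g}{p}$ from Lemma~\ref{op_norm}, yields the decaying factor $\beta^{2(k-j)/p}$ (resp.\ $\beta^{(k-j)(p-2)/p}$) times $\norm{g}{p}^2$ up to an explicit power of $2$.

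Substituting these bounds into the decomposition, pulling the common power $\beta^{2n_0(p-2)/p}$ (resp.\ $\beta^{n_0}$) out of every term, collecting the powers of $2$, and using $\norm{g}{p}^2\le4\norm{f}{p}^2$, the single sum reproduces the first summand of $n^{-2}V(\beta,n,p)$ and the double sum the second, with the explicit factor $2$ in front of the double sum and the factor $4$ from $\norm{g}{p}^2$ absorbed exactly into the constants displayed in $V(\beta,n,p)$; this gives \eqref{err_L_p} and \eqref{err_L_gro_4}. I do not expect any genuine obstacle: the argument runs parallel to the finite‑state proof of Lemma~\ref{err_prop_fin_2}, and the only point requiring care is the bookkeeping of exponents — in each of the two regimes one must choose the H\"older split and the Lebesgue space on which \eqref{norm_Lp_gen} is invoked so that the three exponents (the $r$ from Lemma~\ref{L_est}, the $p$ carrying the retained $g$‑factor, and the $q$ carrying the propagated factor) combine to produce precisely the powers of $\beta$ appearing in $V(\beta,n,p)$.
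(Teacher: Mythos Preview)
Your proposal is correct and follows essentially the same route as the paper's own proof: start from \eqref{gen_con}, apply Lemma~\ref{L_est} with $r=p/2$ for $p\in(2,4)$ and $r=2$ for $p\ge4$, split the mixed term $gP^{k-j}g$ by H\"older/Cauchy--Schwarz exactly as you describe, invoke \eqref{norm_Lp_gen} to get the $\beta$-decay of the propagated factor, and finish with $\norm{g}{p}\le2\norm{f}{p}$. The only slip is bibliographic: the embedding $\norm{g}{q}\le\norm{g}{p}$ for $q\le p$ on a probability space is just Jensen's inequality, not Lemma~\ref{op_norm} (which concerns $\norm{P}{L_p\to L_p}=1$).
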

\begin{proof}
	First, let $g=f-S(f)$ and observe that for $p\geq1$ 
	one obtains	
	\begin{equation} \label{rel_g_f}
			\norm{g}{p}\leq \norm{f}{p}+\abs{S(f)} \leq \norm{f}{p}+\norm{f}{1} \leq 2\norm{f}{p}.
	\end{equation}
	The equation \eqref{gen_con} implies
	\begin{align} \label{exact_error_a}  
	\abs{e_\nu(S_{n,n_0},f)^2-e_\pi(S_n,f)^2} \leq
 \frac{1}{n^2}\sum_{j=1}^{n} \abs{ L_{j+n_0}(g^2)}
+ \frac{2}{n^2} \sum_{j=1}^{n-1} \sum_{k=j+1}^n \abs{ L_{j+n_0}(gP^{k-j}g)}.
\end{align}	
Let $p\in(2,4)$. Then it follows by \eqref{q_r_gen} with $r=\frac{p}{2}$ and $r/(r-1)=\frac{p}{p-2}$ that
 \begin{align*}
 		\abs{L_{j+n_0}(g^2)} & 
 		\leq  2 ^{4/p} \beta^{2j\frac{p-2}{p}}\;  \beta^{2n_0\frac{p-2}{p}}
		    \norm{\frac{d\nu}{d\pi}-1}{\frac{p}{p-2}}\;   \norm{g}{p}^2,\\
 		\abs{L_{j+n_0}(gP^{k-j}g)}& 
 		\leq  2 ^{4/p} \beta^{2j\frac{p-2}{p}}\;\beta^{2n_0\frac{p-2}{p}} 
		     \norm{\frac{d\nu}{d\pi}-1}{\frac{p}{p-2}}\;   \norm{g P^{k-j} g}{p/2}.
 \end{align*}
By applying the Cauchy-Schwarz inequality (CS) and \eqref{norm_Lp_gen} one obtains
\[
\norm{g P^{k-j}g }{p/2} \underset{\text{(CS)}}{\leq} \norm{g}{p} \norm{P^{k-j}g}{p}
	\leq \norm{g}{p}^2 \norm{P^{k-j}}{L_p^0\to L_p^0} 
	\underset{\eqref{norm_Lp_gen}}{\leq}  2^{2\frac{p-1}{p}}\beta^{2\frac{k-j}{p}} \norm{g}{p}^2.
\]
Let $\e_0(p)= \beta^{2n_0\frac{p-2}{p}} \norm{\frac{d\nu}{d\pi}-1}{\frac{p}{p-2}}$.  Then
  \begin{align*}
 		\sum_{j=1}^n& \abs{L_{j+n_0}(g^2)} +
 		2\sum_{j=1}^{n-1} \sum_{k=j+1}^n \abs{L_{j+n_0}(gP^{k-j}g)}\\
 		&\leq 2^{4/p} \e_0(p) \norm{g}{p}^2  \sum_{j=1}^n   \beta^{2j\frac{p-2}{p}}
 		+  2^{\frac{3p+2}{p}} \e_0(p) \norm{g}{p}^2 \sum_{j=1}^{n-1} \beta^{2j\frac{p-3}{p}} \sum_{k=j+1}^n \beta^{2k/p}\\
 		 &=  \e_0(p) \norm{g}{p}^2 \left( 2^{4/p} \sum_{j=1}^n   \beta^{2j\frac{p-2}{p}} +  2^{\frac{3p+2}{p}} \sum_{j=1}^{n-1} \beta^{2j\frac{p-3}{p}} \sum_{k=j+1}^n \beta^{2k/p} \right)\\
 		 & \underset{\eqref{rel_g_f}}{\leq } V(\beta,n,p) \cdot \e_0(p) \norm{f}{p}^2.
 \end{align*}
Thus, claim \eqref{err_L_p} is shown.\\
Let us turn to \eqref{err_L_gro_4}, i.e. $p\in[4,\infty]$. Equation \eqref{q_r_gen} with $r=2$ is used to get 
 \begin{align*}
 		\abs{L_{j+n_0}(g^2)} & 
 		\leq  2\beta^{j+n_0} \norm{\frac{d\nu}{d\pi}-1}{2}\; \norm{g}{4}^2,\\
 		\abs{L_{j+n_0}(gP^{k-j}g)}& 
 		\leq  2\beta^{j+n_0} \norm{\frac{d\nu}{d\pi}-1}{2}\; \norm{gP^{k-j}g}{2}.
 \end{align*}
By H\"older's inequality (HI) with conjugate parameters $\frac{p}{2}$ and $\frac{p}{p-2}$ one obtains 
\begin{align*}
\norm{gP^{k-j}g}{2} & \underset{\text{(HI)}}{\leq} \norm{g}{p} \norm{P^{k-j}g}{\frac{2p}{p-2}} 
		\leq \norm{P^{k-j}}{L_{2p/(p-2)}^0\to L_{2p/(p-2)}^0} \norm{g}{p} \norm{g}{\frac{2p}{p-2}} \\
	&	\leq \norm{P^{k-j}}{L_{2p/(p-2)}^0\to L_{2p/(p-2)}^0} \norm{g}{p}^2
 \underset{\eqref{norm_Lp_gen}}{\leq} 2^{\frac{p+2}{p}} \beta^{(k-j)\frac{p-2}{p}} \norm{g}{p}^2.
\end{align*}
Note that in the third inequality of the last estimation it was essential that $p\in[4,\infty]$ 
for using
$
\norm{g}{\frac{2p}{p-2}} \leq \norm{g}{p}.
$
Thus, for $\e_0=\beta^{n_0} \norm{\frac{d\nu}{d\pi}-1}{2}$ one has
\begin{align*}
 		\sum_{j=1}^n& \abs{L_{j+n_0}(g^2)} +
 		2\sum_{j=1}^{n-1} \sum_{k=j+1}^n \abs{L_{j+n_0}(gP^{k-j}g)}\\
 		&\leq \e_0 \norm{g}{4}^2  2\sum_{j=1}^n\beta^j 
 					+ \e_0 \norm{g}{p}^2 2^{2+\frac{p+2}{p}}  \sum_{j=1}^{n-1} \beta^{2j/p} \sum_{k=j+1}^n \beta^{k\frac{p-2}{p}}\\
 		&\leq  \e_0 \norm{g}{p}^2 \left( 2\sum_{j=1}^n\beta^j+ 2^{\frac{3p+2}{p}}  \sum_{j=1}^{n-1} \beta^{2j/p} \sum_{k=j+1}^n \beta^{k\frac{p-2}{p}} \right)
 		\underset{\eqref{rel_g_f}}{\leq} V(\beta,n,p) \cdot \e_0 \norm{f}{p}^2. 
\end{align*}
Finally by substituting this in equation \eqref{exact_error_a} everything is shown.
\end{proof}
Let us consider $V(\beta,n,p)$.
If $p\in(2,\infty]$ and $1-\beta>0$, then we show that the mapping $n\mapsto V(\beta,n,p)$ is bounded.
\begin{lemma}  \label{lemma_V_gen}
Let $p\in(2,\infty]$ and $1-\beta>0$.
For all $n\in \N$ we obtain
\begin{equation} \label{bound_V}
V(\beta,n,p)\leq \frac{64p}{(p-2)(1-\beta)^2}.
\end{equation}
%
%


%
%

\end{lemma}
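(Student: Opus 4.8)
The plan is to follow the two–case split already present in the definition of $V(\beta,n,p)$ and, in each branch, to replace every finite geometric sum by the corresponding infinite one. We may assume $\beta\in(0,1)$: if $\beta=0$ then every summand of $V(\beta,n,p)$ carries a positive power of $\beta$, so $V(0,n,p)=0$ and the estimate is trivial. For $\beta\in(0,1)$ each base that occurs --- namely $\beta^{2(p-2)/p}$, $\beta^{2/p}$, $\beta$, and $\beta^{(p-2)/p}$ --- lies in $(0,1)$, so $\sum_{j=1}^{m}t^{j}\le\frac{1}{1-t}$ for each such $t$.

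First I would bound the single sums, $\sum_{j=1}^{n}\beta^{2j(p-2)/p}\le(1-\beta^{2(p-2)/p})^{-1}$ and $\sum_{j=1}^{n}\beta^{j}\le(1-\beta)^{-1}$. For the double sums I would carry out the inner summation over $k$ first, pulling out $\beta^{2(j+1)/p}$ when $p\in(2,4)$ and $\beta^{(j+1)(p-2)/p}$ when $p\in[4,\infty]$; the key point is that, after multiplying by the outer factor, the exponent collapses, since $\frac{2j(p-3)+2(j+1)}{p}=\frac{2j(p-2)}{p}+\frac{2}{p}$ and $\frac{2j+(j+1)(p-2)}{p}=j+\frac{p-2}{p}$. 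Using $\beta^{\,\cdot}\le1$ this bounds the double sums by $(1-\beta^{2/p})^{-1}(1-\beta^{2(p-2)/p})^{-1}$, respectively $(1-\beta^{(p-2)/p})^{-1}(1-\beta)^{-1}$. The quantitative engine is the Bernoulli inequality with real exponent (recalled in Remark~\ref{normal_op}): for $r\in[0,1]$ one has $\beta^{r}\le1-r(1-\beta)$, hence $1-\beta^{r}\ge r(1-\beta)$. Applying it with $r=\frac{2(p-2)}{p}\in(0,1)$, $r=\frac{2}{p}\in(0,1)$ and $r=\frac{p-2}{p}\in(0,1]$ turns each denominator above into a constant multiple of $1-\beta$. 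Feeding these estimates into $V$, using $2^{(3p+2)/p}=8\cdot2^{2/p}$ and $1-\beta\le1$, I arrive at
\[
V(\beta,n,p)\le\frac{p\,(2^{1+4/p}+8\cdot2^{2/p}p)}{(p-2)(1-\beta)^{2}}\quad(p\in(2,4)),\qquad
V(\beta,n,p)\le\frac{(8+32\sqrt2)\,p}{(p-2)(1-\beta)^{2}}\quad(p\in[4,\infty]).
\]

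It then remains only to check the numerical constant, and this bit of constant–chasing is the sole step requiring a little care. For $p\in[4,\infty]$ one has $\frac{2}{p}\le\frac12$, hence $2^{2/p}\le\sqrt2$, and $8+32\sqrt2\le8+48=56\le64$ (using $\sqrt2\le\frac32$), which already gives the claim. For $p\in(2,4)$ the crude bound $2^{2/p}\le2$ is not quite sharp enough, so I would apply Bernoulli once more, now to the base $2$: $2^{2/p}=(1+1)^{2/p}\le1+\frac{2}{p}$, whence $8\cdot2^{2/p}p\le8(p+2)$, while $2^{1+4/p}\le2^{3}=8$ because $\frac{4}{p}<2$. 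Then $2^{1+4/p}+8\cdot2^{2/p}p\le8p+24<56<64$ for $p<4$, and $V(\beta,n,p)\le\frac{64p}{(p-2)(1-\beta)^{2}}$ follows in both cases. Note that the potentially large factor $\frac{p}{p-2}$ as $p\to2^{+}$ is produced automatically by the Bernoulli step, in line with the $\norm{\frac{d\nu}{d\pi}-1}{p/(p-2)}$–type terms appearing in Lemma~\ref{err_thm_gen}.
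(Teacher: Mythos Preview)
Your proof is correct and follows essentially the same approach as the paper's: split into the two cases $p\in(2,4)$ and $p\in[4,\infty]$, collapse the exponent in the double sum via the identity $\frac{2j(p-3)+2(j+1)}{p}=\frac{2j(p-2)}{p}+\frac{2}{p}$ (resp.\ $\frac{2j+(j+1)(p-2)}{p}=j+\frac{p-2}{p}$), replace finite geometric sums by infinite ones, and apply the Bernoulli inequality $1-\beta^{r}\ge r(1-\beta)$. The only differences are in the final constant--chasing: the paper combines the single and double sums before bounding (obtaining $\frac{4p^{2}}{(p-2)(1-\beta)^{2}}$ and then using $p<4$), whereas you bound them separately and then invoke Bernoulli once more for $2^{2/p}\le 1+\tfrac{2}{p}$ to control the numerical prefactor; both routes land below $64$.
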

\begin{proof}
The inequalities indicated by $(\star)$ follow from $1-\beta^r\geq r(1-\beta)$ for $r\in[0,1]$.
 First, let $p\in(2,4)$. By the geometric series one can estimate
\begin{align*}
\frac{V(\beta,n,p)}{4} 
& =2^{4/p} \sum_{j=1}^n\beta^{2j\frac{p-2}{p}}
 	+ 2^{\frac{3p+2}{p}}\sum_{j=1}^{n-1}\beta^{2j\frac{p-3}{p}}\sum_{k=j+1}^n \beta^{2k/p}\\
& =2^{4/p} \sum_{j=1}^n\beta^{2j\frac{p-2}{p}}
 	+ 2^{\frac{3p+2}{p}}\beta^{2/p}\sum_{j=1}^{n-1}\beta^{2j\frac{p-2}{p}}\sum_{k=0}^{n-j-1} \beta^{2k/p}\\
& \leq 2^{4/p} \sum_{j=1}^n\beta^{2j\frac{p-2}{p}}
 	+ \frac{2^{\frac{3p+2}{p}}\beta^{2/p}}{1-\beta^{2/p}}\sum_{j=1}^{n-1}\beta^{2j\frac{p-2}{p}}\\
\displaybreak
& \leq \left(\frac{2^{4/p}+\beta^{2/p} 2^{4/p}(2^{3-2/p}-1)}{1-\beta^{2/p}}\right) \sum_{j=1}^n\beta^{2j\frac{p-2}{p}}
  \leq \frac{2^{3+2/p}}{1-\beta^{2/p}} \sum_{j=1}^n\beta^{2j\frac{p-2}{p}}\\
& \hspace{-1.8ex}\underset{p\in(2,4)}{\leq} \frac{16}{(1-\beta^{2/p})(1-\beta^{2(p-2)/p})} 
	\underset{(\star)}{\leq} \frac{4p^2}{(p-2)(1-\beta)^2} \underset{p\in(2,4)}{\leq} \frac{16p}{(p-2)(1-\beta)^2}.
\end{align*} 
For $p\in[4,\infty]$, again by the geometric series, we can estimate
\begin{align*}
\frac{V(\beta,n,p)}{4} 
& =2 \sum_{j=1}^n\beta^j+ 2^{\frac{3p+2}{p}}  \sum_{j=1}^{n-1} \beta^{2j/p} \sum_{k=j+1}^n \beta^{k\frac{p-2}{p}} \\
& =2 \sum_{j=1}^n\beta^j+ 2^{\frac{3p+2}{p}} \beta^{\frac{p-2}{p}} \sum_{j=1}^{n-1} \beta^{j} 
						\sum_{k=0}^{n-j-1} \beta^{k\frac{p-2}{p}}\\
& \leq  \left(	2+\frac{2^{\frac{3p+2}{p}}\beta^{\frac{p-2}{p}}}{1-\beta^{\frac{p-2}{p}}}
 	\right)\sum_{j=1}^n\beta^j
  \leq  \left(	\frac{2+\beta^{\frac{p-2}{p}}(2^{\frac{3p+2}{p}}-2)}{1-\beta^{\frac{p-2}{p}}}
 	\right)\sum_{j=1}^n\beta^j	
 	\\
& \hspace{-1.8ex}\underset{p\in[4,\infty]}{\leq} \frac{8\sqrt{2}}{1-\beta^{\frac{p-2}{p}}} \sum_{j=1}^n\beta^j
	\leq \frac{8\sqrt{2}}{(1-\beta)(1-\beta^{\frac{p-2}{p}})}	
	\underset{(\star)}{\leq} \frac{8\sqrt{2}p}{(p-2)(1-\beta)^2}.	
\end{align*} 
This completes the proof. 
\end{proof}

The main error bound of $S_{n,n_0}$ for Markov chains with an $L_2$-spectral gap is presented in the next theorem.

\begin{theorem} \label{main_thm}
Let $(X_{n})_{n\in\N}$ be a Markov chain with transition kernel $K$ and initial distribution $\nu$.
Let $\pi$ be a stationary distribution of $K$. 
For $p\in(2,\infty]$ let $f\in L_p$ and $\nu\in \mathcal{M}_{\max\set{2,\frac{p}{p-2}}}$. 
Suppose that the Markov operator has an $L_2$-spectral gap, i.e. $1-\beta>0$. 
Then we have

\begin{align*}
e_\nu(S_{n,n_0},f)^2 	& \leq e_\pi(S_n,f)^2 
	+\frac{ 64 p\norm{f}{p}^2 }{n^2 (p-2) (1-\beta)^2}	
  	\begin{cases}
  	\beta^{2n_0\frac{(p-2)}{p}} \norm{\frac{d\nu}{d\pi}-1}{\frac{p}{p-2}}, & p\in(2,4),\\
  	\beta^{n_0} \norm{\frac{d\nu}{d\pi}-1}{2}, & p\in[4,\infty],
	\end{cases}
\end{align*}
where
\[
e_\pi(S_n,f)^2 \leq 
	       \begin{cases}
		\frac{2}{n(1-\Lambda)}\norm{f}{p}, & \text{if $K$ is reversible with respect to }\pi,
				  \\
		\frac{2}{n(1-\beta)}\norm{f}{p}, & \text{otherwise}.
		\end{cases}  
\]
Furthermore
\begin{align}\label{asymp_gen} 
 \lim_{n\to \infty} n\cdot e_\nu(S_{n,n_0},f)^2 &=\lim_{n\to \infty} n\cdot e_\pi(S_{n},f)^2 
\end{align}
 and if $K$ is reversible with respect to $\pi$ then \eqref{asymp_gen} is equal to
\[
\scalar{(I+P)(I-P)^{-1}g}{g}, \quad \text{where} \quad g=f-S(f).
\]
\end{theorem}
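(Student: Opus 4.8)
The plan is to assemble Theorem~\ref{main_thm} from the estimates already established, so that almost everything reduces to substitution. First I would derive the displayed two-case inequality by combining Lemma~\ref{err_thm_gen} with Lemma~\ref{lemma_V_gen}: parts~(i) and~(ii) of Lemma~\ref{err_thm_gen} bound $\abs{e_\nu(S_{n,n_0},f)^2-e_\pi(S_n,f)^2}$ by $\frac{V(\beta,n,p)}{n^2}$ times the appropriate power of $\beta^{n_0}$, the appropriate $\mathcal{M}_q$-norm of $\frac{d\nu}{d\pi}-1$, and $\norm{f}{p}^2$; inserting the uniform estimate $V(\beta,n,p)\leq\frac{64p}{(p-2)(1-\beta)^2}$ from Lemma~\ref{lemma_V_gen} gives exactly the claimed bound with $e_\pi(S_n,f)^2$ left on the right-hand side. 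The only thing to watch here is keeping the regimes $p\in(2,4)$ and $p\in[4,\infty]$ apart, since both the exponent of $\beta^{n_0}$ and the index of the norm of $\frac{d\nu}{d\pi}-1$ change at $p=4$; this is pure bookkeeping.

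Second, I would bound $e_\pi(S_n,f)^2$. If $K$ is reversible with respect to $\pi$, then the presence of an $L_2$-spectral gap forces $\Lambda<1$, so Corollary~\ref{worst_stat_gen} applies and yields $e_\pi(S_n,f)^2\leq\frac{2}{n(1-\Lambda)}\norm{f}{2}^2$; since $\pi$ is a probability measure one has $\norm{f}{2}\leq\norm{f}{p}$ for $p\geq2$, which produces the first line of the stated bound on $e_\pi(S_n,f)^2$. Without reversibility, Proposition~\ref{ohne_rev_stat} gives $e_\pi(S_n,f)^2\leq\frac{2}{n(1-\beta)}\norm{f}{2}^2\leq\frac{2}{n(1-\beta)}\norm{f}{p}^2$, which is the second line.

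Finally, for the asymptotic statement I would observe that the combination used in the first step shows $\abs{e_\nu(S_{n,n_0},f)^2-e_\pi(S_n,f)^2}\leq C/n^2$ with $C$ independent of $n$, hence $n\cdot\abs{e_\nu(S_{n,n_0},f)^2-e_\pi(S_n,f)^2}\to0$; consequently the limit $\lim_{n\to\infty}n\cdot e_\nu(S_{n,n_0},f)^2$ exists precisely when $\lim_{n\to\infty}n\cdot e_\pi(S_n,f)^2$ does, and then the two agree, which is \eqref{asymp_gen}. In the reversible case Proposition~\ref{expl_stat_c} gives, with $g=f-S(f)$,
\[
n\cdot e_\pi(S_n,f)^2=\int_\lambda^\Lambda \frac{W(n,\alpha)}{n}\,\dint\scalar{E_{\set{\alpha}}g}{g},
\qquad
\frac{W(n,\alpha)}{n}=\frac{1+\alpha}{1-\alpha}-\frac{2\alpha(1-\alpha^n)}{n(1-\alpha)^2},
\]
and on the compact interval $[\lambda,\Lambda]\subset[-1,1)$ the integrand converges pointwise to $\frac{1+\alpha}{1-\alpha}$ while remaining uniformly bounded (for instance by $\frac{2}{1-\Lambda}$, via the monotonicity of $\alpha\mapsto W(n,\alpha)$ from Lemma~\ref{lemm_mono_inc}). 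Since the spectral measure $\scalar{E_{\set{\cdot}}g}{g}$ is finite with total mass $\norm{g}{2}^2$, dominated convergence lets me pass to the limit, and the continuous functional calculus identifies $\int_\lambda^\Lambda\frac{1+\alpha}{1-\alpha}\,\dint\scalar{E_{\set{\alpha}}g}{g}$ with $\scalar{(I+P)(I-P)^{-1}g}{g}$, where $I-P$ is invertible on $L_2^0$ because $1\notin\spec(P|L_2^0)$. The main obstacle is exactly this last paragraph — justifying the uniform bound for $W(n,\alpha)/n$ over the spectrum and the spectral-calculus identification of the limiting operator; the remainder is assembly of known inequalities.
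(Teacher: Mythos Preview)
Your proposal is correct and follows exactly the paper's route: the main inequality comes from Lemma~\ref{err_thm_gen} combined with Lemma~\ref{lemma_V_gen}, the bound on $e_\pi(S_n,f)^2$ from Corollary~\ref{worst_stat_gen} (reversible case) and Proposition~\ref{ohne_rev_stat} (otherwise), and the asymptotic statement from the $C/n^2$ remainder together with Proposition~\ref{expl_stat_c}. Your final paragraph spells out the dominated-convergence justification for the spectral limit more carefully than the paper does, but the argument is the same.
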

\begin{proof}
By Lemma~\ref{err_thm_gen} and Lemma~\ref{lemma_V_gen} the equality of \eqref{asymp_gen} is true. 
If the transition kernel is reversible, then by Proposition~\ref{expl_stat_c} the asymptotic result holds since
\[
\lim_{n\to \infty} n\cdot e_\pi(S_{n},f)^2 = \lim_{n\to \infty} \frac{1}{n} \scalar{W(n,P)g}{g} = \scalar{(I+P)(I-P)^{-1}g}{g}.
\]

By Lemma~\ref{err_thm_gen} and Lemma~\ref{lemma_V_gen} one obtains the estimate of $e_\nu(S_{n,n_0},f)^2$.
The estimate of
$e_\pi(S_n,f)^2$ follows by Proposition~\ref{ohne_rev_stat} and 
for a reversible transition kernel by Corollary~\ref{worst_stat_gen}.
\end{proof} 

\begin{remark}
A large burn-in $n_0$ guarantees that the influence of the initial distribution disappears and a large $n$ makes 
$e_\pi(S_n,f)$ small.
The condition of the $L_1$-exponential convergence could be substituted 
by the existence of an $L_2$-spectral gap by paying the price of considering error bounds 
in terms of $L_p$-norms of the integrand for $p\in(2,\infty]$. 
If $p$ converges to $2$, then the bound goes to infinity. 
However, for $p>2$ one has an explicit error bound. 
If the initial and stationary distribution is the same, 
then the influence of the initial part vanishes for all $p\in(2,\infty]$. 
\end{remark} 
\begin{remark}
Let
\[
c_{n,n_0}(p)= \frac{64p}{n^2(p-2)(1-\beta)^2} 
	      \begin{cases}
		\beta^{2n_0\frac{(p-2)}{p}} \norm{\frac{d\nu}{d\pi}-1}{\frac{p}{p-2}}, &	p\in(2,4),\\
		\beta^{n_0} \norm{\frac{d\nu}{d\pi}-1}{2},			&	p\in[4,\infty].
	      \end{cases}
\]
For $\norm{f}{p}\leq 1$ we have by Lemma~\ref{err_thm_gen} and Lemma~\ref{lemma_V_gen} that 
\begin{equation*}  
\abs{e_\nu(S_{n,n_0},f)^2-e_\pi(S_n,f)^2} \leq c_{n,n_0}(p). 
\end{equation*}
Observe that this implies a lower error bound for $S_{n,n_0}$.
We do not use it because of the lack of a general lower bound of $\sup_{\norm{f}{p}\leq1} e_\pi(S_{n},f)^2$ for $p\in(2,\infty]$. 
\end{remark}

\begin{remark}
Let $K$ be a transition kernel which is reversible with respect to $\pi$. We use
the notation $\beta_K=\beta$ and $\Lambda_K=\Lambda$ to indicate the transition kernel.
The lazy version of $K$ is given by $\widetilde{K}$. Then one has
\[
  \beta_{\widetilde{K}} = \Lambda_{\widetilde{K}} 
= \frac{1}{2}(1+\Lambda_K).
\]
If one has an estimate of $\Lambda_K$, 
then one also has an estimate of $\beta_{\widetilde{K}}$ and one can apply Theorem~\ref{main_thm}.
There are some techniques, e.g. canonical paths (see \cite{can_path_gen}) and the conductance concept 
(see \cite{lawler_sokal,lova_simo1} and \cite{jerrum,dia_stroock_eigen}) 
which are helpful to estimate $\Lambda_K$.
However, in general it is a challenging task.

\end{remark}

\section{Burn-in} \label{burn_in_sec}
Assume that computational resources for $N=n+n_0$ steps of the Markov chain are available.
The burn-in $n_0$ and the sample size $n$ should be chosen such that the 
error bound is as small as possible. One encounters the same trade-off as for finite state spaces.
In the next statement the error bound for an explicit burn-in is stated.

\begin{theorem}{\ \\[-4ex]}
\label{main_coro_gen}
\begin{enumerate}[(i)]
\item \label{erstens_burn_in}
Suppose that we have a Markov chain which is reversible with respect to $\pi$ 
and $L_1$-exponentially convergent with $(\a,M)$. Let
\[
n_0= \max\set{\left\lceil \frac{\log(M \norm{\frac{d\nu}{d\pi}-1}{\infty})}{\log(\a^{-1})}\right\rceil,0}.
\]
Then 
\begin{align*}
\sup_{\norm{f}{2}\leq1}e_\nu(S_{n,n_0},f)^2
		& \leq \frac{2}{n(1-\beta)} 
				+				 
				 \frac{2}{n^2(1-\a)^2} \\
		& 	\label{alpha_est}
		    \leq \frac{2}{n(1-\a)} 
				+				 
				 \frac{2}{n^2(1-\a)^2}.
\end{align*}
\item	\label{zweitens_burn_in}
  Suppose that we have a Markov chain with Markov operator $P$ which has an $L_2$-spectral gap $1-\beta>0$.
  For $p\in(2,\infty]$ let $n_0(p)$ be the smallest natural number (including zero) 
  which is greater than or equal to
  \[
    \frac{1}{\log(\beta^{-1})} 
    \begin{cases}
	\frac{p}{2(p-2)}\log\left(\frac{32p}{p-2} \norm{\frac{d\nu}{d\pi}-1}{\frac{p}{p-2}}\right), & p\in(2,4),\\
	 \log\left(64 \norm{\frac{d\nu}{d\pi}-1}{2}\right) , & p\in[4,\infty]. 
    \end{cases}
  \]
  Then
  \[
    \sup_{\norm{f}{p}\leq1}e_\nu(S_{n,n_0(p)},f)^2
	\leq \frac{2}{n(1-\beta)} +\frac{2}{n^2(1-\beta)^2}.
  \]
\end{enumerate}
\end{theorem}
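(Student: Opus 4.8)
The plan is to obtain both parts simply by substituting the explicitly chosen burn-in into error bounds that are already available, and checking that the choice makes the initial-distribution contribution collapse to the stated second term. For part~\eqref{erstens_burn_in} the relevant bound is \eqref{uni_b} of Theorem~\ref{main_thm_unif}, and for part~\eqref{zweitens_burn_in} it is the bound of Theorem~\ref{main_thm} together with the explicit constants from Lemma~\ref{err_thm_gen} and Lemma~\ref{lemma_V_gen}. In both cases no new estimate is needed; the only point requiring care is the borderline behaviour of the ceiling and of the $\max\{\cdot,0\}$.

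For part~\eqref{erstens_burn_in}, note first that the stated $n_0$ is finite only because reversibility and $L_1$-exponential convergence force $\nu\in\mathcal{M}_\infty$, so Theorem~\ref{main_thm_unif} applies and for $\norm{f}{2}\leq1$ gives
\[
e_\nu(S_{n,n_0},f)^2 \leq \frac{2}{n(1-\Lambda)} + \frac{2M\norm{\frac{d\nu}{d\pi}-1}{\infty}\,\a^{n_0}}{n^2(1-\a)^2}.
\]
The defining inequality $n_0\log(\a^{-1})\geq \log\!\big(M\norm{\frac{d\nu}{d\pi}-1}{\infty}\big)$ yields $M\norm{\frac{d\nu}{d\pi}-1}{\infty}\a^{n_0}\leq1$ when $M\norm{\frac{d\nu}{d\pi}-1}{\infty}\geq1$, and when it is $<1$ one has $n_0=0$ and the same product equals $M\norm{\frac{d\nu}{d\pi}-1}{\infty}<1$; either way the residual term is at most $2/(n^2(1-\a)^2)$. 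This gives the bound $\tfrac{2}{n(1-\Lambda)}+\tfrac{2}{n^2(1-\a)^2}$. Replacing $\Lambda$ by $\beta$ via $\Lambda\leq\beta$ produces the first displayed line, and then replacing $\beta$ by $\a$ via Proposition~\ref{uni_impl_geo} ($\beta\leq\a$) produces the second.

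For part~\eqref{zweitens_burn_in} I would use Theorem~\ref{main_thm}: for $\norm{f}{p}\leq1$ one has $\norm{f}{2}\leq\norm{f}{p}\leq1$ (as $p\geq2$), hence $e_\pi(S_n,f)^2\leq 2/(n(1-\beta))$ by Proposition~\ref{ohne_rev_stat}, while Lemma~\ref{err_thm_gen} and Lemma~\ref{lemma_V_gen} give
\[
e_\nu(S_{n,n_0},f)^2 - e_\pi(S_n,f)^2 \leq \frac{64p}{n^2(p-2)(1-\beta)^2}
\begin{cases}
\beta^{2n_0\frac{p-2}{p}}\norm{\frac{d\nu}{d\pi}-1}{\frac{p}{p-2}}, & p\in(2,4),\\
\beta^{n_0}\norm{\frac{d\nu}{d\pi}-1}{2}, & p\in[4,\infty].
\end{cases}
\]
It then remains to check that $n_0(p)$ forces the bracketed factor to be at most $2(p-2)/(64p)=(p-2)/(32p)$. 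For $p\in(2,4)$ the condition $n_0(p)\log(\beta^{-1})\geq \frac{p}{2(p-2)}\log\!\big(\frac{32p}{p-2}\norm{\frac{d\nu}{d\pi}-1}{\frac{p}{p-2}}\big)$ rearranges exactly into $\beta^{2n_0(p)\frac{p-2}{p}}\norm{\frac{d\nu}{d\pi}-1}{\frac{p}{p-2}}\leq \frac{p-2}{32p}$; for $p\in[4,\infty]$ the condition gives $\beta^{n_0(p)}\norm{\frac{d\nu}{d\pi}-1}{2}\leq \tfrac{1}{64}\leq \frac{p-2}{32p}$ since $\frac{p}{p-2}\leq2$. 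As in part~\eqref{erstens_burn_in}, if the logarithm defining $n_0(p)$ is non-positive then $n_0(p)=0$ and the same inequality holds trivially. Substituting shows the residual term is $\leq 2/(n^2(1-\beta)^2)$, and adding $e_\pi(S_n,f)^2\leq 2/(n(1-\beta))$ gives the claimed bound.

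The whole argument is essentially bookkeeping on top of the earlier theorems; I expect no conceptual obstacle. The one spot where a sign slip would be easy is keeping straight the two $p$-ranges with their different H\"older exponents $\tfrac{p}{p-2}$ versus $2$ and the corresponding powers $\beta^{2n_0\frac{p-2}{p}}$ versus $\beta^{n_0}$, together with the borderline cases where the quantity controlling the burn-in is already at most $1$; these are the only places meriting attention.
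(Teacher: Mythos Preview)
Your proposal is correct and follows essentially the same route as the paper: the paper's proof reads in full ``Assertion \eqref{erstens_burn_in} follows from Theorem~\ref{main_thm_unif} and Proposition~\ref{uni_impl_geo}. Claim \eqref{zweitens_burn_in} is an application of Theorem~\ref{main_thm}'', and your write-up simply makes explicit the arithmetic verification that the chosen $n_0$ collapses the initial-distribution term. One small wording issue: the condition $\nu\in\mathcal{M}_\infty$ (needed to invoke Theorem~\ref{main_thm_unif}) is an implicit assumption on the initial distribution, not a consequence of reversibility and $L_1$-exponential convergence of the kernel as you suggest; but this does not affect the argument.
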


\begin{proof}
Assertion \eqref{erstens_burn_in} follows from Theorem~\ref{main_thm_unif} and Proposition~\ref{uni_impl_geo}. 
Claim \eqref{zweitens_burn_in} 
is an application of 
Theorem~\ref{main_thm}.
\end{proof}
Note that $\log(\beta^{-1})=(1-\beta)+\sum_{j=2}^\infty \frac{(1-\beta)^j}{j !}$ and $\log(\beta^{-1})\geq1-\beta$.
This can be used to estimate the suggestion of the burn-in.
Now we justify the choice of the burn-in.
  
For simplicity we assume that $\a=\beta$.
Let us define
\[
C(p)=
\begin{cases}
	M  \norm{\frac{d\nu}{d\pi}-1}{\infty}, 				& p=2,\\
	\frac{32p}{p-2}  \norm{\frac{d\nu}{d\pi}-1}{\frac{p}{p-2}},	& p\in(2,4),\\
	64 \norm{\frac{d\nu}{d\pi}-1}{2},				& p\in[4,\infty].
\end{cases}
\]
We consider numerical experiments under the following conditions.
Suppose that
\begin{itemize}
	\item\; the computational resources are either $N=10^5$ or $N=10^6$.
	\item\; $\beta=0.9$ or $\beta=0.99$ or $\beta=0.999$.
  	\item\; $C=C(p)=10^{30}$, independent of $p$. 
\end{itemize}

Then the suggestion of the burn-in of Theorem~\ref{main_coro_gen} for $p=2$ and $p\in[4,\infty]$ has the form
\[
n_0^{\set{2}\cup [4,\infty)} = \left \lceil\frac{\log(C)}{\log(\beta^{-1})} \right \rceil,
\]
whereas for $p\in(2,4)$ it still depends on $p$, such that
\[
n_0^{(2,4)} = \left \lceil \frac{p}{2(p-2)} \frac{\log(C)}{\log(\beta^{-1})} \right \rceil.
\]
The error for $\norm{f}{p}\leq1$ where $p\in\set{2}\cup[4,\infty)$ is bounded by
\[
\text{est}_{\set{2}\cup[4,\infty)}(n,n_0)=\sqrt{\frac{2}{n(1-\beta)}+\frac{2C \beta^{n_0}}{n^2(1-\beta)^2}}
\]
whereas for $p\in(2,4)$ we have the upper estimate 
\[
\text{est}_{(2,4)}(n,n_0)=\sqrt{\frac{2}{n(1-\beta)}+\frac{2C \beta^{2n_0\frac{p-2}{p}}}{n^2(1-\beta)^2}}.
\]
With the restriction $N=n+n_0$ one can numerically compute a burn-in, 
which approximates the minimal upper error bound.
This is a $1$-dimensional minimization problem with different parameters.
Let us denote the numerically computed values of the burn-in by
$n^{\set{2}\cup[4,\infty)}_{\text{opt}}$ for $p\in\set{2}\cup[4,\infty)$ 
and $n^{(2,4)}_{\text{opt}}$ for $p\in(2,4)$ respectively.\\

\begin{table}[htb]
\begin{center}
\small
\begin{tabular}{|c|c|c|c|c|c|}

\hline 

& & & & &\\[-2ex]

$N$ & $\beta$	&  $n^{\set{2}\cup [4,\infty)}_{\text{opt}}$	& $n_0^{\set{2}\cup [4,\infty)}=\left\lceil \frac{\log(C)}{\log(\beta^{-1})} \right\rceil$ 
                &  $n^{(2,4)}_{\text{opt}}$ & $n_0^{(2,4)}=\left\lceil \frac{p}{2(p-2)}\frac{\log(C)}{\log(\beta^{-1})}\right\rceil$  \\
		& 	& {\scriptsize (by Maple)} 	& \scriptsize{(suggested above)} & \scriptsize{(by Maple)}	& \scriptsize{(suggested above, $p=2.1$)}    		\\[1ex]  		
\hline & & & & & \\[-1.5ex] 

$10^5$ & $0.9$   	&   $656$					& $656$		 	&	$6655$		&	$6885$	\\
$10^6$ & $0.9$		&	  $656$					&	$656$			&	$6655$		&	$6885$	\\
$10^5$ & $0.99$   &   $6873$				&	$6874$		&	$69642$		&	$72169$	\\
$10^6$ & $0.99$		&	  $6874$				&	$6874$ 		&	$69715$		&	$72169$	\\
$10^5$ & $0.999$  &   $68977$				&	$69043$  	&	$79011$		&	$724952$\\
$10^6$ & $0.999$	&	  $69041$				&	$69043$ 	&	$699520$	&	$724952$\\[1ex]
\hline
\end{tabular} 
\end{center}
\caption{For $C=10^{30}$ and $p=2.1$. 
The numerically computed value $n^{\text{Int}}_{\text{opt}}$ which approximately
	minimizes the mapping $n_0 \mapsto \text{est}_\text{Int}(N-n_0,n_0)$,
	  either $\text{Int}=\set{2}\cup [4,\infty)$ or $\text{Int}=(2,4)$.}
  
\label{tab_burn_in}
\end{table}

Table~\ref{tab_burn_in} gives a collection of  $n^{\set{2}\cup[4,\infty)}_{\text{opt}}$ and
 $n^{(2,4)}_{\text{opt}}$ where $p=2.1$. 
The suggested $n_0$ of Theorem~\ref{main_coro_gen} 
is close to the numerically computed values of the burn-in, which approximately minimize the error bound.  
For $N=10^{5}$ and $\beta=0.999$ the difference
between $n^{(2,4)}_{\text{opt}}$ and $n_0^{(2,4)}$ is large. 
In this situation Theorem~\ref{main_thm} gives for no choice 
of $n$ and $n_0$ with $N=10^5$ an error smaller than $1$. 
The available resources $N=n+n_0$ are too small, 
such that the suggested burn-in cannot be reached.
If the computational resources are large enough, then 
the computed values $n^{\set{2}\cup[4,\infty)}_{\text{opt}}$ 
and $n^{(2,4)}_{\text{opt}}$ are of the same magnitude 
as the suggested $n_0^{\set{2}\cup [4,\infty)}$ and $n_0^{(2,4)}$.\\

If an error of at most $\e\in (0,1)$ is desired, then 
the suggested choice $n_0^{\set{2}\cup [4,\infty)}$ or $n_0^{(2,4)}$,
depending on $p$, of the burn-in is independent
of the precision $\e$. We choose $n_0$ as suggested in Theorem~\ref{main_coro_gen} and
\[
n 
 \geq   \frac{ 1+\sqrt{1+4\e^2}}{(1-\beta)\e^2}
\qquad\text{to achieve} \qquad 
e_\nu(S_{n,n_0},f) \leq \e.
\]  
\\
Let the Markov chain be reversible with respect to $\pi$ and let $\Lambda=\beta$. 
For different fixed values $n_0$ a plot of
\[
\text{est}_{\set{2}\cup[4,\infty)}(N-n_0,n_0)
\quad \text{and} \quad 
\sup_{\norm{f}{2}\leq1} e_\pi(S_N,f)=\sqrt{\frac{1+\Lambda}{N(1-\Lambda)} -\frac{2\Lambda(1-\Lambda^N)}{N^2(1-\Lambda)^2}}
\] 
is presented in Figure~\ref{diff_asymp}.
Roughly spoken one can see that if the burn-in is chosen too small 
a vertical shifting takes place and if the burn-in is chosen 
too large a horizontal shifting takes place. 
\begin{figure}[htb] 
  \begin{center}
    \includegraphics[height=9cm]{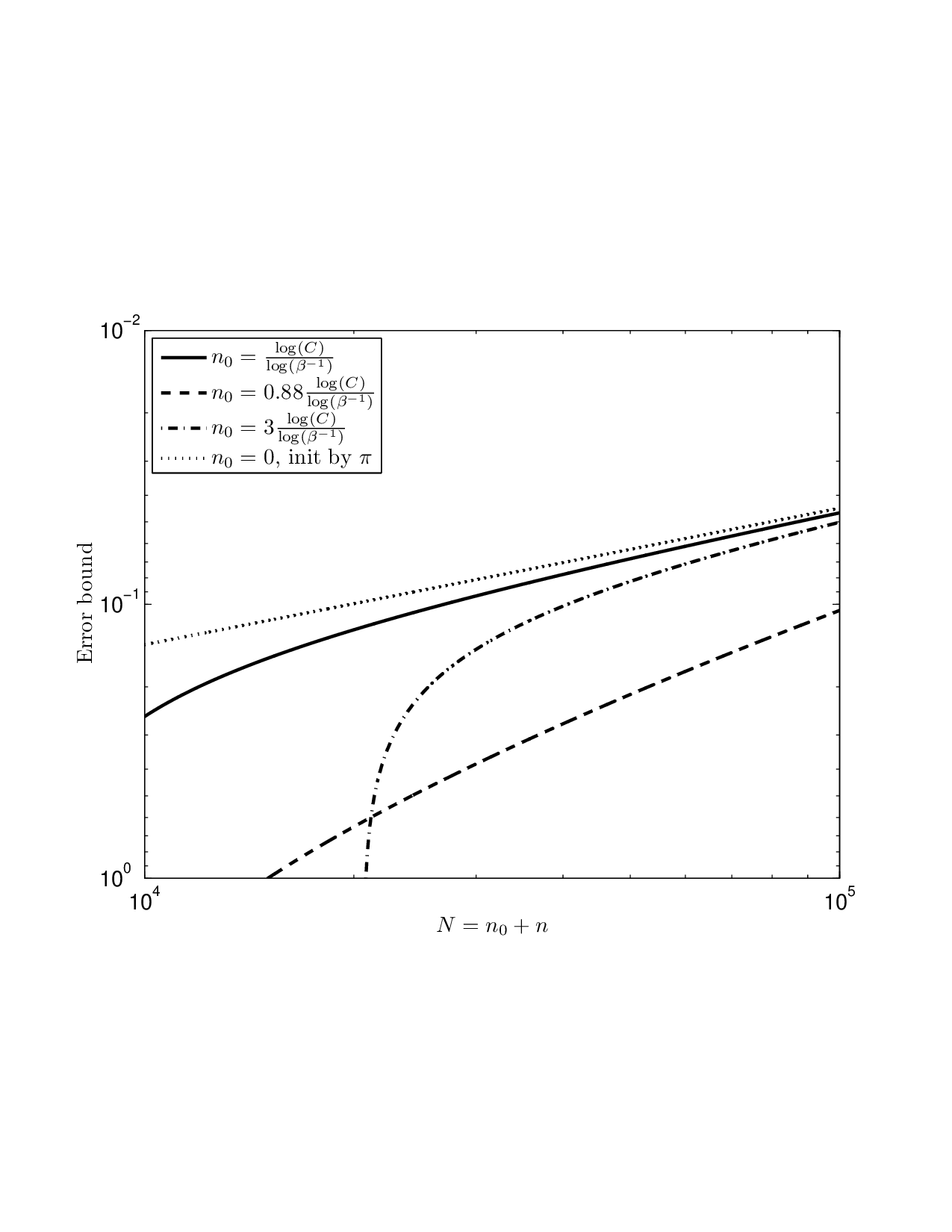}
    \caption{For $\beta=\Lambda=0.99$ and $C=10^{30}$ the mapping $N \mapsto \text{est}_{\set{2}\cup[4,\infty)}(N-n_0,n_0)$ is plotted 
    for different values of $n_0$. The dotted curve is a plot of the mapping $N\mapsto \sup_{\norm{f}{2}\leq1} e_\pi(S_N,f)$.}
    \label{diff_asymp}
    \end{center}  
\end{figure} 
Summarized one can say, if $\beta$, $C$ and $p$ are given, then choose the burn-in as suggested above. 
If there is an estimate of $\log(C)/\log(\beta^{-1}) $, then 
one should ensure that it is not smaller than the real quotient. 
As seen in Figure~\ref{diff_asymp} if it is slightly smaller there is already a strong influence. 
By choosing the burn-in too large the influence is less heavy.\\ 
 
If there is nothing known about $\beta$ or $C$ another
strategy is to choose $n=n_0=N/2$ for even $N$. 
This has the advantage that no information about $\beta$ or $C$ is needed. 
In Figure~\ref{exact} we plotted
\[
\text{est}_{\set{2}\cup[4,\infty)}(N/2,N/2),\;
\text{est}_{\set{2}\cup[4,\infty)}(N-n_0^{\set{2}\cup [4,\infty)},n_0^{\set{2}\cup [4,\infty)})
\; \text{ and} \;
\sup_{\norm{f}{2}\leq1} e_\pi(S_N,f) 
\]
where $N\in[10^4,10^5]$.
\begin{figure}[htb] 
  \begin{center}
    \includegraphics[height=9cm]{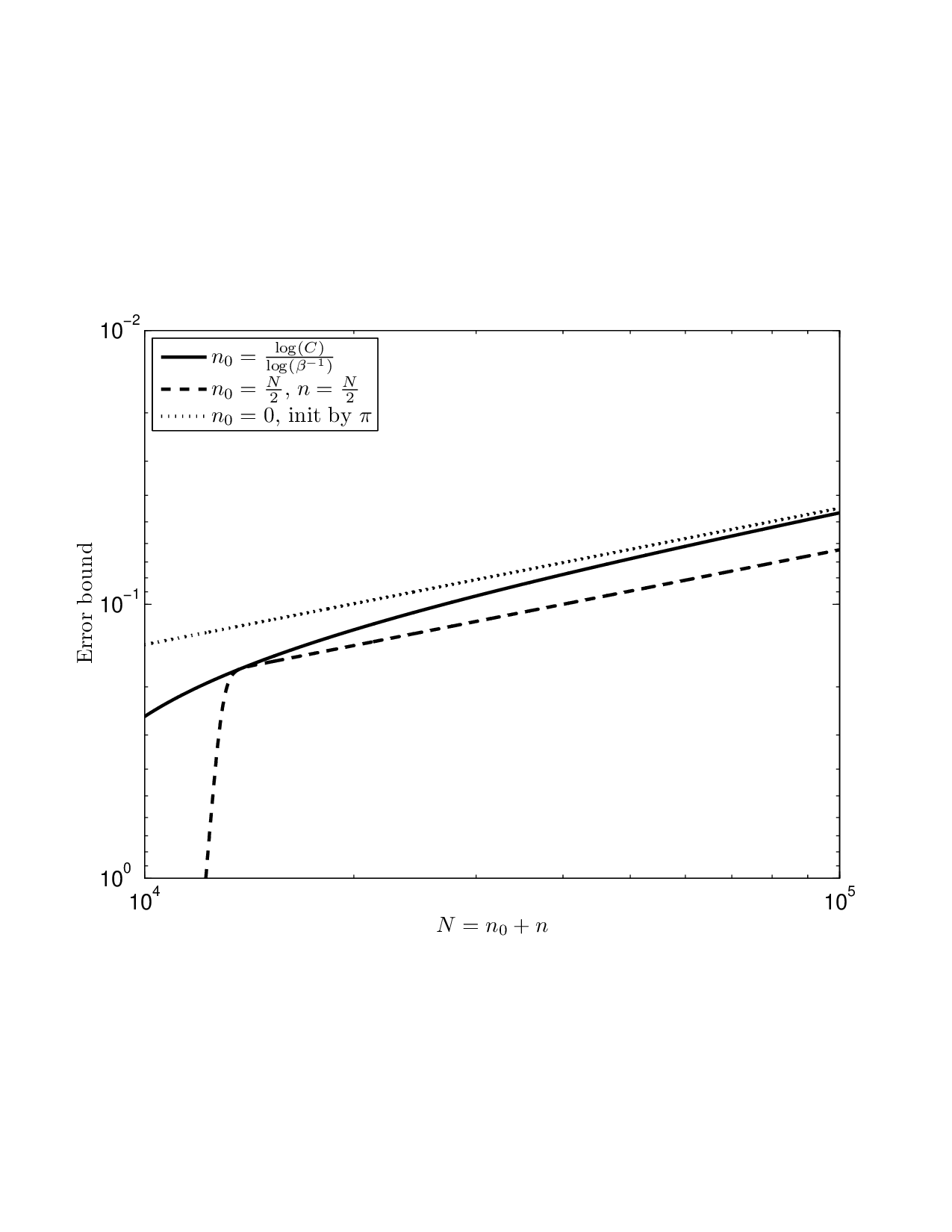}
    \caption{For $\beta=\Lambda=0.99$ and $C=10^{30}$ the mapping $N \mapsto \text{est}_{\set{2}\cup[4,\infty)}(N-n_0,n_0)$ is plotted 
    for different values of $n_0$. The dotted curve is the plot of the mapping $N\mapsto \sup_{\norm{f}{2}\leq1} e_\pi(S_N,f)$.}
    \label{exact}
    \end{center}  
\end{figure} 
Asymptotically
the price of a factor of $\sqrt{2}$ is paid, 
i.e. asymptotically the error is $\sqrt{2}$ times worse 
than $\sup_{\norm{f}{2}\leq1} e_\pi(S_N,f)$, see Figure~\ref{exact}. 
This strategy works well and reaches the same rate of convergence as
 in Theorem~\ref{main_coro_gen}.

\section{Examples}  \label{ex_gen}
For the examples in Section~\ref{toy_fin} 
one can provide all eigenfunctions and eigenvalues.
Usually
it is a challenging task to obtain the necessary information of the spectral structure of the Markov operator,
in particular on general state spaces.
This section contains 
examples to illustrate the error bounds.
The literature provides some tools which can be applied 
to estimate the quantities of interest, e.g. $\Lambda$, $\beta$.
These tools are briefly introduced. For further details we refer to the literature.
Note that the initial distributions of the Markov chains of the following examples 
are chosen to demonstrate the error bounds and not chosen to minimize the burn-in. 
\subsection*{Bounded state spaces}
Suppose that the state space $D$ is a measurable subset of $\R^d$.
The $\sigma$-algebra $\D$ is given by $\Borel(D)$.
We say a transition kernel $K$ has a \emph{transition density} with respect to a positive measure $\mu$
if there is a non-negative function $k\colon D \times D \to [0,\infty]$, 
such that
\[
K(x,A)=\int_A k(x,y)\, \mu(\dint y), \quad  x\in D,\,A\in \Borel(D).
\]
We write $k^n$ for the transition density of $K^n$.\\

Let $D$ be a bounded set and let the function $\rho\colon D \to [0,\infty]$ be integrable with respect to the Lebesgue measure, 
with $\int_D \rho(x)\, \dint x >0$. Then 
\[
\pi_\rho(A)=\frac{\int_A \rho (x)\, \dint x}{\int_D \rho(x) \, \dint x}, \quad A\in\Borel(D),
\]
is a probability measure on $(D,\Borel(D))$.
We say $\rho$ is an \emph{unnormalized density} 
with respect to the Lebesgue measure if $\int_D \rho(x)\, \dint x \neq 1$.
Let $K$ be a transition kernel with transition density $k$ 
with respect to the Lebesgue measure and assume that $\pi_\rho$ is a stationary distribution of $K$.
Furthermore, let $s\in[0,1]$ and let us define
\[
K_s(x,A)= (1-s) K(x,A) + s \mathbf{1}_A(x), \quad  x\in D,\,A\in \Borel(D).
\]
The transition kernel $K_s$ is called the \emph{$s$-modified transition kernel of $K$}.
If $s=\frac{1}{2}$ then the lazy version of $K$ is given and if $s=0$ then one has $K$.
For all $s\in[0,1]$ we have that $\pi_\rho$ is a stationary distribution of $K_s$. The goal is to approximate
\[
S(f)=\int_D f(x)\, \pi_\rho(\dint x).
\]
One obtains for $n\in\N$ that
\begin{equation}  \label{binom_trans}
K_s^n(x,A)= \sum_{i=1}^{n} s^{n-i} (1-s)^i \binom{n}{i} K^{i}(x,A) + s^n \mathbf{1}_A (x), \quad x\in D,\,A\in \Borel(D).
\end{equation}
The case $s=0$ is reasonable if we define $0^0=1$.
The following lemma determines a condition 
which implies $L_1$-exponential convergence of the $s$-modified transition kernel.
For simplicity let us assume that $\int_D \rho(x)\,\dint x =1 $.
\begin{lemma}  \label{s_mod_trans}
	If there exist an $\a \in[0,1)$ and $M<\infty$ such that
\[
2s^n+  		\int_D \esssup_{y\in D} 
		  		\abs{\sum_{i=1}^{n} s^{n-i} (1-s)^{i}\binom{n}{i} 
		  				\frac{k^{i}(x,y)}{\rho(y)}-(1-s^n)} \rho(x)\,\dint x 
		  \leq \a^n M,\quad n\in\N,
\]
then 
the transition kernel $K_s$ is $L_1$-exponentially convergent with $(\a,M)$. 
\end{lemma}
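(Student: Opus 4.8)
The goal is to show $\norm{P_s^n-S}{L_1\to L_1}\le M\a^n$ for every $n\in\N$, which is exactly $L_1$-exponential convergence with $(\a,M)$ in the sense of Definition~\ref{L_p_exp_def}. Here $P_s^n$ is the Markov operator of the $n$-step kernel $K_s^n$, and $\pi_\rho$ is stationary for $K_s$ by the discussion preceding the lemma. Write $h_n(x,y)=\sum_{i=1}^n s^{n-i}(1-s)^i\binom ni k^i(x,y)$, so that \eqref{binom_trans} reads $K_s^n(x,A)=\int_A h_n(x,y)\,\dint y+s^n\mathbf{1}_A(x)$, and hence for $g\in L_1$
\[
(P_s^n g)(x)=\int_D g(y)\,h_n(x,y)\,\dint y+s^n g(x).
\]
Two facts about $h_n$ will be used. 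First, $\int_D h_n(x,y)\,\dint y=(s+(1-s))^n-s^n=1-s^n$, since each $K^i(x,\cdot)$ is a probability measure. Second, stationarity of $\pi_\rho$ for $K$ in density form gives $\int_D k^i(x,y)\,\rho(x)\,\dint x=\rho(y)$ for $\pi_\rho$-a.e.\ $y$, whence $\int_D h_n(x,y)\,\rho(x)\,\dint x=(1-s^n)\rho(y)$.

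First I would peel off the atomic part of $K_s^n(x,\cdot)$. Using $S(g)=(1-s^n)S(g)+s^nS(g)$ together with $\int_D h_n(x,y)\,\dint y=1-s^n$, one has
\[
(P_s^n-S)g(x)=\Bigl[\int_D g(y)\,h_n(x,y)\,\dint y-(1-s^n)S(g)\Bigr]+s^n\bigl(g(x)-S(g)\bigr).
\]
The second summand is estimated crudely: $\norm{s^n(g-S(g))}{1}\le s^n\bigl(\norm g1+\abs{S(g)}\bigr)\le 2s^n\norm g1$, where $\abs{S(g)}\le\norm g1$ is the Jensen inequality. This accounts for the term $2s^n$ in the hypothesis.

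For the first summand, put $\psi_n(x,y)=h_n(x,y)/\rho(y)=\sum_{i=1}^n s^{n-i}(1-s)^i\binom ni\frac{k^i(x,y)}{\rho(y)}$, i.e.\ the expression occurring in the hypothesis; up to the normalizing constant this is the density of the absolutely continuous part of $K_s^n(x,\cdot)$ relative to $\pi_\rho$. Expanding $S(g)=\int_D g\,\dint\pi_\rho$ and converting $\dint y$ into $\pi_\rho(\dint y)$, a short computation (this is where the stationarity identity enters) rewrites the first bracket as $\int_D g(y)\bigl(\psi_n(x,y)-(1-s^n)\bigr)\,\pi_\rho(\dint y)$. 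Bounding the integrand by the absolute value of the kernel times $\abs{g(y)}$ and pulling out the essential supremum gives
\[
\Bigl|\int_D g(y)\bigl(\psi_n(x,y)-(1-s^n)\bigr)\,\pi_\rho(\dint y)\Bigr|\le\esssup_{y\in D}\bigl|\psi_n(x,y)-(1-s^n)\bigr|\cdot\norm g1,
\]
and integrating this against $\pi_\rho(\dint x)$, then recalling that $\pi_\rho(\dint x)$ is proportional to $\rho(x)\,\dint x$, bounds the contribution of the first summand by $\norm g1\int_D\esssup_{y\in D}\abs{\psi_n(x,y)-(1-s^n)}\,\rho(x)\,\dint x$. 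Adding the two summands yields
\[
\norm{(P_s^n-S)g}{1}\le\Bigl(2s^n+\int_D\esssup_{y\in D}\abs{\psi_n(x,y)-(1-s^n)}\,\rho(x)\,\dint x\Bigr)\norm g1\le M\a^n\,\norm g1
\]
by the hypothesis, and taking the supremum over $g$ with $\norm g1\le1$ completes the argument.

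The genuinely delicate point is the separate handling of the atom $s^n\delta_x(\dint y)$ inside $K_s^n(x,\cdot)$: it is not an integral operator, so it cannot be folded into the kernel estimate, and one must check that after the splitting the absolutely continuous part is centred by $1-s^n$ (not by $1$) — this is precisely what makes its contribution coincide with the integral in the hypothesis, and it is forced by the two identities $\int_D h_n(x,y)\,\dint y=1-s^n$ and $\int_D h_n(x,y)\,\rho(x)\,\dint x=(1-s^n)\rho(y)$. Everything else is a routine chain of triangle inequalities, the Jensen inequality, and monotonicity of the integral; no convergence rate is lost because the hypothesis bounds the entire expression $2s^n+\int_D\esssup_{y}\abs{\psi_n(x,y)-(1-s^n)}\,\rho(x)\,\dint x$ by $M\a^n$.
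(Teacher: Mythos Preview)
Your proposal is correct and follows essentially the same route as the paper: split $(P_s^n-S)g$ into the absolutely continuous piece centred by $(1-s^n)S(g)$ and the atomic piece $s^n(g-S(g))$, bound the latter by $2s^n\norm{g}{1}$, and bound the former by pulling out $\esssup_y\abs{\psi_n(x,y)-(1-s^n)}$ after rewriting $h_n(x,y)\,\dint y=\psi_n(x,y)\,\rho(y)\,\dint y$, then integrating in $x$. One small remark: the stationarity identity $\int_D h_n(x,y)\,\rho(x)\,\dint x=(1-s^n)\rho(y)$ that you flag is not actually needed for the rewriting of the first bracket (that step is purely algebraic once $S(g)=\int g\,\dint\pi_\rho$ is inserted and $\dint y$ is converted to $\rho(y)\,\dint y$), so your parenthetical ``this is where the stationarity identity enters'' is slightly misplaced---but it does no harm to the argument.
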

\begin{proof}
The Markov operator of $K_s$ is denoted by
$P_s$. 
Then
\begin{align*}
 & \norm{(P_s^n  -S)f}{1}  
 = \int_D \abs{ \int_D f(y) (\sum_{i=1}^{n} s^{n-i} (1-s)^{i} \binom{n}{i} k^{i}(x,y)\,\dint y)
 				+s^n f(x) - S(f)} \rho(x)\, \dint x \\
 			&\quad\qquad \leq \int_D \int_D \abs{f(y)} 
 				\abs{\sum_{i=1}^{n} s^{n-i} (1-s)^{i}\binom{n}{i} 
 				\frac{k^{i}(x,y)}{\rho(y)}-(1-s^n)} \rho(y)\, \dint y\, \rho(x)\,\dint x\\ 
 			& \quad\qquad\qquad + s^n \int_D \abs{f(x)-S(f)} \rho(x)\, \dint x\\
		  & \quad\qquad\leq \norm{f}{1} \int_D \esssup_{y\in D} 
		  		\abs{\sum_{i=1}^{n} s^{n-i} (1-s)^{i}\binom{n}{i} \frac{k^{i}(x,y)}{\rho(y)}-(1-s^n)} \rho(x)\,\dint x\\
		  & \quad\qquad\qquad + s^n \norm{f-S(f)}{1}\\		
		  & \quad\qquad\leq \norm{f}{1} (2s^n+
		  		\int_D \esssup_{y\in D} 
		  		\abs{\sum_{i=1}^{n} s^{n-i} (1-s)^{i}\binom{n}{i} 
		  				\frac{k^{i}(x,y)}{\rho(y)}-(1-s^n)} \rho(x)\,\dint x )		  
\end{align*}
proves the assertion.
\end{proof}

For $n=1$ and $s=0$ one has a criterion for $L_1$-exponential 
convergence with $(\a,1)$ for the transition kernel $K$. 

%

\begin{coro} \label{L_1_einf}
If there exists an $\a \in[0,1)$ such that
\[
		\int_D \esssup_{y\in D}\abs{\frac{k(x,y)}{\rho(y)}-1} \rho(x)\,\dint x \leq \a,
\]
then the transition kernel $K$ is $L_1$-exponentially convergent with $(\a,1)$.
\end{coro}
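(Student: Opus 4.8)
The statement to prove is the special case $n=1$, $s=0$ of Lemma~\ref{s_mod_trans}, so the cleanest route is simply to invoke that lemma directly. The plan is to set $n=1$ and $s=0$ in the hypothesis of Lemma~\ref{s_mod_trans}, agreeing (as the surrounding text already does) to the convention $0^0=1$, so that $s^n = 0^1 = 0$, $s^{n-i}=s^{1-1}=0^0=1$ for the single term $i=1$, and $(1-s)^i = 1^1 = 1$, $\binom{1}{1}=1$. With these substitutions the quantity $2s^n$ vanishes, the sum $\sum_{i=1}^{1} s^{n-i}(1-s)^i \binom{n}{i} \frac{k^i(x,y)}{\rho(y)}$ collapses to $\frac{k(x,y)}{\rho(y)}$, and $(1-s^n)$ becomes $1$. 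Hence the general condition of Lemma~\ref{s_mod_trans} reduces precisely to
\[
\int_D \esssup_{y\in D}\abs{\frac{k(x,y)}{\rho(y)}-1}\,\rho(x)\,\dint x \leq \a,
\]
which is exactly the hypothesis of the corollary, with $M=1$.

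The second step is to observe that $K_0 = K$: by definition $K_s(x,A)=(1-s)K(x,A)+s\mathbf{1}_A(x)$, so $K_0(x,A)=K(x,A)$ for all $x\in D$, $A\in\Borel(D)$. Therefore the conclusion of Lemma~\ref{s_mod_trans} in this case states that $K$ itself is $L_1$-exponentially convergent with $(\a,1)$, which is the desired assertion. So the proof is essentially a one-line reduction: apply Lemma~\ref{s_mod_trans} with $n=1$ and $s=0$.

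\textbf{Main obstacle.}
There is no real obstacle of substance; the only point requiring minor care is the bookkeeping with the convention $0^0=1$ when specializing $s=0$, so that the binomial expansion \eqref{binom_trans} and the hypothesis of Lemma~\ref{s_mod_trans} degenerate correctly. One should double-check that the term $s^n\mathbf{1}_A(x)$ in $K_s^n$ indeed disappears for $s=0$, $n=1$ (it does, since $0^1=0$), so that no spurious boundary term survives in the estimate. Everything else is a direct substitution into an already-proven lemma.

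\begin{proof}
This is the special case $n=1$ and $s=0$ of Lemma~\ref{s_mod_trans}.
Indeed, with the convention $0^0=1$ one has for $s=0$ that $s^1=0$, hence $2s^1=0$ and $(1-s^1)=1$.
Moreover the sum $\sum_{i=1}^{1} s^{1-i}(1-s)^i \binom{1}{i} \frac{k^i(x,y)}{\rho(y)}$ consists of the single term $i=1$, which equals $\frac{k(x,y)}{\rho(y)}$ since $s^{0}=1$, $(1-s)^{1}=1$ and $\binom{1}{1}=1$.
Therefore the hypothesis of Lemma~\ref{s_mod_trans} for $n=1$, $s=0$ reads
\[
\int_D \esssup_{y\in D}\abs{\frac{k(x,y)}{\rho(y)}-1}\,\rho(x)\,\dint x \leq \a,
\]
which is precisely the assumed inequality, with $M=1$.
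Since $K_0(x,A)=(1-0)K(x,A)+0\cdot \mathbf{1}_A(x)=K(x,A)$ for all $x\in D$ and $A\in\Borel(D)$, we have $K_0=K$.
Lemma~\ref{s_mod_trans} thus yields that $K$ is $L_1$-exponentially convergent with $(\a,1)$.
\end{proof}
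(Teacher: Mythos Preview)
Your approach matches the paper's intent: the text immediately preceding the corollary says exactly that the $n=1$, $s=0$ case of Lemma~\ref{s_mod_trans} furnishes the criterion. However, as literally written your proof has a small logical gap. The hypothesis of Lemma~\ref{s_mod_trans} is a family of inequalities required to hold \emph{for all} $n\in\N$; $n$ is not a free parameter of the lemma that you can specialize, only $s$ is. Verifying the displayed inequality for $n=1$ alone therefore does not let you invoke the lemma directly.

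What the $n=1$, $s=0$ instance of the \emph{proof} of Lemma~\ref{s_mod_trans} actually yields is the single bound $\norm{P-S}{L_1\to L_1}\leq\alpha$. To upgrade this to $L_1$-exponential convergence with $(\alpha,1)$ you need one more (easy) step: since $PS=SP=S^2=S$, one has $(P-S)^n=P^n-S$, and hence by submultiplicativity of the operator norm
\[
\norm{P^n-S}{L_1\to L_1}=\norm{(P-S)^n}{L_1\to L_1}\leq\norm{P-S}{L_1\to L_1}^n\leq\alpha^n,\quad n\in\N.
\]
Adding this sentence closes the gap; the rest of your reduction (the bookkeeping with $0^0=1$ and $K_0=K$) is fine.
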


\subsubsection*{Example 1}
Let us present an easy example borrowed from \cite[p.~402]{rosenthal_conv}. 
Let $D=[0,1]$ and $\D=\Borel([0,1])$.
The transition kernel is defined by
\[
K(x,A)= \int_A \frac{1+x+y}{x+\frac{3}{2}}\, \dint y, \quad x\in [0,1],\, A\in \Borel([0,1]).
\]
The stationary distribution is given by
\[
\pi(A)=\frac{1}{2}\int_A (x+\frac{3}{2})\, \dint x,\quad A\in \Borel([0,1]).
\]
The transition kernel $K$ is reversible with respect to $\pi$.
These properties can be checked straightforward. 
We have
\begin{align*}
	& \quad	\int_0^1 \esssup_{y\in[0,1]} \abs{\frac{k(x,y)}{\rho(y)}-1}  \, \rho(x)\,\dint x
		= \int_0^1 \esssup_{y\in[0,1]} \frac{\abs{x+y-2xy-\frac{1}{2}}}{2(y+\frac{3}{2})(x+\frac{3}{2})}\,\rho(x)\,\dint x \\
	&	=	\int_0^1 \esssup_{y\in[0,1]} \frac{\abs{x+y-2xy-\frac{1}{2}}}{4(y+\frac{3}{2})}\,\dint x
%
%
%
	 = \frac{1}{6}\int_0^1 \abs{x-\frac{1}{2}}\,\dint x = \frac{1}{24}.
\end{align*}
Hence Corollary~\ref{L_1_einf} gives that the transition kernel is $L_1$-exponentially convergent with $(1/24,1)$.
Because of the reversibility one can apply Proposition~\ref{uni_impl_geo}
and has that the transition kernel is $\pi$-a.e. uniformly ergodic with $(1/24,1/2)$. 
Furthermore there exists an $L_2$-spectral gap, one has $\beta \leq \a= 1/24$.\\
Let $\d\in(0,2/3)$ and let the initial distribution $\nu$ be given by
\[
\nu(A) = \frac{1}{\d} \int_A \mathbf{1}_{[0,\d]}(x)\, \dint x ,\quad A\in \Borel([0,1]).
\]
Hence the initial state is chosen uniformly distributed in $[0,\d]$.
Then
\[
\norm{\frac{d\nu}{d\pi_\rho}-1}{\infty} 
= \esssup_{x\in[0,1]} \abs{ \frac{4\cdot \mathbf{1}_{[0,\d]}(x)}{\d(2x+3)}-1 } = \frac{4}{3\d}-1.
\]
Theorem~\ref{main_coro_gen}\,\eqref{erstens_burn_in} suggests the choice
\[
n_0= \left\lceil \frac{\log( \frac{4}{3\d}-1 )}{\log(24)}\right\rceil
\] 
such that
\[
\sup_{\norm{f}{2}\leq1} e_\nu(S_{n,n_0},f)^2 \leq \frac{48}{23n}+\frac{1152}{529n^2} < \frac{5}{n}.
\]


\subsubsection*{Example 2}
It is taken from \cite[p.~172]{rosenthal_nearly_period}.
Let $D=[-1,1]$ and $\D=\Borel([-1,1])$. The transition kernel is defined by
\[
K(x,A)= \int_A  \mathbf{1}_{[-1,0]}(x) \mathbf{1}_{(0,1]}(y)
	    +  \mathbf{1}_{(0,1]}(x) \mathbf{1}_{[-1,0]}(y)\, \dint y, \quad x\in [-1,1],\,A\in \Borel([-1,1]).
\]
For $x\in[-1,0]$ the next state is uniformly distributed in $(0,1]$
and for $x\in(0,1]$ the next state is uniformly distributed in $[-1,0]$. 
The transition kernel is reversible with respect to the uniform distribution on $D$, thus 
$\pi_\rho$ is given by $\rho(x)=1/2$ for $x\in D$.
For $n\in \N$ we have 
\begin{align*}
 K^n(x,A)=\begin{cases}
 	K(x,A), 	& n\; \text{odd},\\
	K^2(x,A),	& n\; \text{even},
 	\end{cases}
\end{align*}
where
\[
	K^2(x,A)	=	\int_A \mathbf{1}_{[-1,0]}(x) \mathbf{1}_{[-1,0]}(y)
	    +  \mathbf{1}_{(0,1]}(x) \mathbf{1}_{(0,1]}(y)\, \dint y,\quad x\in [-1,1],A\in \Borel([-1,1]).
\]
The spectrum of $P$ is completely known, one has $\spec(P|L_2)=\set{1,0,-1}$ with
\begin{align*}
\mbox{Eig}(P,1) &=\set{f\in L_2 \mid f\equiv c,\,c\in\R}=(L_2^0)^\bot,\\
\mbox{Eig}(P,0)&
		 =\{f\in L_2\mid \int_{-1}^0 f(x)\dint x=\int_0^1 f(x)\dint x=0\},\\
%
\mbox{Eig}(P,-1)&=\{f\in L_2 \mid f(x)=c\,(\mathbf{1}_{[-1,0]}(x)-\mathbf{1}_{(0,1]}(x)),\; c\in \R\},
\end{align*}
where $\mbox{Eig}(P,\lambda)$ denotes the eigenspace of the eigenvalue $\lambda$.
Clearly $\spec(P|L_2^0)=\set{0,-1}$. 
To apply the error bounds one has to pass over to $\widetilde{K}$, the lazy version of $K$.
Let $\widetilde{P}$ be the transition operator which corresponds to $\widetilde{K}$. We denote
$\beta=\beta_{\widetilde{K}}$ and $\Lambda=\Lambda_{\widetilde{K}}$ to indicate the transition kernel $\widetilde{K}$.
We have $\spec(\widetilde{P}|L_2)=\set{1,\frac{1}{2},0}$ 
and $\spec(\widetilde{P}|L_2^0)=\set{\frac{1}{2},0}$. 
The operator $\widetilde{P}$ has an $L_2$-spectral gap, one obtains 
\[
\beta_{\widetilde{K}}=\Lambda_{\widetilde{K}}=\norm{\widetilde{P}}{L_2^0 \to L_2^0} = \frac{1}{2}.
\]
Note that $\widetilde{K}=K_{\frac{1}{2}}$.
By 
the special structure of $K^n$ one obtains 
for $x,y\in D$
that
\begin{align*}
\frac{1}{2^n} \sum_{i=1}^n \binom{n}{i} \frac{k^i(x,y)}{\rho(y)}
	& = \frac{1}{2^{n-1}} 
	  \begin{cases}
		\sum_{i=0}^{\frac{n-1}{2}}	\binom{n}{2i+1} k(x,y) + \sum_{i=1}^{\frac{n-1}{2}} \binom{n}{2i} k^2(x,y), & n\; \text{odd},\\
		\sum_{i=0}^{\frac{n}{2}-1} \binom{n}{2i+1} k(x,y)+\sum_{i=1}^{\frac{n}{2}-1} \binom{n}{2i} k^2(x,y),      & n\; \text{even},
          \end{cases}
	  \\
%
%
	& =	(k(x,y)+k^2(x,y)) -\frac{k^2(x,y)}{2^{n-1}} 
	  = 1  -\frac{k^2(x,y)}{2^{n-1}} .
\end{align*}
It follows that
\begin{align*}
& \quad \int_{-1}^1 \esssup_{y\in[-1,1]} \abs{\frac{1}{2^n} \sum_{i=1}^n \binom{n}{i} \frac{k^i(x,y)}{\rho(y)}-1+\frac{1}{2^n}} \rho(x)\dint x\\
& =	\int_{-1}^1 \esssup_{y\in[-1,1]} \abs{\frac{1}{2^n} -\frac{k^2(x,y)}{2^{n-1}} } \frac{1}{2}\, \dint x
 = \frac{1}{2^n}.
\end{align*}
By Lemma~\ref{s_mod_trans} we get 
with $s=1/2$ that the kernel $\widetilde{K}$ is $L_1$-exponentially convergent with $(1/2,3)$, i.e. 
\[
  \norm{\widetilde{P}^n-S}{L_1\to L_1} \leq \frac{3}{2^n}, \quad n\in\N.
\]
The parameter $\a=1/2$ of the $L_1$-exponential convergence is optimal, since $\beta_{\widetilde{K}}=1/2$ 
and in general for reversible, $L_1$-exponentially convergent transition kernel with $(\a,M)$ one has $\beta\leq \a$.

Let $\d\in(0,1)$. Assume that the initial distribution is given by
\[
\nu(A) = \frac{1}{\d} \int_A \mathbf{1}_{[0,\d]}(x)\, \dint x, \quad A\in \Borel([-1,1]),
\]
i.e. the initial state is chosen with respect to the uniform distribution in $[0,\d]$.
Then
\[
\norm{\frac{d\nu}{d\pi_\rho}-1}{\infty}
= \esssup_{x\in[-1,1]} \abs{\frac{2\cdot\mathbf{1}_{[0,\d]}(x)}{\d}-1}
=	\frac{2}{\d}-1.
\]

Theorem~\ref{main_coro_gen}\,\eqref{erstens_burn_in} suggests the choice
\begin{equation*} \label{burn_in_ex}
n_0= \left \lceil \frac{\log( 3(\frac{2}{\d}-1) )}{\log(2)} \right \rceil
\end{equation*}
such that for $S_{n,n_0}$, which uses a Markov chain with transition kernel $\widetilde{K}$ and initial distribution $\nu$,
one has
\begin{equation}	\label{upp_bound_b_space}
\sup_{\norm{f}{2}\leq1} e_\nu(S_{n,n_0},f) \leq \sqrt{\frac{4}{n}+\frac{8}{n^2}}.
\end{equation}
By Remark~\ref{lower_bound_gen}, by the $L_1$-exponential convergence of $\widetilde{K}$ with $(1/2,3)$
and $\Lambda_{\widetilde{K}}=\beta_{\widetilde{K}}$ one obtains the lower bound 
\begin{equation} \label{low_bound_b_space}
	\sqrt{\frac{3}{n}-\frac{16}{n^2}} \leq \sup_{\norm{f}{2}\leq1} e_\nu(S_{n,n_0},f).
\end{equation}

By 
Corollary~\ref{asymp_err_coro_gen}
one has for all $u \in \mbox{Eig}(\widetilde{P},\frac{1}{2})= \mbox{Eig}(P,0)$ 
with $\norm{u}{2}=1$ that
\[
	e_\pi(S_n,u)^2
	= \sup_{\norm{f}{2}\leq 1} e_{\pi}(S_{n},f)^2
	= \lim_{n_0 \to \infty}	\sup_{\norm{f}{2}\leq 1} e_{\nu}(S_{n,n_0},f)^2.
\]
This motivates the comparism of the lower error bound, the upper error bound and 
the exact error for a specific $u\in\mbox{Eig}(\widetilde{P},\frac{1}{2})$.
Namely, let
\[
	u(x)=	\begin{cases}
		-1, & x\in[-1,-\frac{1}{2}] \cup [0,\frac{1}{2}),\\
		\;\,\,\, 1, & x\in(-\frac{1}{2},0] \cup (\frac{1}{2},1].
		\end{cases}
\]
By $u^2=1$ we get  
\[
L_j(u^2) =0\quad \text{and}\quad L_j(u P^k u) = \frac{1}{2^k} L_j(u^2) = 0, \quad \text{for}\; j,k\in \N.
\] 
Hence by Proposition~\ref{connect_lem} one has
\begin{equation} \label{exact_err_b_space}
	e_{\nu}(S_{n,n_0},u)
	= e_\pi(S_n,u)
	= \sqrt{\frac{3}{n} - \frac{4(1-2^{-n})}{n^2}}.
\end{equation}

\begin{figure}[htb] 
  \begin{center}
    \includegraphics[height=9cm]{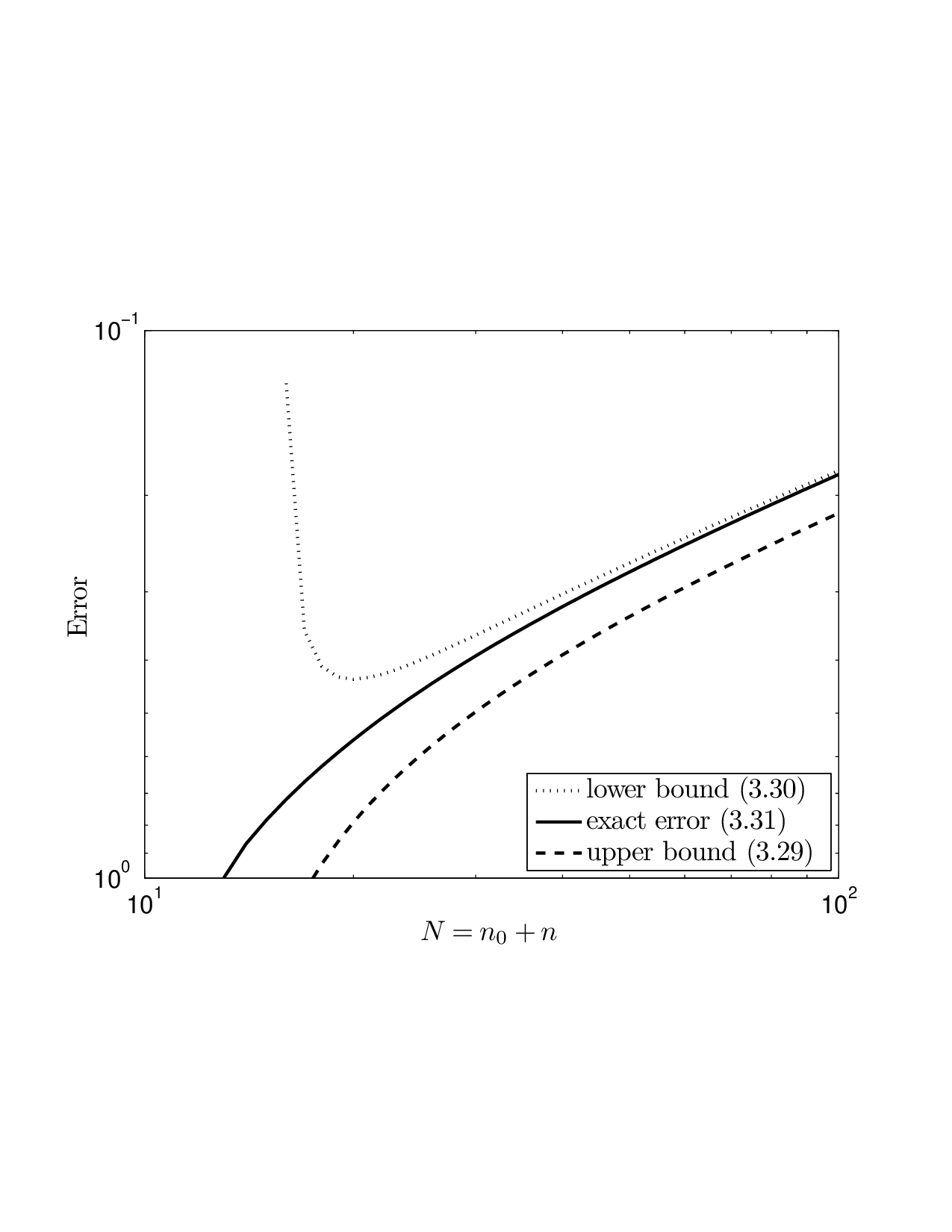}
    \caption{Example 2: Exact error and error bounds, $\d=10^{-3}$ and 
    $n_0= \left \lceil \frac{\log( 3(\frac{2}{\d}-1) )}{\log(2)} \right \rceil $ = 13.}
    \label{n_opt_low_up_d10-3}
    \end{center}  
\end{figure}

In Figure~\ref{n_opt_low_up_d10-3} for $\d=10^{-3}$ the exact error \eqref{exact_err_b_space}, 
the upper error bound	\eqref{upp_bound_b_space} and the lower bound \eqref{low_bound_b_space} are plotted.
The lower bound leads to a non-trivial
estimate if $N\geq n_0+ 6 = 19$. The curve of the upper error estimate is shifted down, 
because the coefficient of the leading term is worse
than the coefficient of the leading term of the exact error $e_\nu(S_{n,n_0},u)$. \\

Lemma~\ref{s_mod_trans} 
provides a tool which can be used to show $L_1$-exponential convergence for several examples.
Unfortunately it is rather difficult to apply for more sophisticated applications.
Next let us present the Metropolis-Hastings algorithm.

\subsection*{Metropolis-Hastings algorithm} \label{Metro_alg}

The Metropolis-Hastings algorithm, suggested in \cite{metro} and extended in \cite{hastings}, is widely used 
in applications. 
The following introduction is based on Mengersen and Tweedie \cite{mengersen}. 
Suppose that the state space $D$ is contained in $\R^d$ and equipped with $\Borel(D)$. 
Let $\pi_\rho$ be a probability measure on $(D,\Borel(D))$ 
given by a possibly unnormalized density $\rho$ with respect to the Lebesgue measure, one has
\[
\pi_\rho(A) = \frac{\int_A \rho(x)\,\dint  x}{\int_D \rho(x) \,\dint x},\quad A\in \Borel(D).
\]  
Let $q\colon D \times D \to [0,\infty]$ be a function which satisfies that 
$q(x,\cdot)$ is integrable with respect to the Lebesgue measure for all $x\in D$ 
and assume that
\[
Q(x,A)=\int_A q(x,y)\, \dint y+\mathbf{1}_A(x)\left( 1-\int_D q(x,y)\, \dint y \right), \quad x\in D,\; A\in \Borel(D),
\] 
is a transition kernel.
It might happen that for some $x\in D$ one has $Q(x,\set{x})>0$. 
If $Q(x,\set{x})=0$ for all $x\in D$ then $q$ is a transition density of $Q$. 
The question is how to modify $Q$ to get a transition kernel with stationary distribution $\pi_\rho$.
For $x,y\in D$ let 
\[
\theta(x,y) = \begin{cases}
		\min\set{\frac{\rho(y)q(y,x)}{\rho(x)q(x,y)} ,1 }, 	& \rho(x)q(x,y)>0,\\
		1,	&	\rho(x)q(x,y)=0,
	      \end{cases}
\]
be the \emph{acceptance probability}.
Then the \emph{Metropolis-Hastings transition kernel $K_\rho$} is defined by
\begin{align*}
K_\rho(x,A) 
& = \int_A \theta(x,y)\, Q(x,\dint y) + \mathbf{1}_A(x)
	\left( \int_D (1-\theta(x,y))\, Q(x,\dint y) \right) \\
& =\int_A \theta(x,y) q(x,y)\, \dint y + \mathbf{1}_A(x)
	\left( \int_D (1-\theta(x,y)) q(x,y)\, \dint y +Q(x,\set{x}) \right),
\end{align*}
where $x\in D$ and $A\in \Borel(D)$. 
In this setting $Q$ is called the \emph{proposal transition kernel} of $K_\rho$.
If $q(x,y)=q(y,x)$ for all $x,y\in D$, then we call $K_\rho$ the \emph{Metropolis transition kernel}.
By the construction one can see that
the transition kernel $K_\rho$ is reversible with respect to $\pi_\rho$, thus one has the desired stationary distribution.

\begin{lemma}
 The Metropolis-Hastings transition kernel $K_\rho$ is reversible with respect to $\pi_\rho$.
\end{lemma}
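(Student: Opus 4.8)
The plan is to verify the reversibility condition directly from the definition, namely to show that
\[
\int_B K_\rho(x,A)\,\pi_\rho(\dint x) = \int_A K_\rho(x,B)\,\pi_\rho(\dint x), \quad A,B\in\Borel(D),
\]
which by the symmetry of the statement in $A$ and $B$ it suffices to establish. First I would recall that $\pi_\rho(\dint x) = \rho(x)\,\dint x / \int_D \rho$, so up to the common normalizing constant the claim reduces to an identity for the measure $\rho(x)\,\dint x$. Then I would split $K_\rho(x,A)$ into its two pieces: the ``jump'' part $\int_A \theta(x,y)q(x,y)\,\dint y$ and the ``stay'' part involving $\mathbf 1_A(x)$. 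The stay part contributes, after multiplying by $\rho(x)$ and integrating over $B$, a term of the form $\int_{A\cap B}\bigl(\int_D(1-\theta(x,y))q(x,y)\,\dint y + Q(x,\{x\})\bigr)\rho(x)\,\dint x$, which is manifestly symmetric under interchanging $A$ and $B$ since $A\cap B = B\cap A$; so the stay part causes no trouble.

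The heart of the matter is the jump part: I need to show
\[
\int_B \int_A \rho(x)\,\theta(x,y)\,q(x,y)\,\dint y\,\dint x = \int_A \int_B \rho(x)\,\theta(x,y)\,q(x,y)\,\dint y\,\dint x.
\]
By Fubini this is equivalent to the pointwise (a.e.) symmetry $\rho(x)\theta(x,y)q(x,y) = \rho(y)\theta(y,x)q(y,x)$ for $x,y\in D$. This is the classical detailed-balance computation for the acceptance ratio: if $\rho(x)q(x,y)>0$ and $\rho(y)q(y,x)>0$, then $\theta(x,y)=\min\{\rho(y)q(y,x)/(\rho(x)q(x,y)),1\}$ and $\theta(y,x)=\min\{\rho(x)q(x,y)/(\rho(y)q(y,x)),1\}$, and one checks that $a\cdot\min\{b/a,1\} = \min\{b,a\} = b\cdot\min\{a/b,1\}$ with $a=\rho(x)q(x,y)$, $b=\rho(y)q(y,x)$, giving the identity. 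The degenerate cases — where $\rho(x)q(x,y)=0$ or $\rho(y)q(y,x)=0$ — must be handled separately: there both sides of the detailed-balance identity vanish (the vanishing factor $q$ or $\rho$ kills the product on that side, and $\theta$ being bounded by $1$ keeps the other side zero as well, noting that $\rho(x)q(x,y)=0$ forces the left side to $0$ regardless of $\theta(x,y)=1$). I would write this out case by case.

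The main obstacle is really only bookkeeping: one must be careful that the definition of $\theta$ has the asymmetric convention $\theta(x,y)=1$ when $\rho(x)q(x,y)=0$, and confirm that this convention does not break the symmetry of the product $\rho(x)\theta(x,y)q(x,y)$ — which it does not, precisely because the prefactor $\rho(x)q(x,y)$ is then zero. There is no analytic difficulty, no need for Dynkin-type extension arguments beyond Fubini's theorem, and no spectral theory. So the proof is short: reduce to detailed balance via Fubini, verify the pointwise identity with the elementary fact $a\min\{b/a,1\}=\min\{a,b\}=b\min\{a/b,1\}$, dispatch the degenerate cases, and observe that the holding part is symmetric on its face. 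I would present it in that order, emphasizing that this is the standard argument (and citing the construction already given in the excerpt).
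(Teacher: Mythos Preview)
Your proof is correct and follows essentially the same approach as the paper: both reduce to the pointwise detailed-balance identity $\rho(x)\theta(x,y)q(x,y)=\rho(y)\theta(y,x)q(y,x)$ and invoke Fubini's theorem. The only cosmetic difference is that the paper first reduces to disjoint $A,B$ (so the stay part vanishes on both sides), whereas you handle the stay part directly by observing it depends only on $A\cap B$; either way is fine.
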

\begin{proof}
 It is enough to show that
  \[
    \int_A K_\rho(x,B) \,\pi_\rho(\dint x) = \int_B K_\rho(x,A)\, \pi_{\rho}(\dint x)
  \]
 for disjoint $A,B\in\Borel(D)$. Then the assertion follows by the symmetry 
 $\theta(x,y)q(x,y)\rho(x)=\theta(y,x)q(y,x)\rho(y)$ and Fubini's Theorem.
\end{proof}
The Metropolis-Hastings algorithm, which simulates a
transition of the Metropolis-\break Hastings transition kernel, works as follows: 
Let $x\in D$ be the current state.
Choose a proposal state $y$ with respect to $Q(x,\cdot)$. 
Toss a coin, whose probability that ``head'' occurs is $\theta(x,y)$.  
If it is ``head'' then accept the proposal state, i.e. return $y$.
Otherwise reject the proposal, i.e. return $x$. 
Schematically, a single step of the Metropolis-Hastings algorithm is presented in the Procedure 
\ref{Metro_step}$(x,Q,\rho)$.\\ 
\IncMargin{1em}
\begin{procedure}[htb]

\SetKwFunction{rand}{rand}
\SetKwInOut{Input}{input}
\SetKwInOut{Output}{output}
\BlankLine
\Input{ 
	current state $x$, proposal kernel $Q$, unnormalized density $\rho$.
      }
\Output{ next state $y$. }
\BlankLine
\BlankLine
Choose $y$ with respect to $Q(x,\cdot)$\;
\BlankLine
Compute 
\[
\theta(x,y)= 	\begin{cases}
	\min\set{\frac{\rho(y)q(y,x)}{\rho(x)q(x,y)} ,1 }, & \rho(x)q(x,y)>0,\\
		1,	&	\rho(x)q(x,y)=0;
					\end{cases}
\]
\If{$\rand() \geq \theta(x,y)$}
{
$y:=x$\;
}
Return $y$.
\caption{Metropolis-Step($x$,$Q$,$\rho$)}
\label{Metro_step}
\end{procedure}
\DecMargin{1em}

If $\tilde{q}(y)=q(x,y)$ for all $x,y \in D$ then the proposal transition kernel samples independently of $x$.
In this situation one can apply the following result. 
\begin{theorem} \label{met_uni}
  Let  $\tilde{q}\colon D \to [0,\infty]$ be a function with $\int_D \tilde{q}(x)\, \dint x = 1$.
  Let the proposal transition kernel 
  of the Metropolis-Hastings transition kernel $K_\rho$ be 
    $
      Q(x,A)=\int_A \tilde{q}(y) \dint y
    $
  for $x\in D$ and $A \in \Borel(D)$.
  If there exists a $\gamma>0$ such that   
  \[
      \frac{\tilde{q}(y) }{\rho(y)}\geq \gamma,\quad y\in D,
  \]
  then $K_\rho$ is uniformly ergodic.
  We obtain
  \[
      \norm{K_\rho^n(x,\cdot)-\pi}{\text{\rm tv}} \leq (1-\gamma)^n, \qquad x\in D,\;n\in\N.
  \]
\end{theorem}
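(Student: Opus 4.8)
The plan is to establish a \emph{one-step minorization}: find a probability measure $\mu$ on $(D,\Borel(D))$ and a constant $\varepsilon>0$ with
\[
  K_\rho(x,A) \geq \varepsilon\, \mu(A), \qquad x\in D,\ A\in\Borel(D),
\]
and then invoke the standard Doeblin/coupling argument (or, since we want the explicit rate $(1-\gamma)^n$, a direct induction) to conclude $\norm{K_\rho^n(x,\cdot)-\pi}{\text{\rm tv}}\leq(1-\varepsilon)^n$. The natural candidate here is $\mu=\pi$ and $\varepsilon=\gamma$. First I would write out $K_\rho(x,A)$ for the independence proposal $Q(x,A)=\int_A\tilde q(y)\,\dint y$: since $q(x,y)=\tilde q(y)$, the acceptance probability becomes $\theta(x,y)=\min\set{\frac{\rho(y)\tilde q(x)}{\rho(x)\tilde q(y)},1}$, and
\[
  K_\rho(x,A)=\int_A \theta(x,y)\,\tilde q(y)\,\dint y + \mathbf 1_A(x)\Bigl(1-\int_D\theta(x,y)\,\tilde q(y)\,\dint y\Bigr).
\]
Dropping the non-negative holding term, $K_\rho(x,A)\geq\int_A\theta(x,y)\,\tilde q(y)\,\dint y$.

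\textbf{The minorization.} The key pointwise estimate is $\theta(x,y)\,\tilde q(y)\geq\gamma\,\rho(y)$ for all $x,y\in D$. Indeed, if $\frac{\rho(y)\tilde q(x)}{\rho(x)\tilde q(y)}\geq 1$ then $\theta(x,y)=1$ and $\tilde q(y)\geq\gamma\rho(y)$ by hypothesis; otherwise $\theta(x,y)=\frac{\rho(y)\tilde q(x)}{\rho(x)\tilde q(y)}$, so $\theta(x,y)\tilde q(y)=\rho(y)\cdot\frac{\tilde q(x)}{\rho(x)}\geq\gamma\,\rho(y)$, again using the hypothesis with $x$ in place of $y$. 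Therefore
\[
  K_\rho(x,A)\geq\gamma\int_A\rho(y)\,\dint y = \gamma\, c_\rho\, \pi(A),
\]
where $c_\rho=\int_D\rho(y)\,\dint y$; and since $\gamma\rho(y)\leq\tilde q(y)$ integrates to at most $1$ over $D$, we have $\gamma c_\rho\leq 1$. Writing $\varepsilon=\gamma c_\rho\in(0,1]$ this is a genuine minorization $K_\rho(x,\cdot)\geq\varepsilon\,\pi$. (To match the clean rate $(1-\gamma)^n$ in the statement one uses instead the cruder bound $K_\rho(x,A)\geq\gamma\,\pi_\rho(A)\cdot\bigl(\text{something}\bigr)$; I would simply normalize by $c_\rho$ and note $\varepsilon\geq\gamma$ is what is claimed — more precisely the statement's $(1-\gamma)^n$ follows because $1-\varepsilon\leq 1-\gamma$ when $c_\rho\geq 1$, and when $c_\rho<1$ one rescales $\rho$; since $\pi$ and $\pi_\rho$ are unchanged under scaling $\rho$, WLOG $c_\rho=1$ and $\varepsilon=\gamma$.)

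\textbf{From minorization to the bound.} With $K_\rho(x,\cdot)\geq\gamma\,\pi$, decompose $K_\rho(x,\cdot)=\gamma\,\pi + (1-\gamma)R(x,\cdot)$ where $R(x,\cdot)=\tfrac{1}{1-\gamma}\bigl(K_\rho(x,\cdot)-\gamma\pi\bigr)$ is again a transition kernel (non-negative by the minorization, total mass $1$). Since $\pi$ is stationary for $K_\rho$ (Metropolis--Hastings reversibility, proved above), one checks $\pi$ is stationary for $R$ as well. Then for any $x$, $K_\rho^n(x,\cdot)-\pi = (1-\gamma)^n\bigl(R^n(x,\cdot)-\pi\bigr)$ by a short induction (the $\gamma\pi$ part is absorbed immediately at each step because $\pi$ is $K_\rho$-invariant), and since $\norm{R^n(x,\cdot)-\pi}{\text{\rm tv}}\leq 1$, we get $\norm{K_\rho^n(x,\cdot)-\pi}{\text{\rm tv}}\leq(1-\gamma)^n$ for all $x\in D$; uniform ergodicity with $(1-\gamma,1)$ follows by definition. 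The main obstacle is purely bookkeeping: being careful that the two-sided hypothesis $\tilde q/\rho\geq\gamma$ is what makes the $\theta$-term minorizable uniformly in $x$ (one must apply it at the argument $y$ when $\theta=1$, and at the argument $x$ when $\theta<1$), and handling the normalization $c_\rho$ so that the stated constant is exactly $(1-\gamma)^n$ rather than $(1-\gamma c_\rho)^n$.
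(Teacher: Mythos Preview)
Your proof is correct and follows precisely the approach the paper has in mind: the paper does not give its own proof but cites \cite[Theorem~2.1]{mengersen} and remarks immediately afterward that the argument rests on showing the whole state space is a small set, i.e., on a one-step minorization $K_\rho(x,\cdot)\geq\gamma\,\psi$ for some probability measure $\psi$. Your pointwise estimate $\theta(x,y)\,\tilde q(y)\geq\gamma\,\rho(y)$ (applying the hypothesis at $y$ when $\theta=1$ and at $x$ when $\theta<1$) is exactly the computation behind that minorization, and your Doeblin decomposition $K_\rho=\gamma\pi+(1-\gamma)R$ with the identity $K_\rho^n-S_\pi=(1-\gamma)^n(R^n-S_\pi)$ is the standard way to extract the rate; the normalization wrinkle with $c_\rho$ is harmless here since in the paper's usage $\rho$ is a genuine density (cf.\ the $N(0,1)$ example that follows), so $c_\rho=1$ and the stated rate $(1-\gamma)^n$ is exact.
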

\begin{proof}
 See \cite[Theorem~2.1, p.~105]{mengersen}.
\end{proof}
\begin{remark}
 	The proof is based on the well known equivalence that
 	a transition kernel $K$ is uniformly ergodic iff 
	the whole state space $D$ is a small set.
 	A set $R\in \Borel(D)$ is called small if there exists a $\gamma>0$, an $m\in\N$
	and a probability measure $\psi$ such that 
	\begin{equation*}
	K^m(x,A) \geq \gamma \psi(A), \quad x\in R,\;A\in \Borel(D).
\end{equation*}
\end{remark} 
  The result of Theorem~\ref{met_uni} 
  will be demonstrated for a toy example, stated in \cite[p.~107]{mengersen}.\\

  Let $D=\R$ and $\D=\Borel(\R)$. 
  Note that the state space is unbounded. 
  The desired distribution is given by
  the density 
  \[
      \rho(y)=\frac{1}{\sqrt{2\pi}}\exp(-\frac{y^2}{2}),\quad y\in \R,
  \]
  i.e. $\pi_\rho$ is an $N(0,1)$ distribution.
  By $N(\mu,\xi^2)$ we denote the normal distribution 
  with mean $\mu$ and variance $\xi^2$.
  Furthermore, assume that the proposal transition kernel
  samples independently from $N(0,\xi^2)$ so that
  \[
    \tilde{q}(y)=\frac{1}{\sqrt{2\pi}\xi} \exp(-\frac{y^2}{2\xi^2}),\quad y\in \R.
  \]
  Let $\xi^2>1$. Then
  \[
     \frac{\tilde{q}(x)}{\rho(x)}\geq \xi^{-1},
  \]
  which implies that
  \[
      \norm{K_\rho^n(x,\cdot)-\pi_\rho}{\text{\rm tv}} \leq (1-\xi^{-1})^n, \qquad x\in D,\;n\in\N.
  \]
  By the reversibility with respect to $\pi_\rho$ 
  of the Metropolis-Hastings transition kernel an immediate consequence is that 
  uniform ergodicity implies $L_1$-exponential convergence,
  since $\pi$-a.e. uniform ergodicity is equivalent to  $L_1$-exponential convergence.
  Hence we have a transition kernel which is $L_1$-exponentially convergent with $(1-\xi^{-1},1)$.
  This implies that the Markov operator $P$ which corresponds
  to the transition kernel $K_\rho$ has an $L_2$-spectral gap, we have
  $1-\beta \geq \xi^{-1}$.
  
  Let $\d\in(0,1)$ and $x_0\in[0,\infty)$.
  The initial state is chosen uniformly distributed in $[x_0-\d,x_0+\d]$.
  Then 
  \[
	\frac{d\nu}{d\pi}(x)=\sqrt{\frac{\pi}{2}} \cdot	\frac{\mathbf{1}_{[x_0-\d,x_0+\d]}(x)}{\d} \,\exp\left(\frac{x^2}{2}\right),\quad x\in D.
  \]
  We obtain
	\[
		\norm{\frac{d\nu}{d\pi}-1}{\infty} = \sqrt{\frac{\pi}{2}} \cdot 	
		\frac{\exp\left(\frac{(x_0+\d)^2}{2}\right)}{\d}-1 \leq
		\sqrt{\frac{\pi}{2}} \cdot 	
		\frac{\exp\left(\frac{(x_0+\d)^2}{2}\right)}{\d}.
	\]
    The method $S_{n,n_0}$ uses a Markov chain with transition kernel $K_\rho$ and initial distribution $\nu$.
	The burn-in is almost chosen as suggested in Theorem~\ref{main_coro_gen}\,\eqref{erstens_burn_in}.
	We use $\log(1-\xi^{-1}) \geq \xi^{-1}$ to estimate the burn-in, such that we set
        \[
	 n_0=\left \lceil \xi \left(\log(\d^{-1})+\frac{(x_0+\d)^2}{2} + 0.23\right )\right \rceil.
	\]
	Then
	\[
		\sup_{\norm{f}{2} \leq 1} e_\nu (S_{n,n_0},f)^2 \leq \frac{2\xi}{n}+\frac{2\xi^2}{n^2}.
	\]	
\subsection*{Contracting Normals}
The next example is described in \cite{baxendale}, 
see also \cite{ro_ro_normals,ro_tw_normals}. Let $D=\R$, $\D=\Borel(\R)$ and $\theta \in (-1,1)$. 
Note that the state space is unbounded. 
The transition kernel is given by
\[
K(x,A)= \frac{1}{\sqrt{2\pi(1-\theta^2)}}\int_A \exp\left(-\frac{(\theta x-y)^2}{2(1-\theta^2)}\right) \dint y,
\quad x\in \R,\,A\in \Borel(\R),
\]
so that $K(x,\cdot)$ is an $N(\theta x,1-\theta^2)$ distribution.
%
By some elementary calculation one can see that a stationary distribution is
\[
\pi(A) =\frac{1}{\sqrt{2\pi}} \int_A \exp\left( -\frac{y^2}{2} \right) \dint y,\quad A\in \Borel(\R),
\]
i.e. $\pi$ is an $N(0,1)$ distribution. The transition kernel $K$ is reversible with respect to $\pi$. 
Suppose that $\theta \in (0,1)$. Then the Markov operator is \emph{positive semi-definite}, i.e.
$\scalar{Pf}{f} \geq 0$, for all $f\in L_2$.
The next result is an application of \cite[Theorem~1.3, p.~702]{baxendale} 
where the Markov operator is self-adjoint and positive semi-definite. 
The same example is considered in \cite[p.~728]{baxendale} and \cite[p.~33]{niemiro_lat}.
\begin{lemma}  \label{tilde}
Let $\theta \in (0,1)$, $c\in(1,\infty)$ and set
\begin{align*}
\lambda & = \theta^2 + \frac{2(1-\theta^2)}{1+c^2},\\
K & =  2+\theta^2(c^2-1),\\
B & = 
	2 \left[\Phi\left(\frac{(1+\theta)c}{\sqrt{1-\theta^2}}\right) 
				- \Phi\left(\frac{\theta c}{\sqrt{1-\theta^2}}\right) \right]
	, \quad \text{where} \quad 
	\Phi(z)=\frac{1}{\sqrt{2\pi}}\int_{-\infty}^z \exp(-\frac{y^2}{2}) \dint y,\\
\a & = 1+ \frac{\log\left( \frac{K-B}{1-B}\right)}{\log(\lambda^{-1})},\\
\hat{\beta} & = \max\set{ \lambda, (1-B)^{1/\a} }<1.
\end{align*}
Then
\[
\beta = \norm{P}{L_2^0 \to L_2^0} \leq \hat{\beta}.
\]
\end{lemma}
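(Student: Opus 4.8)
The plan is to exhibit the contracting normals chain as an instance of the standard geometric‑drift/minorization scheme and then quote \cite[Theorem~1.3, p.~702]{baxendale}, whose $L_2$‑conclusion applies here precisely because $P$ is self‑adjoint (reversibility w.r.t.\ $\pi=N(0,1)$, already recorded above) and positive semi‑definite for $\theta\in(0,1)$. The remaining regularity hypotheses of \cite{baxendale} are immediate, since $k(x,y)=\frac{1}{\sqrt{2\pi(1-\theta^2)}}\exp\!\big(-\frac{(\theta x-y)^2}{2(1-\theta^2)}\big)>0$ for all $x,y$, so the chain is irreducible and aperiodic. Fix $c\in(1,\infty)$ and choose the drift function $V(x)=1+x^2$ and the candidate small set $C=[-c,c]=\set{x\in\R\mid V(x)\le 1+c^2}$.

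First I would compute the one‑step drift. Because $K(x,\cdot)$ is the $N(\theta x,1-\theta^2)$ law, $PV(x)=\int_\R(1+y^2)\,K(x,\dint y)=1+\theta^2x^2+(1-\theta^2)=2-\theta^2+\theta^2x^2$. A short calculation shows that $PV(x)\le\lambda V(x)$ is equivalent to $x^2\ge c^2$ when $\lambda=\frac{2-\theta^2+\theta^2c^2}{1+c^2}=\theta^2+\frac{2(1-\theta^2)}{1+c^2}$; hence the geometric drift $PV\le\lambda V+K\mathbf{1}_C$ holds with $K=\sup_{x\in C}PV(x)=2+\theta^2(c^2-1)$, the supremum being attained at $x=\pm c$ since $PV$ is increasing in $\abs{x}$. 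Note that $c>1$ gives $1+c^2>2$, hence $\lambda<1$, and that $K>2$.

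Next I would establish the minorization on $C$. Since $k(x,y)$ is decreasing in $(\theta x-y)^2$, for $x\in[-c,c]$ one has $k(x,y)\ge \inf_{\abs{x'}\le c}k(x',y)=\frac{1}{\sqrt{2\pi(1-\theta^2)}}\exp\!\big(-\frac{(\theta c+\abs{y})^2}{2(1-\theta^2)}\big)=:g(y)$, so $K(x,A)\ge\int_{A\cap[-c,c]}g(y)\,\dint y$ for every $x\in C$. The substitution $u=(\theta c+\abs{y})/\sqrt{1-\theta^2}$ then gives $\int_{-c}^{c}g(y)\,\dint y=2\big[\Phi\big(\tfrac{(1+\theta)c}{\sqrt{1-\theta^2}}\big)-\Phi\big(\tfrac{\theta c}{\sqrt{1-\theta^2}}\big)\big]=B$, so $C$ is small with minorization constant $B\in(0,1)$ and minorizing probability measure $B^{-1}g(y)\mathbf{1}_{[-c,c]}(y)\,\dint y$.

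Finally, with the drift constants $(\lambda,K)$, small‑set level $d=1+c^2$ and minorization constant $B$ in hand, I would check the side conditions of \cite[Theorem~1.3]{baxendale} (which here reduce to $\lambda<1$, $K>1$, $B<1$, all just verified) and read off its conclusion for the self‑adjoint positive operator $P$:
\[
\beta=\norm{P}{L_2^0\to L_2^0}\le\max\set{\lambda,\ (1-B)^{1/\a}},\qquad \a=1+\frac{\log\!\big(\frac{K-B}{1-B}\big)}{\log(\lambda^{-1})}
\]
which is exactly $\hat\beta$; since $0<1-B<1$ and $\a>1$, the right‑hand side is $<1$. The main obstacle is bookkeeping: matching the precise normalization of the drift and minorization hypotheses in \cite{baxendale}, making sure its $L_2$‑bound (as opposed to a total‑variation bound) is the one being invoked — it is this refinement that needs the positivity $\scalar{Pf}{f}\ge0$, available only for $\theta\in(0,1)$ — and confirming that the choice $V(x)=1+x^2$, $C=[-c,c]$ violates none of the auxiliary inequalities linking $d$, $\lambda$ and $K$ in that theorem.
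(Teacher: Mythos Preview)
Your proposal is correct and takes essentially the same approach as the paper: the paper's proof consists entirely of the reference ``See \cite[Theorem~1.3, p.~702 and p.~728]{baxendale}'', where p.~728 is exactly the contracting normals example with the drift function $V(x)=1+x^2$ and small set $[-c,c]$. You have simply written out explicitly the verification of Baxendale's hypotheses (drift, minorization, self-adjointness, positivity) that the paper delegates to that reference.
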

\begin{proof}
See \cite[Theorem~1.3, p.~702 and p.~728]{baxendale}.
\end{proof}
Let us illustrate the last lemma.
For any fixed $\theta$ one can numerically minimize the upper estimate $\hat{\beta}$ of $\beta$, depending on $c$.
For example let $\theta = 0.5$. Then, one gets $\hat{\beta} = 0.8946$ for $c = 1.6041$.\\

It exists an $L_2$-spectral gap, thus we can apply Theorem~\ref{main_coro_gen} for $p\in (2,\infty]$.
Let $\d\in(0,1)$ and $x_0\in[0,\infty)$.
The initial state is chosen uniformly distributed on $[x_0-\d,x_0+\d]$.
The density of the initial distribution with respect to $\pi$ is given by
\[
\frac{d\nu}{d\pi}(x)=\sqrt{\frac{\pi}{2}} \cdot	\frac{\mathbf{1}_{[x_0-\d,x_0+\d]}(x)}{\d} \,\exp\left(\frac{x^2}{2}\right).
\]
Then for all $q\in[1,\infty]$ it follows that
\[
\norm{\frac{d\nu}{d\pi}-1}{q} \leq 		
\norm{\frac{d\nu}{d\pi}-1}{\infty} = \sqrt{\frac{\pi}{2}} \cdot 	
		\frac{\exp\left(\frac{(x_0+\d)^2}{2}\right)}{\d}-1 \leq
		\sqrt{\frac{\pi}{2}} \cdot 	
		\frac{\exp\left(\frac{(x_0+\d)^2}{2}\right)}{\d}.
\]
The burn-in is chosen as suggested in Theorem~\ref{main_coro_gen}, 
where we use the previously stated estimate of $\norm{\frac{d\nu}{d\pi}-1}{q}$.
Suppose that the burn-in $n_0(p)$ is the smallest natural number (including zero)
which is greater than or equal to
\[
    \frac{1}{\log(\hat{\beta}^{-1})} 
    \begin{cases}
	\frac{p}{2(p-2)} \left[\log(\frac{16p}{p-2})+\log(\sqrt{2\pi}\,\d^{-1})+\frac{(x_0+\d)^2}{2}\right], & p\in(2,4),\\
	\log(\d^{-1})+\frac{(x_0+\d)^2}{2}+4.39, & p\in[4,\infty]. 
    \end{cases}
  \]
Then
\[
\sup_{\norm{f}{p}\leq1}e_\nu(S_{n,n_0},f)^2
		\leq \frac{2}{n(1-\hat{\beta})} 
				+				 
				 \frac{2}{n^2(1-\hat{\beta})^2}.
\]
In Table~\ref{tab_contracing_normals} one can see how much resources $N$ 
are sufficient to obtain an error less than $\e=0.01$.

\begin{table}[htb]
\begin{center}
\begin{tabular}{|c|c|c|c|c|c|}

\hline 

& & & & &\\[-2ex]

$\theta$ & $c$	&	$\hat{\beta}$	&  $n_0$ & $n$	&  $N$ \\
	 &      & 	& \scriptsize{(for $p=2.1$)} & \scriptsize{(for precision $\e=0.01$)}& \\[1ex]  		
\hline & & & & & \\[-1.5ex] 
  
$0.91$ & $1.12845$  &   $0.999664$	& $2.82241\cdot10^5$  & $5.94614\cdot10^7$  &	$5.97437\cdot10^7$	\\
$0.92$ & $1.11691$  &   $0.999816$	& $5.16275\cdot10^5$  &	$1.08759\cdot10^8$  &	$1.09275\cdot10^8$	\\
$0.93$ & $1.10499$  &   $0.999912$	& $1.08257\cdot10^6$  &	$2.28043\cdot10^8$  &	$2.29126\cdot10^8$	\\
$0.94$ & $1.09260$  &   $0.999966$	& $2.76738\cdot10^6$  &	$5.82923\cdot10^8$  &	$5.85690\cdot10^8$	\\
$0.95$ & $1.07964$  &   $0.999990$	& $9.60536\cdot10^6$  &	$2.02337\cdot10^9$  &	$2.03297\cdot10^9$\\
$0.96$ & $1.06599$  &   $0.999998$	& $5.58578\cdot10^7$  &	$1.17624\cdot10^{10}$  &$1.18183\cdot10^{10}$
\\[1ex]
\hline
\end{tabular} 
\end{center}
\caption{Contracting Normals: The initial distribution $\nu$ is chosen with $x_0=0$ and $\d=0.1$.
	  The burn-in of Theorem~\ref{main_coro_gen} is computed for $p=2.1$ 
	  and $n$ is computed such that one obtains an error less than $\e=0.01$. 
	  The estimate $\hat{\beta}$ of $\beta$ is computed by a 
	  minimizing procedure of Maple for $c\geq1.01$.
	}
\label{tab_contracing_normals}
\end{table}
%
\section{Notes and remarks}
In the last decades explicit error bounds and confidence estimates of Markov chain Monte Carlo
methods on general state spaces attracted more and more attention. 
In the following let us present how the results 
fit into the published literature.\\[-1ex]


In the seminal work of Lov{\'a}sz and Simonovits \cite{lova_simo1} an estimate of $e_\pi(S_n,f)^2$ is shown. 
The paper deals with the computation of the volume of a convex body by a randomized algorithm based on Markov chains.
Let us explain the result of \cite[Theorem~1.9, p.~375]{lova_simo1} in detail.
Let $(X_n)_{n\in\N}$ be a Markov chain with transition kernel $K$ and initial distribution $\nu$ and
let $K$ be reversible with respect to a probability measure $\pi$. 
Then let us define the \emph{conductance} as
\[
\phi(K,\pi)=\inf_{0<\pi(A)\leq \frac{1}{2}}  \frac{\int_A K(x,A^c) \,\pi(\dint x)}{\pi(A)}.
\]
Assume that the Markov operator is positive semi-definite, i.e.
$\scalar{Pf}{f}\geq 0$ for all $f\in L_2$. 
Then 
\begin{equation}  \label{lovasz_err}
e_\pi(S_n,f)^2 \leq \frac{4}{\phi(K,\pi)^2 \cdot n } \norm{f}{2}^2.
\end{equation}
The result 
is slightly worse than the result of Proposition~\ref{expl_stat_c}. 
In Proposition~\ref{expl_stat_c} one has an exact error formula for $e_\pi(S_n,f)^2$. 
Mainly the spectral structure of the Markov operator is used.
In Corollary~\ref{worst_stat_gen} this exact error formula is further estimated and one obtains
\begin{equation} \label{coro_oben}
e_\pi(S_n,f)^2 \leq \frac{2}{(1-\Lambda)n} \norm{f}{2}^2 ,
\quad \text{where} \quad 
\Lambda = \sup \set{\a\mid \a \in \spec(P|L_2^0)}.
\end{equation}
The \emph{Cheeger inequality}\footnote{
The Cheeger inequality is stated in Section~\ref{cond_concept}.
}, given by 
$
1-\Lambda \geq \frac{\phi(K,\pi)^2}{2}, 
$
provides a relation between $\Lambda$ and $\phi(K,\pi)$,
so that \eqref{coro_oben} implies \eqref{lovasz_err}.
Note that in Proposition~\ref{expl_stat_c} and Corollary~\ref{worst_stat_gen} it is not assumed that 
the Markov operator is positive semi-definite, such that the assumptions are slightly less restrictive.
But if one has a transition kernel $K$ 
which determines a not necessarily positive semi-definite transition operator, 
then one can pass over to the lazy version of $K$ and obtains positive semi-definiteness.
However, the estimate of \eqref{lovasz_err} covers the important facts and it seems 
that the refinement of Proposition~\ref{expl_stat_c} is well known.\\


The paper of Math{\'e} \cite{mathe1} contains results concerning the asymptotic integration error 
for uniformly ergodic Markov chains which are 
reversible with respect to $\pi$. 
For example it is shown that for any initial distribution $\nu \in \mathcal{M}_\infty$ one has   
\[
	 \lim_{n\to\infty} n \cdot  \sup_{\norm{f}{2}\leq 1} e_{\nu}(S_{n,n_0},f)^2  
	 = \frac{1+\Lambda}{1-\Lambda}
\]
and for $f\in L_2$ it is proven that
\[
	\lim_{n\to\infty} n\cdot e_{\nu}(S_{n,n_0},f)^2 = \scalar{(I-P)^{-1}(I+P)g}{g},\;\,\text{where}\;g=f-S(f).
\]
The same result is part of Corollary~\ref{asymp_err_coro_gen} and for individual $f$ part of Theorem~\ref{main_thm_unif}. 
In \cite{mathe2} the asymptotic integration error is 
studied for not necessarily reversible and not necessarily uniformly ergodic Markov chains.
It is assumed that the transition kernel is $V$-uniformly ergodic, see \eqref{V_uniform}. 
For further details let us refer to \cite{mathe2}.\\


In \cite[Theorem~8, p.~19]{expl_error} an explicit upper error bound of $e_\nu(S_{n,n_0},f)^2$ 
for general state spaces is provided. 
The result is based on \cite[Theorem~1.9, p.~375]{lova_simo1} and the assumptions are the same.
Namely, the transition kernel $K$ is reversible with respect to $\pi$ 
and the transition operator $P$ is positive semi-definite.
After a burn-in
\begin{equation} \label{cond_dan}
 n_0 \geq \frac{\log(\norm{\frac{d\nu}{d\pi}}{\infty})}{\phi(K,\pi)^2} 
 \quad\text{the error obeys}\quad
 e_\nu(S_{n,n_0},f)^2 \leq \frac{100}{\phi(K,\pi)^2\cdot n} \norm{f}{\infty}^2. 
\end{equation}
The proof of the result is based on Proposition~\ref{connect_lem} which provides the crucial relation between $e_\nu(S_{n,n_0},f)^2$ and
$e_\pi(S_{n,n_0},f)^2$.
By Theorem~\ref{main_thm} and Theorem~\ref{main_coro_gen} one obtains a refined error estimate 
and a refined recipe for the choice of $n_0$.
Note that positive semi-definiteness and reversibility is not needed in Theorem~\ref{main_thm}. 
It is enough that there exists an $L_2$-spectral gap, i.e. $1-\beta>0$.\\


Independently of \cite[Theorem~8, p.~19]{expl_error} in the work of 
Belloni and Chernozhukov
 \cite[Theorem~3, p.~2031]{complex_mcmc_based} a similar 
error bound for $S_{n,n_0}$ is proven.
It is also based on \cite[Theorem~1.9, p.~375]{lova_simo1} such that again the transition kernel 
is assumed to be reversible with respect to $\pi$ and the Markov operator must be positive semi-definite. 
Then it is shown that
\[
e_\nu(S_{n,n_0},f)^2 \leq e_\pi(S_n,f)^2 + 8 \norm{f}{\infty}^2 \norm{\nu P^{n_0}-\pi}{\text{\rm tv}}.
\]
Let the initial distribution $\nu$ be $R$-warm, i.e.
$
\sup_{A\in \D,\, \pi(A)>0} \frac{\nu(A)}{\pi(A)} \leq R. 
$
Then one obtains by \cite[Corollary~1.5, p.~372]{lova_simo1} that
\[
 \norm{\nu P^{n_0}-\pi}{\text{\rm tv}} \leq \sqrt{R} \left(1-\frac{\phi(K,\pi)^2}{2}\right)^{n_0}.
\]
Hence by \cite[Theorem~1.9, p.~375]{lova_simo1} one has
\begin{equation}	\label{belloni_cond}
e_\nu(S_{n,n_0},f)^2 \leq  \frac{4}{\phi(K,\pi)^2 \cdot n } \norm{f}{2}^2 
	+  8  \sqrt{R} \left(1-\frac{\phi(K,\pi)^2}{2}\right)^{n_0}\norm{f}{\infty}^2.
\end{equation}
The result of an explicit error bound for $S_{n,n_0}$, 
when the initial distribution is not the stationary one, is the same as in \cite[Theorem~8, p.~19]{expl_error}.
Note that the burn-in depends on the desired precision. 
We can choose $R = \norm{\frac{d\nu}{d\pi}}{\infty}$ and if one uses $\norm{f}{2}\leq \norm{f}{\infty}$, then 
the upper bound of \eqref{belloni_cond} can be further estimated 
and one obtains an estimate with respect to $\norm{\cdot}{\infty}$.\\


Another result due to {\L}atuszy{\'n}ski and Niemiro is presented in \cite{niemiro_lat}. 
The integration error for $V$-uniformly ergodic Markov chains is estimated, 
where $V\colon D\to[1,\infty)$ is a drift function.
The weighted class of functions
\[
L_V=L_V(D)=\set{ f\colon D\to \R \mid \abs{f}_V= \sup_{x\in D} \frac{\abs{f(x)}}{V(x)} < \infty }
\]
is studied. Let $\a\in [0,1)$ and $M<\infty$. 
A transition kernel $K$ is called \emph{$V$-uniformly ergodic with $(\a,M)$} if 
\begin{equation}		\label{V_uniform}
	\norm{P^n-S}{L_V \to L_V} \leq M \a^n,\quad n\in \N.
\end{equation}
One can substitute the drift function $V$ by $V^{1/r}$ for all $r\geq 1$.
Then there exist an $\a(r) \in [0,1)$ and an $M(r)<\infty$ such that
\begin{equation*}		\label{V_uniform_r}
		\norm{P^n-S}{L_{V^{1/r}} \to L_{V^{1/r}}} \leq M(r) \a(r)^n,\quad n\in\N.
\end{equation*}
This is justified by an interpolation argument in \cite{mathe2} and by different assumptions stated in
\cite
{baxendale}.
Now let us state 
a less general version 
of the main result of \cite[Theorem~3.1, p.~28]{niemiro_lat}.
For $r=2$ and $g=f-S(f)$ one has
\begin{equation}  \label{latus_bound}
		e_{\nu}(S_{n,n_0},f)^2 \leq \frac{\abs{g^2}_V}{n}\left( 1+ \frac{2M(2)\a(2)}{1-\a(2)} \right)
				\left( \norm{V}{1}  + \frac{M^2 \a^{n_0} \norm{\nu-\pi}{V} }{n(1-\a)}\right),
\end{equation}
where $\norm{\nu-\pi}{V}=\sup_{\abs{g}_V\leq1} \abs{\int_D g(x) (\nu(\dint x)-\pi(\dint x))}$.
This seems to be the first explicit error bound of $S_{n,n_0}$ for integrands $f$ which belong to $L_V$.
If the transition kernel is reversible, then $V$-uniform ergodicity with $(\a,M)$ 
is equivalent to the existence of an $L_2$-spectral gap, see \cite{hybrid, roberts_tweed}. 
Furthermore if $V\in L_p$ for some $p>2$ then $L_V\subset L_p$ and the error bound of 
Theorem~\ref{main_thm} can also be applied.
However, in general Theorem~\ref{main_thm} cannot be used in this setting.\\

The paper of Joulin and Ollivier \cite{joulin_ollivier} based on \cite{ollivier} follows a new idea. 
Let $(D,\dist)$ be a metric, complete, separable state space, with metric $\dist$, and
let $K$ be a transition kernel with stationary distribution $\pi$ on $(D,\Borel(D))$. 
Let $\mathcal{P}_{\dist}(D)$ be the set of probability measures $\mu$ on $(D,\Borel(D))$ 
for which there exists an $x_0\in D$ such that
$\int_D \dist(x_0,y)\, \mu(\dint y)<\infty$.
Then let us define the Wasserstein distance between $\mu_1,\mu_2 \in \mathcal{P}_{\dist}(D)$ by
\[
	W_1(\mu_1,\mu_2)= \inf_{\xi \in \Pi(\mu_1,\mu_2)} \int_D\int_D \dist(x,y) \,\xi(\dint x,\dint y),
\]
where $\Pi(\mu_1,\mu_2)$ is the set of probability measures $\xi$ on $(D^2,\Borel(D^2))$
with marginals
$\mu_1$ and $\mu_2$. If there exists a $\kappa>0$ such that
\begin{equation} 
\label{ricci}
		W_1(K(x,\cdot),K(y,\cdot)) \leq (1-\kappa) \dist(x,y),\quad x,y\in D,
\end{equation}
then we say that the transition kernel $K$ has
\emph{positive Ricci curvature $\kappa$}. 
Let the function $f\colon D \to \R$ be integrable with respect to $\pi$ and let 
\[
\norm{f}{\Lip}=\sup_{x,y\in D,\, x\neq y} \frac{\abs{f(x)-f(y)}}{\dist(x,y)}.
\]
The \emph{coarse diffusion constant} $\sigma(x)$ for $x\in D$ of the transition kernel is defined by
\[
  \sigma(x)^2=\frac{1}{2} \int_D\int_D \dist(y,z)^2 \,K(x,\dint y)\, K(x,\dint z),
\]
and the \emph{local dimension} $n_x$ for $x\in D$ is defined by
\[
n_x = \inf_{\norm{f}{\Lip}=1} \frac{2\sigma(x)^2}{\int_D \int_D \abs{f(y)-f(z)}^2 K(x,\dint z)K(x,\dint y)}. 
\]
If the transition kernel has positive Ricci curvature, then
by \cite[Proposition~1, p.~2423, and Theorem~2, p.~2424]{joulin_ollivier} 
one obtains that
\begin{align*}
	e_{\d_x}(S_{n,n_0},f)^2 \leq & 
	\left( \frac{1}{\kappa^2 n}+\frac{1}{\kappa^3 n^2} \right)\norm{f}{\Lip}^2 \sup_{x\in D} \frac{\sigma(x)^2}{n_x} \\
	& \quad+	\frac{(1-\kappa)^{2(n_0+1)}}{\kappa^4 n^2}\norm{f}{\Lip}^2
		 \left( \int_D \dist(x,y)\, K(x,\dint y) \right)^2.
\end{align*}
The estimate is reasonable for any deterministic initial state $x\in D$, the initial distribution is $\d_x$. 
For further estimates and details let us refer to \cite{joulin_ollivier}. 
Let $p\in(2,\infty]$, let $\norm{f}{\Lip}<\infty$ and 
assume that there exists an $x_0\in D$ such that $\norm{\dist(\cdot,x_0)}{p}<\infty$ 
then one obtains $f\in L_p$, in particular
\[
    \norm{f}{p} \leq 2^{\frac{p-1}{p}}( \norm{f}{\Lip}
    \norm{\dist(\cdot,x_0)}{p}
    +\abs{f(x_0)}).
\]
If the transition kernel is reversible with respect to $\pi$ 
and $\norm{\sigma}{2}<\infty$,  
then one can show that 
a positive Ricci curvature $\kappa>0$ of $K$ 
implies an $L_2$-spectral gap of $P$, 
it follows that $1-\beta \geq \kappa$, see \cite[Proposition~30, p.~831]{ollivier}. 
In this setting 
Theorem~\ref{main_thm} can be applied 
when the initial distribution $\nu$ belongs to $\mathcal{M}_{\max\set{2,\frac{p}{p-2}}}$. \\


A regenerative Markov chain Monte Carlo 
algorithm for the approximation of $S(f)$ is studied in \cite{nie_lat_reg}.
Roughly spoken, if one has certain information of a small set, then one can explicitly estimate the mean square error
of this regenerative estimator
for uniformly and $V$-uniformly ergodic Markov chains, see \cite{nie_lat_reg} for details.\\


The literature provides also confidence estimates for $S_{n,n_0}$.
One can apply Lemma~\ref{lemm_markov} if an upper bound of $e_\nu(S_{n,n_0},f)^2$ is available.
These estimates can be boosted by a median trick 
explained in \cite{niemiro_poka} and applied in \cite{niemiro_lat}. 
However, exponential inequalities such as Hoeffding or Chernoff bounds 
for Markov chain Monte Carlo are better, 
see \cite{Krue,lezaud_fin, hoeff_unif,joulin_ollivier,Mia}.
Asymptotic confidence estimates are discussed in \cite{FleJon}.
\\

%

 
Let us provide a conclusion.
There are different explicit error bounds of the mean square error 
for $S_{n,n_0}$ on general state spaces.
In some situations these estimates could be improved.
It seems that the error bound with respect to $\norm{\cdot}{2}$ is not known so far.
Let us recall that we assumed that the used Markov chain is $L_1$-exponentially convergent 
and reversible with respect to $\pi$. 
If we only assume that the Markov chain has an $L_2$-spectral gap, 
then we showed an estimate of the error 
uniformly with respect to $\norm{\cdot}{p}$ for $p\in(2,\infty]$. 
Upper error bounds with respect to $\norm{\cdot}{\infty}$ 
are known but with respect to $\norm{\cdot}{p}$ seem to be new.
In this setting it
is not assumed that the Markov chain is reversible with respect to $\pi$, we require hat 
$\pi$ is the stationary distribution.
The suggestion of the burn-in $n_0$ of Theorem~\ref{main_coro_gen} performs well and also 
appears to be new.
All error bounds hold for bounded and unbounded state spaces 
whenever estimates of the crucial parameters, for example $\Lambda$,  $\beta$ or $(\a,M)$, are available.

\chapter{Applications} \label{appl}
In numerous applications one wants to compute for $D\subset \R^d$ an integral of the form
\begin{equation}  \label{int_f_rho}
  \int_D f(x) \cdot c \rho(x)\, \dint x, 
\end{equation}
with density $c \rho$, where the number $c$ is unknown. 
Of course $c$ can be defined by
\[
  \frac{1}{c} = \int_D \rho(x)\, \dint x.
\]
However, it is desirable to have algorithms
that are able to compute \eqref{int_f_rho} without any pre-computation of $c$. 
Let $\mathcal{F}(D)$ be a class of tuples of the form $(f,\rho)$, 
where $\rho\colon D\to [0,\infty)$ is a possibly unnormalized density
with $\int_D \rho(x) \,\dint x >0$ and for $f$ we assume that  
$f\cdot\rho$ is integrable with respect to the Lebesgue measure. 
Then the goal is to compute 
\begin{equation}  \label{S_f_rho}
	S(f,\rho)= \frac{\int_{D} f(x) \rho(x)\, \dint x}{\int_{D} \rho(x)\, \dint x},
\quad\text{for} \quad (f,\rho)\in \mathcal{F}(D).
\end{equation}
The solution operator $S$ is linear in $f$ but not in $\rho$. 
Hence $S$ is a nonlinear functional.\\

We assume that there are two procedures, $\text{Or}_f$ and $\text{Or}_\rho$, 
which provide information of $f$ and $\rho$, respectively. These procedures are considered as ``black boxes'' 
and we call them oracles. Let the oracle $\text{Or}_f$ be
a procedure which returns for an input $x\in D$ the function value $f(x)$, i.e. $\text{Or}_f(x)=f(x)$.   
Unless stated otherwise we also assume that
the oracle $\text{Or}_\rho$ provides for $x\in D$ the function value of $\rho(x)$, i.e. 
$\text{Or}_\rho(x)=\rho(x)$.
We assume that the cost of an oracle call is much more expensive 
than the cost of arithmetic operations.
Hence we count the total number of oracle calls which are needed to approximate $S(f,\rho)$.\\

Let $\text{{\bf A}lg}_{n}$ be the class of all randomized algorithms which at most use $n$ calls of the  
oracle $\text{Or}_f$
and $n$ calls of the oracle $\text{Or}_\rho$. 
More precisely $A_{n}\in \text{{\bf A}lg}_{n}$ 
is a mapping described by a function $\phi_{2n} :\R^{2n} \to \R$ such that
\[
A_{n}(f,\rho)=\phi_{2n}(\text{Or}_f(X_1),\dots,\text{Or}_f(X_{n}),
    \text{Or}_\rho(X_1),\dots,\text{Or}_\rho(X_{n})).
\]
The sample $(X_1,\dots,X_{n})\in D^n$ is determined as follows: 
Let $\omega=(\omega_1,\dots,\omega_{n})$ 
be a random element with some distribution $W$.
Then 
\begin{align*}
	X_1 & =	X_1(\omega_1), \\
	X_i & = 	X_i(\text{Or}_f(X_1),\dots,\text{Or}_f(X_{i-1}),\text{Or}_\rho(X_1),\dots,\text{Or}_\rho(X_{i-1}),\omega_i),	\quad i=2,\dots,n.
\end{align*}

The \emph{individual error of} $A_{n}\in\text{{\bf A}lg}_{n}$ applied to $(f,\rho) \in \mathcal{F}(D)$ is, 
as in the previous chapters, 
measured in the mean square sense, such that
\[
  e(A_{n},(f,\rho)) = (\expect \abs{S(f,\rho)-A_{n}(f,\rho)}^2)^{1/2},
\]
where the expectation is taken with respect to $W$.
The \emph{overall error} on $\mathcal{F}(D)$ is 
\[
	e(A_{n}, \mathcal{F}(D))= \sup_{(f,\rho)\in \mathcal{F}(D)} e(A_{n},(f,\rho)).
\]
The \emph{complexity} of the problem \eqref{S_f_rho} on $\mathcal{F}(D)$ is given by
\[
	\comp (\e,d,\mathcal{F}(D))=\min\set{n \mid \text{there exists}\; A_n\in\text{{\bf A}lg}_{n}\;\text{with}\; e(A_{n}, \mathcal{F}(D))\leq \e}.
\]
Note that $d$ is the dimension of the domain $D$.
We want to quantify the complexity of a problem
with respect to the dimension $d$.
The integration problem \eqref{S_f_rho} for the class $\mathcal{F}(D)$ is called 
\emph{polynomially tractable} 
if there exist non-negative numbers $c,q_1$ and $q_2$ such that
\[
	\comp (\e,d,\mathcal{F}(D)) \leq c \, \e^{-q_1} d^{q_2}  \quad \text{for all}\; d\in \N,\, \e \in(0,1).
\] 
Roughly spoken it says that the complexity for computing \eqref{S_f_rho} increases at most polynomially 
in the precision 
$\e^{-1}$ and the dimension $d$. 
For details of the concept of tractability let us refer to 
Novak and Wo{\'z}niakowski \cite{trac_vol1,trac_vol2}.\\

Let us provide a result which motivates an additional term of tractability.
We consider the following class of functions
\[
\fc(D) = \{ (f, \rho ) \mid \Vert f \Vert_\infty \le 1, \ 
\frac{\sup \rho}{\inf \rho}  \le C \} .
\]
In some applications $C$ can be very large, such as $C=10^{20}$.  
Observe that always $S(\fc(D) ) = [-1, 1]$, hence the problem 
is scaled properly. 
In \cite{novak} Math{\'e} and Novak proved  
a lower error bound, see \cite[Theorem~1, p.~678]{novak}. 

\begin{theorem}  \label{Nov_low}
For any $A_n\in\text{{\bf A}{\rm lg}}_n$ one obtains
\[
e(A_{n}, \mathcal{F}_C(D))  \ge  \frac{\sqrt 2}{6} 
\begin{cases}
\sqrt{\frac{C}{2n}}, &  2n\geq C - 1, \\  
\frac{3 C}{C+2n-1}, & 2n < C -1.
\end{cases}
\]
\end{theorem}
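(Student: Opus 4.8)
The plan is to prove Theorem~\ref{Nov_low} by the classical adversary argument of information-based complexity: first reduce randomized algorithms to deterministic ones running on a \emph{random} problem instance, and then build a family of instances inside $\fc(D)$ on which the functional $S(f,\rho)$ takes well-separated values while any $2n$ oracle calls return, with constant probability, exactly the same information.

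\emph{Step 1 (derandomization).} For every probability measure $\mu$ on $\fc(D)$ and every $A_n\in\text{{\bf A}{\rm lg}}_n$ one has, since the supremum dominates the $\mu$-average and by Fubini,
\[
e(A_n,\fc(D))^2 \;\ge\; \expect_{(f,\rho)\sim\mu}\expect_\omega \abs{S(f,\rho)-A_n(f,\rho)}^2 \;\ge\; \inf_{\phi}\; \expect_{(f,\rho)\sim\mu} \abs{S(f,\rho)-\phi(f,\rho)}^2,
\]
where the infimum is over all \emph{deterministic} procedures $\phi$ using at most $n$ calls of $\text{Or}_f$ and $n$ calls of $\text{Or}_\rho$. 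Hence it suffices to produce, for a suitable $\mu$, a lower bound for this deterministic Bayes error.

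\emph{Step 2 (hard instances and information bound).} Fix an integer $N$ and split $D$ into congruent measurable pieces $D_1,\dots,D_N$ of equal Lebesgue measure. Draw $j\in\set{1,\dots,N}$ and a sign $\eps\in\set{-1,1}$ independently and uniformly, and put
\[
\rho_j = 1+(C-1)\mathbf 1_{D_j}, \qquad f_{j,\eps}= 1-(1-\eps)\mathbf 1_{D_j}.
\]
Then $(f_{j,\eps},\rho_j)\in\fc(D)$, since $\norm{f_{j,\eps}}{\infty}=1$ and $\sup\rho_j/\inf\rho_j=C$, and a direct computation gives $S(f_{j,\eps},\rho_j)=1-\frac{(1-\eps)C}{C+N-1}$, i.e.\ the value is $1$ or $1-\frac{2C}{C+N-1}$, with half-separation $\frac{C}{C+N-1}$. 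The crucial observations are that $\text{Or}_\rho$ distinguishes $D_j$ from the rest only at a point of $D_j$, and that $f_{j,\eps}$ carries the secret $\eps$ only on $D_j$; hence, as long as the algorithm has not queried a point of $D_j$, its transcript does not depend on $(j,\eps)$, so its query points form a fixed sequence of at most $2n$ points and meet at most $2n$ of the $N$ pieces. Consequently, with probability at least $1-\frac{2n}{N}$ over $j$ the algorithm never enters $D_j$, the posterior law of $\eps$ stays uniform on $\set{-1,1}$, and any deterministic output $a$ incurs $\expect_\eps\abs{S-a}^2\ge \big(\frac{C}{C+N-1}\big)^2$; therefore
\[
\inf_\phi \expect_\mu\abs{S-\phi}^2 \;\ge\; \Big(1-\frac{2n}{N}\Big)\Big(\frac{C}{C+N-1}\Big)^2 .
\]
For the regime $2n\ge C-1$, where a single bump is too weak, one uses instead $\rho\equiv 1$ together with a function carrying an independent uniform sign $\eps_k\in\set{-1,1}$ on each piece $D_k$; then $S=\frac1N\sum_k\eps_k$, at most $2n$ of the signs can be read off, and the posterior variance of $S$ given the transcript equals $\frac{N-2n}{N^2}$.

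\emph{Step 3 (optimisation and conclusion).} In the regime $2n<C-1$ I would take $N$ of the same order as the budget, so that $1-\frac{2n}{N}$ stays bounded below while the weight $\frac{C}{C+N-1}$ is of order $\frac{C}{C+2n}$; this turns Step~2 into the bound $\frac{\sqrt 2}{6}\cdot\frac{3C}{C+2n-1}$. In the regime $2n\ge C-1$ I would balance $N$ (of order $C+2n$, which in this regime is realisable by an admissible density) in the variance estimate $\frac{N-2n}{N^2}$ to obtain $\frac{\sqrt 2}{6}\sqrt{C/(2n)}$, and then keep the larger of the two bounds. The hard part will be Step~2's bookkeeping: one must verify that the two oracles jointly reveal nothing about $\eps$ until the special piece is entered, handle the adaptivity of $\phi$ carefully (the transcript is frozen only on the ``not yet found'' branch, and $\text{Or}_f$ leaks asymmetrically for $\eps=+1$ versus $\eps=-1$), and then tune the partition size $N$ and the probabilistic constants so that the estimates come out exactly in the form $\frac{\sqrt 2}{6}\sqrt{C/(2n)}$ and $\frac{\sqrt 2}{6}\cdot\frac{3C}{C+2n-1}$ claimed in the theorem.
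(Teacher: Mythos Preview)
The paper does not actually prove this theorem; it is quoted from Math\'e and Novak \cite{novak} without proof. So there is no argument in the paper to compare against, and I assess your attempt on its own merits.

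Step~1 is standard and correct, and your single-bump construction for the regime $2n<C-1$ is fine: the bound $(1-2n/N)\big(C/(C+N-1)\big)^2$ really does give the stated order after optimising $N$, and the asymmetric-leakage issue you flag is handled by restricting to the ``all answers equal $1$'' branch, on which the transcript is frozen for every $j$ whose cell is missed.

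The genuine gap is in the regime $2n\ge C-1$. With $\rho\equiv 1$ and i.i.d.\ signs you get $e^2\ge (N-2n)/N^2$, and this is at most $1/(8n)$ for \emph{every} $N$, so it yields only $e\gtrsim n^{-1/2}$ with no $C$-dependence; this falls short of $\tfrac{\sqrt 2}{6}\sqrt{C/(2n)}$ as soon as $C$ exceeds an absolute constant. Your proposed ``balancing'' $N\sim C+2n$ does not rescue it: $(N-2n)/N^2=C/(C+2n)^2\le C/(4n^2)$, which gives $e\gtrsim \sqrt C/n$, the wrong power of $n$. To obtain the correct $\sqrt{C/n}$ one must let the density carry hidden information as well. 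One construction that works: partition $D$ into $N\approx 2n$ equal cells, put independent signs $\eps_k$ on all cells, and set $\rho=C$ on a uniformly random subset $J$ of size $m\approx 2n/(C-1)$ and $\rho=1$ elsewhere. With $n$ joint $(f,\rho)$-evaluations the algorithm identifies on average only half of $J$; the $\approx m/2$ undiscovered high-weight cells contribute posterior variance of order $C^2 m/\big(N+(C-1)m\big)^2\asymp C/n$, which matches the target. The single-bump and the constant-density families are the two degenerate endpoints ($m=1$ and $m=N$) of this construction, and neither endpoint is strong enough in the regime $2n\ge C-1$.
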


For an upper error bound Math{\'e} and Novak consider the simple Monte Carlo algorithm: 
Evaluate the numerator and denominator on a common independent
sample according to the uniform distribution, 
say $(X_1,X_2,\dots,X_n)\in D^n$,  and compute
\[ 
 \vtn(f,\rho)= \frac{\sum_{j=1}^n
 f(X_j)\rho(X_j)}{\sum_{j=1}^n\rho(X_j)}. 
\] 
Note that every $X_j$
is uniformly distributed. 
It is essential that one can sample with respect to the uniform distribution on $D$.
This might be a restrictive assumption.
In \cite[Theorem~2, p.~680]{novak} the following upper error bound is proven.

\begin{theorem} \label{Nov_upp}
For all $n\in\N$ we have
\[
e(\vtn,\fc(D))\leq 2\, \min\set{1,  \sqrt{\frac{2C}{n}}} .  
\] 
\end{theorem}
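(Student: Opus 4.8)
The plan is to establish the two bounds $e(\vtn,(f,\rho))\le 2$ and $e(\vtn,(f,\rho))\le 2\sqrt{2C/n}$ separately and then take the minimum. The first is immediate: since $\norm{f}{\infty}\le 1$, the value $\vtn(f,\rho)$ is a convex combination of the numbers $f(X_j)\in[-1,1]$ and $S(f,\rho)\in[-1,1]$ as well, so $\abs{\vtn(f,\rho)-S(f,\rho)}\le 2$ surely. For the quantitative bound I would treat $\vtn$ as a ratio of two empirical averages and control the numerator and the denominator separately.

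Fix $(f,\rho)\in\fc(D)$; note that $\inf\rho>0$, since otherwise $\sup\rho/\inf\rho=\infty$ and $\rho\notin\fc(D)$. Let $\mu$ be the uniform distribution on $D$, abbreviate $S=S(f,\rho)$, and set $\nu_0=\expect_\mu[\rho]\in[\inf\rho,\sup\rho]$. With the sample $X_1,\dots,X_n$ i.i.d.\ according to $\mu$, put $Y_j=\rho(X_j)(f(X_j)-S)$, $\bar Y=\frac1n\sum_{j=1}^n Y_j$ and $\bar N=\frac1n\sum_{j=1}^n\rho(X_j)$, so that $\vtn(f,\rho)-S=\bar Y/\bar N$. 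Using $S=\int_D f\,\dint\pi_\rho$ together with the elementary identity $\expect_\mu[\rho\,h]=\nu_0\int_D h\,\dint\pi_\rho$ (valid whenever $\rho h$ is Lebesgue integrable) one gets $\expect\bar Y=0$ and $\expect\bar N=\nu_0$. I would then split the mean square error according to whether $\bar N\ge\nu_0/2$ or not.

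On the event $G=\set{\bar N\ge\nu_0/2}$ one has $\abs{\vtn(f,\rho)-S}^2\le 4\bar Y^2/\nu_0^2$, and since the $Y_j$ are i.i.d.\ with mean zero,
\[
\expect\left[\ind_G\abs{\vtn(f,\rho)-S}^2\right]\le\frac{4\,\expect[Y_1^2]}{n\nu_0^2}=\frac{4\,\expect_\mu[\rho^2(f-S)^2]}{n\nu_0^2}\le\frac{4\sup\rho\cdot\nu_0\cdot\Var_{\pi_\rho}(f)}{n\nu_0^2}\le\frac{4C}{n},
\]
where I used $\rho^2\le\sup\rho\cdot\rho$, the identity above with $h=(f-S)^2$ (giving $\expect_\mu[\rho(f-S)^2]=\nu_0\Var_{\pi_\rho}(f)$), the bound $\Var_{\pi_\rho}(f)\le\norm{f}{\infty}^2\le 1$, and $\sup\rho/\nu_0\le\sup\rho/\inf\rho\le C$. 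On $G^c$ I use $\abs{\vtn(f,\rho)-S}\le 2$ and Chebyshev's inequality: $\Pr(\bar N<\nu_0/2)\le 4\Var(\bar N)/\nu_0^2=4\Var(\rho(X_1))/(n\nu_0^2)$. Taking $\expect_\mu$ of the pointwise inequality $(\rho-\inf\rho)(\rho-\sup\rho)\le 0$ yields $\Var(\rho(X_1))\le(\nu_0-\inf\rho)(\sup\rho-\nu_0)$, and maximizing $t\mapsto(t-\inf\rho)(\sup\rho-t)/t^2$ over $t\in[\inf\rho,\sup\rho]$ gives $\Var(\rho(X_1))/\nu_0^2\le(\sup\rho/\inf\rho-1)/4\le(C-1)/4$. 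Hence $\expect[\ind_{G^c}\abs{\vtn(f,\rho)-S}^2]\le 4\cdot\frac{C-1}{n}<\frac{4C}{n}$. Adding the two contributions gives $e(\vtn,(f,\rho))^2<8C/n$, and combined with the uniform bound this proves $e(\vtn,\fc(D))\le 2\min\set{1,\sqrt{2C/n}}$.

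The only step requiring genuine care is the control of the random denominator. The naive estimate $1/\bar N\le 1/\inf\rho$ would produce only a bound of order $C^2/n$, and even after splitting on $G$ one must use the sharp Bhatia--Davis-type variance inequality $\Var(\rho(X_1))/\nu_0^2\le(C-1)/4$ (rather than the cruder $\Var(\rho(X_1))\le\sup\rho\cdot\nu_0$) in order to recover precisely the constant $2\sqrt2$. Everything else --- the two moment identities, Chebyshev's inequality, and the i.i.d.\ variance computation --- is routine.
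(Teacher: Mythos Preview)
Your argument is correct. The paper itself does not prove this theorem; it simply attributes the result to Math\'e and Novak \cite[Theorem~2, p.~680]{novak} and states it without proof. So there is no in-paper argument to compare against.

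A couple of minor remarks on your write-up. The maximum of $t\mapsto(t-\inf\rho)(\sup\rho-t)/t^2$ on $[\inf\rho,\sup\rho]$ is attained at the harmonic mean $t_0=2\inf\rho\,\sup\rho/(\inf\rho+\sup\rho)$ and equals $(\sup\rho-\inf\rho)^2/(4\inf\rho\,\sup\rho)$; your stated bound $(\sup\rho/\inf\rho-1)/4$ is a (correct) further upper estimate of this maximum, not the maximum itself. Also, the trivial bound $2$ and the quantitative bound $2\sqrt{2C/n}$ are both bounds on the \emph{root mean square} error, so taking their minimum is legitimate because each holds for the same quantity $e(\vtn,(f,\rho))$; it may be worth saying explicitly that the pointwise bound $|\vtn-S|\le 2$ implies the $L_2$ bound $e(\vtn,(f,\rho))\le 2$. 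Neither point affects the validity of the proof.
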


From Theorem~\ref{Nov_low} and Theorem~\ref{Nov_upp} one obtains that the complexity $\comp(\e,d,\fc(D))$
of \eqref{S_f_rho} is linear in $C$ and 
$\vtn$ is almost optimal, for all $\e \in (0, \frac{1}{2\sqrt{2}})$ it follows that
 \[
  0.02\, C \e^{-2} \leq \comp (\e,d,\fc(D)) \leq  8\, C \e^{-2}.
\]

Hence all algorithms are bad if $C=10^{20}$.
Math{\'e} and Novak suggest to consider a smaller class of densities.
The main goal is to have also tractability with respect to $C$ on a class of functions, say 
$\widetilde{\mathcal{F}}_C(D)$, where the possibly unnormalized densities satisfy 
$\frac{\sup \rho}{\inf \rho}\leq C$. More precisely, 
the integration problem $\eqref{S_f_rho}$ is called \emph{tractable also with respect to C}
if there exist non-negative numbers $c,q_1,q_2$ and $q_3$ such that
\begin{equation} \label{c_trac}
  \comp (\e,d,\widetilde{\mathcal{F}}_C(D)) \leq c\,\e^{-q_1}d^{q_2}[\log C]^{q_3}
\end{equation}
for all $\e\in(0,1)$, $d\in \N$ and $C>1$, see \cite[p.~541]{trac_vol2}.\\
 
With Markov chain Monte Carlo algorithms one can achieve this goal on certain classes of functions.
Let $(X_n)_{n\in\N}$ be a Markov chain with transition kernel $K$ and initial 
distribution $\nu$. Assume that the transition kernel has stationary distribution $\pi_\rho$, where
\[
  \pi_\rho(A)=\frac{\int_A \rho(x)\, \dint x}{\int_D \rho(x) \,\dint x}, \quad A\in \Borel(D), 
  \quad\text{so that}\quad
  S(f,\rho)= \int_D f(x)\, \pi_{\rho}(\dint x).
\]
Under suitable assumption on the Markov chain and on $(f,\rho)\in \widetilde{\mathcal{F}}_C(D)$ one has that
the algorithm 
\[
  S_{n,n_0}(f,\rho)=\frac{1}{n}\sum_{j=1}^n f(X_{j+n_0})
\]
is an approximation of $S(f,\rho)$.
Suppose that for each step of the Markov chain we use a single oracle call of $\text{Or}_\rho$.
Then it follows that $S_{n,n_0}$ needs $n+n_0$ oracle calls of $\text{Or}_\rho$ and $n$ oracle calls of $\text{Or}_f$.
Consequently $S_{n,n_0}\in \text{{\bf A}lg}_{n+n_0}$.


 
\section{Integration with respect to log-concave densities} 
\label{sec_S_f_rho}
Let $r>0$ and let $B(x,r)$ be the $d$-dimensional 
Euclidean ball with radius $r$ around $x\in \R ^d$.
Furthermore let $\ball=B(0,1)$ and $r\ball=B(0,r)$. 
The goal is to compute 
\begin{equation}  \label{S_f_rho_ball}
	S(f,\rho)
	= \frac{ \int_{r\ball} f(x)\rho(x)\, \dint x}{ \int_{r\ball} \rho(x) \,\dint x },
\end{equation}
for $(f,\rho)$ which belong to a certain class of functions.
Let us define the class of functions on a convex body $D\subset \R^d$ rather than on $r\ball$.
We assume that the state space $D$ is equipped with the Borel $\sigma$-algebra $\Borel(D)$.
We consider functions $(f,\rho)$ with the following properties:
\begin{itemize}
 \item Let $\rho$ be strictly positive and log-concave, 
	i.e. for all $x,y \in D$ and $0<\lambda <1 $ one has
      \[
	  \rho( \lambda x + (1-\lambda) y) \geq \rho(x)^\lambda \cdot \rho(y)^{1-\lambda}.
      \]
 \item The logarithm of $\rho$ is Lipschitz continuous, i.e. there exists an $\text{\rm L} \geq 0$
       such that
      \[
	 \abs{ \log \rho(x) - \log \rho(y)  } \leq \text{\rm L} \norm{x-y}{\text{\rm E}}, \quad x,y\in D,
      \]
      where $\norm{\cdot}{\text{\rm E}}$ denotes the Euclidean norm. 
 \item The integrand $f$ satisfies $\norm{f}{p}\leq 1$.
\end{itemize}
For $D=r\ball$ one obtains that $\frac{\sup \rho}{\inf \rho} \leq e^{ 2\text{\rm L} r }$.
Hence $C= e^{2\text{\rm L} r}$ and to have tractability also with respect to $C$, 
see \eqref{c_trac}, the goal is to show an error bound which depends polynomial on $\text{\rm L}\, r$.
In general one has the following classes of functions
\[
    \mathcal{F}^{\text{\rm L}}_p(D) = \set{ (f,\rho)\mid \rho \in \mathcal{R}^\text{\rm L}(D),\; \norm{f}{p}\leq 1 },
\]
where
\[
\mathcal{R}^\text{\rm L}(D) = \set{ \rho>0\mid \rho\; 
		\text{is log-concave},\, \abs{ \log \rho(x) - \log \rho(y)  } \leq \text{\rm L} \norm{x-y}{\text{E}} }.
\]

The idea is to apply the Metropolis algorithm to obtain a Markov chain with
stationary distribution $\pi_\rho$, see Section~\ref{ex_gen}. 
The proposal transition kernel on $(D,\Borel(D))$ is given by the ball walk.
This random walk is used in \cite{novak,expl_error} and studied in
different references of volume computation, see e.g. \cite{lova_simo1,vempala}.\\

The transition kernel of the \emph{$\d$ ball walk} is given by
\[
Q_\d(x,A)=\frac{\vol_d(B(x,\d)\cap A)}{\vol_d(\d \ball)}+
	  \left( 1-\frac{\vol_d(B(x,\d)\cap D)}{\vol_d(\d \ball)} \right)\mathbf{1}_A(x), \quad x\in D,\;A\in \Borel(D),
\]
where $\d>0$ and $\vol_d(A)$ denotes the $d$-dimensional Lebesgue measure of $A\in\Borel(D)$. 
Schematically, a single step of the 
$\d$ ball walk from state $x$ may be viewed as in the procedure~\ref{ball_walk}($x,\d$).

\IncMargin{1em}
\begin{procedure}[htb]

\SetKwFunction{rand}{rand}
\SetKwInOut{Input}{input}\SetKwInOut{Output}{output}
\BlankLine
\Input{	current state $x$,
	radius $\d$.}
\Output{ next state $y$. }
\BlankLine
\BlankLine
Choose $y$ uniformly distributed in $B(x,\d)$\;

\eIf{$y\in D$}{
Return $y$\;
}
{
Return $x$\;  					
}
\caption{Ball-Walk($x,\d$)}
\label{ball_walk}
\end{procedure}
\DecMargin{1em}

Let us state some well known properties.

\begin{lemma}
	The transition kernel $Q_\d$ is reversible with respect to the uniform distribution on $D$. 

\end{lemma}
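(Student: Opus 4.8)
The statement to prove is that the transition kernel $Q_\d$ of the $\d$ ball walk is reversible with respect to the uniform distribution on $D$. By the definition of reversibility for transition kernels (see the Definition of reversibility in Section~\ref{MC_cont}), one must show that for all $A,B\in\Borel(D)$,
\[
\int_B Q_\d(x,A)\,\muo(\dint x)=\int_A Q_\d(x,B)\,\muo(\dint x),
\]
where $\muo$ denotes the uniform distribution on $D$, i.e. $\muo(\dint x)=\vol_d(D)^{-1}\dint x$. The plan is to split $Q_\d(x,A)$ into its two pieces: the ``genuine move'' part $\frac{\vol_d(B(x,\d)\cap A)}{\vol_d(\d\ball)}$ and the ``rejection'' part $\lr{1-\frac{\vol_d(B(x,\d)\cap D)}{\vol_d(\d\ball)}}\ind_A(x)$, and to check the required symmetry separately for each piece.

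First I would handle the rejection part. Its contribution to $\int_B Q_\d(x,A)\,\muo(\dint x)$ is
\[
\frac{1}{\vol_d(D)}\int_{A\cap B}\lr{1-\frac{\vol_d(B(x,\d)\cap D)}{\vol_d(\d\ball)}}\dint x,
\]
which is manifestly symmetric in $A$ and $B$ since it only involves the intersection $A\cap B$. Hence this term causes no trouble at all. Next I would treat the genuine-move part, where the key point is a Fubini argument. Writing $\vol_d(B(x,\d)\cap A)=\int_A \ind_{\{\norm{x-y}{\text{E}}<\d\}}\dint y$, the contribution of this piece to the left-hand side becomes
\[
\frac{1}{\vol_d(D)\,\vol_d(\d\ball)}\int_B\int_A \ind_{\{\norm{x-y}{\text{E}}<\d\}}\,\dint y\,\dint x.
\]
Since the indicator $\ind_{\{\norm{x-y}{\text{E}}<\d\}}$ is symmetric in $x$ and $y$ and the integrand is non-negative, Tonelli's theorem lets me swap the order of integration and relabel the variables, which turns this expression into the corresponding contribution to the right-hand side $\int_A Q_\d(x,B)\,\muo(\dint x)$. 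Combining the two pieces proves the reversibility identity.

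There is essentially no hard part here: the only mild subtlety is making sure the kernel is well defined, i.e. that $\vol_d(B(x,\d)\cap D)\le\vol_d(\d\ball)$ so that the rejection coefficient lies in $[0,1]$, which is immediate since $B(x,\d)\cap D\subset B(x,\d)$ and $\vol_d(B(x,\d))=\vol_d(\d\ball)$; and that $x\mapsto Q_\d(x,A)$ is measurable, which follows from continuity of $x\mapsto\vol_d(B(x,\d)\cap A)$ for fixed $A$ (or, more simply, from the Fubini representation above). Once these routine points are noted, the symmetry of the Euclidean distance does all the work, and the proof is just the two-line computation sketched above.
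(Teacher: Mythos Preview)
Your proof is correct. The paper itself does not give an argument here but simply refers to \cite[Proposition~1, p.~685]{novak}; your direct verification via the symmetric density $\ind_{\{\norm{x-y}{\text{E}}<\d\}}/\vol_d(\d\ball)$ together with the trivially symmetric rejection term is exactly the standard argument (and is the same mechanism the paper spells out later for the hit-and-run kernel in Lemma~\ref{hit_and_run_reversible}).
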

\begin{proof}
See \cite[Proposition~1, p.~685]{novak}. 
\end{proof}
%

The \emph{local conductance} of the ball walk is defined by
\[
l(x)=\frac{\vol_d(B(x,\d)\cap D)}{\vol_d(\d \ball)}, \quad x\in D.
\]
We call $l$ \emph{a lower bound of the local conductance}, if $l(x)\geq l$ for all $x\in D$.
Note that $l$ might be very small. For $D=[0,1]^d$, the $d$-dimensional unit cube, one obtains even for small $\delta$
that $l=2^{-d}$. However, one can show for $D=r\ball$ 
and $\d \leq r/\sqrt{d+1}$ that $l= 0.3$ is a lower bound of the local conductance. 

\begin{lemma}  \label{local_conduct}
	Let $Q_\d$ be the transition kernel of the ball walk on $D=r\ball$ for $r>0$.
	If $\d\leq r/\sqrt{d+1}$, then $l= 0.3$ is a lower bound of the local conductance of the ball walk.
\end{lemma}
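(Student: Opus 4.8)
The plan is to fix an arbitrary $x\in r\ball$, write $a=\norm{x}{\text{E}}\le r$, and estimate $l(x)=\vol_d(B(x,\d)\cap r\ball)/\vol_d(\d\ball)=\Pr(\norm{x+V}{\text{E}}\le r)$ for $V$ uniformly distributed on $\d\ball$. (For $x=0$ one has $B(0,\d)\subseteq r\ball$ since $\d<r$, so $l=1$; and for $d=1$ the set $B(x,\d)\cap[-r,r]$ always contains half of $B(x,\d)$, so $l\ge\tfrac12$. Assume therefore $d\ge2$ and $a>0$.) Decomposing $V=-u\hat x+V_\perp$ with $\hat x=x/a$, $u=-\scalar{\hat x}{V}$, $V_\perp\perp\hat x$, gives $\norm{x+V}{\text{E}}^2=(a-u)^2+\norm{V_\perp}{\text{E}}^2$, and since $\set{u\ge0}$ has probability $\tfrac12$ by symmetry of $\d\ball$, it suffices to bound the probability of the ``bad'' event $\set{u\ge0,\ (a-u)^2+\norm{V_\perp}{\text{E}}^2>r^2}$. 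First I would dispose of the regime $a\le\sqrt{r^2-\d^2}$: for $u\ge0$ one has $(a-u)^2+\norm{V_\perp}{\text{E}}^2\le a^2+u^2+\norm{V_\perp}{\text{E}}^2=a^2+\norm{V}{\text{E}}^2\le a^2+\d^2\le r^2$, so the bad event is empty and $l(x)\ge\tfrac12$. Hence only the boundary regime $\sqrt{r^2-\d^2}<a\le r$ needs work.

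In that regime I would condition on $V_\perp=v'$, $\rho=\norm{v'}{\text{E}}\le\d$. The bad values of $u$ satisfy $(a-u)^2>r^2-\rho^2$; the branch $u>a+\sqrt{r^2-\rho^2}$ is impossible because $\d\le r/\sqrt{d+1}$ (with $d\ge2$) forces $u\le\d<\sqrt{r^2-\d^2}\le\sqrt{r^2-\rho^2}$, so the bad set of $u$ is contained in $[0,\,a-\sqrt{r^2-\rho^2})$, an interval of length at most
\[
g(\rho):=r-\sqrt{r^2-\rho^2}=\frac{\rho^2}{r+\sqrt{r^2-\rho^2}}.
\]
Integrating over $v'$, and using $\d^2\le r^2/(d+1)$ so that $\sqrt{r^2-\rho^2}\ge r\sqrt{d/(d+1)}$ on $[0,\d]$, I get
\[
\Pr(\text{bad})\le\frac{\vol_{d-1}(B^{d-1}_\d)}{\vol_d(\d\ball)}\cdot\frac{\expect\norm{V'}{\text{E}}^2}{r\,(1+\sqrt{d/(d+1)})},
\]
with $V'$ uniform on the $(d-1)$-ball of radius $\d$. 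Plugging in $\expect\norm{V'}{\text{E}}^2=\tfrac{d-1}{d+1}\d^2$, $\vol_{d-1}(B^{d-1}_\d)/\vol_d(\d\ball)=\d^{-1}\vol_{d-1}(B^{d-1})/\vol_d(B^d)$ and $\d\le r/\sqrt{d+1}$ reduces the whole estimate to the dimensional ratio $\vol_{d-1}(B^{d-1})/\vol_d(B^d)=\dfrac{\Gamma(d/2+1)}{\sqrt\pi\,\Gamma((d+1)/2)}$, which I would bound by $\sqrt{(d+2)/(2\pi)}$ using the standard Gamma estimate (Gautschi) $\Gamma(x+1)/\Gamma(x+\tfrac12)\le\sqrt{x+1}$.

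Assembling the pieces yields $\Pr(\text{bad})\le h(d):=\dfrac{d-1}{(d+1)^{3/2}(1+\sqrt{d/(d+1)})}\sqrt{\dfrac{d+2}{2\pi}}$, and it remains to check $h(d)\le\tfrac12-0.3=0.2$ for all $d$. In fact $h(d)\le\tfrac1{2\sqrt{2\pi}}=0.1994\ldots$: after using $(d+1)^{3/2}\ge(d+1)\sqrt d$ and squaring, the inequality becomes exactly $(d-1)^2(d+2)\le d(d+1)^2$, i.e. $2d^2+4d-2\ge0$, which holds for $d\ge1$. Therefore $l(x)\ge\tfrac12-h(d)\ge\tfrac12-\tfrac1{2\sqrt{2\pi}}=0.3005\ldots>0.3$ for every $x\in r\ball$, which is the claim.

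The step I expect to be delicate is not any single computation but the sharpness of the constant: $\tfrac12-\tfrac1{2\sqrt{2\pi}}$ is the true asymptotic infimum of the local conductance over the admissible range $\d\le r/\sqrt{d+1}$, so every lossy estimate is doomed — for instance inscribing a ball of radius $\d/2$ inside $B(x,\d)\cap r\ball$ only gives $l(x)\ge2^{-d}$. One is forced to control the thin ``sliver'' between the tangent hyperplane at $x$ and the sphere $\partial(r\ball)$ quite precisely, which is why the exact surface-to-volume ratio of the Euclidean ball, the identity $\expect\norm{V'}{\text{E}}^2=\tfrac{d-1}{d+1}\d^2$, and the dimension-uniform verification of $h(d)\le\tfrac1{2\sqrt{2\pi}}$ are all indispensable.
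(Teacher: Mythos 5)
Your proof is correct, and it is genuinely more than what the paper provides: the paper's proof of Lemma~\ref{local_conduct} is only a pointer to \cite[Lemma~7, p.~687]{novak} together with the remark that the unit ball may be replaced by $r\ball$ (the local conductance is scale invariant, so the unit-ball case with $\d\leq 1/\sqrt{d+1}$ immediately yields the case $D=r\ball$ with $\d\leq r/\sqrt{d+1}$). You instead give a complete self-contained derivation directly for radius $r$: the orthogonal decomposition $\|x+V\|_{\text{E}}^2=(a-u)^2+\|V_\perp\|_{\text{E}}^2$, the symmetry bound $l(x)\geq \tfrac12-\Pr(\text{bad})$, the slab estimate $r-\sqrt{r^2-\rho^2}\leq \rho^2/(r+\sqrt{r^2-\rho^2})$ for the bad $u$-interval, the exact second moment $\tfrac{d-1}{d+1}\d^2$ on the $(d-1)$-ball, the surface-to-volume ratio bounded via Gautschi's inequality, and the dimension-uniform check that reduces to $(d-1)^2(d+2)\leq d(d+1)^2$, i.e. $2d^2+4d-2\geq 0$ — all of these steps are sound, and the excluded branch $u>a+\sqrt{r^2-\rho^2}$ is indeed impossible because $\d\leq r/\sqrt{d+1}\leq r/\sqrt{3}$ gives $u\leq\d<\sqrt{r^2-\d^2}$. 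What your route buys is independence from the cited reference and an explicit, slightly sharper constant, $l(x)\geq \tfrac12-\tfrac1{2\sqrt{2\pi}}\approx 0.3005>0.3$, uniformly in $d$ and $r$; what the paper's route buys is brevity, since the quantitative work is delegated to Math\'e and Novak and only the (trivial) scaling has to be observed. Your closing claim about $\tfrac12-\tfrac1{2\sqrt{2\pi}}$ being the exact asymptotic infimum is not needed for the lemma and is left unproved, but it does not affect the argument.
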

\begin{proof}
The assertion follows by the same arguments as in \cite[Lemma~7, p.~687]{novak}, see also \cite{rudolf_diploma}. 
The only difference is that $r\ball$ is a ball with radius $r$ instead of being the unit ball.
\end{proof}
The Metropolis transition kernel based on the $\d$ ball walk is
\[
K_{\rho,\d}(x,A)=\int_A \theta(x,y)\, Q_\d(x,\dint y)  + \mathbf{1}_A(x)
	\left( \int_D (1-\theta(x,y))\, Q_\d(x,\dint y)\right), 
\]
where the acceptance probability is $\theta(x,y) = \min \set{ 1,\frac{\rho(y)}{\rho(x)} }$ for
$x,y \in D$ and $A\in \Borel(D)$. The lazy version of $K_{\rho,\d}$ is 
denoted by $\widetilde{K}_{\rho,\d}$. The transition kernel $\widetilde{K}_{\rho,\d}$ is reversible with respect to $\pi_\rho$.
In Algorithm~\ref{mcmc_metropolis} we present the integration algorithm $S_{n,n_0}^{\d}$ 
which uses the lazy version of the Metropolis transition kernel with proposal transition kernel $Q_\d$.

\IncMargin{1em}
\begin{algorithm}[htb]

\SetKwFunction{rand}{rand}
\SetKwInOut{Input}{input}\SetKwInOut{Output}{output}
\BlankLine
\Input{ $n$, $n_0$, $\d$, $(f,\rho)$.}
\Output{ $S_{n,n_0}^{\d}(f,\rho)$. }
\BlankLine
\BlankLine
Choose $X_1$ uniformly distributed in $D$\;
\For{$k=1$ \KwTo $n+n_0$}{

\eIf{$\rand{}>0.5$}{
$X_{k+1}:=X_{k}$\;
}
{
$Y:=$\ref{ball_walk}($X_k,\,\d$)\;	
\uIf{$\rho(Y)/\rho(X_i)\geq\rand{}$}
{
$X_{i+1}:=Y$\;}
\Else{
$X_{i+1}:=X_i$\;
}
}
}
Compute
\[
S_{n,n_0}^{\d}(f,\rho) := \frac{1}{n} \sum_{i=1}^n f(X_{i+n_0}).
\]
\caption{$S_{n,n_0}^{\d}$}
\label{mcmc_metropolis}
\end{algorithm}
\DecMargin{1em}
It is convenient to use the notation $P_K=P$, $\beta_K = \beta$ and $\Lambda_K=\Lambda$
to indicate the transition kernel $K$.
The following lemma provides a lower bound of the
$L_2$-spectral gap of $P_{\widetilde{K}_{\rho,\d}}$.
The lemma follows from a result of Math{\'e} and Novak 
presented in \cite[Theorem~4, p.~690]{novak}, where an estimate of the conductance
of ${K}_{\rho,\d}$ is shown.
\begin{prop}  \label{spec_gen_metro}
For $r>0$ let $D\subset \R^d$ be a convex body with 
 \[
    \text{\rm {\bf d}iam}(D)=\sup\set{\norm{x-y}{\text{\rm E}}\mid x,y \in D} \leq 2r.
  \]
 Let $l$ be a lower bound of the local conductance of the $\d$ ball walk.
 Then, for all $\rho \in \mathcal{R}^\text{\rm L}(D)$ one has for the lazy version of the Metropolis transition kernel based on
a $\d$ ball walk, given by $\widetilde{K}_{\rho,\d}$, that
  \[
    1-\beta_{\widetilde{K}_{\rho,\d}} \geq 
    \frac{l^2 e^{-2\text{\rm L}\d}}{256} \min\set{ \frac{\pi}{8} \frac{l^2 \d^2}{r^2 (d+1)},1 }. 
  \]
\end{prop}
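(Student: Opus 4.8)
The plan is to derive the $L_2$-spectral gap of $\widetilde{K}_{\rho,\d}$ from the conductance estimate of Math\'e and Novak by means of the Cheeger inequality, once the passage to the lazy version has been taken into account. First I would isolate what laziness gives us. Since $\widetilde{P}_{\rho,\d}=\frac{1}{2}(I+P_{\rho,\d})$, the spectrum of $\widetilde{P}_{\rho,\d}$ on $L_2^0$ is the affine image $\frac{1}{2}\bigl(1+\spec(P_{\rho,\d}|L_2^0)\bigr)$; as $\norm{P_{\rho,\d}}{L_2\to L_2}=1$ one has $\spec(P_{\rho,\d}|L_2^0)\subset[-1,1]$, so this image lies in $[0,1]$. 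In particular $\lambda_{\widetilde{K}_{\rho,\d}}=\inf\spec(\widetilde{P}_{\rho,\d}|L_2^0)\geq 0$, whence $\beta_{\widetilde{K}_{\rho,\d}}=\max\{\Lambda_{\widetilde{K}_{\rho,\d}},\abs{\lambda_{\widetilde{K}_{\rho,\d}}}\}=\Lambda_{\widetilde{K}_{\rho,\d}}$, and moreover $\Lambda_{\widetilde{K}_{\rho,\d}}=\frac{1}{2}(1+\Lambda_{K_{\rho,\d}})$. Consequently $1-\beta_{\widetilde{K}_{\rho,\d}}=\frac{1}{2}(1-\Lambda_{K_{\rho,\d}})$, so it is enough to bound $1-\Lambda_{K_{\rho,\d}}$ from below.

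Second, I would apply the Cheeger inequality $1-\Lambda\geq\frac{1}{2}\phi(\cdot,\pi)^2$ to the transition kernel $K_{\rho,\d}$, which is reversible with respect to $\pi_\rho$. Combined with the previous step this gives
\[
1-\beta_{\widetilde{K}_{\rho,\d}}=\frac{1}{2}\bigl(1-\Lambda_{K_{\rho,\d}}\bigr)\geq\frac{1}{4}\,\phi(K_{\rho,\d},\pi_\rho)^2 .
\]

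The decisive step is then to insert the lower bound for $\phi(K_{\rho,\d},\pi_\rho)$ proved by Math\'e and Novak in \cite[Theorem~4, p.~690]{novak}. Before quoting it I would check that its hypotheses hold in the present setting: $D$ is a convex body with $\mathrm{diam}(D)\leq 2r$, $l$ is a lower bound of the local conductance of the $\d$ ball walk on $D$, and $\rho\in\mathcal{R}^{\text{\rm L}}(D)$ is log-concave with $\log\rho$ Lipschitz with constant $\text{\rm L}$ --- exactly the assumptions of the proposition. Since the Math\'e--Novak conductance bound is formulated through the diameter of the domain, it applies to an arbitrary convex body of diameter at most $2r$ just as it does to a ball; this is the only respect in which the present statement goes beyond the formulation in \cite{novak}. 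Their estimate has the form $\phi(K_{\rho,\d},\pi_\rho)\geq c\, l\, e^{-\text{\rm L}\d}\min\bigl\{\sqrt{\pi/8}\cdot l\d/(r\sqrt{d+1}),\,1\bigr\}$ with an absolute constant $c$; substituting it into the displayed inequality, using $\min\{a,1\}^2=\min\{a^2,1\}$, and collecting the numerical constants yields
\[
1-\beta_{\widetilde{K}_{\rho,\d}}\geq\frac{l^2 e^{-2\text{\rm L}\d}}{256}\min\Bigl\{\frac{\pi}{8}\,\frac{l^2\d^2}{r^2(d+1)},\,1\Bigr\},
\]
which is the claim.

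The main obstacle is bookkeeping rather than anything conceptual: one must transcribe the Math\'e--Novak conductance estimate with the correct power of $l$ and the correct numerical constant $c$, and confirm that their isoperimetric/localisation argument for the conductance of the Metropolis ball-walk chain is genuinely insensitive to replacing the ball by a general convex body of diameter at most $2r$, so that after the factor $\frac{1}{2}$ from laziness and the factor $\frac{1}{2}$ from Cheeger the constant $1/256$ and the factor $\pi/8$ inside the minimum come out exactly as stated.
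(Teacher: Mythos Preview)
Your proposal is correct and follows exactly the paper's proof: pass to the lazy kernel to get $1-\beta_{\widetilde{K}_{\rho,\d}}=\tfrac{1}{2}(1-\Lambda_{K_{\rho,\d}})$, apply the Cheeger inequality to obtain $1-\beta_{\widetilde{K}_{\rho,\d}}\geq\tfrac{1}{4}\phi(K_{\rho,\d},\pi_\rho)^2$, and then insert the Math\'e--Novak conductance bound $\phi(K_{\rho,\d},\pi_\rho)\geq\tfrac{l e^{-\text{\rm L}\d}}{8}\min\{\sqrt{\pi/8}\,l\d/(r\sqrt{d+1}),1\}$. Your remarks on the constant bookkeeping and on the diameter-based formulation applying to general convex bodies are accurate and slightly more detailed than the paper's own write-up.
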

\begin{proof}
One has
$\beta_{\widetilde{K}_{\rho,\delta}}=\Lambda_{\widetilde{K}_{\rho,\delta}}= \frac{1}{2}(1+\Lambda_{K_{\rho,\delta}})$.
The conductance of ${K}_{\rho,\delta}$ is defined by
\[
\phi({K}_{\rho,\delta},\pi_\rho)=
\inf_{0<\pi_\rho(A)\leq \frac{1}{2}}  \frac{\int_A {K}_{\rho,\delta}(x,A^c)\, \pi_\rho(\dint x)}{\pi_\rho(A)}.
\]
One can use the Cheeger inequality, see Proposition~\ref{cheeger}.
It states that
\[
1-\Lambda_{{K}_{\rho,\delta}} \geq \frac{\phi({K}_{\rho,\delta},\pi_\rho)^2}{2}.
\]
Altogether one obtains
\begin{equation} \label{beta_lazy_metro}
	1-\beta_{\widetilde{K}_{\rho,\delta}} = \frac{1}{2}(1-\Lambda_{K_{\rho,\delta}})
      \geq \frac{\phi(K_{\rho,\delta},\pi_\rho)^2}{4}.
\end{equation}
    In \cite[Theorem~4, p.~690]{novak} it is shown that 
    \[
      \phi(K_{\rho,\d},\pi_\rho) \geq \frac{l e^{-\text{\rm L}\d}}{8} \min\set{\sqrt{\frac{\pi}{8}} \frac{l\d}{r\sqrt{d+1}},1}.
    \]
    This lower bound is plugged into \eqref{beta_lazy_metro} and the assertion is proven.
\end{proof}

In the previous result one can see that the lower bound 
of the local conductance is crucial. 
This motivates that we consider $D=r\ball$, 
since by Lemma~\ref{local_conduct} a lower bound of the local conductance is provided. 
An immediate consequence of the last proposition follows.

\begin{coro}  \label{metro_spec_absch}
 For $r>0$ let $D=r\ball$, assume that $\rho \in \mathcal{R}^\text{\rm L}(r\ball)$ and  
 set 
$\d^*= \min \set{\frac{1}{\text{\rm L}}, \frac{r}{\sqrt{d+1}}}$.
 Then we have 
  \[
    1-\beta_{\widetilde{K}_{\rho,\d^*}} \geq 
     \frac{1.69 \cdot 10^{-6}}{d+1}\, \min\set{ \frac{1}{r^2\,\text{\rm L}^2} , \frac{1}{d+1} }.
  \]

\end{coro}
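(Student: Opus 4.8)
The plan is to specialize Proposition~\ref{spec_gen_metro} to the case $D=r\ball$ with the concrete parameter choices $l=0.3$ (a lower bound of the local conductance, valid by Lemma~\ref{local_conduct}) and $\d=\d^*=\min\set{\text{\rm L}^{-1},r/\sqrt{d+1}}$, and then to simplify the resulting numerical expression to a clean lower bound. First I would check that $\d^*$ is an admissible radius for Lemma~\ref{local_conduct}, i.e. that $\d^*\leq r/\sqrt{d+1}$; this is immediate from the definition of $\d^*$ as a minimum. Hence $l=0.3$ is a lower bound of the local conductance of the $\d^*$ ball walk on $r\ball$, and since $\text{\rm {\bf d}iam}(r\ball)=2r$, Proposition~\ref{spec_gen_metro} applies and gives
\[
  1-\beta_{\widetilde{K}_{\rho,\d^*}} \geq \frac{l^2 e^{-2\text{\rm L}\d^*}}{256}\,
  \min\set{\frac{\pi}{8}\frac{l^2 (\d^*)^2}{r^2(d+1)},1}.
\]

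Next I would bound each factor. Since $\d^*\leq \text{\rm L}^{-1}$ we have $\text{\rm L}\d^*\leq 1$, so $e^{-2\text{\rm L}\d^*}\geq e^{-2}$; combined with $l^2=0.09$ this already yields $l^2 e^{-2\text{\rm L}\d^*}/256\geq 0.09\,e^{-2}/256$, a fixed positive constant. For the minimum, I would substitute $(\d^*)^2=\min\set{\text{\rm L}^{-2},r^2/(d+1)}$, so that
\[
  \frac{\pi}{8}\frac{l^2(\d^*)^2}{r^2(d+1)}
  = \frac{\pi l^2}{8}\,\min\set{\frac{1}{r^2\text{\rm L}^2(d+1)},\frac{1}{(d+1)^2}}.
\]
Because $\pi l^2/8<1$ the outer minimum with $1$ is governed by this term whenever the bracketed quantity is at most $1$, and when it exceeds $1$ one simply replaces it by a smaller expression of the same shape; in either case a valid lower bound is obtained by dropping the ``$\min$ with $1$'' and keeping $\frac{\pi l^2}{8}\min\set{\frac{1}{r^2\text{\rm L}^2(d+1)},\frac{1}{(d+1)^2}}$. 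Multiplying the two factors and pulling $1/(d+1)$ out of the minimum gives
\[
  1-\beta_{\widetilde{K}_{\rho,\d^*}} \geq
  \frac{l^2 e^{-2}}{256}\cdot\frac{\pi l^2}{8}\cdot\frac{1}{d+1}\,
  \min\set{\frac{1}{r^2\text{\rm L}^2},\frac{1}{d+1}}.
\]

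Finally I would evaluate the leading constant: $\frac{l^4\pi e^{-2}}{2048}$ with $l=0.3$ gives $l^4=8.1\cdot10^{-3}$, and $8.1\cdot10^{-3}\cdot\pi\cdot e^{-2}/2048$ is approximately $1.69\cdot10^{-6}$ (indeed slightly larger, so rounding down is safe). This yields exactly the claimed bound
\[
  1-\beta_{\widetilde{K}_{\rho,\d^*}} \geq \frac{1.69\cdot10^{-6}}{d+1}\,
  \min\set{\frac{1}{r^2\text{\rm L}^2},\frac{1}{d+1}}.
\]
I do not anticipate a genuine obstacle here: the only place requiring a little care is handling the ``$\min$ with $1$'' inside Proposition~\ref{spec_gen_metro} so that the final statement is a clean lower bound of the stated form, together with the purely arithmetic verification that the accumulated constant is indeed at least $1.69\cdot10^{-6}$.
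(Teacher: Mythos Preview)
Your approach is correct and is exactly what the paper does (its proof just cites Proposition~\ref{spec_gen_metro} and Lemma~\ref{local_conduct}). One small arithmetic point: with $l=0.3$ the constant $\dfrac{l^4\pi e^{-2}}{2048}$ evaluates to approximately $1.68\cdot 10^{-6}$, not ``slightly larger'' than $1.69\cdot 10^{-6}$ as you assert, so recheck that last rounding step before claiming the displayed bound verbatim.
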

\begin{proof}
 The assertion is implied by Proposition~\ref{spec_gen_metro} and Lemma~\ref{local_conduct}.
\end{proof}

In particular one obtains that the lazy version of the ball walk has an $L_2$-spectral gap, since one can
consider constant densities where $\text{\rm L}=0$.

\begin{coro}
 For $r>0$ let $D=r\ball$ and
 let $\d = \frac{r}{\sqrt{d+1}}$. 
 Then the lazy version $\widetilde{Q}_{\d}$ of the transition kernel of the ball walk obeys
  \[
    1-\beta_{\widetilde{Q}_{\d}} \geq 
    \frac{1.69 \cdot 10^{-6}}{(d+1)^2}.
  \]
\end{coro}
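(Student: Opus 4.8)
The statement is the special case of Corollary~\ref{metro_spec_absch} in which the Lipschitz constant $\text{\rm L}$ equals zero, so the plan is simply to reduce to that corollary. First I would observe that a constant positive density $\rho$ belongs to $\mathcal{R}^\text{\rm L}(r\ball)$ for every $\text{\rm L}\geq0$, in particular for $\text{\rm L}=0$, since $\abs{\log\rho(x)-\log\rho(y)}=0\leq \text{\rm L}\norm{x-y}{\text{E}}$ and constant functions are trivially log-concave. For such a $\rho$ the acceptance probability is $\theta(x,y)=\min\set{1,\rho(y)/\rho(x)}=1$ for all $x,y\in r\ball$, hence the Metropolis transition kernel based on the $\d$ ball walk coincides with the proposal kernel, $K_{\rho,\d}=Q_\d$, and therefore also $\widetilde{K}_{\rho,\d}=\widetilde{Q}_\d$ for the lazy versions.

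Next I would specialize the choice of radius in Corollary~\ref{metro_spec_absch}. There one takes $\d^*=\min\set{\frac{1}{\text{\rm L}},\frac{r}{\sqrt{d+1}}}$; with $\text{\rm L}=0$ the first term is $+\infty$, so $\d^*=\frac{r}{\sqrt{d+1}}=\d$, which matches the kernel $\widetilde{Q}_\d$ in the statement. Applying Corollary~\ref{metro_spec_absch} with this $\rho$ and $\d^*=\d$ then yields
\[
  1-\beta_{\widetilde{Q}_\d} = 1-\beta_{\widetilde{K}_{\rho,\d^*}}
  \geq \frac{1.69\cdot10^{-6}}{d+1}\,\min\set{\frac{1}{r^2\,\text{\rm L}^2},\frac{1}{d+1}}.
\]
Since $\text{\rm L}=0$, the quantity $\frac{1}{r^2\text{\rm L}^2}$ is $+\infty$, so the minimum equals $\frac{1}{d+1}$, and the right-hand side becomes $\frac{1.69\cdot10^{-6}}{(d+1)^2}$, which is exactly the claimed bound.

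There is essentially no obstacle here: the only point requiring a word of care is the harmless appearance of $\text{\rm L}=0$ in the denominators $\frac{1}{\text{\rm L}}$ and $\frac{1}{r^2\text{\rm L}^2}$ of Corollary~\ref{metro_spec_absch}, which should be read as $+\infty$ (equivalently, one can run the argument with a sequence $\text{\rm L}_k\downarrow0$ and pass to the limit, or simply re-examine Proposition~\ref{spec_gen_metro} and Lemma~\ref{local_conduct} directly with $\text{\rm L}=0$, so that $e^{-2\text{\rm L}\d}=1$ and the factor $\min\set{\frac{\pi}{8}\frac{l^2\d^2}{r^2(d+1)},1}$ with $l=0.3$ and $\d=r/\sqrt{d+1}$ is the one that is active). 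Everything else is bookkeeping already carried out in the preceding results.
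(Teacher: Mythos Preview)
Your proposal is correct and matches the paper's approach exactly: the paper states this corollary without a formal proof, noting only that ``one can consider constant densities where $\text{\rm L}=0$,'' which is precisely the reduction to Corollary~\ref{metro_spec_absch} that you carry out. Your remark about handling $1/\text{\rm L}$ and $1/(r^2\text{\rm L}^2)$ as $+\infty$ (or going back to Proposition~\ref{spec_gen_metro} directly) is the right way to make the one-line argument rigorous.
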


Now we can apply the error bounds of Section~\ref{err_bound_gen}.
The next theorem states an error bound for $S_{n,n_0}^{\d^*}(f,\rho)$ 
where $(f,\rho)\in \mathcal{F}^\text{\rm L}_p(r\ball)$.

\begin{theorem} \label{err_S_f_rho}
For $r>0$ let $D=r\ball$ and let $\nu$ be the uniform distribution on $r\ball$.   
Let
  $\rho \in \mathcal{R}^\text{\rm L}(r\ball)$ and 
  $\delta^*=\min \set{\frac{1}{\text{\rm L}}, \frac{r}{\sqrt{d+1}}}$. 
  Let $(X_n)_{n\in\N}$ be a Markov chain with transition kernel $\widetilde{K}_{\rho,\delta^*}$ and initial distribution $\nu$.
    The approximation
  of $S(f,\rho)$ is
  \[
      S_{n,n_0}^{\d^*}(f,\rho) = \frac{1}{n} \sum_{i=1}^n f(X_{i+n_0}).
  \]
  For $p\in(2,\infty]$ recall that
  \[
      \mathcal{F}^\text{\rm L}_p(r\ball)=\set{ (f,\rho) \mid \rho\in\mathcal{R}^\text{\rm L}(r\ball),\, \norm{f}{p}\leq1}.
  \]
  Let $n_0(p)$ be the smallest natural number (including zero) greater than or equal to
  \[
    5.92\cdot 10^6\,(d+1) \max\set{r^2\,\text{\rm L}^2, d+1} 
    \cdot\begin{cases}
      \frac{p}{(p-2)}\,( \text{\rm L}\, r + 0.5\log\frac{32p}{p-2}) ,	& 	p\in(2,4),\\
      2 \text{\rm L}\, r + 4.16,			&	p\in[4,\infty].
    \end{cases}
  \]
  Then
  \begin{align*}
    e(S_{n,n_0(p)}^{\d^*},\mathcal{F}^\text{\rm L}_p(r\ball)) & \leq 
     \frac{1089}{\sqrt{n}} \sqrt{d+1} \max\{r\,\text{\rm L},\sqrt{d+1}\,\} \\ 
    & \qquad \qquad+\frac{8.38\cdot 10^5}{n} (d+1)\max\{r^2\,\text{\rm L}^2,(d+1)\}.
  \end{align*}
\end{theorem}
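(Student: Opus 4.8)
The plan is to combine Corollary~\ref{metro_spec_absch}, which supplies a lower bound for the $L_2$-spectral gap $1-\beta_{\widetilde{K}_{\rho,\delta^*}}$, with the burn-in choice and error estimate of Theorem~\ref{main_coro_gen}\,\eqref{zweitens_burn_in}. The main work is bookkeeping: expressing every quantity in Theorem~\ref{main_coro_gen}\,\eqref{zweitens_burn_in} (the burn-in $n_0(p)$ and the two terms of the error bound) through the explicit lower bound for the spectral gap and through an explicit bound for $\norm{\frac{d\nu}{d\pi_\rho}-1}{q}$, and then simplifying the resulting constants.

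First I would record the ingredients. By Lemma~\ref{hit_and_run_reversible}-type reasoning (here Proposition~\ref{spec_gen_metro} and the surrounding lemmas) the transition kernel $\widetilde{K}_{\rho,\delta^*}$ is reversible with respect to $\pi_\rho$ and, by Corollary~\ref{metro_spec_absch}, has an $L_2$-spectral gap with
\[
1-\beta_{\widetilde{K}_{\rho,\delta^*}} \geq g:= \frac{1.69\cdot 10^{-6}}{d+1}\,\min\set{\frac{1}{r^2\text{\rm L}^2},\frac{1}{d+1}} = \frac{1.69\cdot 10^{-6}}{(d+1)\max\set{r^2\text{\rm L}^2,d+1}}.
\]
Next I would bound the density term. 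Since $\nu$ is uniform on $r\ball$ and $\rho\in\mathcal{R}^\text{\rm L}(r\ball)$ is log-concave with $\log\rho$ Lipschitz with constant $\text{\rm L}$ and $\text{\rm {\bf d}iam}(r\ball)=2r$, one has $\frac{\sup\rho}{\inf\rho}\leq e^{2\text{\rm L} r}$, and hence $\frac{d\nu}{d\pi_\rho}(x)=\frac{\int_{r\ball}\rho(y)\,\dint y}{\vol_d(r\ball)\,\rho(x)}\leq e^{2\text{\rm L} r}$, so that
\[
\norm{\frac{d\nu}{d\pi_\rho}-1}{q}\leq \norm{\frac{d\nu}{d\pi_\rho}-1}{\infty}\leq e^{2\text{\rm L} r},\quad q\in[1,\infty].
\]
Plugging this into the definition of $n_0(p)$ from Theorem~\ref{main_coro_gen}\,\eqref{zweitens_burn_in} and using $\log(\beta^{-1})\geq 1-\beta\geq g$ gives, for $p\in(2,4)$,
\[
n_0(p)\leq \frac{1}{g}\cdot\frac{p}{2(p-2)}\Bigl(2\text{\rm L} r+\log\tfrac{32p}{p-2}\Bigr)
=\frac{1}{g}\cdot\frac{p}{p-2}\Bigl(\text{\rm L} r+\tfrac12\log\tfrac{32p}{p-2}\Bigr),
\]
and for $p\in[4,\infty]$, $n_0(p)\leq \frac{1}{g}\bigl(2\text{\rm L} r+\log 64\bigr)\leq \frac{1}{g}(2\text{\rm L} r+4.16)$. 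Substituting $1/g = \frac{(d+1)\max\set{r^2\text{\rm L}^2,d+1}}{1.69\cdot 10^{-6}}$ and noting $\frac{1}{1.69\cdot 10^{-6}}\leq 5.92\cdot 10^6$ yields exactly the claimed expression for $n_0(p)$.

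For the error itself, Theorem~\ref{main_coro_gen}\,\eqref{zweitens_burn_in} with this burn-in gives
\[
\sup_{\norm{f}{p}\leq 1}e_\nu(S_{n,n_0(p)},f)^2\leq \frac{2}{n(1-\beta)}+\frac{2}{n^2(1-\beta)^2}\leq \frac{2}{ng}+\frac{2}{n^2g^2}.
\]
Since $S(f,\rho)=\int_{r\ball}f(x)\,\pi_\rho(\dint x)$ and $S_{n,n_0}^{\delta^*}(f,\rho)=\frac1n\sum_{i=1}^n f(X_{i+n_0})$ is precisely $S_{n,n_0}(f)$ for this chain, $e(S_{n,n_0(p)}^{\delta^*},(f,\rho))=e_\nu(S_{n,n_0(p)},f)$, so taking the supremum over $\mathcal{F}_p^\text{\rm L}(r\ball)$ and using $\sqrt{a+b}\leq\sqrt a+\sqrt b$,
\[
e(S_{n,n_0(p)}^{\delta^*},\mathcal{F}_p^\text{\rm L}(r\ball))\leq \sqrt{\frac{2}{ng}}+\frac{\sqrt 2}{ng}.
\]
Inserting $1/g=\frac{(d+1)\max\set{r^2\text{\rm L}^2,d+1}}{1.69\cdot 10^{-6}}$, using $\max\set{r^2\text{\rm L}^2,d+1}=\max\set{r\text{\rm L},\sqrt{d+1}}^2$ to pull the square root inside as $\max\set{r\text{\rm L},\sqrt{d+1}}$, and bounding $\sqrt{2/(1.69\cdot 10^{-6})}\leq 1089$ and $\sqrt2/(1.69\cdot 10^{-6})\leq 8.38\cdot 10^5$ gives the stated bound. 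The only genuinely delicate point is being consistent about which $L_q$-norm of $\frac{d\nu}{d\pi_\rho}-1$ appears for $p\in(2,4)$ versus $p\in[4,\infty]$; since we bound all of them by the single quantity $e^{2\text{\rm L} r}$, the two cases of $n_0(p)$ merge cleanly into one formula, and the rest is arithmetic with the explicit constants.
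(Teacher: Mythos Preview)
Your proposal is correct and follows exactly the paper's approach: bound $\norm{\frac{d\nu}{d\pi_\rho}-1}{\infty}$ by $e^{2\text{\rm L} r}$ via the Lipschitz property of $\log\rho$, invoke Corollary~\ref{metro_spec_absch} for the spectral gap, and then apply Theorem~\ref{main_coro_gen}\,\eqref{zweitens_burn_in}. The paper's proof is terse and omits the arithmetic verification of the constants, which you have carried out explicitly; otherwise the two arguments are identical.
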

  \begin{proof}
    The initial distribution obeys
    \[
      \nu(A)= \frac{\vol_d(A)}{\vol_d(r\ball)}
	=\frac{1}{\vol_d(r\ball)} \int_A \int_{r\ball} \frac{\rho(y)}{\rho(x)}\, \dint y\; \pi_\rho(\dint x),
	\quad A\in \Borel(r\ball).
    \]
    Since $ \log \rho$ is Lipschitz continuous with Lipschitz constant $\text{\rm L}$ we obtain  
    \[
	e^{-2\text{\rm L} r} \leq \frac{\rho(y)}{\rho(x)} \leq e^{2\text{\rm L} r}, \quad x,y\in r\ball,
    \]
    so that
    \[
    \norm{\frac{d\nu}{d\pi_\rho}-1}{p} \leq \norm{\frac{d\nu}{d\pi_\rho}-1}{\infty} 
\leq \max\set{1 , e^{2\text{\rm L} r}} = e^{2\text{\rm L} r}.
    \]
    By Corollary~\ref{metro_spec_absch} we have the crucial 
    lower bound for the spectral gap $1-\beta_{\widetilde{K}_{\rho,\delta^*}}$ and consequently
    Theorem~\ref{main_coro_gen}\,\eqref{zweitens_burn_in} can be applied which
    proves the assertion.
  \end{proof}

Note that $p\in(2,\infty]$ is necessary to apply Theorem~\ref{main_coro_gen}\,\eqref{zweitens_burn_in}.
An essential consequence of the last theorem is the following result 
concerning the tractability of \eqref{S_f_rho_ball}.

\begin{theorem}  \label{theo_S_f_rho}
For the integration problem $S(f,\rho)$ defined over $\mathcal{F}^\text{\rm L}_p(r\ball)$ with
$r>0$ and $p>2$ we have
\begin{align*}
& \comp (\e,d,\mathcal{F}^\text{\rm L}_p(r\ball)) 
    \leq (d+1)\max\set{r^2\, \text{\rm L}^2,d+1} \\
& \qquad \qquad \qquad \qquad 
    \cdot \left[ 4.8\cdot 10^6\, \e^{-2}+
    1.2\cdot 10^{6} \cdot
     \begin{cases}
      \frac{p}{(p-2)}\,( \text{\rm L}\, r + 0.5\log \frac{32p}{p-2} ),	& 	p\in(2,4)\\
      2 \text{\rm L}\, r + 4.16,			&	p\in[4,\infty]
    \end{cases}
  \right]
\end{align*}
for all $\e\in(0,1)$ and $d\in \N$.
\end{theorem}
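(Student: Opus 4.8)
The plan is to exhibit, for each target accuracy $\e\in(0,1)$, an algorithm in $\text{{\bf A}lg}_{N}$ with $N$ of the stated order whose overall error on $\mathcal{F}^\text{\rm L}_p(r\ball)$ is at most $\e$. The natural candidate is the Metropolis/ball-walk estimator $S_{n,n_0(p)}^{\d^*}$ of Theorem~\ref{err_S_f_rho}: as noted in the excerpt, a chain of length $n+n_0(p)$ uses $n+n_0(p)$ calls of $\text{Or}_\rho$ and $n$ calls of $\text{Or}_f$, so $S_{n,n_0(p)}^{\d^*}\in\text{{\bf A}lg}_{n+n_0(p)}$, and hence $\comp(\e,d,\mathcal{F}^\text{\rm L}_p(r\ball))\le n+n_0(p)$ for any $n$ for which this estimator meets the error tolerance.

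First I would fix the burn-in $n_0(p)$ exactly as in Theorem~\ref{err_S_f_rho} and then determine how large $n$ must be. Write $Q=(d+1)\max\set{r^2\text{\rm L}^2,d+1}$, so that the error bound of Theorem~\ref{err_S_f_rho} reads $e(S_{n,n_0(p)}^{\d^*},\mathcal{F}^\text{\rm L}_p(r\ball))\le 1089\,Q^{1/2}n^{-1/2}+8.38\cdot10^5\,Q\,n^{-1}$. I would bound each summand by $\e/2$. The first is at most $\e/2$ once $n\ge 4\cdot1089^2\,Q\,\e^{-2}$, and $4\cdot1089^2=4\,743\,684<4.8\cdot10^6$; for such $n$, and since $\e\le1$ gives $\e^{-2}\ge\e^{-1}$, the second summand is at most $\frac{8.38\cdot10^5}{4.74\cdot10^6}\,\e<\e/2$. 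Therefore the choice $n=\big\lceil 4.8\cdot10^6\,Q\,\e^{-2}\big\rceil$ already yields overall error $\le\e$.

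Finally I would add up the oracle calls: $\comp(\e,d,\mathcal{F}^\text{\rm L}_p(r\ball))\le n+n_0(p)\le 4.8\cdot10^6\,Q\,\e^{-2}+1+n_0(p)$, and then substitute the explicit bound for $n_0(p)$ from Theorem~\ref{err_S_f_rho}, absorbing the two rounding ``$+1$'' terms into the leading constants. Collecting the $Q$-factor in front gives exactly the claimed two-term bracket
\[
Q\left[\,4.8\cdot10^6\,\e^{-2}+c\cdot
\begin{cases}
\frac{p}{p-2}\big(\text{\rm L}\,r+0.5\log\frac{32p}{p-2}\big), & p\in(2,4),\\
2\text{\rm L}\,r+4.16, & p\in[4,\infty],
\end{cases}\right]
\]
with the constant $c$ inherited from the $n_0(p)$ estimate of Theorem~\ref{err_S_f_rho}. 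There is no conceptual obstacle here: all of the analytic work — the lower bound on the $L_2$-spectral gap of $\widetilde K_{\rho,\d^*}$ obtained from the conductance estimate of Math\'e and Novak together with the Cheeger inequality (Proposition~\ref{spec_gen_metro}, Corollary~\ref{metro_spec_absch}), and the mean-square error bound it feeds into Theorem~\ref{main_coro_gen}\,\eqref{zweitens_burn_in} and Theorem~\ref{err_S_f_rho} — is already in place. The only mild care needed is the numerical bookkeeping above, ensuring the two error contributions and the ceilings fit within the stated constants; that is the one-line computation just sketched, and it is the single point where one must be slightly careful rather than merely routine.
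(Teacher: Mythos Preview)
Your approach is exactly the intended one: the paper gives no separate proof for this theorem and treats it as an immediate consequence of Theorem~\ref{err_S_f_rho}, obtained by choosing $n$ so that the two-term error bound falls below $\e$ and then adding $n_0(p)$ to count oracle calls. Your splitting into $\e/2+\e/2$ and the check $4\cdot1089^2<4.8\cdot10^6$ are precisely the routine bookkeeping the paper leaves implicit; the only caveat is that the burn-in constant you inherit from Theorem~\ref{err_S_f_rho} is $5.92\cdot10^6$, so the $1.2\cdot10^6$ printed in the statement appears to be a typo in the paper rather than something your argument should reproduce.
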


The last theorem states that the problem \eqref{S_f_rho_ball} is polynomially tractable.
Roughly spoken, for fixed $p$ one obtains
\[
 \comp (\e,d,\mathcal{F}^\text{\rm L}_p(r\ball)) \prec d \max\set{r^2\, \text{\rm L}^2,d} (\e^{-2} + \text{\rm L}\, r ),
\]
so that the dependence on $\text{\rm L}$, on the precision $\e$, dimension $d$ and $r$ is polynomial.
We have tractability also with respect to $C=e^{2r{\rm L}}$, inequality \eqref{c_trac} holds 
with $q_1=2$, $q_2=2$ and $q_3=3$.
For $p\in [4,\infty]$ the complexity can be bounded independently of $p$.
If $p$ converges to $2$ the result is restrictive.
However, for fixed $p\in(2,\infty]$ we showed that the integration problem on $\mathcal{F}^\text{\rm L}_p(r\ball)$
is polynomially tractable in the sense of \eqref{c_trac}.  

\section{Integration over a convex body}  \label{int_conv}
The goal is to compute 
\begin{equation}  \label{S_f_A}
	S(f,A)
	    = \frac{1}{\vol_d(A)} \int_A f(x)\, \dint x, 
\end{equation}
with $A\subset \R^d$.
In other words, $S(f,A)$ is the expectation of $f$ with respect 
to the uniform distribution, say $\mu_A$, on $A\subset \R^d$.
The domain $A$ and the function $f$ are the input quantities.
It fits in the class of problems described by $\eqref{S_f_rho}$
if we assume that $A\subset D$. Then $\mu_A$ might be considered as
given by a density which is an indicator function.\\

For some domains $A$ it is indeed simple to generate uniformly distributed random points, e.g. 
the Euclidean unit ball or the unit cube. 
Then one can approximate $S(f,A)$ by Monte Carlo methods with an i.i.d. sample.
However, here $A$ is part of the input to the algorithm, thus
the problem $S(f,A)$ shall be solved uniformly for a class of state spaces, where we 
cannot assume that sampling with respect to the uniform distribution is possible.\\ 

Let $r\geq1$ and let 
  \[
    \mathcal{S}_d(r)=\set{ A\subset \R^d\; \text{convex} \mid \ball\subset A \subset r\ball}.
  \]
If $A \in \mathcal{S}_d(r)$ then $A$ is a convex bounded set with non-empty interior which contains the origin.
The class of input parameters is given by
\[
  \mathcal{F}_p(r,d)=\set{ (f,A)\mid \norm{f}{p}\leq 1,\; 
			 A\in \mathcal{S}_d(r)}.
\]
We assume that for any $A \in \mathcal{S}_d(r)$ there exists an oracle $\text{Or}_A(\ell)$ 
which returns for an arbitrary line $\ell$ 
a uniformly distributed random point on $A\cap\ell$.\\

Let us comment this assumption. 
Assume that we have a membership oracle of $A\in \mathcal{S}_d(r)$ 
which is given by 
$\widetilde{\text{Or}}_{A}(x)=\mathbf{1}_A(x)$
for any $x\in r \ball$.
The oracle $\text{Or}_A$
can be implemented by using the membership oracle. 
Let $[x,y]=\set{x+ty\mid t\in[0,1]}$ be the segment of $x,y\in\R^d$ with Euclidean distance $\norm{x-y}{\text{E}}$.
By the convexity of $A$
it follows that $A\cap \ell$ is a single segment, hence there exist $a_1,a_2\in\R^d$ 
such that $[a_1,a_2]=A\cap\ell$. Suppose that $\ell=\set{\tilde{x}+t\, \text{{\bf d}ir}\mid t\in \R}$ with $\tilde{x}\in A$
and assume that there is a positive number $\e_0$ such that $\norm{a_1-a_2}{\text{E}}\geq\e_0$.
We use that $A\in \mathcal{S}_d(r)$ and $\tilde{x}\in A$.
By a bisection method
one can find 
with at most $3\log(\frac{2r}{\e_0})+2$ calls of 
the membership oracle $\widetilde{\text{Or}}_{A}$, a segment $[b_1,b_2]$ with $b_1,b_2\in\R^d$ and $[a_1,a_2]\subset [b_1,b_2]$ such that
\[
  \frac{1}{6}\norm{b_1-b_2}{\text{E}} 
\leq \norm{a_1-a_2}{\text{E}} 
\leq \norm{b_1-b_2}{\text{E}}.
\] 
Then, choose a uniformly distributed random point in $[b_1,b_2]$ and accept it, if it is in $A$,
otherwise reject it and repeat the acceptance rejection procedure. 
This procedure gives a uniformly distributed random point in $A\cap \ell$ and
works reasonably fast, since the acceptance probability is $1/6$.
Altogether an oracle call of $\text{Or}_A$ requires 
at most an expected number of $3\log(\frac{2r}{\e_0})+8$ oracle calls of $\widetilde{\text{Or}}_{A}$. 
In the analysis of the error we count the calls of the oracle $\text{Or}_A$ 
and the function evaluations of $f$, i.e. the calls of the oracle $\text{Or}_f$.\\

Now let us provide a Markov chain on the measurable space $(A,\Borel(A))$ with stationary distribution $\mu_A$. 
We consider the hit-and-run algorithm, also called hypersphere directions algorithm, see \cite{hit_smith}. 
The algorithm is studied and analyzed in \cite{hit_and_run_fast,hit_and_run_from_corner}.
The work of Vempala \cite{vempala} provides an introduction to geometric random walks.\\ 

The algorithm is as follows.
Suppose that the current position is $X_i\in A$ with $i\in\N$. 
Then choose a uniformly distributed direction, say $\text{{\bf d}ir}_i$, and consider the 
line which is defined by $\ell^{(i)}=\set{X_i+t\,\text{{\bf d}ir}_i\mid t\in\R}$. 
Apply $\text{Or}_A(\ell^{(i)})$, which gives 
the next state $X_{i+1}$ chosen uniformly distributed in $\ell^{(i)}\cap A$.
Then, again, a uniformly distributed direction, say $\text{{\bf d}ir}_{i+1}$, is generated and the 
next state is chosen uniformly distributed on $\ell^{(i+1)}\cap A$ by $\text{Or}_A(\ell^{(i+1)})$.
Two consecutive steps of the hit-and-run algorithm are illustrated in Figure~\ref{h_a_r}.
\begin{figure}[htb]
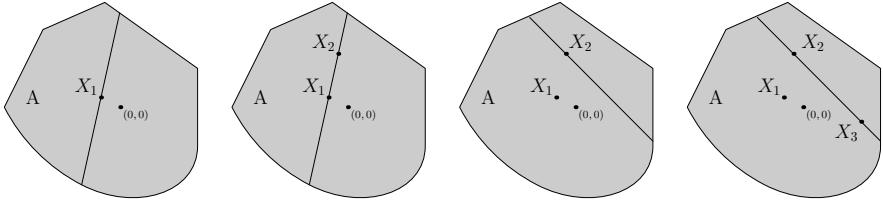
 
\centering
  \includegraphics[width=2.6cm]{graphics.3}
  \hspace{0.2cm}
  \includegraphics[width=2.6cm]{graphics.4}
  \hspace{0.2cm}
  \includegraphics[width=2.6cm]{graphics.6}
  \hspace{0.2cm}
  \includegraphics[width=2.6cm]{graphics.7}
\vspace*{0.5ex}
  \caption{
Illustration of the generation of $X_3$ and $X_2$ by the hit-and-run algorithm given state $X_1$. 
}
\label{h_a_r}
\end{figure}
Recall that the Euclidean unit ball is denoted by $\ball$ 
and its boundary is denoted by $\partial \ball$.
Schematically, a single step of the hit-and-run algorithm 
from $x\in A$ is presented in the Procedure~\ref{hit_and_run}($x$).\\
\IncMargin{1em}
\begin{procedure}[H]
\SetKwFunction{rand}{rand}
\SetKwInOut{Input}{input}\SetKwInOut{Output}{output}
\BlankLine
\Input{	current state $x$.}
\Output{ next state $y$. }
\BlankLine
\BlankLine
Choose a direction $\text{{\bf d}ir}$ uniformly distributed on $\partial \ball$\;
Choose $y$ uniformly distributed on
\[
 A \cap \set{ x+t\, \text{{\bf d}ir} \mid t \in \R };
\]
Return $y$.					
\caption{Hit-and-Run($x$)}
\label{hit_and_run}
\end{procedure}
\DecMargin{1em}
The transition kernel of the hit-and-run algorithm follows.
For any $x,y\in \R^d$ let 
\[
	\text{{\bf I}nt}(x,y)=\set{\lambda \in \R \mid x + \lambda \frac{y-x}{\norm{y-x}{\rm E}}\in A}.
\]
Since $A$ is convex, $\text{{\bf I}nt}(x,y)$ is an interval. 
Let
\[
	\lambda_1(x,y)= \min\set{\a \mid \a\in \text{{\bf I}nt}(x,y)}
\quad \text{and} \quad
	\lambda_2(x,y)= \max\set{\a \mid \a\in \text{{\bf I}nt}(x,y)},
\]
which implies that $\text{{\bf I}nt}(x,y)=[\lambda_1(x,y),\lambda_2(x,y)]$.
The length of the chord $\text{{\bf I}nt}(x,y)$ is given by $\ell(x,y)=\lambda_2(x,y)-\lambda_1(x,y)$.
Let $U(x,y)$ be a uniformly distributed random variable in the interval $\text{{\bf I}nt}(x,y)$.
Then the \emph{hit-and-run transition kernel} $H$ of the hit-and-run algorithm is 
\begin{align}
	H(x,C) 
	& = \frac{\int_{\partial \ball}	\notag
		\Pr[x+U(x,x+\theta) \theta \in C] \, \dint \theta }
		{\vol_{d-1}(\partial \ball)}\\
\displaybreak
	& = \frac{1}{\vol_{d-1}(\partial \ball)}  \notag
		\int_{\partial \ball} \int_{\lambda_1(x,x+\theta)}^{\lambda_2(x,x+\theta)} 
		\frac{\mathbf{1}_C(x+\lambda \theta)}
		{ \ell(x,x+\theta) }
		\,\dint \lambda\, \dint \theta \\
	& = \frac{1}{\vol_{d-1}(\partial \ball)}  \notag 
		\int_{\partial \ball} \int_{\lambda_1(x,x+\theta)}^{0} 
		\frac{\mathbf{1}_C(x+\lambda \theta)}{\ell(x,x+\theta) }
		\,\dint \lambda \,\dint \theta \\	
	& \qquad+ \frac{1}{\vol_{d-1}(\partial \ball)} \notag
		\int_{\partial \ball} \int_{0}^{\lambda_2(x,x+\theta)} 
		\frac{\mathbf{1}_C(x+\lambda \theta)}{ \ell(x,x+\theta) }
		\,\dint \lambda \,\dint \theta \\							 
	& = \frac{2}{\vol_{d-1}(\partial \ball)}  \label{hit_eq}
	        \int_C \frac{1 \,\dint y}{\ell(x,y) \norm{x-y}{\text{E}}^{d-1}},
\end{align}
where $x\in A$ and $C\in \Borel(A)$.
The last equality follows by the integral transformation formula 
\[
	\int_{\R^d} h(y)\, \dint y 
= \int_{\partial \ball} \int_0^\infty h(g(\lambda, \theta))\, \lambda^{d-1}\, \dint \lambda\, \dint \theta
\]
with
\[
h(y)=\frac{\mathbf{1}_C(y)}{\ell(x,y)\norm{x-y}{\text{E}}^{d-1}} 
\]
and either
 $g(\lambda, \theta)=x+\lambda \theta$ or $g(\lambda, \theta)=x-\lambda \theta$.
\begin{lemma} \label{hit_and_run_reversible}
The hit-and-run transition kernel $H$, given by \eqref{hit_eq},
is reversible with respect to $\mu_A$ on $A$.
\end{lemma}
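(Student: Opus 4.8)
The plan is to take \eqref{hit_eq} as the definition of $H$ and verify directly the defining identity of reversibility: for all $B,C\in\Borel(A)$,
\[
\int_B H(x,C)\,\mu_A(\dint x) = \int_C H(x,B)\,\mu_A(\dint x).
\]
Since $\mu_A$ has constant Lebesgue density $\vol_d(A)^{-1}$ on $A$, substituting \eqref{hit_eq} and cancelling the common factor $\dfrac{2}{\vol_{d-1}(\partial\ball)\,\vol_d(A)}$ reduces the claim to the symmetry statement
\[
\int_B\int_C \frac{\dint y\,\dint x}{\ell(x,y)\,\norm{x-y}{\text{E}}^{d-1}}
= \int_C\int_B \frac{\dint x\,\dint y}{\ell(x,y)\,\norm{x-y}{\text{E}}^{d-1}}.
\]

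The key point --- and essentially the only thing one must observe --- is that the integrand $g(x,y)=\ell(x,y)^{-1}\norm{x-y}{\text{E}}^{-(d-1)}$ is a symmetric function of $x$ and $y$ on $A\times A$ off the diagonal. Indeed $\norm{x-y}{\text{E}}=\norm{y-x}{\text{E}}$, and the chord length $\ell(x,y)$ is by its very definition the length of the segment $A\cap\ell$, where $\ell$ is the straight line through $x$ and $y$; this segment depends only on the line $\ell$, hence is unchanged under swapping $x$ and $y$, so $\ell(x,y)=\ell(y,x)$. The diagonal $\set{x=y}$ is Lebesgue-null in $A\times A$ and can be ignored. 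With $g\geq0$ symmetric, the displayed identity is just Fubini's theorem; to check the needed integrability (so that Fubini, not merely Tonelli, applies to the iterated integrals over $B\times A$ and $C\times A$) one uses that $H(x,\cdot)$ is a probability measure, which by \eqref{hit_eq} means $\int_A g(x,y)\,\dint y=\tfrac12\vol_{d-1}(\partial\ball)$ for every $x\in A$, whence $\int_B\int_A g(x,y)\,\dint y\,\dint x\leq \tfrac12\vol_{d-1}(\partial\ball)\vol_d(B)<\infty$. Swapping the roles of $B$ and $C$ then yields reversibility, which (as noted after the definition of reversibility) in particular shows $\mu_A$ is stationary for the hit-and-run chain.

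There is no serious obstacle here: the whole content is recognizing the symmetry of the chord length $\ell$. The only minor technical points worth spelling out are that for distinct $x,y\in A$ one has $\ell(x,y)\geq\norm{x-y}{\text{E}}>0$ (both $x$ and $y$ lie on the chord $A\cap\ell$), so $g$ is finite a.e.\ and the manipulations are legitimate, and that one should check measurability of $(x,y)\mapsto\ell(x,y)$ on $A\times A$, which follows from convexity of $A$ since $\ell(x,y)$ is an infimum/supremum over the interval $\text{{\bf I}nt}(x,y)$ determined by the membership condition defining $A$.
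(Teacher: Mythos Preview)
Your proof is correct and follows essentially the same approach as the paper: observe that the transition density $g(x,y)=\dfrac{2}{\vol_{d-1}(\partial\ball)}\,\ell(x,y)^{-1}\norm{x-y}{\text{E}}^{-(d-1)}$ is symmetric in $(x,y)$ because $\ell(x,y)=\ell(y,x)$, and then apply Fubini. The paper's version is slightly terser, stating the general principle that any transition kernel with symmetric density is reversible with respect to $\mu_A$ and then noting the symmetry of $\ell$; your additional remarks on integrability and measurability are fine but not strictly needed here.
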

\begin{proof}
Let $k(x,y)$ be a symmetric transition density of a transition kernel $K$,
i.e. $k(x,y)=k(y,x)$ for all $x,y\in A$.
Then it follows by Fubini's theorem that
  \begin{align*}
			\int_B K(x,C) \,\mu_A(\dint x)  = \int_B \int_C k(x,y)\, \mu_A(\dint y)\, \mu_A(\dint x) 
			& = \int_C \int_B k(x,y)\, \mu_A(\dint x)\, \mu_A(\dint y) \\
			 = \int_C \int_B k(y,x) \,\mu_A(\dint x)\, \mu_A(\dint y)
			& = \int_C K(x,B)\, \mu_A(\dint x),\;\, B,C \in \Borel(A).
		\end{align*}
		Hence the transition kernel $K$ is reversible with respect to $\mu_A$.
		Since $\ell(x,y)=\ell(y,x)$, one obtains that the transition kernel $H$ has a symmetric density 
		and this implies that it is reversible with respect to $\mu_A$.	
\end{proof}
The lazy version of $H$ is denoted by $\widetilde{H}$.
In Algorithm~\ref{mcmc_hit_and_run} 
we present the integration algorithm $S_{n,n_0}^{\text{har}}$ 
which uses the lazy version of the hit-and-run transition kernel.  
\IncMargin{1em}
\begin{algorithm}[htb]
\SetKwFunction{rand}{rand}
\SetKwInOut{Input}{input}\SetKwInOut{Output}{output}
\BlankLine
\Input{ $n$, $n_0$, ($f$, $A$).}
\Output{ $S_{n,n_0}^{\text{har}}(f,A)$. }
\BlankLine
\BlankLine
choose $X_1$ uniformly distributed in $\ball$\;
\For{$k=1$ \KwTo $n+n_0$}{

\eIf{$\rand{}>0.5$}{
$X_{k+1}:=X_{k}$\;
}
{
$X_{k+1}:=$\ref{hit_and_run}($X_k$)\;		
}
}
Compute
\[
S_{n,n_0}^{\text{har}}(f,A) := \frac{1}{n} \sum_{i=1}^n f(X_{i+n_0}).
\]
\caption{$S_{n,n_0}^{\text{har}}$}
\label{mcmc_hit_and_run}
\end{algorithm}
\DecMargin{1em}
We use the notation $P_K=P$, $\beta_K = \beta$ and $\Lambda_K=\Lambda$
to indicate the transition kernel $K$.
The following lemma provides a lower bound of the $L_2$-spectral gap of $P_{\widetilde{H}}$.
The lemma is a straightforward implication of a result of Lov{\'a}sz and Vempala 
presented in \cite[Theorem~4.2, p.~993]{hit_and_run_from_corner}.
Lov{\'a}sz and Vempala show an estimate of the conductance of $H$.

\begin{prop}  \label{absch_beta}
Let $r\geq1$. Then, for all $A\in \mathcal{S}_d(r)$ one has for the lazy version 
of the hit-and-run transition kernel, given by $\widetilde{H}$, that 
\[
1-\beta_{\widetilde{H}} \geq 2^{-52}  (d r)^{-2}. 
\]
\end{prop}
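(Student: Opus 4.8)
The plan is to imitate, step for step, the derivation of Proposition~\ref{spec_gen_metro}: one starts from a lower bound for the conductance of the hit-and-run kernel $H$, pushes it through the Cheeger inequality to control the spectrum of $H$ on $L_2^0$, and then uses the passage to the lazy version to produce the claimed $L_2$-spectral gap. First I would record the elementary spectral facts about the lazy kernel. Since $\widetilde H$ is the lazy version of $H$, its transition operator is $P_{\widetilde H}=\tfrac12(I+P_H)$, whose spectrum is contained in $[0,1]$; hence $P_{\widetilde H}$ is positive semi-definite and $\beta_{\widetilde H}=\Lambda_{\widetilde H}$. Moreover $\Lambda_{\widetilde H}=\tfrac12(1+\Lambda_H)$ with $\Lambda_H=\sup\set{\a\mid\a\in\spec(P_H\mid L_2^0)}$, exactly as already noted for lazy kernels in the remark following Theorem~\ref{main_thm}. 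Consequently
\[
  1-\beta_{\widetilde H}=\tfrac12\,(1-\Lambda_H).
\]

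Next I would invoke reversibility and the Cheeger inequality. By Lemma~\ref{hit_and_run_reversible} the hit-and-run kernel $H$ is reversible with respect to $\mu_A$, so Proposition~\ref{cheeger} applies and yields $1-\Lambda_H\ge \tfrac12\,\phi(H,\mu_A)^2$, where $\phi(H,\mu_A)$ denotes the conductance of $H$ with respect to its stationary distribution $\mu_A$. Combining this with the previous display gives
\[
  1-\beta_{\widetilde H}\ \ge\ \frac{\phi(H,\mu_A)^2}{4}.
\]
So everything reduces to a lower bound for $\phi(H,\mu_A)$ that holds uniformly over $A\in\mathcal{S}_d(r)$, i.e. over convex bodies with $\ball\subset A\subset r\ball$ and $r\ge1$.

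Such a bound is precisely the conductance estimate of Lov\'asz and Vempala, \cite[Theorem~4.2, p.~993]{hit_and_run_from_corner}: for a convex body sandwiched between the unit ball and $r\ball$ one has $\phi(H,\mu_A)\ge c_0\,(dr)^{-1}$ with an explicit absolute constant $c_0$. Inserting this into the displayed inequality produces
\[
  1-\beta_{\widetilde H}\ \ge\ \frac{c_0^2}{4}\,(dr)^{-2}\ \ge\ 2^{-52}\,(dr)^{-2},
\]
where the last step is merely the numerical check that the constant coming out of \cite{hit_and_run_from_corner} satisfies $c_0^2\ge 2^{-50}$. The only genuinely delicate point is this bookkeeping: one must verify that Lov\'asz and Vempala's normalization (inner ball of radius one, outer ball of radius $r$, their notion of conductance, and their standing hypotheses) matches the definition of $\mathcal{S}_d(r)$ verbatim, and then track the absolute constant through their argument so that the final $2^{-52}$ is actually valid; the rest is the routine chain above.
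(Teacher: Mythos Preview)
Your proposal is correct and follows exactly the paper's own argument: the paper cites $\phi(H,\mu_A)\ge 2^{-25}(dr)^{-1}$ from \cite[Theorem~4.2]{hit_and_run_from_corner} and then explicitly says the rest ``follows by the same arguments as the proof of Lemma~\ref{spec_gen_metro}'', i.e.\ $1-\beta_{\widetilde H}=\tfrac12(1-\Lambda_H)\ge \phi(H,\mu_A)^2/4=2^{-52}(dr)^{-2}$. Your constant bookkeeping is exactly right.
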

\begin{proof}
In \cite[Theorem~4.2, p.~993]{hit_and_run_from_corner} it is proven that
\[
	\phi(H,\mu_A) \geq 2^{-25} (d r)^{-1}.
\] 
Then the proof follows by the same arguments as the proof of Lemma~\ref{spec_gen_metro}.
\end{proof}
Now we can apply the error bounds of Section~\ref{err_bound_gen} and obtain the following.

\begin{theorem}  \label{err_S_f_A}
Let $\nu$ be the uniform distribution on $\ball$.
Let $(X_n)_{n\in\N}$ be a Markov chain with transition kernel $\widetilde{H}$ and initial distribution $\nu$.
The 
approximation of $S(f,A)$ is
\[
   	S_{n,n_0}^{\text{{\rm har}}}(f,A) = \frac{1}{n} \sum_{i=1}^n f(X_{i+n_0}).
\]
For $r\geq1$ and $p>2$ recall that
\[
  \mathcal{F}_p(r,d)=\set{ (f,A)\mid \norm{f}{p}\leq 1,\; 
			 A\in \mathcal{S}_d(r)}.
\] 
Let $n_0(p)$ be the smallest natural number (including zero) greater than or equal to 
\[
  4.51\cdot 10^{15}\, d^2\, r^2  \cdot 
  	\begin{cases}
        	\frac{p}{2(p-2)} ( d \log r +\log\frac{32p}{p-2}) ,  	& p\in(2,4),\\
  		d \log r+ 4.16,			& p\in[4,\infty].
  	\end{cases}
\]
Then
\[
  e(S_{n,n_0(p)}^{\text{\rm har}},\mathcal{F}_p(r,d)) 
		\leq 9.5 \cdot 10^{7}\, \frac{ d r}{\sqrt{n}}+ 6.4 \cdot 10^{15} \,\frac{d^2\, r^2}{n}.
\]	
\end{theorem}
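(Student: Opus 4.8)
The plan is to instantiate the general error bound of Theorem~\ref{main_coro_gen}\,\eqref{zweitens_burn_in} with the concrete data supplied by the hit-and-run setup, exactly in the spirit of the proof of Theorem~\ref{err_S_f_rho}. First I would record the two ingredients that Theorem~\ref{main_coro_gen}\,\eqref{zweitens_burn_in} needs: an explicit lower bound for the $L_2$-spectral gap $1-\beta_{\widetilde{H}}$ and an explicit bound for $\norm{\frac{d\nu}{d\pi}-1}{q}$ with $\pi=\mu_A$ and the relevant $q$. The spectral gap is handed to us by Proposition~\ref{absch_beta}: for every $A\in\mathcal{S}_d(r)$ we have $1-\beta_{\widetilde{H}}\geq 2^{-52}(dr)^{-2}$, and since the transition kernel $\widetilde{H}$ is the lazy version of $H$ it is in particular reversible with respect to $\mu_A$ (Lemma~\ref{hit_and_run_reversible}), so the hypotheses of Theorem~\ref{main_coro_gen} are met. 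In particular $\beta_{\widetilde H}$ is well below $1$, so $\log(\beta_{\widetilde H}^{-1})\geq 1-\beta_{\widetilde H}\geq 2^{-52}(dr)^{-2}$, which is what will ultimately convert the logarithmic burn-in formula into the stated polynomial expression.

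Next I would bound the initial-distribution term. The initial distribution $\nu$ is the uniform distribution on $\ball$, and since $\ball\subset A\subset r\ball$, the density $\frac{d\nu}{d\pi}$ with respect to $\mu_A$ equals $\frac{\vol_d(A)}{\vol_d(\ball)}\mathbf{1}_{\ball}$, which is bounded by $\frac{\vol_d(r\ball)}{\vol_d(\ball)}=r^d$. Hence $\norm{\frac{d\nu}{d\pi}-1}{\infty}\leq r^d$ (indeed $\le r^d-1<r^d$), and a fortiori $\norm{\frac{d\nu}{d\pi}-1}{q}\leq r^d$ for every $q\in[1,\infty]$; this covers both the $q=\tfrac{p}{p-2}$ case ($p\in(2,4)$) and the $q=2$ case ($p\in[4,\infty]$) that appear in Theorem~\ref{main_coro_gen}\,\eqref{zweitens_burn_in}. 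Plugging $\norm{\frac{d\nu}{d\pi}-1}{q}\le r^d$ and $\log(\beta_{\widetilde H}^{-1})\ge 2^{-52}(dr)^{-2}$ into the formula for $n_0(p)$, the $\log(\cdots r^d)$ terms become $d\log r$ up to the constants $\log\frac{32p}{p-2}$ for $p\in(2,4)$ and $\log 64<4.16$ for $p\in[4,\infty]$, while the $1/\log(\beta_{\widetilde H}^{-1})$ prefactor contributes the $2^{52}(dr)^2$ factor; absorbing $2^{52}$ and the various small numerical constants into the quoted $4.51\cdot 10^{15}$ yields precisely the stated threshold for $n_0(p)$.

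With the burn-in chosen at least this large, Theorem~\ref{main_coro_gen}\,\eqref{zweitens_burn_in} gives, for $\norm{f}{p}\le 1$,
\[
e(S_{n,n_0(p)}^{\text{har}},(f,A))^2=\sup_{\norm{f}{p}\le 1}e_\nu(S_{n,n_0(p)},f)^2\le \frac{2}{n(1-\beta_{\widetilde H})}+\frac{2}{n^2(1-\beta_{\widetilde H})^2}.
\]
Since this bound is uniform over $(f,A)\in\mathcal{F}_p(r,d)$ and $1-\beta_{\widetilde H}\ge 2^{-52}(dr)^{-2}$, I substitute to get $e(S_{n,n_0(p)}^{\text{har}},\mathcal{F}_p(r,d))^2\le 2^{53}\frac{d^2r^2}{n}+2^{105}\frac{d^4r^4}{n^2}$. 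Taking square roots and using $\sqrt{a+b}\le\sqrt a+\sqrt b$ gives $e(S_{n,n_0(p)}^{\text{har}},\mathcal{F}_p(r,d))\le \sqrt{2^{53}}\,\frac{dr}{\sqrt n}+\sqrt{2^{105}}\,\frac{d^2r^2}{n}$, and $\sqrt{2^{53}}<9.5\cdot 10^7$, $\sqrt{2^{105}}<6.4\cdot10^{15}$, which is exactly the claimed estimate.

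The routine-but-delicate part, and the only place where care is really needed, is the bookkeeping of the numerical constants: tracking the $2^{-52}$ from Proposition~\ref{absch_beta} through its square in the $(1-\beta)^{-2}$ term, verifying the inequality $\log(\beta^{-1})\ge 1-\beta$ is used in the correct direction when passing from the $1/\log(\beta^{-1})$ burn-in formula to the polynomial expression in $d$ and $r$, and checking that $r^d$ (not, say, $r^{d-1}$ or $(r^d-1)$) is the right density bound and that it indeed dominates $\norm{\frac{d\nu}{d\pi}-1}{q}$ for all the $q$ values in play. None of this is conceptually hard — it is the same template as Theorem~\ref{err_S_f_rho} — but the explicit constants $4.51\cdot10^{15}$, $9.5\cdot10^7$, $6.4\cdot10^{15}$ have to be produced by honest arithmetic with generous rounding, so the main obstacle is simply being careful enough that every constant in the final statement is genuinely an upper bound.
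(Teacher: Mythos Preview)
Your proposal is correct and follows essentially the same route as the paper: bound $\norm{\frac{d\nu}{d\mu_A}-1}{\infty}$ by $r^d$ via $\ball\subset A\subset r\ball$, invoke Proposition~\ref{absch_beta} for the spectral-gap bound $1-\beta_{\widetilde H}\ge 2^{-52}(dr)^{-2}$, and plug both into Theorem~\ref{main_coro_gen}\,\eqref{zweitens_burn_in}. The only (harmless) slip is the parenthetical ``indeed $\le r^d-1$'': when $A\supsetneq\ball$ the density vanishes on $A\setminus\ball$, so $\norm{\frac{d\nu}{d\mu_A}-1}{\infty}\ge 1$, which can exceed $r^d-1$ for $r$ close to $1$; but the bound $\le r^d$ you actually use is fine.
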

\begin{proof}
Note that the initial distribution $\nu$ is well defined, since for $A\in \mathcal{S}_d(r)$
one has $\ball \subset A \subset r\ball$.
Furthermore, it follows that
\[
	\nu(C)= \frac{1}{\vol_d(\ball)} \int_C \mathbf{1}_{\ball}(x) \, \dint x 
	      =	\frac{1}{\vol_d(A)} \int_C \mathbf{1}_{\ball}(x)\frac{\vol_d(A)}{\vol_d(\ball)}\, \dint x
			,\quad
				C\in \Borel(A).
\] 
One obtains
\begin{align*}
	\norm{\frac{d\nu}{d\mu_A}-1}{p} 
      & \leq \norm{\frac{d\nu}{d\mu_A}-1}{\infty} 
%
%
	\underset{r\in[1,\infty)}{\leq} \frac{\vol_d(r\ball)}{\vol_d(\ball)}= r^d.
\end{align*}
By Lemma~\ref{absch_beta} we have the crucial lower bound for the spectral gap $1-\beta_{\widetilde{H}}$
and consequently Theorem~\ref{main_coro_gen}~\eqref{zweitens_burn_in} can be applied.
Hence the assertion is proven.
\end{proof}

Note that $p>2$ is necessary to apply Theorem~\ref{main_coro_gen}\,\eqref{zweitens_burn_in}.
A consequence of the last theorem is the following result 
concerning the tractability of the integration problem \eqref{S_f_A}.

\begin{theorem}  \label{comp_S_f_rho}
For the integration problem $S(f,A)$ defined over $\mathcal{F}_p(r,d)$ with
$r\geq1$ and $p>2$ we have
\begin{align*}
 \comp (\e,\mathcal{F}_p(r,d)) &
    \leq
      d^2 r^2 \left[ 4\cdot 10^{16}\, \e^{-2} +
         5\cdot 10^{15}  	
        \begin{cases}
        	\frac{p}{2(p-2)} ( d \log r +\log\frac{32p}{p-2} ),  	& p\in(2,4)\\
  		d \log r+ 4.16,			& p\in[4,\infty]
  	\end{cases}
     \right]
\end{align*}
for all $\e\in(0,1)$ and $d\in \N$.
\end{theorem}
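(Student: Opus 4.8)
The plan is to read off the complexity bound directly from the explicit error estimate of Theorem~\ref{err_S_f_A}, which in turn rests on the $L_2$-spectral gap of Proposition~\ref{absch_beta} and the general recipe of Theorem~\ref{main_coro_gen}\,\eqref{zweitens_burn_in}. First I would fix $p\in(2,\infty]$ and observe that the algorithm $S_{n,n_0(p)}^{\text{har}}$ runs $n+n_0(p)$ steps of the lazy hit-and-run chain, each step using exactly one call of the oracle $\text{Or}_A$, together with $n$ evaluations of $f$; hence $S_{n,n_0(p)}^{\text{har}}\in \text{{\bf A}lg}_{n+n_0(p)}$, and by the definition of $\comp$ it suffices to choose $n$ (with $n_0(p)$ as in Theorem~\ref{err_S_f_A}) so that $e(S_{n,n_0(p)}^{\text{har}},\mathcal{F}_p(r,d))\leq \e$, and then to bound $n+n_0(p)$ from above.

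By Theorem~\ref{err_S_f_A} we have $e(S_{n,n_0(p)}^{\text{har}},\mathcal{F}_p(r,d))\leq 9.5\cdot 10^{7}\,d r\,n^{-1/2}+6.4\cdot 10^{15}\,d^2 r^2\,n^{-1}$. I would force each summand to be at most $\e/2$: the first requires $n\geq (1.9\cdot 10^{8})^2\,d^2 r^2\,\e^{-2}=3.61\cdot 10^{16}\,d^2 r^2\,\e^{-2}$, the second requires $n\geq 1.28\cdot 10^{16}\,d^2 r^2\,\e^{-1}$. Since $\e\in(0,1)$ gives $\e^{-1}\leq \e^{-2}$, both conditions are satisfied by taking $n=\lceil 4\cdot 10^{16}\,d^2 r^2\,\e^{-2}\rceil$, and for this $n$ the overall error on $\mathcal{F}_p(r,d)$ is at most $\e$.

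Finally I would add the two budgets. Using $n\leq 4\cdot 10^{16}\,d^2 r^2\,\e^{-2}+1$ and the bound on $n_0(p)$ from Theorem~\ref{err_S_f_A}, namely that $n_0(p)$ is at most $4.51\cdot 10^{15}\,d^2 r^2$ times the displayed $p$-dependent case expression plus $1$, one gets $\comp(\e,\mathcal{F}_p(r,d))\leq n+n_0(p)$, and collecting terms — absorbing the two additive $1$'s into the leading constants, which is harmless because $d^2 r^2\geq 1$ and $4.51\cdot 10^{15}<5\cdot 10^{15}$ — produces exactly the asserted inequality with the stated constants. The work is entirely routine arithmetic; the only points that need any care are the bookkeeping of the constants and ceilings and the remark that counting calls of $\text{Or}_A$ is legitimate since each hit-and-run step consumes precisely one such call, so there is no real obstacle beyond this.
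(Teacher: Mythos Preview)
Your proposal is correct and follows the same route the paper takes: the paper states Theorem~\ref{comp_S_f_rho} as an immediate consequence of Theorem~\ref{err_S_f_A} without writing out a proof, and what you have done is precisely to fill in the arithmetic that the paper leaves implicit (choose $n$ so that each of the two error terms is at most $\e/2$, then add $n_0(p)$ and absorb the ceilings into the slack between $4.51\cdot10^{15}$ and $5\cdot10^{15}$). One minor wording point: because of the laziness, a step of $\widetilde H$ uses \emph{at most} one call of $\text{Or}_A$ rather than exactly one, but this only strengthens the inclusion $S_{n,n_0(p)}^{\text{har}}\in\text{{\bf A}lg}_{n+n_0(p)}$ and does not affect your argument.
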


The last theorem states that \eqref{S_f_A} is polynomially tractable.
Roughly spoken for fixed $p$ one obtains
\[
 \comp (\e,\mathcal{F}_p(r,d)) \prec d^2 r^2 (\e^{-2} + d \log r ),
\]
so that the dependence on the precision $\e$, dimension $d$ and $r$ is polynomial.
For $p\in [4,\infty]$ the complexity can be bounded independently of $p$.
If $p$ converges to $2$ the result is restrictive.
However, for fixed $p>2$ we showed that the integration problem is polynomially tractable on $\mathcal{F}_d(r,p)$. 

\section{Notes and remarks}
Let us briefly summarize the features of the last sections and provide additional results of the literature.
In Section~\ref{sec_S_f_rho} elementary state spaces were considered, namely balls, 
and the distribution $\pi_\rho$ determined by $\rho$ could be complicated. 
In Section~\ref{int_conv} the distribution of interest was simple, namely the uniform one, and
the state space was possibly complicated.\\

The problem of integration \eqref{int_f_rho}, stated in the form 
\begin{equation*}  
	S(f,\rho)= \frac{\int_{D} f(x) \rho(x)\, \dint x}{\int_{D} \rho(x)\, \dint x}
\end{equation*}
 is formulated as in the work of Math{\'e} and Novak \cite{novak}. 
There the authors also proved an asymptotic error bound of the Metropolis algorithm based on the ball walk 
on $\mathcal{F}^\text{\rm L}_2(\ball)$.
They studied the algorithm $S_{n,0}^{\d^*}$ 
and for $\d^*=\min\set{(d+1)^{-1/2},\text{\rm L}^{-1}}$  it is shown in \cite[Theorem~5, p.~693]{novak} that
\[
  \lim_{n \to \infty}  n \cdot
  e(S_{n,0}^{\d^*},\mathcal{F}^\text{\rm L}_2(\ball))^2 
  \leq 594700 \cdot (d+1) \max\set{d+1,\text{\rm L}^2}. 
\]
The first non-asymptotic error bound is proven in \cite{expl_error} 
for the class $\mathcal{F}_\infty^{\text{\rm L}}(\ball)$.
It states that for $n_0 \geq 1.28\cdot10^6\cdot \text{\rm L} (d+1) \max\set{d+1,\text{\rm L}^2}$
the error obeys
  \[
    e(S_{n,n_0}^{\d^*},\mathcal{F}^\text{\rm L}_{\infty}(\ball))
\leq \frac{8000}{\sqrt{n}}\sqrt{d+1} \max\set{ \sqrt{d+1},\text{\rm L}}.
  \] 
Theorem~\ref{err_S_f_rho} extends the result. 
The integrands $f$ belong to $L_p$
for $p>2$ and we considered the domain $r\ball$. 
The constants in the error bound are of the same magnitude and the 
dependence on the dimension $d$, the Lipschitz constant $\text{\rm L}$ and the precision $\e$ is the same.
The problem is tractable in the sense of \eqref{c_trac}.\\

Apart of the asymptotic result of \cite[Theorem~5, p.~693]{novak} it is always assumed that
the integrand $f$ belongs to $L_p$ for $p>2$.
The case of $f\in L_2$ is not covered so far.
To apply Theorem~\ref{main_thm_unif} it is sufficient to have a transition kernel which is 
reversible with respect to the desired distribution and uniformly ergodic with $(\a,M)$.
It is well known that the ball walk, the Metropolis algorithm based on the ball walk
and the hit-and-run algorithm
are uniformly ergodic, see \cite{hit_smith,KaSm,novak}. 
However, as far as we know there is no estimate of the numbers $\a \in [0,1)$ and $M<\infty$,
of the uniform ergodicity with $(\a,M)$, to obtain polynomial tractability.
We get polynomial tractability 
if there exist non-negative numbers $c$ and $q$, such that  $(1-\a)^{-1} \leq c\, d^q $.
One can prove the following.
%
%
%
Let $D=\ball$ and $\d= 2/\sqrt{d+1}$.
Then the ball walk $Q_\d$ is uniformly ergodic with $(\a,M)$, where 
\[
\a = 1-\frac{0.15}{\sqrt{d+1}((d+1)2^{d+1})^{\sqrt{d+1}}}
\quad  \text{and} \quad 
M=100.
\]
Unfortunately the crucial quantity $(1-\a)^{-1}$ is exponentially bad in $d$. 
Hence, this is not enough to prove polynomial tractability.
It is not clear if one can get a significantly better $\a$.\\

The hit-and-run algorithm is studied in different references of volume computation
and optimization. However, as far as we know it was not yet applied to 
integration problems of the form of \eqref{S_f_A}.
There is an immediate generalization of the hit-and-run algorithm which can be used to sample a
distribution given by a log-concave density, see for example \cite[p.~987]{hit_and_run_from_corner}.
This might be used to obtain further error bounds for other classes of functions.
\\

\begin{appendix}


\chapter{Appendix}
Some aspects of Functional Analysis are fundamental for the understanding of
the error of Markov chain Monte Carlo. 
We present the Spectral Theorem for linear, bounded and self-adjoint operators.
Then we state the Interpolation Theorem of Riesz-Thorin for operators acting on $L_p$.
Afterwards the conductance and the Cheeger inequality is introduced.

\section{Spectral Theorem} \label{sec_spec}
We state the Spectral Theorem for linear, self-adjoint and bounded operators.
For further reading, proofs and details we refer to \cite{kirillov,rudin,triebel}. 
For an introduction see \cite{kreyszig}.\\

Let $H$ be a real or complex Hilbert space and let $\Le(H)$ 
be the space of all linear and bounded operators mapping from $H$ to $H$. 
Let $\Borel(\R)$ be the Borel $\sigma$-algebra over $\R$.
\begin{defin}[\index{spectral measure}spectral measure]
A \emph{spectral measure} or a \emph{projection-valued measure} is a mapping 
$E \colon \Borel(\R) \to \Le(H)$ with the following properties:
\begin{enumerate}[(i)]
\item for all $A\in\Borel(\R)$ the operator $E_A$ is an orthogonal projection,
\item $E_\emptyset=0,\; E_\R=I$, where $I$ is the identity,
\item for pairwise disjoint $A_1,A_2,\dots\in\Borel(\R)$ we have for any $g\in H$ that
	\[
		\sum_{i=1}^\infty E_{A_i}(g)=E_{\bigcup_{i=1}^\infty A_i}(g).
	\]
\end{enumerate}
If there exists a compact set $K\in\Borel(\R)$ with $E_K=I$,
then we say that the spectral measure has \emph{compact support}.
\end{defin}
For $f,g\in H$ a signed measure is defined on $(\R,\Borel(\R))$ by  
\[
\omega(A)=\scalar{E_A f}{g}, \quad A\in\Borel(\R).
\]
If $f=g$, then the measure $\omega$ is non-negative. 
Let $P\in \Le(H)$ be a self-adjoint operator and
let us denote the spectrum of $P\colon H\to H$ by $\spec(P|H)$. 
Furthermore let 
\[
\lambda 
=\inf_{\norm{g}{}=1} \scalar{Pg}{g} 
\quad\mbox{and}\quad
 \Lambda 
=\sup_{\norm{g}{}=1} \scalar{Pg}{g}.
\]
The spectrum of $P$ is closed and $\spec(P| H)\subset [\lambda,\Lambda]$. 
Additionally one has $\lambda,\Lambda \in\spec(P|H)$, thus 
\[
\lambda = \inf\set{\a \mid \a\in\spec(P| H)}
\quad\mbox{and}\quad
\Lambda = \sup\set{\a \mid \a\in\spec(P| H)}.
\]
Now we state the Spectral Theorem for linear bounded self-adjoint operators.
It is an analogon to the finite dimensional Spectral Theorem for matrices.
\begin{prop}[Spectral Theorem] \label{spec_thm}
Let $P\in\Le(H)$ be self-adjoint and $k\in\N$. 
Then there exists a uniquely determined spectral measure $E$ 
with compact support $\spec(P|H)$ such that
\begin{equation} \label{leb_stielt1}
\scalar{P^k f}{g} = \int_\lambda^\Lambda \a^k \;\dint\scalar{E_{\set{\a}}f}{g}, \quad f,g\in H.
\end{equation}
Let $F\colon [\lambda,\Lambda]\to \R$ be a continuous function.
Then one has by the continuous functional calculus a self-adjoint operator $F(P)\in\Le(H)$ with
\begin{equation} \label{leb_stielt2}
\scalar{F(P)f}{g}= \int_\lambda^\Lambda F(\a) \;\dint\scalar{E_{\set{\a}} f}{g}	\quad f,g\in H,
\end{equation}
and 
\begin{equation*} \label{spec_norm}
\norm{F(P)}{H\to H}= \max_{\a \in \spec(P|H)} \abs{F(\a)}.
\end{equation*}
\end{prop}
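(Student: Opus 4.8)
The plan is to construct the functional calculus in three stages — polynomials, continuous functions, bounded Borel functions — and then to read off the spectral measure from the last stage; this is the classical route, and detailed treatments may be found in \cite{kirillov,rudin,triebel}. \emph{Stage 1 (polynomials).} For a polynomial $q$ with real coefficients the operator $q(P)\in\Le(H)$ is defined termwise and is again self-adjoint, and the crucial estimate is
\[
\norm{q(P)}{H\to H}=\max_{\a\in\spec(P|H)}\abs{q(\a)}.
\]
This follows by combining the polynomial spectral mapping theorem $\spec(q(P)|H)=q(\spec(P|H))$ with the identity $\norm{T}{H\to H}=\sup\set{\abs{\a}\mid\a\in\spec(T|H)}$ valid for self-adjoint $T$, which in turn rests on $\norm{T^2}{H\to H}=\norm{T}{H\to H}^2$ together with the spectral radius formula $r(T)=\lim_{n\to\infty}\norm{T^n}{H\to H}^{1/n}$. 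Since $\spec(P|H)\subset[\lambda,\Lambda]$ is compact, the map $q\mapsto q(P)$ is an isometric $*$-homomorphism from the polynomials, normed by the supremum over $\spec(P|H)$, into $\Le(H)$.

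\emph{Stage 2 (continuous functions and the measures).} By the Weierstrass approximation theorem the polynomials are dense in $C(\spec(P|H))$, so the map of Stage~1 extends uniquely to an isometric $*$-homomorphism $F\mapsto F(P)$ on $C(\spec(P|H))$, with $\norm{F(P)}{H\to H}=\max_{\a\in\spec(P|H)}\abs{F(\a)}$; as $\spec(P|H)\subset[\lambda,\Lambda]$, this covers every continuous $F\colon[\lambda,\Lambda]\to\R$. For fixed $f,g\in H$ the functional $F\mapsto\scalar{F(P)f}{g}$ is linear and bounded on $C(\spec(P|H))$, so by the Riesz representation theorem there is a unique regular signed (resp.\ complex) Borel measure $\omega_{f,g}$, carried by $\spec(P|H)$, with $\scalar{F(P)f}{g}=\int_\lambda^\Lambda F(\a)\,\dint\omega_{f,g}(\a)$; for $f=g$ this measure is non-negative, since any $F\geq0$ can be written $F=G^2$ with $G\in C(\spec(P|H))$ real-valued, so that $\scalar{F(P)f}{f}=\norm{G(P)f}{}^2\geq0$.

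\emph{Stage 3 (Borel functions and the spectral measure).} For a bounded Borel function $F$ the form $(f,g)\mapsto\int_\lambda^\Lambda F\,\dint\omega_{f,g}$ is sesquilinear and bounded by $\norm{F}{\infty}\norm{f}{}\norm{g}{}$ (using $\abs{\omega_{f,g}}(\R)\leq\norm{f}{}\norm{g}{}$ from the isometry), hence is represented by a unique $F(P)\in\Le(H)$; multiplicativity $(FG)(P)=F(P)G(P)$ and $\overline{F}(P)=F(P)^*$ pass from continuous to bounded Borel $F,G$ by a dominated (resp.\ monotone) convergence argument on the measures. Setting $E_A=\mathbf{1}_A(P)$ for $A\in\Borel(\R)$: realness and $\mathbf{1}_A^2=\mathbf{1}_A$ make $E_A$ an orthogonal projection; $E_\emptyset=0$ and $E_\R=E_{\spec(P|H)}=I$ because $\mathbf{1}_{\spec(P|H)}$ equals the constant $1$ on $\spec(P|H)$, which also yields the compact support claim; and countable additivity in the strong operator topology follows from that of each $\omega_{f,g}$. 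Then \eqref{leb_stielt2} is the representation of the continuous functional calculus against $E$, and \eqref{leb_stielt1} is the special case $F(\a)=\a^k$. Uniqueness of $E$ is immediate, since $\scalar{E_{\set{\cdot}}f}{g}$ must coincide with $\omega_{f,g}$, which is uniquely determined by its integrals against polynomials via Weierstrass density and the uniqueness in the Riesz representation theorem.

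The step I expect to be the main obstacle is Stage~1 — establishing the isometry $\norm{q(P)}{H\to H}=\max_{\a\in\spec(P|H)}\abs{q(\a)}$, which bundles together the polynomial spectral mapping theorem and the coincidence of operator norm and spectral radius for self-adjoint operators. Once that is in place, Stages~2 and~3 are essentially Riesz-representation bookkeeping and standard measure-theoretic limiting arguments, and the uniqueness statement is a one-line consequence.
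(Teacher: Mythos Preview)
Your sketch is correct and follows the classical route via the three-stage functional calculus (polynomials $\to$ continuous functions $\to$ bounded Borel functions), which is exactly what one finds in the references \cite{kirillov,rudin,triebel}. The paper itself does not prove this proposition at all: it merely states the result and defers to those references for proofs and details, so there is nothing to compare against beyond noting that you have supplied precisely the standard argument the cited texts contain. One minor point worth keeping in mind, which the paper flags in its remark, is that the thesis works over a real Hilbert space; your sketch accommodates this (you write ``signed (resp.\ complex)'' for the Riesz measures and take real polynomial coefficients), but if you were to write this out in full you would want to be explicit that the Weierstrass approximation and the spectral-mapping steps go through unchanged in the real setting.
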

\begin{remark}
Mostly in the literature the case where $H$ is a complex Hilbert space is considered. 
In \cite{kirillov} they handle both, real and complex Hilbert spaces.  
Note that the integral in \eqref{leb_stielt1} and \eqref{leb_stielt2} 
is defined with respect to a signed measure. 
\end{remark}

\section{Interpolation Theorem}
We state a version of the Theorem of Riesz-Thorin. 
For a proof and further details let us refer to \cite{BeLof,bennett_sharpley}.
Let $L_p=L_p(D,\pi)$ for a probability measure $\pi$ on a measurable space $(D,\D)$.
\begin{prop}[Theorem of Riesz-Thorin] \label{riesz_thorin}
Let $1\leq p, q_1, q_2 \leq \infty$. 
We assume that $\theta \in (0,1)$ and 
\[
\frac{1}{p} = \frac{1-\theta}{q_1}+\frac{\theta}{q_2}.
\]
Further let $T$ be a linear operator from $L_{q_1}$ to $L_{q_1}$ 
and at the same time from $L_{q_2}$ to $L_{q_2}$ with
\begin{align*}
\norm{T}{L_{q_1} \to L_{q_1}} \leq M_1 \quad \text{and} \quad 
\norm{T}{L_{q_2} \to L_{q_2}} \leq M_2.
\end{align*}

Then 
\[
\norm{T}{L_p\to L_p} \leq 2 M_1^{1-\theta} M_2^\theta.
\]
\end{prop}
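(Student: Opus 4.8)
The plan is to deduce the stated inequality from the classical complex-variable version of the Riesz--Thorin theorem (which carries no extra constant) together with a short complexification argument that accounts for the factor $2$. So the two ingredients are: the Hadamard three-lines argument on the strip $\set{z\in\C\mid 0\le\Re z\le1}$, and the estimate relating the norm of a real operator on $L_q(D,\pi)$ to the norm of its complexification on $L_q(D,\pi;\C)$.

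First I would prove the complex case: if a linear operator $T$ is bounded from $L_{q_1}(D,\pi;\C)$ to itself with bound $\widetilde M_1$ and from $L_{q_2}(D,\pi;\C)$ to itself with bound $\widetilde M_2$, then $\norm{T}{L_p\to L_p}\le\widetilde M_1^{1-\theta}\widetilde M_2^{\theta}$. By density of simple functions (and the usual duality adjustments when one of $q_1,q_2,p$ is $\infty$) it suffices to bound $\abs{\scalar{Tf}{g}}$ for simple functions $f=\sum_j a_j\mathbf{1}_{A_j}$ and $g=\sum_k b_k\mathbf{1}_{B_k}$ with $a_j,b_k\neq0$, pairwise disjoint $A_j$ (resp. $B_k$), and $\norm{f}{p}=\norm{g}{p'}=1$, where $p'$ is the conjugate exponent. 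One introduces the analytic families
\[
 f_z=\sum_j \abs{a_j}^{\,p\left(\frac{1-z}{q_1}+\frac{z}{q_2}\right)}\frac{a_j}{\abs{a_j}}\,\mathbf{1}_{A_j},\qquad
 g_z=\sum_k \abs{b_k}^{\,p'\left(\frac{1-z}{q_1'}+\frac{z}{q_2'}\right)}\frac{b_k}{\abs{b_k}}\,\mathbf{1}_{B_k},
\]
so that $f_\theta=f$, $g_\theta=g$, and $\Phi(z)=\scalar{Tf_z}{g_z}$ is a finite linear combination of exponentials in $z$, hence entire, and bounded on the strip. A short computation with $\Re z=0$ gives $\norm{f_z}{q_1}\le1$ and $\norm{g_z}{q_1'}\le1$, so by Hölder $\abs{\Phi(z)}\le\widetilde M_1$; similarly $\abs{\Phi(z)}\le\widetilde M_2$ for $\Re z=1$. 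The three-lines lemma then yields $\abs{\scalar{Tf}{g}}=\abs{\Phi(\theta)}\le\widetilde M_1^{1-\theta}\widetilde M_2^{\theta}$, and taking the supremum over admissible $g$ gives the complex bound.

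To obtain the real statement, extend the given real operator to its complexification $T^{\C}(u+iv)=Tu+iTv$. For $h=u+iv\in L_{q_i}(D,\pi;\C)$ one has $\norm{T^{\C}h}{q_i}\le\norm{Tu}{q_i}+\norm{Tv}{q_i}\le M_i(\norm{u}{q_i}+\norm{v}{q_i})\le 2M_i\norm{h}{q_i}$, so $T^{\C}$ is bounded by $2M_i$ on each complex space. Applying the complex case with $\widetilde M_i=2M_i$,
\[
 \norm{T^{\C}}{L_p(D,\pi;\C)\to L_p(D,\pi;\C)}\le(2M_1)^{1-\theta}(2M_2)^{\theta}=2\,M_1^{1-\theta}M_2^{\theta},
\]
and since $L_p(D,\pi)$ embeds isometrically into $L_p(D,\pi;\C)$ and $T^{\C}$ restricts to $T$ on it, we conclude $\norm{T}{L_p\to L_p}\le2\,M_1^{1-\theta}M_2^{\theta}$.

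The only genuinely delicate point is the bookkeeping of the exponents $p\bigl(\tfrac{1-z}{q_1}+\tfrac{z}{q_2}\bigr)$ and their conjugates, i.e. checking that $\Re\bigl(p(\tfrac{1-z}{q_1}+\tfrac{z}{q_2})\bigr)=p/q_1$ on $\Re z=0$ and $=p/q_2$ on $\Re z=1$ (and likewise for $g_z$), so that the boundary norms are exactly $1$ and Hölder applies with the right exponents; everything else — density of simple functions, the three-lines lemma, and the complexification estimate — is routine. Since the full complex-variable argument is entirely classical, in the paper I would simply cite \cite{BeLof,bennett_sharpley} for it and present only the complexification step that produces the constant $2$.
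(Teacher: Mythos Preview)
Your proof is correct and actually goes further than the paper: the paper gives no argument at all and simply refers the reader to \cite{BeLof,bennett_sharpley}. Your sketch of the three-lines argument and the complexification step (which explains the factor $2$ for real $L_p$ spaces) is the standard route and is exactly what those references contain; you even anticipate the paper's handling by suggesting to cite these sources for the classical part.
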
  

\begin{remark}
We can substitute the function spaces $L_p,\,L_{q_1},\,L_{q_2}$ in the last proposition  
by the sequence spaces $\ell_p,\,\ell_{q_1},\,\ell_{q_2}$ and the result remains the same.
\end{remark}
\begin{remark}
Note that we consider real-valued functions. If we would study functions which map into
the complex numbers, then the same 
result holds true. In particular, the additional factor of two in the assertion is not needed.
\end{remark}

\section{Conductance and the Cheeger inequality}  \label{cond_concept}


Let $(D,\D)$ be a measurable space. Assume $K$ is a transition kernel defined on $(D,\D)$ which is reversible 
with respect to a probability measure $\pi$.
 The \emph{conductance} of the transition kernel $K$ is defined by
\[
\phi(K,\pi) =\inf_{0<\pi(A)\leq\frac{1}{2}}   
            \frac{\int_A K(x,A^c)\, \pi(\dint x)}{\pi(A)}.
\]
Let $(X_n)_{n\in\N}$ be a Markov chain with transition kernel $K$ and initial distribution $\pi$.
Then the numerator of the ratio within the definition of the conductance is the probability 
of $X_1 \in A$ and $X_2 \in A^c$. Hence one has
\[
\Pr(X_2\in A^c \mid X_1\in A) = \frac{\int_A K(x,A^c)\, \pi(\dint x)}{\pi(A)}.
\]
The conductance of $K$ is the infimum over sets $A\in\D$, with $0<\pi(A)\leq1/2$, 
of the probability that $X_2\in A^c$ under the condition that $X_1\in A$. 

The Markov operator $P$, given by $Pf(x)=\int_D f(y)\, K(x,\dint y)$, is self-adjoint on $L_2=L_2(D,\pi)$.
For $f\in L_2$ let $S(f)=\int_D f(x)\,\pi(\dint x)$ and let
\[
L_2^0=\{f\in L_2\mid S(f)=0\} .
\]
Furthermore we define
\[
  \Lambda = \sup \set{\a \mid \a \in \spec(P|L_2^0)}.
\]
The Cheeger inequality provides a relation between $\Lambda$ and the conductance $\phi(K,\pi)$.

\begin{prop}[Cheeger inequality] \label{cheeger}
Let the transition kernel $K$ be reversible with respect to a probability measure $\pi$. 
Then
\begin{equation} \label{cheeger_eq}
1-\Lambda \geq \frac{\phi(K,\pi)^2}{2}.
\end{equation}  
\end{prop}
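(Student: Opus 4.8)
The plan is to combine the variational characterisation of $\Lambda$ with a layer‑cake (co‑area) argument, following the classical reasoning behind the Cheeger inequality (see e.g.\ \cite{lawler_sokal}). Recall from the discussion after Definition~\ref{def_spectral_gap} that, for a reversible $K$, the operator $P$ is self‑adjoint on $L_2^0$ and $\Lambda=\sup_{\norm{g}{2}=1,\,g\in L_2^0}\scalar{Pg}{g}$; note also that $P$ maps $L_2^0$ into $L_2^0$ by \eqref{rem_f}. Hence $1-\Lambda=\inf\{\scalar{(I-P)g}{g}\mid g\in L_2^0,\,\norm{g}{2}=1\}$, and it suffices to prove $\scalar{(I-P)g}{g}\ge\tfrac{1}{2}\phi(K,\pi)^2\norm{g}{2}^2$ for all $g\in L_2^0$. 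First I would rewrite the Dirichlet form symmetrically: using \eqref{F_x_y_n} of Lemma~\ref{f_eq} with $F(x,y)=g(x)^2$ one gets $\int_D\int_D g(x)^2\,K(x,\dint y)\,\pi(\dint x)=\int_D\int_D g(y)^2\,K(x,\dint y)\,\pi(\dint x)=\norm{g}{2}^2$, and expanding the square yields
\[
\scalar{(I-P)g}{g}=\frac{1}{2}\int_D\int_D\big(g(x)-g(y)\big)^2\,K(x,\dint y)\,\pi(\dint x)=:\mathcal{E}(g,g),
\]
the integral being finite since $(g(x)-g(y))^2\le 2(g(x)^2+g(y)^2)$.

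Second, I would reduce to nonnegative integrands with small support. Fix $g\in L_2^0$ and let $c$ be a $\pi$‑median of $g$, so $\pi(\{g>c\})\le\tfrac12$ and $\pi(\{g<c\})\le\tfrac12$. Put $h_+=(g-c)^+$ and $h_-=(g-c)^-$; these lie in $L_2$, have disjoint supports contained in $\{g>c\}$ and $\{g<c\}$ respectively, and $g-c=h_+-h_-$. Since $\mathcal{E}$ is invariant under adding constants, $\mathcal{E}(g,g)=\mathcal{E}(h_+-h_-,h_+-h_-)$; because $h_+$ and $h_-$ never share a support point, the cross term $-2\int\int(h_+(x)-h_+(y))(h_-(x)-h_-(y))K(x,\dint y)\pi(\dint x)$ is nonnegative, so $\mathcal{E}(g,g)\ge\mathcal{E}(h_+,h_+)+\mathcal{E}(h_-,h_-)$. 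Moreover $\norm{h_+}{2}^2+\norm{h_-}{2}^2=\norm{g-c}{2}^2=\norm{g}{2}^2+c^2\ge\norm{g}{2}^2$ since $\int_D g\,\dint\pi=0$. Thus the proposition follows once the bound $\mathcal{E}(h,h)\ge\tfrac12\phi(K,\pi)^2\norm{h}{2}^2$ is established for every nonnegative $h\in L_2$ with $\pi(\{h>0\})\le\tfrac12$.

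Third, the heart of the argument: estimate $I:=\int_D\int_D\abs{h(x)^2-h(y)^2}\,K(x,\dint y)\,\pi(\dint x)$ from both sides. For the upper bound, Cauchy--Schwarz gives
\[
I\le\big(2\,\mathcal{E}(h,h)\big)^{1/2}\Big(\int_D\int_D(h(x)+h(y))^2K(x,\dint y)\pi(\dint x)\Big)^{1/2}\le\big(2\,\mathcal{E}(h,h)\big)^{1/2}\cdot 2\norm{h}{2},
\]
using $(a+b)^2\le 2(a^2+b^2)$ and the identity from the first paragraph. For the lower bound, set $A_t=\{x:h(x)^2>t\}$ for $t\ge 0$; then $\int_0^\infty\mathbf{1}_{A_t}(x)\mathbf{1}_{A_t^c}(y)\,\dint t=(h(x)^2-h(y)^2)^+$, and by reversibility $I=2\int_D\int_D(h(x)^2-h(y)^2)^+K(x,\dint y)\pi(\dint x)$, so Tonelli yields $I=2\int_0^\infty\big(\int_{A_t}K(x,A_t^c)\,\pi(\dint x)\big)\,\dint t$. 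Since $A_t\subseteq\{h>0\}$ forces $\pi(A_t)\le\tfrac12$, the definition of $\phi(K,\pi)$ gives $\int_{A_t}K(x,A_t^c)\pi(\dint x)\ge\phi(K,\pi)\,\pi(A_t)$, and the layer‑cake formula $\int_0^\infty\pi(\{h^2>t\})\,\dint t=\norm{h}{2}^2$ gives $I\ge 2\,\phi(K,\pi)\norm{h}{2}^2$. Combining the two estimates, $2\,\phi(K,\pi)\norm{h}{2}^2\le 2\norm{h}{2}(2\,\mathcal{E}(h,h))^{1/2}$, i.e.\ $\mathcal{E}(h,h)\ge\tfrac12\phi(K,\pi)^2\norm{h}{2}^2$, which together with the reductions above proves \eqref{cheeger_eq}.

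I expect the main obstacle to be the bookkeeping in the third step rather than any conceptual difficulty: one must justify the interchange of $\int_0^\infty$ with the double integral (Tonelli, as the integrand is nonnegative), check that the boundary level sets $\{x:h(x)^2=t\}$ do not spoil the co‑area identity — they are $\pi$‑null for all but countably many $t$, hence irrelevant to $\int_0^\infty(\cdots)\,\dint t$ — and verify the sign of the cross term in the median reduction. All of these are routine once stated carefully.
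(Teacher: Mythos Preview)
Your argument is correct and follows the classical Cheeger--Lawler--Sokal route: symmetrise the Dirichlet form, reduce via a median to nonnegative functions supported on a set of measure at most $\tfrac12$, then combine Cauchy--Schwarz with the co-area formula for the superlevel sets of $h^2$. The bookkeeping points you flag (Tonelli, the sign of the cross term, null level sets) are indeed routine and you have handled them correctly.

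The paper itself does not give a proof of this proposition; it simply cites \cite{behrends} for the finite case and \cite{lawler_sokal,lova_simo1} for general state spaces. Your argument is essentially the proof in \cite{lawler_sokal}, so in that sense you are supplying exactly what the cited reference would provide. There is nothing to compare against in the paper proper.
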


For a proof of the inequality on finite state spaces  
we refer to \cite[Theorem~11.3, p.~93]{behrends}.
The Cheeger inequality for general state spaces is proven by Lawler and Sokal in \cite[Theorem~3.5, p.~570]{lawler_sokal} 
and by Lov{\'a}sz and Simonovits in \cite[Lemma~1.7, p.~374]{lova_simo1}.
Lawler and Sokal provide different types of inequalities for Markov chains and Markov processes.

\end{appendix}

\newcommand{\etalchar}[1]{$^{#1}$}
\providecommand{\bysame}{\leavevmode\hbox to3em{\hrulefill}\thinspace}
\providecommand{\MR}{\relax\ifhmode\unskip\space\fi MR }
\providecommand{\MRhref}[2]{%
  \href{http://www.ams.org/mathscinet-getitem?mr=#1}{#2}
}
\providecommand{\href}[2]{#2}

\end{document}